\newcommand{\showcomments}{no}
\newsavebox{\commentbox}
\DeclareMathOperator{\visual}{\partial_{\infty}}
\newtheorem{thm}{Theorem}[section]
\newtheorem{lem}[thm]{Lemma}
\newtheorem{cor}[thm]{Corollary}
\newtheorem{conj}[thm]{Conjecture}
\newtheorem{prop}[thm]{Proposition}
\theoremstyle{definition}
\newtheorem{defn}[thm]{Definition}
\newtheorem{rem}[thm]{Remark}
\newtheorem*{remi}{Remark}
\newtheorem{exmp}[thm]{Example}
\newtheorem{notation}[thm]{Notation}
\newtheorem{claim}{Claim}
\newtheorem*{corj}{Corollary}
\newtheorem{claim*}{Claim}
\DeclareMathOperator{\dimension}{dim}
\DeclareMathOperator{\image}{im}
\DeclareMathOperator{\Aut}{Aut}
\DeclareMathOperator{\stabilizer}{Stab}
\DeclareMathOperator{\diam}{diam}
\newcommand{\scname}[1]{\text{\sf #1}}
\newcommand{\area}{\scname{Area}}
\newcommand{\field}[1]{\mathbb{#1}}
\newcommand{\integers}{\ensuremath{\field{Z}}}
\newcommand{\naturals}{\ensuremath{\field{N}}}
\newcommand{\reals}{\ensuremath{\field{R}}}
\newcommand{\Euclidean}{\ensuremath{\field{E}}}
\newcommand{\Rmnum}[1]{\mathbf{{\expandafter\@slowromancap\romannumeral #1@}}}
\newcommand{\simp}{\ensuremath{\partial_{_{\triangle}}}}
\newcommand{\dive}[2]{\ensuremath{{\mathbf{div}}({#1},{#2})}}
\newcommand{\ascone}[4]{\ensuremath{\mathbf{Cone}}_{#4}({#1},{#2},{#3})}
\newcommand{\lefth}[1]{\ensuremath{\overleftarrow{#1}}}
\newcommand{\righth}[1]{\ensuremath{\overrightarrow{#1}}}
\newcommand{\lrhalf}[1]{\ensuremath{\widehat{\mathcal{#1}}}}
\newcommand{\divergence}[5]{\ensuremath{\mathbf{div}_{{#4},{#5}}({#1},{#2},{#3})
}}
\newcommand{\Divergence}[3]{\ensuremath{\mathbf{Div}^{{#1}}_{{#2},{#3}}}}
\let\oldmarginpar\marginpar
\renewcommand\marginpar[1]{\-\oldmarginpar[\raggedleft\footnotesize #1]%
{\raggedright\footnotesize #1}}
\newcommand{\Xomega}{{\mathbf X}_{\omega}}
\newcommand{\done}{\dot{d}}
\newcommand{\dtwo}{\ddot{d}}
\newcommand{\co}{\colon}
\newcounter{enumitemp}
\newenvironment{enumeratecontinue}{
  \setcounter{enumitemp}{\value{enumi}}
  \begin{enumerate}
  \setcounter{enumi}{\value{enumitemp}}
}
{
  \end{enumerate}
}
\long\def\Restate#1#2#3#4{
\medskip\par\noindent
{\bf #1 \ref{#2} #3} {\it #4}\par\medskip }
\begin{document}
\title[Thickness, relative hyperbolicity, and simplicial boundaries]
{Cubulated groups: thickness, relative hyperbolicity, and simplicial boundaries}

\author[J. Behrstock]{Jason Behrstock}\thanks{Behrstock was supported
as an Alfred P. Sloan Fellow and by NSF grant \#DMS-1006219}
\address{Lehman College and The Graduate Center, City University of New York,
U.S.A.}
\email{jason.behrstock@lehman.cuny.edu}

\author[M.~Hagen]{Mark F. Hagen}\thanks{Hagen was partially supported by the NSF under Grant Number NSF 1045119.}
\address{Dept. of Math., University of Michigan, Ann Arbor, Michigan, USA }
\email{markfhagen@gmail.com}

\date{March 10th, 2014}
\keywords{thick metric space, thick group, cubulated group, cube complex, relatively
hyperbolic group, boundary, simplicial boundary}
\maketitle

\begin{abstract}
Let $G$ be a group acting geometrically on a CAT(0) cube complex $\mathbf X$.
We prove first that $G$ is hyperbolic relative to the collection $\mathbb P$ of
subgroups if and only if the simplicial boundary $\simp\mathbf X$ is the
disjoint union of a nonempty discrete set, together with a
pairwise-disjoint collection of subcomplexes corresponding, in the appropriate sense, to elements of
$\mathbb P$.  As a special case of this result is a new proof, in the cubical case, of a Theorem of Hruska--Kleiner regarding Tits boundaries of relatively hyperbolic CAT(0) spaces.  Second, we relate the existence of cut-points in asymptotic cones
of a cube complex $\mathbf X$ to boundedness of the 1-skeleton of $\simp\mathbf
X$.  We deduce characterizations of thickness and strong algebraic thickness of a group $G$ acting properly and cocompactly on the CAT(0) cube complex $\mathbf X$ in terms of the structure of, and nature of the $G$-action on, $\simp\mathbf X$.  Finally, we construct, for each $n\geq 0, k\geq 2$, infinitely many quasi-isometry types of group $G$ such that $G$ is strongly algebraically thick of order $n$, has polynomial divergence of order $n+1$, and acts properly and cocompactly on a $k$-dimensional CAT(0) cube complex.
\end{abstract}

\setcounter{tocdepth}{1}
\tableofcontents

\section*{Introduction}
In this paper, we study the mutually exclusive properties of relative
hyperbolicity and thickness of groups, in the context of groups acting
properly and cocompactly on CAT(0) cube complexes.  Since being first introduced as an interesting family of CAT(0) spaces by Gromov in~\cite{Gromov87}, the class of CAT(0) cube complexes has been recognized as being sufficiently rich to warrant a theory encompassing more than just CAT(0) geometry.  Applications of this theory range from their use by Charney-Davis to resolve the $K(\pi,1)$ problem for hyperplane complements~\cite{CharneyDavis95b} to the recent resolution of the virtual fibering conjecture~\cite{WiseIsraelHierarchy} and virtual Haken conjectures~\cite{AgolVirtualHaken}, among many others.  In the setting of cubical complexes, we study the properties of relative
hyperbolicity and thickness, the latter of which is a powerful obstruction to the former. We show that despite these two properties being antithetical, surprisingly, they admit similar characterizations using the boundary of the space.  This is the only setting in which such a close relationship between these two properties is known.

A CAT(0) cube complex has a highly organized combinatorial structure that yields an associated space, the
\emph{simplicial boundary}, which encodes much of the large-scale structure of
the cube complex.  Our results show that, to a large extent, both relative
hyperbolicity and thickness of a group $G$ acting geometrically on a cube
complex $\mathbf X$ correspond to simple properties of the
simplicial boundary of $\mathbf X$ and the natural action of $G$ on the simplicial boundary of $\mathbf X$.

The definition of a \emph{thick metric space} and the attendant
quasi-isometry invariant, \emph{order of thickness}, were introduced
by Behrstock--Dru\c{t}u--Mosher \cite{BDM} as an obstruction to relative hyperbolicity and as a tool to study geometric commonalities between several
classes of groups, notably mapping class groups of surfaces, outer automorphism
groups of finitely generated free groups, and $SL_n(\integers)$.  We review
thick metric spaces in detail in Section~\ref{sec:thickprelim}.

The order of thickness of $M$, defined below, is intimately related to the divergence function
of $M$.  The relevant notion of divergence of a metric space originates in work
of
Gromov and Gersten~\cite{Gromov:Asymptotic,
GerstenDivergence2,GerstenDivergence}, and, roughly speaking,
estimates how far one must travel in $M$ from a point $a$ to a point $b$,
avoiding a specified ball centered at a third point $c$.  Divergence can be
studied via asymptotic cones of $M$.  In particular,
Dru\c{t}u, Mozes, and Sapir proved that, if $M$ is quasi-isometric to a
finitely generated group, then $M$ has linear divergence if and only if it is
wide~\cite{DrutuMozesSapir}.  Furthermore, the first author and Dru\c{t}u
proved in~\cite{BD} that the divergence of $M$ is bounded above by a polynomial
of order $n+1$ when $M$ is a metric space that is strongly thick of order $n$.

The order of thickness of a metric space $M$ is defined inductively. First, $M$
is [strongly] thick of order 0 if $M$ is \emph{unconstricted} [\emph{wide}],
which means that some [any] asymptotic cone of $M$ has no cut-point.  $M$ is
\emph{[strongly] thick of order at most $n\geq 1$} if there is a collection of
quasiconvex \emph{thickly connecting} subspaces $\{S_i\}$ that coarsely cover
$M$, with the additional property that each $S_i$ is [strongly] thick of order
at most $(n-1)$.  Being \emph{thickly connected} means that for any $p,q\in M$,
there is a sequence $S_{i_1},\ldots,S_{i_k}$ with $p\in S_{i_1},q\in S_{i_k}$
and $\diam(S_{i_j}\cap S_{i_{j+1}})=\infty$ for all $j$. An important variation
on this notion occurs when $M$ is quasi-isometric to a finitely generated group,
and the sets $S_i$ are cosets of a finite collection of quasi-convex subgroups,
each of which is \emph{(strongly) algebraically thick of order $n-1$}.  In this
case, $M$ is \emph{(strongly) algebraically thick of order $n$}.  \emph{Algebraically thick of order 0} means unconstricted, and \emph{strongly algebraically thick of order 0} means wide.

CAT(0) cube complexes are a generalization of trees in two fundamental ways.  First, the class of
graphs that are 1-skeleta of CAT(0) cube complexes is precisely the class of
median graphs, of which trees are a special case, as was established independently by Chepoi and by
Roller~\cite{Chepoi2000,Roller98}.  Second, CAT(0) cube complexes contain large
collections of convex subspaces with exactly two complementary components.
These convex subspaces are the \emph{hyperplanes}; in the 1-dimensional case, hyperplanes are
midpoints of edges.  A detailed discussion of basic properties of CAT(0) cube complexes occurs in Section~\ref{sec:cubecomplex}.

Just as cube complexes generalize trees, the theory of groups acting on trees
generalizes, yielding a theory of groups acting on cube complexes.  See Sageev~\cite{Sageev95} and later developments in work of Chatterji-Niblo, Haglund-Paulin, Hruska-Wise, and
Nica~\cite{ChatterjiNiblo04,HaglundPaulin98,HruskaWiseAxioms,NicaCubulating04}.
 The class of groups known to be \emph{cubulated} --- i.e., to admit a
metrically
proper action by isometries on a CAT(0) cube complex --- is ever-growing and
contains many Coxeter groups~\cite{NibloReeves03}, right-angled Artin
groups~\cite{CharneyDavis94}, Artin groups of finite
type~\cite{CharneyDavis95b}, groups satisfying sufficiently strong
small-cancellation conditions~\cite{WiseSmallCanCube04}, random groups at
sufficiently low density in Gromov's model~\cite{OllivierWiseDensity},
appropriately-chosen subgroups of fundamental groups of nonpositively-curved
graph manifolds~\cite{LiuGraphManifold,PrzytyckiWiseGraphManifold}, certain
graphs of cubulated groups~\cite{HsuWiseCubulatingMalnormal}, and many others.

The connection between relative hyperbolicity and thickness for cube complexes results from the fact that these two properties of a group acting geometrically on a CAT(0) cube complex can both be detected by examining the action of the group on the simplicial boundary of the cube complex.  The \emph{simplicial boundary} $\simp\mathbf X$ was introduced
by Hagen \cite{HagenBoundary} as a combinatorial analogue of the Tits boundary of $\mathbf X$. The simplicial boundary is a simplicial complex that is an invariant of the median graph $\mathbf
X^{(1)}$, obtained by taking the 1-skeleton of $\mathbf X$, or, equivalently, of the hyperplanes and how they interact.  In the
event of a proper, cocompact action on $\mathbf X$, the two
boundaries are quasi-isometric in a strong sense discussed in Section~\ref{sec:titschar}~\cite[Section~3.5]{HagenBoundary}. Simplices of $\simp\mathbf X$
are represented by set of hyperplanes in $\mathbf X$ modeled on the set of
hyperplanes separating some basepoint from a collection of points at infinity,
and since an isometric action of a group $G$ on $\mathbf X$ preserves the set of
hyperplanes, such an action induces an action of $G$ on $\simp\mathbf X$ by
simplicial automorphisms.
A more discussion of the simplicial boundary is provided in Section~\ref{sec:simplicialboundary}.

\subsection*{Relative hyperbolicity}
Relatively hyperbolic cubulated groups form a rich family. For instance, by
recent work of Wise~\cite{WiseIsraelHierarchy}, if $M$ is a finite-volume cusped hyperbolic
3-manifold with a geometrically finite incompressible surface, then $M$ has a
finite cover $\widehat M$ such that $\pi_1\widehat M$ is the fundamental group
of a compact nonpositively-curved cube complex.  The simplicial boundary of the
universal cover of such a cube complex is described by Theorem~\ref{thm:relhyp} below.

Given a group $G$
acting properly and cocompactly on a CAT(0) space $Y$, it is natural to search for characterizations of hyperbolicity of $G$ relative to a collection of subgroups.  A result of
Hruska-Kleiner achieves this in the special case in which each peripheral subgroup is free abelian; they prove that $G$ is hyperbolic relative to a collection of free abelian subgroups if and only if the Tits boundary $\partial_TY$ decomposes as
the union of an infinite set of isolated points and an infinite collection of
spheres, which are boundaries of flats in $Y$ corresponding to the peripheral
subgroups~\cite{HruskaKleiner}.
The following two results generalize Hruska-Kleiner's result in the cubical setting, by removing any assumptions on the peripheral subgroups.  Just as Hruska-Kleiner's result shows that the property of being hyperbolic relative to free abelian subgroups corresponds to the existence of a simple geometric description of the Tits boundary, the following theorems relate relative hyperbolicity of cubulated groups to the existence of a simple decomposition of the simplicial (and therefore Tits) boundary into pieces with simpler structure.

\Restate{Theorem}{thm:relhyp}{}
{Let $(G,\mathbb P)$ be a relatively hyperbolic structure and let $G$ act
properly and
cocompactly on the CAT(0) cube complex $\mathbf X$.  Then $\simp\mathbf X$
consists of an infinite collection of isolated 0-simplices, together with a pairwise-disjoint collection $\{g\simp\mathbf Y_P:P\in\mathbb P,g\in G\}$ of subcomplexes, with each $\mathbf Y_P$ the convex hull of a $P$-orbit in $\mathbf X$.
}

When each $P\in\mathbb P$ is isomorphic to $\integers^{n_P}$ for some $n_P\geq
2$, the complex $\simp\mathbf Y_P$ is isomorphic to the $(n-1)$-dimensional
hyperoctahedron, and thus homeomorphic to $\mathbb S^{n-1}$; see Corollary~\ref{cor:relhypabelian}.  Conversely, the following shows that
relative hyperbolicity can be identified by examining the action on the simplicial boundary:

\Restate{Theorem}{thm:relhypconverse}{}{
Let $G$ act properly and cocompactly on the CAT(0) cube complex $\mathbf X$.
Let $\{\mathbf S_i\}_i$ be a $G$-invariant collection of pairwise-disjoint
subcomplexes of $\simp\mathbf X$, such that $\simp\mathbf X$ consists of
$\sqcup_i\mathbf S_i$ together with a $G$-invariant collection of isolated
0-simplices.  Suppose each $\stabilizer_G(\mathbf S_i)$ acts with a quasiconvex orbit on $\mathbf X$ and has infinite index in $G$, and that $\mathbf S_i$ contains all limit simplices for the action of $\stabilizer_G(\mathbf S_i)$.  Then $G$ is hyperbolic relative to a collection of subgroups, each of which is commensurable with some $\stabilizer_G(\mathbf S_i)$.
}

Corollary~\ref{cor:relhyptits1} provides an analogue of Theorem~\ref{thm:relhyp} and Theorem~\ref{thm:relhypconverse} in terms of
 the Tits boundary. In particular, this provides a characterization of relative hyperbolicity of a group acting geometrically on a cube complex $\mathbf X$ in terms of the action of $G$ on $\partial_T\mathbf X$.

\subsection*{Thickness} Important motivating examples of cocompactly cubulated groups are the
right-angled Artin groups, see Charney--Davis \cite{CharneyDavis94}. In contrast to the fundamental groups of finite volume hyperbolic manifolds mentioned above,  right-angled Artin groups are cocompactly
cubulated groups which are not relatively hyperbolic; in fact, these groups are thick \cite{BDM}.
Behrstock--Charney showed that one-ended
right-angled Artin groups that are thick of order 0 (and thus have linear
divergence) are precisely those whose presentation graphs decompose as
nontrivial joins \cite{BehrstockCharney}.  Motivated by this result, Hagen generalized this to show that a cocompactly cubulated
groups has linear divergence if and only if it acts geometrically on a
CAT(0) cube complex whose simplicial boundary decomposes as a nontrivial
simplicial join \cite{HagenBoundary}.  Otherwise, the simplicial boundary is disconnected and contains many isolated 0-simplices corresponding to endpoints of
axes of rank-one isometries \cite[Corollary~B]{CapraceSageev}.
Accordingly, as Theorem~\ref{thm:cocompactwide} we record the fact that if a CAT(0) cube complex $\mathbf X$ admits a geometric action by a
group $G$, then $\mathbf X$ and $G$ are each thick of order 0 exactly when
the simplicial boundary of $\mathbf X$ is connected.

For proper, cocompact CAT(0) cube complexes, the
property of being thick of order 1
admits a succinct characterization in terms of the simplicial boundary. We summarize this by:

\Restate{Theorem}{thm:introthick}{(Characterization of thickness)}{
Let $G$ act properly and cocompactly by isometries on the fully visible CAT(0) cube complex
$\mathbf X$.  If $G$ is algebraically thick of order 1 relative to a collection of quasiconvex wide subgroups, then $\simp\mathbf X$ is disconnected and contains a positive-dimensional, $G$-invariant connected component.

Conversely, if $\simp\mathbf X$ is disconnected, and has a positive-dimensional $G$-invariant component, then $\mathbf X$ is thick of order 1 relative to a set of wide, convex subcomplexes, and, in particular,  $G$ is thick of order~1.
}

Moreover, we obtain the following complete description of the boundary of a cube complex admitting a geometric action by a group that is strongly algebraically thick of order 1.  This description of algebraic thickness closely parallels that of relative hyperbolicity provided by Theorem~\ref{thm:relhypconverse}.

\Restate{Theorem}{thm:introthick}{(Description of the boundary)}{
Let $G$ act properly and cocompactly on the CAT(0) cube complex $\mathbf X$.  Then $G$ is strongly algebraically thick of order 1 if and only if $\simp\mathbf X$ is disconnected and has a positive-dimensional, $G$-invariant connected subcomplex $\mathfrak C=\cup_{A\in\mathcal A,g\in G}gA$, where $\mathcal A$ is a finite collection of bounded subcomplexes such that:
\begin{enumerate}
 \item Each $\stabilizer(A)$ acts on $\mathbf X$ with a quasiconvex orbit.
 \item For each $A\in\mathcal A$, $f^{-1}(A)$ belongs to the limit set of $\stabilizer(A)$.
 \item $f^{-1}(\mathfrak C)$ is contained in the limit set of $\langle\{\stabilizer(A):A\in\mathcal A\}\rangle$.
\end{enumerate}
}

\begin{remi}
Here, $f\co\partial_{\infty}\mathbf X\rightarrow\simp\mathbf X$ is a surjection from the visual boundary to the simplicial boundary which sends each asymptotic class of CAT(0) geodesic rays to a point in the simplex of $\simp\mathbf X$ represented by the set of hyperplanes crossing some ray in the given asymptotic class; see Section~\ref{sec:thickoforderone}.  \emph{Full visibility} of $\mathbf X$ is a technical condition on $\simp\mathbf X$ saying roughly that each infinite family of nested halfspaces in $\mathbf X$ determines a combinatorial geodesic ray.

Condition~$(3)$ is used to verify that $\langle\{\stabilizer(A):A\in\mathcal A\}\rangle$ has finite index in $G$, as required by the definition of algebraic thickness.  In contrast to the situation for many other examples of thick groups (see \cite{BDM}), in the present case there does not appear to be natural choice of generators of these subgroups from which one can easily see that the collection of them generate a finite index subgroup of $G$.
\end{remi}

From Theorem~\ref{thm:introthick}, an application of Corollary~4.17 of~\cite{BD} immediately yields:

\begin{corj}\label{cor:introdiv}
Let $G$ act properly and cocompactly on the CAT(0) cube complex $\mathbf X$, and suppose that $\simp\mathbf X$ has a $G$-invariant connected proper subcomplex satisfying $(1)-(3)$ of Theorem~\ref{thm:introthick}.  Then $G$ has quadratic divergence function.
\end{corj}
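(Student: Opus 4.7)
The plan is a direct two-step chain of previously stated results. First, I would observe that the hypothesis of the corollary is precisely the hypothesis of the ``if'' direction of the description part of Theorem~\ref{thm:introthick}: we are handed a $G$-invariant, connected, positive-dimensional proper subcomplex $\mathfrak C\subseteq\simp\mathbf X$, and the conditions (1)--(3) on the stabilizers of the pieces of $\mathfrak C$ and their limit sets are exactly those formulated in the theorem. Since $\mathfrak C$ is a proper subcomplex, $\simp\mathbf X$ is strictly larger than $\mathfrak C$, and the structure theorem forces this excess to consist of isolated $0$-simplices; in particular $\simp\mathbf X$ is disconnected. Thus the ``if'' direction of Theorem~\ref{thm:introthick} applies and $G$ is strongly algebraically thick of order~$1$.

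Second, I would invoke Corollary~4.17 of~\cite{BD}, which asserts that any metric space (in particular any finitely generated group with word metric) that is strongly thick of order~$n$ has divergence function bounded above by a polynomial of degree~$n+1$. Applying this with $n=1$ yields immediately that the divergence function of $G$ is at most quadratic, which is the promised conclusion.

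If one interprets ``quadratic divergence function'' in the two-sided sense $\delta_G(r)\asymp r^2$, a matching lower bound follows with equally little effort. Since $\mathfrak C$ is proper, $\simp\mathbf X$ is disconnected, so Theorem~\ref{thm:cocompactwide} implies that $G$ is not wide, i.e.\ some asymptotic cone of $G$ has a cut-point. Equivalently, the divergence of $G$ is strictly super-linear. The standard CAT(0) dichotomy (divergence is either linear or at least quadratic) then promotes this super-linear lower bound to a quadratic one, matching the upper bound from~\cite[Corollary~4.17]{BD}.

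There is no real obstacle here: the substantive work has been done in Theorem~\ref{thm:introthick} (packaging the boundary hypothesis into strong algebraic thickness) and in~\cite[Corollary~4.17]{BD} (converting thickness into a divergence bound). The only thing to verify is that the hypothesis of the corollary genuinely triggers Theorem~\ref{thm:introthick}, which is immediate from the way conditions (1)--(3) and properness of $\mathfrak C$ are phrased.
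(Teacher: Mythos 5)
Your proof is correct and takes essentially the same route as the paper. The paper proves the body-text version of this corollary, Corollary~\ref{cor:quaddiv}, exactly as you do: apply Theorem~\ref{thm:thickoforderone} and Proposition~\ref{prop:stalgthick} (which together constitute the ``if'' direction of Theorem~\ref{thm:introthick}) to get strong algebraic thickness of order~$1$, invoke Corollary~4.17 of~\cite{BD} for the quadratic upper bound, and obtain the matching lower bound from the linear/at-least-quadratic dichotomy~\cite[Proposition~3.3]{KapovichLeeb} combined with the fact that a disconnected simplicial boundary rules out wideness (hence linear divergence) via~\cite{DrutuMozesSapir} and Theorem~\ref{thm:cocompactwide}.

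One small caution: you claim that properness of $\mathfrak C$ alone forces $\simp\mathbf X$ to be disconnected because ``the structure theorem forces this excess to consist of isolated $0$-simplices.'' This is circular as written: the ``if'' direction of Theorem~\ref{thm:introthick} explicitly \emph{assumes} that $\simp\mathbf X$ is disconnected as one of its hypotheses, so it cannot be used to derive disconnectedness. The introduction's statement of the corollary is indeed phrased loosely here; the body-text version, Corollary~\ref{cor:quaddiv}, makes the needed assumption ``$\simp\mathbf X$ contains isolated $0$-simplices'' explicit (along with full visibility of $\mathbf X$). You need disconnectedness both to invoke Theorem~\ref{thm:introthick} and to get the lower bound, so it is cleaner to treat it as part of the hypothesis rather than derive it from properness of $\mathfrak C$.
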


Theorem~\ref{thm:introthick} and the above corollary are, respectively, equivalent to very similar statements about the $G$-action on the Tits boundary of $\mathbf X$; see Corollary~\ref{cor:thicktits} below.

A key ingredient in the proof of Theorem~\ref{thm:thickoforderone} is
Theorem~\ref{thm:unconstrictedCC}, which relates the existence of cut-points in
some asymptotic cone of a cube complex (not necessarily cocompact) to
boundedness of the 1-skeleton of the simplicial boundary.  The proof of this
theorem occupies much of Section~\ref{sec:unconstricted}, and relies in part on
the relationship between divergence and wideness discussed
in~\cite{DrutuMozesSapir} and the relationship between divergence and the
simplicial boundary discussed in~\cite{HagenBoundary}.

We show that there are many cocompactly cubulated groups that are thick of any
given order. Indeed we show this is already true for the class of groups
that act geometrically on CAT(0) square complexes.

\Restate{Theorem}{thm:thickofanyorder}{(Abundance of cubulated groups that are thick of order $n$).}{
For all $n\geq 0$, there are infinitely many quasi-isometry types of
cocompactly cubulated groups that are algebraically thick (and hence metrically
thick) of order $n$ and have polynomial divergence of order precisely $n+1$.

Furthermore, for any $k\geq 2$, there are infinitely many quasi-isometry types of such groups with the additional condition that the groups act properly and
cocompactly on $k$-dimensional CAT(0) cube complexes.
}

The nature of the construction and
the latter part of the proof are modeled on the construction by Behrstock--Dru\c{t}u~\cite{BD} of CAT(0) groups which are thick of order~$n$ and with polynomial divergence of degree~$n+1$. CAT(0) groups of arbitrary order of polynomial growth were also constructed
recently by Macura~\cite{MacuraDivergence}, who considered iterated HNN
extensions of $\integers^2$. Dani--Thomas recently posted a preprint in which they show that for every integer there exists a Coxeter group whose divergence is polynomial of that degree \cite{DaniThomas:divcox} --- it would be interesting to know if those Coxeter groups are each thick and to compute their simplicial boundaries.

\subsection*{Acknowledgements}
The authors thank Dani Wise for sharing his enthusiasm for cubulated groups!
The authors thank the NSF and the NSERC for funds to
cover travel to the other's home institution to work on this project.
The second author wishes to thank Juliana Nalerio
and Tim Nest for allowing the use of their couch while he visited the first
author in New York.  We are grateful to Tim Susse for carefully
reading an earlier draft and offering several helpful corrections.  We
also thank Michah Sageev, Eric Swenson, and Dani Wise for discussions
relating to an early (unfortunately, unsuccessful) approach we had to
the results in Section~\ref{sec:thickoforderone}; we remain optimistic
those ideas will bear fruit in future work.  We finally thank the
referee for their careful reading and many helpful comments and corrections.

\section{Preliminaries}\label{sec:prelim}
The summary of thick metric spaces and groups given in Section~\ref{sec:thickprelim}
is based on the discussion in~\cite{BDM}.  Section~\ref{sec:cubecomplex}
provides a brief review of CAT(0) cube complexes, and
Section~\ref{sec:divergence} recalls some facts about divergence.

\subsection{Thick spaces and groups}\label{sec:thickprelim}

\subsubsection{Asymptotic cones}
Let $(M, d)$ be a metric space and let $\omega\subset 2^{\naturals}$ be
an ultrafilter on $\naturals$.  Given a sequence
$m=(m_n\in M)_{n\in\naturals}$ of \emph{observation points} and a positive
sequence $s=(s_n)_{n\in\naturals}$ with
$s_n\stackrel{n}{\longrightarrow}\infty$,
the \emph{asymptotic cone} $\ascone{M}{m}{s}{\omega}$ is the ultralimit of the
based metric spaces
$\lim_{\omega}(M,m_n,\frac{d}{s_n})$.  More precisely, define a
pseudometric $\mathbf d_{\omega}$ on $\prod_n M$ by
letting $\mathbf d_{\omega}(y,z)=\lim_{\omega}\frac{d(y_n,z_n)}{s_n}$,
and consider the induced pseudometric on the component containing $m$, i.e.,
\[\widehat M=\left\{(y_n)_{n\in\naturals}\in\prod_n(M,\frac{d}{d_n})\,:\,\mathbf
d_{\omega}(y_n,m_n)<\infty\right\}.\]
Then $\ascone{M}{m}{s}{\omega}$ is the associated quotient metric space,
obtained from $\widehat M$ by identifying points $y$ and $z$ for which $\mathbf
d_{\omega}(y,z)=0$.  A priori, $\ascone{M}{m}{s}{\omega}$ depends on the
observation point $m$, the sequence $s$, and the ultrafilter $\omega$.

When $M$ admits an isometric action by a group $G$ such that some bounded subset
of $M$ meets every $G$-orbit, then $\ascone{M}{m}{s}{\omega}$ is independent of
the choice of observation point $m$, and it suffices to consider
$\ascone{M}{m}{s}{\omega}$, where, for some fixed basepoint $m_o$, the
observation point $m_n=m_o$ for all $n\in\naturals$.  In most of our
applications, $M$ comes equipped with a geometric group action, and thus the
asymptotic cone is independent of the choice of observation point.

\subsubsection{Unconstricted spaces and groups}
A point $c\in M$ is a \emph{cut-point} if $M-\{c\}$ has at least two connected
components.  By convention, $c$ is a cut-point of the space
$\{c\}$.

\begin{defn}[Unconstricted space, wide space]\label{defn:unconstricted}
The metric space $(M,d)$ is \emph{unconstricted} if it satisfies each of
the following:
\begin{enumerate}
\item There exists $\kappa<\infty$ such that for all $m\in M$, there exists a quasi-isometric embedding $\gamma\co\reals\rightarrow M$ such that
$d(m,\gamma)<\kappa$.
\item There exists an ultrafilter $\omega$ and a sequence $s$ such that for any
sequence $m$ of observation points in $M$, there is no cut-point in
$\ascone{M}{m}{s}{\omega}$.
\end{enumerate}
If for all ultrafilters $\omega$, all sequences $m$ of observation points, and
all scaling sequences $d$, there is no cut-point in $\ascone{M}{m}{s}{\omega}$,
then $M$ is \emph{wide}.
\end{defn}

\begin{rem}[Unconstricted group, wide group]\label{rem:unconstrictedgroup}
Let the infinite finitely-generated group $G$ act properly and cocompactly by
isometries on $(M,d)$.  It is easy to see
that Definition~\ref{defn:unconstricted}.(1) holds for $M$.  Moreover, since
$\ascone{M}{m}{s}{\omega}$ is independent of $m$,
Definition~\ref{defn:unconstricted}.(2) is satisfied exactly when at least one
asymptotic cone of $M$ does not have a cut-point.  In particular, letting $M$ be
a Cayley graph of $G$ and $d$ the associated word-metric yields the
notion of an \emph{unconstricted group} and of a \emph{wide} group.
\end{rem}

The inductive definition of a thick metric space requires the notion of a
\emph{uniformly unconstricted} family of spaces.

\begin{defn}[Uniformly unconstricted, uniformly
wide]\label{defn:uniformlyunconstricted}
The collection $(M_n,d_n)_{n\in\naturals}$ of metric spaces is
\emph{uniformly unconstricted} if there exists an ultrafilter $\omega$ and a
sequence $(s_n)_{n\in\naturals}$ of scaling constants such that, for all
observation points $m=(m_n\in M_n)_{n\in\naturals}$, the ultralimit
$\lim_{\omega}(M_n,m_n,\frac{ d_n}{s_n})$ has no cut-point.  If this
ultralimit lacks cut-points for all choices of ultrafilter, observation points,
and scaling constants, then $(M_n)_{n\in\naturals}$ is \emph{uniformly wide}.
\end{defn}

\begin{defn}[Thick space, strongly thick space]\label{defn:thickspace}
The space $(M, d)$ is \emph{thick of order 0} if it is unconstricted, and
\emph{strongly thick of order 0} if it is wide.  Let $\mathcal S$ be a
collection of subsets of $M$ which are each
(strongly) thick of order at most~$n$.  Then
$M$ is \emph{$\tau$-thick ($(\tau,\eta)$-strongly thick) of order at most
$n+1$ with
respect to $\mathcal S$} if there exists $\tau,\eta\geq 0$ such that each of the
following holds:
\begin{enumerate}
\item For each $m\in M$, there exists $S\in\mathcal S$ with $d(m,S)\leq\tau$.
\item Each $S\in\mathcal S$ is $\tau$-quasiconvex in $M$, i.e., any two points
in $S$ can be connected by a $(\tau,\tau)$-quasigeodesic in $N_{\tau}(S)$.
\item For all $S,S'\in\mathcal S$, there exists a sequence
$$S=S_0,S_1,\ldots,S_k=S', \mbox{ with } S_i\in\mathcal S$$ such that
for all $0\leq
i<k$, the subspace $\mathcal N_{\tau}(S_i\cap S_{i+1})$
is of infinite diameter and $\tau$-path-connected.  Strong thickness requires a strengthening of this condition, namely that for any $S,S'$ that both intersect $\mathcal N_{3\tau}(x)$ for some $x\in M$, the preceding sequence can always be chosen so that $k\leq\eta$ and $x\in\mathcal N_{\eta}(S_i)$ for $0\leq i\leq k$.
\end{enumerate}

Further, we say a family of metric spaces $\mathcal M$ is \emph{uniformly thick
(uniformly strongly thick) of order at most $n+1$} if it satisfies:
\begin{enumeratecontinue}
\item
\begin{enumerate}
\item There exists constants $\tau$ and $\eta$ as above such that each
$M\in\mathcal M$ is $\tau$-thick
($(\tau,\eta)$-strongly thick) of order at most $n+1$ with respect to a
collection, $\mathcal S_M$, of subsets of $M$.
\item $\bigcup_{M\in\mathcal M}\mathcal S_M$ is uniformly thick (uniformly
strongly thick) of order at most~$n$.
\end{enumerate}
\end{enumeratecontinue}

We typically drop the constants $\tau$ and $\eta$ from the notation, as the
precise constants are rarely of interest; is is usually important only that some constants exist.

If $M$ is $(\tau,\eta)$-(strongly) thick of order at most $n$ and is not
$(\tau',\eta')$-(strongly) thick of order at most $n-1$
for any $\tau',\eta'$, then $M$ is \emph{(strongly) thick of order~$n$}.
\end{defn}

Following~\cite{BDM} and~\cite{BD}, we define \emph{algebraic thickness} and \emph{strong algebraic thickness} of a group as follows.

\begin{defn}[Algebraically thick]\label{defn:algebraicallythick}
The finitely generated group $G$ is \emph{algebraically thick of
order~0} if it is unconstricted.  For $n\geq 1$, the group $G$ is
\emph{algebraically thick of order at most $n+1$} if there exists a
finite collection $\mathcal G$ of finitely generated undistorted
subgroups of $G$ such that:
\begin{enumerate}
 \item There exists a finite index subgroup $G'\leq G$ generated by a finite
subset of $\cup_{H\in\mathcal G}H$.

 \item Each $H\in\mathcal G$ is algebraically thick of order at most $n$.

 \item For all $H,H'\in\mathcal G$, there exists a finite sequence
$H=H_1,\ldots,H_m=H'$ such that each $H_i\in\mathcal G$ and $H_i\cap H_{i+1}$
is infinite for $1\leq i\leq m-1$.
\end{enumerate}
If $G$ is algebraically thick of order at most $n+1$ and is not algebraically
thick of order at most $n$, then $G$ is \emph{algebraically thick of
order $n+1$}.
\end{defn}

\begin{defn}[Strongly algebraically thick]\label{defn:stronglyalgebraicallythick}
    The finitely generated group $\Gamma$ is
    \emph{strongly algebraically thick of order 0} if it is wide.
    For $n\geq 1$, the group $\Gamma$ is \emph{strongly algebraically
thick of order at most $n+1$} if there exists a finite collection
$\mathbb G$ of finitely generated undistorted subgroups of $G$ such
that:
\begin{enumerate}
 \item There exists a finite index subgroup $\Gamma'\leq \Gamma$ generated by a finite
subset of $\cup_{H\in\mathbb G}H$.

 \item Each $H\in\mathbb G$ is strongly algebraically thick of order at most $n$.

 \item For $H,H'\in\mathbb G$, there exists a sequence
 $H=H_0,\ldots,H_n=H'$ such that $H_i\in\mathbb G$ for each $i$, and
 $H_i\cap H_{i+1}$ is infinite and $M$-path-connected for $0\leq i<n$.

 \item There exists $M\geq 0$ such that each $H\in\mathbb G$ is $M$-quasiconvex.
\end{enumerate}
 If $\Gamma$ is strongly algebraically thick of order at most $n+1$,
 but is not strongly algebraically thick of order $n$, then $\Gamma$
 is strongly algebraically thick of order $n+1$.
\end{defn}

Note that if $\Gamma$ is strongly algebraically thick of order $n$,
then $\Gamma$ is algebraically thick of order at most $n$.

\subsection{Divergence}\label{sec:divergence}
The notion of the divergence function of a metric space goes back to
Gromov and
Gersten~\cite{Gromov:Asymptotic,GerstenDivergence,GerstenDivergence2}; the
present summary
follows~\cite{BD}.

\begin{defn}[Divergence]\label{defn:divergence}
Let $(M,d)$ be a geodesic metric space and fix $\lambda\in(0,1),\mu\geq
0$.  For $a,b,c\in M$, with $d(c,\{a,b\})=r>0$, let
$\divergence{a}{b}{c}{\lambda}{\mu}$ to be the infimum of the set $\{|P|\}$,
where $P$ varies over all paths in $M$ that join $a$ to $b$ and satisfy
$d(P(t),c)\geq\lambda r-\mu$ for all $t$.

The \emph{divergence}
$\Divergence{M}{\lambda}{\mu}:\naturals\rightarrow\reals^+$ of $M$ with respect
to $\lambda,\mu$ is defined by
\[\Divergence{M}{\lambda}{\mu}(n)=\sup\{\divergence{a}{b}{c}{\lambda}{\mu}\,:\,
 d(a,b)\leq n\}.\]
\end{defn}

For any function $f\colon\naturals\rightarrow\reals^+$, the space $M$ has
divergence
at most $f$ if for some $\lambda,\mu$, and for all $n\in\naturals$, we have
$\Divergence{M}{\lambda}{\mu}(n)\leq f(n)$, and the notion of a space with
divergence at least $f$ is defined analogously.  As usual, for functions $f,g$,
we write $f\preceq g$ if for all $n$, we have $f(n)\leq Kg(Kn+K)+K$ for some
constant $K$, and $f\asymp g$ if $f\preceq g$ and $g\preceq f$.  For $d\geq 1$,
the space $M$ has divergence \emph{of order at most $d$} if
$\Divergence{M}{\lambda}{\mu}\preceq p$ for some $\lambda,\mu$, where $p$ is a
polynomial of degree $d$, and \emph{order $d$} if it has divergence of order at
most $d$ but does not have divergence of order at most $d-1$.

There are several alternative notions of divergence discussed
in~\cite[Section~3]{BD}.  In the situations of interest in this paper, $M$
admits a proper, cocompact group action and thus the various divergence
functions coincide up to $\asymp$, by~\cite[Corollary~3.2]{DrutuMozesSapir}.
Further, under the hypotheses of~\cite[Corollary~3.2]{DrutuMozesSapir},
the~$\asymp$-class of the divergence of $M$ is a quasi-isometry invariant, in
the following sense: if $q\co M\rightarrow M'$ is a quasi-isometry, then for some
$\lambda,\lambda'\in(0,1),\mu,\mu'\geq 0$, we have
$\Divergence{M}{\lambda}{\mu}\asymp\Divergence{M}{\lambda'}{\mu'}$, and in
particular the divergence order of $M$ (if it exists) is a quasi-isometry
invariant.  Hence the divergence of a finitely-generated group is well-defined,
and it is sensible to speak of groups with linear, quadratic, exponential, etc.
divergence.

In this paper, we study divergence of cocompactly cubulated groups by studying
thickness of cube complexes.  The relationship between the thickness order and
the divergence order of $M$ is not yet fully understood (see, e.g.~\cite[Question~1.2]{BD}).
One useful result that is established is the following, which we will use in
Section~\ref{sec:arbitrary}, in conjunction with lower bounds on divergence for some cocompactly cubulated groups, in order to provide lower bounds on the order of thickness.

\begin{prop}[Corollary~4.17 of~\cite{BD}]\label{prop:BDdivthick}
Let $M$ be a geodesic metric space that is strongly thick of order at most $n$.
Then
\[\Divergence{M}{\lambda}{\mu}(r)\preceq r^{n+1}\]
for all $\lambda\in(0,\frac{1}{54}),\mu\geq 0$.
\end{prop}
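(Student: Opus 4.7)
The plan is to proceed by induction on the order of thickness $n$. The base case $n=0$ asserts that wide geodesic spaces have linear divergence, which is established by Dru\c{t}u--Mozes--Sapir~\cite{DrutuMozesSapir}, giving $\Divergence{M}{\lambda}{\mu}(r) \preceq r = r^{0+1}$ as desired.

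For the inductive step, assume the bound holds up to order $n - 1$, and let $M$ be $(\tau, \eta)$-strongly thick of order at most $n$ with respect to a collection $\mathcal{S}$ that is uniformly strongly thick of order at most $n - 1$. By the inductive hypothesis applied to the uniformly thick family $\mathcal{S}$, each $S \in \mathcal{S}$ has divergence $\preceq r^n$ with uniform constants. Given $a, b, c \in M$ with $d(a, b) \leq r$ and $r_c := d(c, \{a, b\}) > 0$, the task is to produce a path from $a$ to $b$ of length $O(r^{n+1})$ avoiding $B(c, \lambda r_c - \mu)$. The strategy is to track the geodesic $\gamma$ from $a$ to $b$ (length $\leq r$), detouring around the forbidden ball by hopping between thickness pieces $S \in \mathcal{S}$. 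Sample $\gamma$ at spacing $O(\tau)$ to produce $\ell = O(r)$ waypoints $x_0 = a, x_1, \ldots, x_\ell = b$. At each waypoint, condition (1) of strong thickness yields $S^{(i)} \in \mathcal{S}$ with $d(x_i, S^{(i)}) \leq \tau$; consecutive pieces $S^{(i)}, S^{(i+1)}$ both meet $\mathcal{N}_{3\tau}(x_i)$, so the strong form of condition (3) provides a chain $S^{(i)} = T^{(i)}_0, \ldots, T^{(i)}_{k_i} = S^{(i+1)}$ with $k_i \leq \eta$ and each $T^{(i)}_j$ within $\eta$ of $x_i$. In each overlap neighborhood $\mathcal{N}_\tau(T^{(i)}_j \cap T^{(i)}_{j+1})$, which has infinite diameter and is $\tau$-path-connected, select an anchor $q^{(i)}_j$ lying at distance $\gtrsim r_c$ from $c$ and at distance $O(r)$ from $x_i$; such anchors exist because the intersection neighborhoods are unbounded. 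Within each $T^{(i)}_j$, the inductive divergence bound applied to the anchors $q^{(i)}_{j-1}, q^{(i)}_j$ (separated by distance $O(r)$) delivers a path of length $\preceq r^n$ avoiding an intrinsic ball around the nearest-point projection of $c$ onto $T^{(i)}_j$; the $\tau$-quasiconvexity of $T^{(i)}_j$ from condition (2) then ensures the path stays outside $B(c, \lambda r_c - \mu)$ in $M$. Concatenating all segments, with $O(\tau)$-length bridges between consecutive pieces, produces a total path of length
\begin{equation*}
O\!\left(\sum_{i=0}^{\ell-1}\sum_{j=0}^{k_i} r^n\right) = O(\ell \cdot \eta \cdot r^n) = O(r^{n+1}),
\end{equation*}
completing the induction.

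The main obstacle is verifying that the detours within each $T^{(i)}_j$, which are obtained from intrinsic divergence, actually avoid the ambient ball $B_M(c, \lambda r_c - \mu)$ rather than merely an intrinsic ball in $T^{(i)}_j$. Anchor selection must balance two competing requirements: the anchors must lie far from $c$ (so that the required divergence input is justified) yet also lie at controlled distance from the waypoint (so that each segment has length polynomial in $r$). The explicit threshold $\lambda < 1/54$ emerges from the cumulative constants needed across the quasiconvexity fuzz of $T^{(i)}_j$, the distance between $c$ and its projection, the chain length bound $\eta$, and the anchor buffer. The strong form of condition (3) is essential here: it bounds each local chain length $k_i$ uniformly by $\eta$, so that summing over the $\ell = O(r)$ waypoints produces the claimed $O(r^{n+1})$ bound rather than something larger.
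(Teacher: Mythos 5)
The paper does not prove this statement; it cites it directly as Corollary~4.17 of \cite{BD}, so there is no internal proof to compare against. Your sketch does capture the broad architecture of the Behrstock--Dru\c{t}u argument: induction on thickness order, with the base case supplied by Dru\c{t}u--Mozes--Sapir's result that wide spaces have linear divergence, and the inductive step marching a chain of thickness pieces along a geodesic from $a$ to $b$ and routing around the forbidden ball using the pieces' (inductively bounded) divergence, with the uniform chain-length bound $\eta$ from strong thickness keeping the total length at $O(r\cdot\eta\cdot r^n)$.

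That said, two steps you wave at are genuine gaps and not mere bookkeeping. The first is anchor selection. You assert that unboundedness of $\mathcal{N}_\tau(T^{(i)}_j\cap T^{(i)}_{j+1})$ yields anchors simultaneously at distance comparable to $r_c$ from $c$ and at distance $O(r)$ from $x_i$. But the strong form of condition~(3) guarantees only that each chain piece $T^{(i)}_j$ meets $\mathcal{N}_\eta(x_i)$; it says nothing about where two \emph{consecutive} pieces overlap, and that overlap could a priori lie entirely far from $x_i$, so that no anchor at controlled distance exists. Unboundedness gives points arbitrarily far away, not a point at controlled distance. (You also implicitly need the reduction that one may assume $r_c\preceq r$ --- otherwise the original geodesic already misses the ball --- before any triangle-inequality version of anchor placement can deliver $O(r)$ bounds; this is not stated.) The second gap, which you at least name, is the conversion of intrinsic divergence of $T^{(i)}_j$ into ambient ball avoidance: quasiconvexity lets you compare the induced and intrinsic path metrics on the piece, but turning a detour around an intrinsic ball about a projection of $c$ into a detour around $B_M(c,\lambda r_c-\mu)$ costs multiplicative and additive constants that must be tracked across the whole chain, and the sketch does not do this. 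These two points are precisely where the argument in \cite{BD} does the work that produces the explicit threshold $\lambda<\frac{1}{54}$.
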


\subsection{CAT(0) cube complexes}\label{sec:cubecomplex}

\subsubsection{Cube complexes and hyperplanes}
A \emph{cube complex} $\mathbf X$ is a CW-complex whose cells are
Euclidean unit cubes of the form $[-\frac{1}{2},\frac{1}{2}]^d$ for $0\leq d<\infty$,
attached in such a way that any two cubes (not necessarily distinct) of $\mathbf
X$ with nonempty intersection intersect in a common face.  The \emph{dimension}
$\dimension\mathbf X$ is the supremum of the set of $d\geq 0$ for which $\mathbf
X$ contains a $d$-cube.

$\mathbf X$ is \emph{nonpositively-curved} if for each $x\in \mathbf X^{(0)}$,
the link
of $x$ is a simplicial flag complex, and \emph{CAT(0)} if it is
nonpositively-curved and simply connected.  As observed by Gromov
in~\cite{Gromov87} and in full generality by Leary~\cite{LearyInfiniteCubes},
the CAT(0) cube complex $\mathbf X$ is endowed with a CAT(0) geodesic metric,
denoted $\dtwo$, obtained by regarding each cube as a Euclidean unit cube (see also the more general results of Bridson and Moussong on the existence of CAT(0) metrics for many polyhedral complexes~\cite{BridsonThesis,MoussongThesis}).  It
is
often convenient to view the 1-cubes as unit intervals and use the combinatorial
metric $\done$ on the graph $\mathbf X^{(1)}$.

These two geometries essentially
agree when $\dimension\mathbf X<\infty$ in the sense that $(\mathbf X,\dtwo)$ is quasi-isometric to $(\mathbf X^{(1)},\done)$.  The metric $\done$ is determined by hyperplanes, as explained below, and these hyperplanes can be used to provide a nice characterization of isometric embeddedness and convexity of subcomplexes.  Since we are concerned with finite-dimensional cube complexes, we use whichever metric is most convenient in a given situation.

For $d\geq 1$, the $d$-cube $c$ has $d$ \emph{midcubes}, which are subspaces
obtained by restricting exactly one coordinate to 0.  A \emph{hyperplane} $H$ of
the CAT(0) cube complex $\mathbf X$ is a connected subspace such that for each cube $c$ of
$\mathbf X$, either $H\cap c=\emptyset$, or $H\cap c$ is a midcube of $c$.  The
\emph{carrier} $N(H)$ of $H$ is the union of all closed cubes $c$ for which
$H\cap c\neq\emptyset$.  Each hyperplane $H$ is itself a CAT(0) cube complex of
dimension at most $\dimension\mathbf X-1$, and $N(H)$ is a CAT(0) cube complex
isomorphic to $H\times[-\frac{1}{2},\frac{1}{2}]$.  Furthermore, $H$ and $N(H)$
are convex with respect to $\dtwo$, and $N(H)^{(1)}$ is convex in
$\mathbf X^{(1)}$, with respect to $\done$
(see~\cite{Chepoi2000,Sageev95}).

Crucially, Sageev showed in~\cite{Sageev95} that, for each hyperplane $H$ of
$\mathbf X$, the complement $\mathbf X-H$ has exactly two components, called
\emph{halfspaces (associated to $H$)} and denoted $\lefth H,\righth H$.  We
denote by $\mathcal H$ the set of hyperplanes in $\mathbf X$ and by $\lrhalf H$
the set of halfspaces.  If $A,B\subset\mathbf X$, then $H\in\mathcal H$
\emph{separates} $A$ and $B$ if $A\subset\lefth H$ and $B\subset\righth H$ or
vice versa.

For each 1-cube $c$ of $\mathbf X$, there is a unique hyperplane $H$ that
separates the endpoints of $c$.  $H$ is the hyperplane \emph{dual} to $c$, and
$c$ is a 1-cube \emph{dual} to $H$.  It can be shown that a path
$P\rightarrow\mathbf X^{(1)}$ is a $\done$-geodesic if and only if $P$
contains at most one 1-cube dual to each $H\in\mathcal H$.  Hence, for
$x,y\in\mathbf X^{(0)}$, the number of hyperplanes separating $x$ from $y$ is
exactly $\done(x,y)$.  Usefully, it is also true that a path
$P\rightarrow\mathbf X$ is an $\dtwo$--geodesic only if for each
$K\in\lrhalf H$, the intersection $P\cap K$ is connected.

Distinct $H_1,H_2\in\mathcal H$ \emph{contact} if $N(H_1)\cap
N(H_2)\neq\emptyset$ (equivalently, no third hyperplane separates $H_1$ from
$H_2$). This can happen in one of two ways: if $H_1\cap H_2\neq\emptyset$, then
$H_1$ and $H_2$ \emph{cross}.  Crossing is also characterized by the fact that
$\lefth H_1\cap\lefth H_2\neq\emptyset,\righth H_1\cap\lefth
H_2\neq\emptyset,\righth H_1\cap\righth H_2\neq\emptyset,\lefth H_1\cap\righth
H_2\neq\emptyset$, and by the fact that $N(H_1)\cap N(H_2)$ contains a 2-cube
whose 1-cubes are dual to $H_1$ or $H_2$.  If $H_1$ and $H_2$ contact and do not
cross, then they \emph{osculate}.

More generally, if $A\subset\mathbf X$ is a connected subspace and $H\in\mathcal
H$, then $H$ \emph{crosses} $A$ if $A\cap\righth H$ and $A\cap\lefth H$ are both
nonempty.  We denote by $\mathcal H(A)$ the set of hyperplanes crossing $A$.  A
connected full subcomplex $\mathbf Y\subseteq\mathbf X$ is \emph{isometrically
embedded} if the inclusion $\mathbf Y^{(1)}\rightarrow\mathbf X^{(1)}$ is an
isometric embedding.  Equivalently, $H\cap\mathbf Y$ is connected for each
$H\in\mathcal H(\mathbf Y)$.  Similarly, $\mathbf Y$ is \emph{convex} if, for any collection $H_1,\ldots,H_n\in\mathcal H(\mathbf Y)$ of pairwise-crossing hyperplanes, $\mathbf Y$ contains an $n$-cube of $\cap_{i=1}^nN(H_i)$.  This notion turns out to coincide with CAT(0)--convexity for subcomplexes~\cite{HaglundSemisimple}; it also equivalent to the requirement that $\mathbf Y^{(1)}$ be a convex subgraph of $\mathbf X^{(1)}$ and every cube of $\mathbf X$ whose 1-skeleton lies in $\mathbf Y$ itself lies in $\mathbf Y$.

\subsubsection{Actions on cube complexes}
By $\Aut(\mathbf X)$, we mean the group of cubical automorphisms of the CAT(0)
cube complex $\mathbf X$, and by an action of the group $G$ on $\mathbf X$, we
mean a homomorphism $G\rightarrow\Aut(\mathbf X)$.  Such an action is also an
action by $\done$-isometries on $\mathbf X^{(1)}$ and by $\dtwo$-isometries on
$\mathbf X$.

This action is \emph{proper}
if the stabilizer of any cube of $\mathbf X$ is finite, and \emph{metrically
proper} if for all infinite sequences $(g_n\in G)_{n\geq 0}$ of distinct
elements, and for all $x\in\mathbf X$, we have $\dtwo(x,g_nx)\rightarrow\infty$
as $n\rightarrow\infty$.  Generally, we are
concerned with cocompact actions, and in this situation the notions of
properness and metric properness coincide.  A proper action of $G$ on a CAT(0)
cube complex is a \emph{cubulation} of $G$, and if such an action exists, $G$ is
\emph{cubulated}.  If $G$ acts geometrically on a CAT(0) cube complex, then $G$
is \emph{cocompactly cubulated}.

Each $g\in\Aut(\mathbf X)$ acts as an isometry of both the CAT(0) space
$(\mathbf X,\dtwo)$ and the median graph $(\mathbf X^{(1)},\done)$.  According
to~\cite{HaglundSemisimple}, either $g$ fixes the barycenter
of a cube of $\mathbf X$, or there exists a combinatorial geodesic
$\gamma\co\reals\rightarrow\mathbf X$ and some $N=N(\dimension\mathbf X),\tau>0$ such that
$g^N\gamma(t)=\gamma(t+\tau)$ for all $t\in\reals$; such an element $g^N$ is
\emph{combinatorially hyperbolic} and $\gamma$ is a \emph{combinatorial axis for
$g^N$}.  Likewise, if $g$ does not fix a point of $\mathbf X$, then since
isometries of CAT(0) spaces are semisimple, $g$ acts by translations on a CAT(0)
geodesic $\alpha\co\reals\rightarrow\mathbf X$, called an \emph{axis} for $g$.  If
$\gamma$ is combinatorially rank-one (equivalently, $\alpha$ is rank-one) for
some combinatorial axis $\gamma$ (CAT(0) axis $\alpha$), then $g$ is a
\emph{rank-one isometry}.

The hyperplane $H\in\mathcal H$ is a \emph{leaf} if at least one of $\lefth
H,\righth H$ fails to contain a hyperplane.  $\mathbf X$ is \emph{essential} if
it contains no leaves.  If $G$ acts on $\mathbf X$, then $H$ is a
\emph{$G$-leaf} if there exists $r\geq 0$ such that, for $A\in\{\lefth H,\righth
H\}$ and for all $x\in\mathbf X$, $\dtwo(gx,H)\leq r$ for all $g\in G$
such that $gx\in A$.  The action of $G$ on $\mathbf X$ is \emph{essential} if
$\mathbf X$ contains no $G$-leaves.  Usually, we will assume that $G$ acts
essentially on $\mathbf X$, abetted by~\cite[Proposition~3.5]{CapraceSageev} and
Lemma~\ref{lem:boundaryofcore} below.  The former says, in particular, that if
$G$ acts geometrically on $\mathbf X$, then there is a convex, $G$-cocompact
subcomplex $\mathbf Y\subseteq\mathbf X$ on which $G$ acts essentially.  The
latter says that the simplicial boundaries of $\mathbf X$ and $\mathbf Y$
coincide.

We will occasionally need some notion of quasiconvexity of subgroups.  Since the groups under consideration are not in general hyperbolic, quasiconvexity of a subgroup depends on the choice of generating set.  However, the groups in this section come equipped with specific geometric actions on metric spaces; accordingly, we use:

\begin{defn}[Quasiconvex]\label{defn:quasiconvex}
Let the group $G$ act properly and cocompactly on the metric space $M$.  The subgroup $H\leq G$ is \emph{quasiconvex} if for some (and hence any) $m\in M$, the orbit $Hm$ is a quasiconvex subspace of $M$.
\end{defn}

This definition is not intrinsic either to $G$ or to $M$, but rather depends on the particular action of $G$ on $M$.  Note, in particular, that this property implies that for any fixed word metric on $G$, there exist uniform constants such that any pair of point in $H$ can be joined by a uniform quality quasigeodesic contained inside a uniform neighborhood of $H$.  This latter, weaker property is the one considered in~\cite{BD}, and it holds for subgroups that are quasiconvex as defined above.

\section{The simplicial boundary}\label{sec:simplicialboundary}
The definition and basic properties of the simplicial boundary of a CAT(0) cube
complex are discussed in~\cite{HagenBoundary}, and we recall these here
briefly, before establishing some simple facts about the simplicial boundary
that will be necessary in subsequent sections.

\subsection{Boundary sets}
Let $\mathbf X$ be a CAT(0) cube complex and suppose that the set $\mathcal H$
of hyperplanes contains no infinite set of pairwise-crossing hyperplanes.  This
holds for all cube complexes in this paper, since they are finite-dimensional by virtue of cocompactness.

\begin{defn}[Closed under separation]\label{defn:closedundersep}
$\mathcal U\subseteq\mathcal H$ is \emph{closed under separation} if for all
$H_1,H_2\in\mathcal U$, if some hyperplane $H_3$ separates $H_1$ from $H_2$,
then $H_3\in\mathcal U$.
\end{defn}

For example, if $A\subset\mathbf X$ is a connected subspace, then $\mathcal
H(A)$ is closed under separation.

\begin{defn}[Unidirectional]\label{defn:unidirectional}
$\mathcal U\subseteq\mathcal H$ is \emph{unidirectional} if for each
$H\in\mathcal U$, at most one of $\lefth H$ or $\righth H$ contains infinitely
many elements of $\mathcal U$.
\end{defn}

The motivating example of a set that is \emph{not} unidirectional is the set
$\mathcal H(\gamma)$, where $\gamma$ is a bi-infinite combinatorial geodesic in
a CAT(0) cube complex $\mathbf X$ in which every set of pairwise-crossing
hyperplanes is finite.

\begin{defn}[Facing triple]\label{defn:facingtriple}
A \emph{facing triple} $\{H_1,H_2,H_3\}\subseteq\mathcal H$ is a set of three
distinct hyperplanes, any two of which are contained in a single halfspace
associated to the third.  Equivalently, $\{H_1,H_2,H_3\}$ is a facing triple if
no three of the associated halfspaces are totally ordered by inclusion.
\end{defn}

\begin{defn}[Boundary set, boundary set equivalence]\label{defn:BS}
$\mathcal U\subseteq\mathcal H$ is a \emph{boundary set} if $\mathcal U$ is
infinite, unidirectional, closed under separation, and contains no facing
triple.

Let $\mathcal U_1,\mathcal U_2$ be boundary sets.  Then $\mathcal
U_1\lesssim\mathcal U_2$ if $|\mathcal U_1-\mathcal U_1\cap\mathcal
U_2|<\infty$.  If $\mathcal U_1\lesssim\mathcal U_2$ and $\mathcal
U_2\lesssim\mathcal U_1$, i.e., if $|\mathcal U_1\triangle\mathcal U_2|<\infty$,
then $\mathcal U_1$ and $\mathcal U_2$ are \emph{equivalent} boundary sets,
denoted $\mathcal U_1\sim\mathcal U_2$.  The boundary set $\mathcal U$ is
\emph{minimal} if for each boundary set $\mathcal U'$ with $\mathcal
U'\lesssim\mathcal U$, we have $\mathcal U'\sim\mathcal U$.
\end{defn}

The following lemma from~\cite{HagenBoundary} explains why we assume that sets
of pairwise-crossing hyperplanes are finite:

\begin{lem}\label{lem:containsminimal}
Any boundary set in $\mathcal H$ contains a minimal boundary set.
\end{lem}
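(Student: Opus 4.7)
My plan is to induct on the \emph{width} $w(\mathcal U):=\sup\{|\mathcal F|:\mathcal F\subseteq\mathcal U\text{ pairwise crossing}\}$, which is finite (in fact at most $\dimension\mathbf X$) since $\mathcal H$ contains no infinite pairwise-crossing subset. Intuitively a boundary set represents a simplex at infinity of ``dimension'' $w-1$, and the plan is to descend until we reach the minimal case $w=1$.

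For the base case $w=1$, every pair in $\mathcal U$ is nested. Fixing any $H\in\mathcal U$, unidirectionality places all but finitely many elements of $\mathcal U$ on a single side of $H$; iterating this while invoking closure under separation and the absence of facing triples shows that $\mathcal U$ coincides, up to a finite subset, with a single linearly nested sequence $H_1,H_2,\ldots$ whose associated halfspaces form a strictly descending chain. Any boundary set $\mathcal U'\lesssim\mathcal U$ is then, by its own infiniteness and closure under separation, a cofinite tail of this list, so $\mathcal U'\sim\mathcal U$; hence $\mathcal U$ is itself minimal.

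For the inductive step ($w\geq 2$), I fix a pairwise-crossing $w$-tuple $H_1,\ldots,H_w\in\mathcal U$ and set $\mathcal U_1:=\{H\in\mathcal U:H\text{ crosses }H_1\}$. Unidirectionality and the absence of facing triples pass to subsets, and closure under separation for $\mathcal U_1$ follows from a direct check: if $H',H''\in\mathcal U_1$ are separated by $H_3\in\mathcal H$, then $H_3\in\mathcal U$ by closure in $\mathcal U$, and $H_3$ must cross $H_1$ (otherwise $H'$ and $H''$ would lie on the same side of $H_3$), so $H_3\in\mathcal U_1$. Furthermore $w(\mathcal U_1)\leq w-1$, since a pairwise-crossing $w$-tuple in $\mathcal U_1$ together with $H_1$ would exhibit a $(w+1)$-tuple of pairwise-crossing hyperplanes in $\mathcal U$, contradicting maximality. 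Provided $\mathcal U_1$ is infinite it is a sub-boundary-set of strictly smaller width, and the inductive hypothesis furnishes a minimal boundary set $\lesssim\mathcal U_1\subseteq\mathcal U$.

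The main obstacle is guaranteeing that some such $\mathcal U_1$ is infinite, which requires an adaptive choice of $H_1$. If for an initial $H_1$ the set $\mathcal U_1$ turns out finite, then cofinitely many elements of $\mathcal U$ are nested with $H_1$, and by unidirectionality all but finitely many lie on a single ``infinity side'' of $H_1$. Replacing $H_1$ by some $H_1'$ on that side and iterating, either some step produces an infinite $\mathcal U_1'$ and we finish as above, or we extract an infinite linearly nested subsequence $H_1,H_1',H_1'',\ldots$ inside $\mathcal U$; its closure under separation within $\mathcal U$ is then a width-$1$ sub-boundary-set of $\mathcal U$, which is minimal by the base case.
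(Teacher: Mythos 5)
Your induction on the maximal cardinality $w(\mathcal U)$ of a pairwise-crossing subset runs into a genuine gap in the fallback case. The checks that $\mathcal U_1=\{H\in\mathcal U:H\text{ crosses }H_1\}$ is closed under separation and has width at most $w-1$ are fine, and when some iterate produces an infinite such $\mathcal U_1$ the induction does go through. But when every iterate yields a finite $\mathcal U_1$, you assert that the closure under separation (within $\mathcal U$) of the extracted nested chain is a width-$1$ boundary set, and this is false: separators of a nested chain can cross one another. Take $\mathbf X=\bigcup_{i\geq 0}[i,i+1]^2\subset\reals^2$, the diagonal staircase of unit squares $S_i$, with hyperplanes $h_i=\{y=i+\tfrac{1}{2}\}\cap S_i$ and $v_i=\{x=i+\tfrac{1}{2}\}\cap S_i$. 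Then $\mathcal U=\{h_i,v_i:i\geq 0\}$ is a boundary set of width $2$, yet every $H\in\mathcal U$ is crossed by exactly one other hyperplane of $\mathcal U$, so the iteration never escapes the bad case. The extracted chain $h_0,h_1,h_2,\ldots$ is nested, but each $v_i$ separates $h_{i-1}$ from $h_{i+1}$ and hence lies in the separation closure; that closure is all of $\mathcal U$, which has width $2$, so the base case cannot be invoked.

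This is not a repairable technicality within your framework: the guiding intuition that a boundary set of width $w$ represents a $(w-1)$-simplex is wrong. The dimension is governed by the number $k$ of pieces in Proposition~\ref{prop:decomposition}, and $k$ can be strictly smaller than the width. In the staircase example $\mathcal U$ is itself minimal (any infinite, separation-closed boundary subset $\lesssim\mathcal U$ must contain infinitely many $h_i$'s or $v_i$'s, and then closure under separation forces a cofinite tail of both families), so $k=1$ while $w=2$, and $\mathcal U$ has no sub-boundary-set of width $1$ at all. Descending on the clique number therefore cannot reach the minimal case in general; a correct argument has to produce a minimal boundary set directly rather than by peeling off one crossing hyperplane per step.
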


Indeed, an infinite set of pairwise-crossing hyperplanes is, by definition, a
boundary set, but such a set is easily seen to fail to contain a minimal
boundary set.  Lemma~\ref{lem:containsminimal} is needed to prove
Proposition~\ref{prop:decomposition} (which is
\cite[Proposition~3.10]{HagenBoundary}), and this statement is in turn required
when defining the simplicial boundary.

\begin{prop}\label{prop:decomposition}
Let $\mathcal U$ be a boundary set.  Then there exists $k\leq\dimension\mathbf X$ and
pairwise-disjoint minimal boundary sets $\mathcal U_1,\ldots,\mathcal U_k$ such
that
$\bigsqcup_{i=1}^k\mathcal U_i\sim\mathcal U$ and, for each $1\leq i<j\leq k$
and each $U\in\mathcal U_j$, the set of $V\in\mathcal U_i$ such that $U\cap
V=\emptyset$ is finite.

Moreover, if $\mathcal U_1',\ldots\mathcal U'_{k'}$ are pairwise-disjoint
minimal boundary sets such that $\bigsqcup_{i=1}^{k'}\mathcal U_i'\sim\mathcal
U$, then $k=k'$ and, after relabeling, $\mathcal U_i\sim\mathcal U'_i$ for all
$i$.
\end{prop}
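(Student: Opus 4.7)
The plan is to prove existence inductively by extracting minimal boundary sets one at a time, and to prove uniqueness by pigeonholing intersections.

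For existence, I induct on the size of a maximal pairwise-crossing family of hyperplanes contained in $\mathcal U$, which is bounded by $\dim\mathbf X$. By Lemma~\ref{lem:containsminimal}, $\mathcal U$ contains a minimal boundary set $\mathcal U_1$. If $\mathcal U \sim \mathcal U_1$, then $k=1$ suffices. Otherwise, define
\[
\mathcal V := \{U \in \mathcal U : U \cap W \neq \emptyset \text{ for cofinitely many } W \in \mathcal U_1\}.
\]
The three properties I need are: (a) $\mathcal U_1 \cap \mathcal V = \emptyset$; (b) $\mathcal U \sim \mathcal U_1 \sqcup \mathcal V$; and (c) $\mathcal V$ is a boundary set. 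Property (a) follows from the fact that in a minimal boundary set each element crosses only finitely many others: if $U \in \mathcal U_1$ crossed infinitely many elements, one could discard those crossings (together with finitely many elements on the ``short side'' of $U$ dictated by unidirectionality) to exhibit a proper sub-boundary-set of $\mathcal U_1$ not equivalent to it, contradicting minimality. Property (c) follows by inheriting unidirectionality and no-facing-triples, together with closure under separation: if $U_3$ separates $U_1, U_2 \in \mathcal V$, then each $W \in \mathcal U_1$ missing $U_3$ lies in one halfspace of $U_3$ and so crosses at most one of $U_1, U_2$---but each $U_i \in \mathcal V$ misses only finitely many $W$'s, so $U_3$ also misses only finitely many. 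Applying the inductive hypothesis to $\mathcal V$ then yields $\mathcal U_2,\ldots,\mathcal U_k$, and the crossing condition between $\mathcal U_1$ and $\mathcal U_j$ for $j>1$ follows directly from the definition of $\mathcal V$.

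Property (b)---showing that only finitely many $U \in \mathcal U \setminus \mathcal U_1$ are comparable with infinitely many elements of $\mathcal U_1$---is the main obstacle. The intuition is that any such $U$ lies ``in the direction of'' $\mathcal U_1$, which is already exhausted by the minimal set $\mathcal U_1$. Formally, given an infinite family $\{U_n\}$ of such elements, unidirectionality of $\mathcal U$ forces (after discarding finite data) the comparable elements of $\mathcal U_1$ to lie on a consistent side of each $U_n$, and the absence of facing triples in $\mathcal U$ restricts how the $U_n$'s interact with $\mathcal U_1$. A careful analysis should produce, from this data, a sub-boundary-set of $\mathcal U_1$ that is strictly smaller in the $\lesssim$ sense but not equivalent to $\mathcal U_1$, violating minimality. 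Granted (a)--(c), the bound $k \leq \dim\mathbf X$ emerges from choosing one hyperplane per $\mathcal U_j$ so that the selected hyperplanes pairwise cross after discarding finitely many exceptions, producing a pairwise-crossing family of size $k$ and hence a $k$-cube.

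For uniqueness, suppose $\bigsqcup_{i=1}^k \mathcal U_i \sim \mathcal U \sim \bigsqcup_{j=1}^{k'} \mathcal U_j'$. Fix $i$. Since $\mathcal U_i$ is infinite and differs from $\bigsqcup_j \mathcal U_j'$ by a finite set, pigeonhole yields some $j$ with $\mathcal U_i \cap \mathcal U_j'$ infinite. This intersection inherits unidirectionality and the no-facing-triple property, and is closed under separation because both $\mathcal U_i$ and $\mathcal U_j'$ are; hence it is a boundary set. Since $\mathcal U_i \cap \mathcal U_j' \lesssim \mathcal U_i$ and $\lesssim \mathcal U_j'$, minimality of both forces $\mathcal U_i \sim \mathcal U_i \cap \mathcal U_j' \sim \mathcal U_j'$. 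Uniqueness of $j$ in $i$ follows because $\mathcal U_i \sim \mathcal U_{j_1}' \sim \mathcal U_{j_2}'$ for disjoint infinite $\mathcal U_{j_1}', \mathcal U_{j_2}'$ would force the union $\mathcal U_{j_1}' \cup \mathcal U_{j_2}'$ (their symmetric difference) to be finite, which is impossible. Symmetry then produces a bijection, so $k = k'$ and $\mathcal U_i \sim \mathcal U_i'$ after relabeling.
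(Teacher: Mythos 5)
First, note that the paper does not actually prove this proposition: it is quoted as Proposition~3.10 of \cite{HagenBoundary}, with Lemma~\ref{lem:containsminimal} recorded only as an ingredient needed there. So the relevant comparison is against that reference rather than against anything in the present paper. With that caveat, your uniqueness argument is correct and essentially complete: the pigeonhole step is fine, $\mathcal U_i\cap\mathcal U_j'$ inherits unidirectionality, the no-facing-triple condition, and closure under separation (since both ambient sets are closed under separation), and minimality of each forces $\mathcal U_i\sim\mathcal U_i\cap\mathcal U_j'\sim\mathcal U_j'$; pairwise disjointness then makes the pairing a bijection.

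The existence half, however, has genuine gaps. For (a), you want $\mathcal U_1\cap\mathcal V=\emptyset$, and you argue that an element $U\in\mathcal U_1$ crossing infinitely many members of $\mathcal U_1$ would yield a proper, inequivalent sub-boundary-set by discarding the crossers together with the short side of $U$. But the resulting set need not be closed under separation: two hyperplanes on the long side of $U$ may be separated by a hyperplane that crosses $U$, and you have deleted it. So the object you produce is not a boundary set a priori, and no contradiction with minimality follows. What is really needed is the nontrivial lemma that each element of a minimal boundary set crosses only finitely many others, and your sketch does not establish it. For (b), which is the heart of the proposition, you concede there is no proof: ``A careful analysis should produce, from this data, a sub-boundary-set of $\mathcal U_1$ that is strictly smaller in the $\lesssim$ sense but not equivalent to $\mathcal U_1$.'' Without (b), the extracted minimal pieces are not shown to exhaust $\mathcal U$ up to a finite set, so the recursion does not yield the claimed decomposition. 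There is also a smaller omission: you announce an induction on the size of a maximal pairwise-crossing family in $\mathcal U$ but never check that this quantity drops when passing from $\mathcal U$ to $\mathcal V$. (It does, granting (a): any $d$ pairwise-crossing members of $\mathcal V$ admit a common crosser in $\mathcal U_1$ because the cofinite crosser-sets have nonempty intersection, giving $d+1$ pairwise-crossing members of $\mathcal U$. But this needs to be said.)

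The pieces that do work are worth noting: your proof of (c) from (a) is correct --- a separator of two members of $\mathcal V$ must cross cofinitely many members of $\mathcal U_1$ by exactly the argument you give --- and the crossing condition between $\mathcal U_1$ and later pieces does follow from the definition of $\mathcal V$. So the overall shape (extract a minimal piece, collect everything crossing it cofinitely, recurse) is reasonable, but as written the proposal proves only the uniqueness statement; (a) and especially (b) would have to be filled in before this counts as a proof of existence.
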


\subsection{Simplices at infinity}
The \emph{dimension} of the boundary set $\mathcal U$ is equal to $k-1$, where
$k$ is the number of minimal boundary sets in the decomposition of $\mathcal U$
given by Proposition~\ref{prop:decomposition}.  In particular, the minimal
boundary sets are exactly those that have dimension 0, and the dimension of any
boundary set is finite, by Proposition~\ref{prop:decomposition}, since $\mathbf
X$ has no infinite set of pairwise-crossing hyperplanes.  Note also that if
$\mathcal U\sim\mathcal U'$, then their dimensions coincide.  Accordingly, for
each $k\geq 0$, let $\mathfrak S(k)$ be the set of $\sim$-classes $u$ such that
some (and hence every) representative $\mathcal U$ of $u$ is a $k$-dimensional
boundary set.

\begin{defn}[Simplicial boundary]\label{defn:simplicialboundary}
Let $\mathbf X$ be a CAT(0) cube complex with no infinite set of
pairwise-crossing hyperplanes.  The \emph{simplicial boundary} $\simp\mathbf X$
of $\mathbf X$ is the simplicial complex whose set of $k$-simplices is
$\mathfrak S(k)$, for $k\geq 0$, with the simplex $u$ (represented by a boundary
set $\mathcal U$) a face of $v$ (represented by $\mathcal V$) exactly when
$\mathcal U\lesssim\mathcal V$.
\end{defn}

For example, it is easily verified that the simplicial boundary of an infinite
tree is a discrete set, and that the simplicial boundary of the
standard tiling of $\Euclidean^2$ by 2-cubes is a 4-cycle.
In~\cite{HagenBoundary}, it is shown that $\simp\mathbf X$ is a flag complex,
every simplex of $\simp\mathbf X$ is contained in a finite-dimensional maximal
simplex.

\subsection{Visibility and cubical flats}
The motivating example of a boundary set is the set $\mathcal H(\gamma)$ of
hyperplanes that cross the (combinatorial or CAT(0)) geodesic ray
$\gamma$, but there are boundary sets not of this type:
see~\cite[Example~3.17]{HagenBoundary}.  Following this example, a simplex $v$
is called \emph{visible} if there exists a combinatorial geodesic ray $\gamma$
such that $\mathcal H(\gamma)$ represents the $\sim$-class $v$.
By~\cite[Theorem~3.19]{HagenBoundary}), each maximal simplex is visible.  In this paper, $\mathbf X$ is often assumed to be \emph{fully visible}, meaning that each simplex is visible.  We believe the following is plausible and would remove the need for to hypothesis fully visible from several results in this paper, but a proof of this result appears to be tricky.

\begin{conj}\label{conj:cocompactfullyvisible}
Let $\mathbf X$ be a locally finite CAT(0) cube complex for which some
$G\leq\Aut(\mathbf X)$ acts cocompactly.  Then $\mathbf X$ is fully visible.
\end{conj}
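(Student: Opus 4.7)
The plan is to reduce visibility of an arbitrary simplex $v$ to the already-known visibility of a maximal simplex containing it (via Theorem~3.19 of~\cite{HagenBoundary}) and then to construct the ray representing $v$ by projecting a ray representing the containing maximal simplex, using local finiteness of $\mathbf X$ to pass to a subsequential limit.

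I would begin by choosing a maximal containing boundary set. Given the simplex $v$ represented by a boundary set $\mathcal U$, since every simplex of $\simp\mathbf X$ is a face of a finite-dimensional maximal simplex, $\mathcal U$ extends to a maximal boundary set $\mathcal V$. By Theorem~3.19 of~\cite{HagenBoundary} there is a combinatorial geodesic ray $\gamma$ from some vertex $x_0$ with $\mathcal H(\gamma)\sim\mathcal V$. By unidirectionality each $H\in\mathcal U$ has a distinguished ``$\mathcal U$-side'' $K_H$ containing cofinitely many elements of $\mathcal U$, and since $\gamma$ crosses every $H\in\mathcal H(\gamma)$, $x_0\in K_H^c$ for cofinitely many $H\in\mathcal U$.

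For each vertex $p_n$ along $\gamma$, let $A_n\subseteq\mathcal U$ be the finite set of hyperplanes in $\mathcal U$ separating $x_0$ from $p_n$. Since $\mathcal U\subseteq\mathcal V$ up to a finite discrepancy and $\gamma$ crosses each element of $\mathcal H(\gamma)$, $|A_n|\to\infty$. The key step is to produce vertices $q_n\in\mathbf X$ with $\done(x_0,q_n)=|A_n|+O(1)$ whose set of separating hyperplanes from $x_0$ consists of $A_n$ together with a uniformly bounded number of exceptions. Granted such $q_n$, the combinatorial geodesic segments $[x_0,q_n]$ are uniformly locally finite, and a standard Arzel\`a--Ascoli argument extracts a subsequential limit combinatorial geodesic ray $\gamma'$ from $x_0$ with $\mathcal H(\gamma')\sim\bigcup_nA_n\sim\mathcal U$. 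The natural candidate for $q_n$ is the gate projection of $x_0$ onto the convex subcomplex $\mathbf Y_n:=\bigcap_{H\in A_n}K_H\cap\bigcap_{H\in\mathcal U\setminus A_n}K_H^c$, which is nonempty as it contains $p_n$ and is cut out by halfspaces of $\mathcal U$ consistent with our choice of $\mathcal U$-sides.

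The main obstacle is controlling hyperplanes \emph{outside} $\mathcal U$ which may separate $x_0$ from $q_n$: if $H'\notin\mathcal U$ does not cross $\mathbf Y_n$ and has $x_0$ and $\mathbf Y_n$ on opposite sides, then $H'$ appears in the separating set of $q_n$, and nothing in the definition of $\mathcal U$ a priori bounds the number of such $H'$. I expect that ruling out unbounded extraneous crossings requires serious use of the cocompact $G$-action: either an equivariant choice of $x_0$ so that $G$-translates of obstructing configurations lie in a fixed fundamental domain (allowing one to use finite orbit data), or a staircase construction building $\gamma'$ directly from $\mathcal U$ one hyperplane at a time, with local finiteness and the orbit structure bounding extraneous crossings at each stage. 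This consistency problem is the crux of the conjecture's apparent difficulty and aligns with the authors' own remark that a proof is tricky.
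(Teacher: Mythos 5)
This statement is labeled a \emph{Conjecture} in the paper, and the authors explicitly write that ``a proof of this result appears to be tricky'' and leave it open; there is no proof in the paper against which to compare your attempt. Your proposal is therefore not wrong, but it also does not close the conjecture, and you correctly and candidly identify where it stops being a proof.

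To be concrete about the gap: the reduction to a maximal visible simplex $\mathcal V\supseteq\mathcal U$ via \cite[Theorem~3.19]{HagenBoundary}, the definition of $A_n$, the choice of $q_n$ as the gate projection of $x_0$ onto $\mathbf Y_n$, and the appeal to local finiteness to extract a subsequential limit ray are all sensible steps. But the entire content of the conjecture is precisely the step you flag: bounding, uniformly in $n$, the number of hyperplanes $H'\notin\mathcal U$ that separate $x_0$ from $\mathbf Y_n$. Nothing in the definitions of ``boundary set'' or ``gate projection'' gives such a bound, and this is exactly the failure mode exhibited by the invisible simplices in the eighth-flat example (\cite[Example~3.17]{HagenBoundary}), where one has a boundary set whose ``closure under separation towards the basepoint'' drags in an unbounded family of extraneous hyperplanes. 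Cocompactness is supposed to forbid this, but your sketch only gestures at how (``equivariant choice of $x_0$'' or ``staircase construction'') without a mechanism. There is also a secondary unaddressed issue: even if one had $|E_n|\leq C$ uniformly, the diagonal limit argument only gives $\mathcal H(\gamma')\subseteq\mathcal U\cup(\text{at most }C\text{ extras})$ along initial segments, and one must separately argue that $\mathcal H(\gamma')$ contains a cofinite subset of $\mathcal U$ (i.e., that the extraneous hyperplanes do not delay the crossings of $\mathcal U$-hyperplanes past every finite stage of the limit). So the proposal is a reasonable plan of attack, but the crux you identify is genuinely open, consistent with the paper's own assessment.
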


We shall occasionally use the fact that full visibility is inherited by convex subcomplexes.

\begin{defn}[Flat, orthant, cubical flat]\label{defn:flat}
For $d\geq 0$, a $d$-\emph{flat} in $\mathbf X$ is the image of an isometric
embedding $\Euclidean^d\rightarrow(\mathbf X,\dtwo)$. An \emph{orthant}
is the image of an isometric embedding $([0,\infty)^d,\mathbf
d_{\Euclidean^d})\rightarrow(\mathbf X,\dtwo)$.  A \emph{cubical flat}
is an isometrically embedded subcomplex $\mathbf F\subseteq\mathbf X$ that is
isomorphic to the standard tiling of $\Euclidean^d$ by unit $d$-cubes for some $d\geq
0$.  A cubical orthant is defined similarly, in terms of the standard tiling of
$[0,\infty)^d$.
\end{defn}

The simplicial boundary of a $d$-dimensional cubical orthant is easily seen to
be a $(d-1)$-simplex, for $d\geq 1$.  Similarly, one checks that the simplicial
boundary of a $d$-dimensional cubical flat is isomorphic to the
$(d-1)$-dimensional \emph{spherical hyperoctahedron} $\mathbb O_d$.  This
simplicial complex is defined as follows: $\mathbb O_1$ consists of a pair of
0-simplices, and for $d\geq 1$, $\mathbb O_d$ is the simplicial join of $\mathbb
O_0$ and $\mathbb O_{d-1}$.  Under the hypothesis of full visibility, the
presence of a $d$-simplex at infinity ensures the presence of an isometric
cubical orthant; likewise, the presence of a hyperoctahedra in the boundary  yields a flat.

\begin{prop}[Theorem~3.23 of~\cite{HagenBoundary}]\label{prop:simplexflat}
Let $\mathbf X$ be fully visible and let $v\subseteq\simp\mathbf X$ be a
simplex.  Then there is a cubical orthant $\mathbf F\subseteq\mathbf X$ with
$\mathcal H(\mathbf F)$ representing $v$.
\end{prop}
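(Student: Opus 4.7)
The plan is to realize the $k$-simplex $v$ as $\mathcal H(\mathbf F)$ for a cubical $(k+1)$-orthant $\mathbf F$ constructed as the convex hull of $k+1$ combinatorial geodesic rays emanating from a common basepoint, one representing each $0$-simplex face of $v$. First, I apply Proposition~\ref{prop:decomposition} to the representative $\mathcal U$ of $v$, obtaining a decomposition $\mathcal U\sim\bigsqcup_{i=0}^{k}\mathcal U_i$ into minimal boundary sets with the property that for $i\neq j$ each hyperplane of $\mathcal U_i$ crosses all but finitely many hyperplanes of $\mathcal U_j$. Each $\mathcal U_i$ represents a $0$-simplex face $u_i$ of $v$, and full visibility supplies combinatorial geodesic rays $\gamma_i$ with $\mathcal H(\gamma_i)\sim\mathcal U_i$. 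Since modifying a geodesic ray on a finite initial segment does not alter the $\sim$-class of its hyperplane set, I may arrange that every $\gamma_i$ emanates from a common basepoint $x_0\in\mathbf X^{(0)}$.

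The crucial next step, which I expect to be the main obstacle, is to refine the rays---by replacing each $\gamma_i$ with a ray agreeing with it on a sufficiently far tail and rerouted to $x_0$ by a geodesic segment whose hyperplane family is disjoint from the tail's---so as to achieve: (a) the hyperplanes of $\gamma_i$ form a chain of pairwise non-crossing hyperplanes, and (b) for $i\neq j$, every hyperplane of $\gamma_i$ crosses every hyperplane of $\gamma_j$. Condition~(a) is essentially automatic after a finite truncation, since any crossings within $\mathcal H(\gamma_i)$ involve only the finitely many hyperplanes not belonging to a nested representative of $\mathcal U_i$. Condition~(b) is more delicate: Proposition~\ref{prop:decomposition} provides only pointwise cofinite crossing, so \emph{a priori} infinitely many noncrossing pairs could remain. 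The plan is to exploit the unidirectional, no-facing-triple structure of each $\mathcal U_i$ to argue that a noncrossing pair $(H,H')$ with $H\in\mathcal U_i$ and $H'\in\mathcal U_j$ forces at least one of $H,H'$ into a bounded initial portion of its ray, so that finitely many truncations eliminate all offending pairs.

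Once the rays are refined, set $\mathbf F$ to be the convex hull in $\mathbf X$ of $\bigcup_i\gamma_i$. Condition~(b) implies that any hyperplane separating two vertices of $\bigcup_i\gamma_i$ must lie in some $\mathcal H(\gamma_i)$: otherwise it would be disjoint from (hence parallel to) some hyperplane of $\gamma_j$, contradicting~(b). Thus $\mathcal H(\mathbf F)=\bigsqcup_i\mathcal H(\gamma_i)$, which is exactly the hyperplane structure of the standard cubical tiling of $[0,\infty)^{k+1}$. A standard product-recognition argument for convex subcomplexes of CAT(0) cube complexes (in the spirit of Caprace--Sageev's product decomposition) then identifies $\mathbf F$ with a cubical $(k+1)$-orthant. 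Finally $\mathcal H(\mathbf F)=\bigsqcup_i\mathcal H(\gamma_i)\sim\bigsqcup_i\mathcal U_i\sim\mathcal U$, so $\mathbf F$ represents $v$, as required.
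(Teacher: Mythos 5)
The paper does not prove this statement; it is imported verbatim as Theorem~3.23 of~\cite{HagenBoundary}, so there is no in-paper proof to compare against. Reviewing your argument on its own merits: the overall template (decompose $\mathcal U$ into minimal boundary sets via Proposition~\ref{prop:decomposition}, realize each as a ray, arrange a common basepoint, take a convex hull) is sound, and step~(a) and the final product-recognition step are unproblematic. The difficulty you flag in step~(b) is, however, a genuine gap, and the repair you sketch does not close it.

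The trouble is that Proposition~\ref{prop:decomposition} gives only an \emph{asymmetric} cofinite-crossing condition: for $i<j$ and each $U\in\mathcal U_j$, only finitely many $V\in\mathcal U_i$ miss $U$. This permits configurations in which, for each fixed $V\in\mathcal U_i$, infinitely many $U\in\mathcal U_j$ miss $V$, so that the noncrossing pairs are not confined to a bounded initial segment of either ray and no finite truncation can remove them. Moreover ``rerouting to $x_0$'' cannot help: if a hyperplane $H'\in\mathcal U_j$ fails to cross the deep tail of $\gamma_i$, then $H'$ separates $x_0$ from that tail, so \emph{every} geodesic from $x_0$ with that tail crosses $H'$, and the bad pair persists. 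The eighth flat (\cite[Example~3.17]{HagenBoundary}) realizes precisely this pathology: the two minimal boundary sets satisfy the asymmetric cofinite-crossing condition, yet the ambient complex contains no $2$-dimensional orthant. What rules this out in the present setting is full visibility of \emph{the simplex $v$ itself}, which your proof never uses --- you invoke visibility only for the $0$-simplex faces. A geodesic ray $\sigma$ with $\mathcal H(\sigma)\sim\mathcal U$ is the missing ingredient: if $H\in\mathcal U_i$ and $H'\in\mathcal U_j$ are a noncrossing pair both crossed by $\sigma$, with $H$ crossed first, then a short nesting argument shows that \emph{all} $H''\in\mathcal U_i$ crossed by $\sigma$ later than $H'$ also miss $H'$, and also that all later $H'''\in\mathcal U_j$ miss $H$; applying Proposition~\ref{prop:decomposition} to whichever ordering of $\{i,j\}$ is increasing then gives a contradiction. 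Thus visibility of $v$ forces the noncrossing pairs to be finite in number, after which your truncation strategy (applied to $\sigma$, not to the individual $\gamma_i$) does succeed. Without that input, the argument as written fails.
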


It will be necessary to reach conclusions similar to that of
Proposition~\ref{prop:simplexflat}, but in the CAT(0) setting.

\begin{prop}[Simplices yield orthants]\label{prop:simplicesflats}
Let $\mathbf X$ be fully visible, and let $\mathcal V$ be a boundary set of
dimension $d\geq 1$.  Then there exists a $(d+1)$-dimensional orthant $\mathbf
O\subseteq\mathbf X$ such that $\mathcal H(\mathbf O)\sim\mathcal V$.
\end{prop}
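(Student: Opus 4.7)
The plan is to bootstrap off Proposition~\ref{prop:simplexflat} and upgrade its conclusion: that proposition gives a cubical orthant realizing the prescribed boundary simplex, but only as a $\done$-isometrically embedded subcomplex; the task remaining is to check that this subcomplex is convex, which will then force the intrinsic CAT(0) geometry of the subcomplex to agree with the restriction of $\dtwo$, giving a genuine CAT(0) orthant.

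More precisely, let $v$ be the $d$-simplex of $\simp\mathbf X$ represented by $\mathcal V$. Applying Proposition~\ref{prop:simplexflat} to $v$ produces a cubical orthant $\mathbf F\subseteq\mathbf X$ with $\mathcal H(\mathbf F)$ representing $v$, and thus $\mathcal H(\mathbf F)\sim\mathcal V$. Since the simplicial boundary of a $k$-dimensional cubical orthant is the $(k-1)$-dimensional hyperoctahedron, $\mathbf F$ has dimension exactly $d+1$. Next, I would verify that $\mathbf F$ is convex in $\mathbf X$. Convexity has two ingredients: (i) every pairwise-crossing subfamily $H_1,\ldots,H_n\in\mathcal H(\mathbf F)$ is realized by an $n$-cube in $\cap_i N(H_i)\cap\mathbf F$; and (ii) $\mathbf F^{(1)}$ is a convex subgraph of $\mathbf X^{(1)}$. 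Item (i) is automatic from the internal structure of $\mathbf F$: since $\mathbf F$ is isomorphic to the standard tiling of $[0,\infty)^{d+1}$, any collection of pairwise-crossing hyperplanes of $\mathbf F$ comes from pairwise distinct parallelism classes, and hence is realized by a cube of $\mathbf F$. For (ii), given vertices $x,y\in\mathbf F$ and a $\done$-geodesic $\sigma$ in $\mathbf X$ from $x$ to $y$, $\done$-isometric embedding of $\mathbf F$ forces the hyperplanes dual to $\sigma$ to be exactly those of $\mathcal H(\mathbf F)$ separating $x$ from $y$; a median argument, using that these hyperplanes fall into $d+1$ totally ordered parallelism classes within $\mathbf F$, shows $\sigma\subseteq\mathbf F$. (If Proposition~\ref{prop:simplexflat} is not already known to produce a convex subcomplex, we may instead pass to the combinatorial convex hull of $\mathbf F$ and argue that no new hyperplanes are introduced beyond those already in $\mathcal H(\mathbf F)$, since any hyperplane in the hull either crosses $\mathbf F$ or separates two points of $\mathbf F$, and the latter is ruled out by $\done$-isometric embedding.)

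Once convexity is in hand, the cited result of Haglund (on the coincidence of combinatorial and CAT(0) convexity for subcomplexes) implies that the restriction of $\dtwo$ from $\mathbf X$ agrees with the intrinsic CAT(0) metric on $\mathbf F$. The latter, by construction of the CAT(0) metric on a cube complex, is the Euclidean metric on $[0,\infty)^{d+1}$. Therefore $\mathbf F$ supplies an isometric embedding $([0,\infty)^{d+1},\mathbf d_{\Euclidean^{d+1}})\hookrightarrow(\mathbf X,\dtwo)$ whose image is the desired orthant $\mathbf O$, and $\mathcal H(\mathbf O)=\mathcal H(\mathbf F)\sim\mathcal V$.

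The main obstacle is the convexity step, since Proposition~\ref{prop:simplexflat} is stated only in terms of cubical orthants (a priori just $\done$-isometrically embedded). The delicate point is controlling $\done$-geodesics that might stray outside $\mathbf F$; this is where the rigid hyperoctahedral structure of $\mathcal H(\mathbf F)$ (exactly $d+1$ parallelism classes, every cross-class pair crossing) is essential, as it leaves no room for a geodesic to detour through hyperplanes outside $\mathcal H(\mathbf F)$. All other steps are essentially formal consequences of the cube-complex dictionary between hyperplanes, cubes, and the CAT(0) metric.
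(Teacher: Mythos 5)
Your plan diverges from the paper's: rather than establishing convexity of the cubical orthant $\mathbf F$ produced by Proposition~\ref{prop:simplexflat}, the paper uses $\mathbf F$ only to extract a decomposition $\mathcal H(\mathbf F)=\bigsqcup_{i=1}^{d+1}\mathcal H(\gamma_i)$ into $d+1$ families with each hyperplane of $\mathcal H(\gamma_i)$ crossing each of $\mathcal H(\gamma_j)$ for $i\neq j$; it then invokes a result from \cite{HagenBoundary} to replace each combinatorial ray $\gamma_i$ by a CAT(0) geodesic ray $\alpha_i$ with $\mathcal H(\alpha_i)=\mathcal H(\gamma_i)$, and concludes that the crossing property forces $\prod_i\alpha_i$ to embed isometrically. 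The CAT(0) orthant the paper produces is not $\mathbf F$, and no convexity claim about $\mathbf F$ is made.

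Your convexity argument has a genuine gap. In (i), you claim that pairwise-crossing $H_1,\dots,H_n\in\mathcal H(\mathbf F)$ must lie in distinct parallelism classes of $\mathbf F$; but the relevant crossings are crossings in $\mathbf X$, and two hyperplanes that are parallel \emph{within} $\mathbf F$ may well cross in $\mathbf X$, in which case $\mathbf F$ contains no $2$-cube realizing that crossing and is therefore not convex. This is exactly the pathology highlighted in the Remark following Proposition~\ref{prop:orthantsyieldsimplices} (the staircase in $\Euclidean^2$). In (ii), the fact that a $\done$-geodesic $\sigma$ from $x$ to $y$ crosses exactly the hyperplanes of $\mathcal H(\mathbf F)$ separating $x$ from $y$ does not force $\sigma\subseteq\mathbf F$: combinatorial geodesics in cube complexes are far from unique, and a geodesic can bulge out of $\mathbf F$ while crossing the same hyperplane set. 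Your parenthetical fallback of passing to the convex hull $\widehat{\mathbf F}$ does not rescue this: although $\mathcal H(\widehat{\mathbf F})=\mathcal H(\mathbf F)$, the hull can strictly contain $\mathbf F$, contain cubes that $\mathbf F$ does not, and fail to be a cubical orthant at all, so the desired Euclidean $(d+1)$-orthant is not obviously present. As a smaller point, you state that the simplicial boundary of a $k$-dimensional cubical orthant is the $(k-1)$-dimensional hyperoctahedron; it is a $(k-1)$-simplex (the hyperoctahedron is the boundary of a cubical \emph{flat}).
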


\begin{proof}
By Proposition~\ref{prop:simplexflat}, there exists an isometric cubical orthant
$\mathbf C$ in $\mathbf X$ with $\mathcal H(\mathbf C)\sim\mathcal V$.  Let
$v_1,\ldots,v_{d+1}$ be the 0-simplices of $v$.  For $1\leq i\leq d+1$, there is
a combinatorial geodesic ray $\gamma_i$ such that the $\gamma_i$ all have common
basepoint, and $\mathcal H(\mathbf C)=\bigsqcup_i\mathcal H(\gamma_i)$, and for
$i'\neq j$, every $V\in\mathcal H(\gamma_i)$ crosses every $H\in\mathcal
H(\gamma_j)$.  As is shown in~\cite{HagenBoundary}, there exists, for each $i$,
a CAT(0) geodesic ray $\alpha_i$ in $\mathbf X$ with $\alpha_i(0)=\gamma_i(0)$
and $\mathcal H(\gamma_i)=\mathcal H(\alpha_i)$. The preceding crossing property
ensures that $\mathbf X$ contains $\prod_i\alpha_i$, which is
the desired CAT(0) orthant.
\end{proof}

\begin{defn}[Maximal orthant]\label{defn:maxorthant}
The orthant $\mathbf O\subseteq\mathbf X$ is \emph{maximal} if for all orthants
$\mathbf O'$ that coarsely contain $\mathbf O$, $\dimension\mathbf
O'=\dimension\mathbf O$.
\end{defn}

\begin{prop}[Orthants yield simplices]\label{prop:orthantsyieldsimplices}
Let $\mathbf X$ be fully visible and let $\mathbf O\subseteq\mathbf X$ be a
$d$-dimensional maximal orthant or cubical orthant.  Then $\mathcal H(\mathbf
O)$ represents a $(d-1)$-simplex of $\simp\mathbf X$.
\end{prop}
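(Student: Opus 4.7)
The plan is to verify that $\mathcal H(\mathbf O)$ satisfies the defining axioms of a boundary set and to show its dimension, in the sense of Section~\ref{sec:simplicialboundary}, is exactly $d-1$. I would split the argument into three stages.

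First (structural axioms from connectedness): $\mathcal H(\mathbf O)$ is infinite because $\mathbf O$ is unbounded, and it is closed under separation because $\mathbf O$ is connected. Indeed, if $H_3$ separates $H_1,H_2\in\mathcal H(\mathbf O)$, then points of $\mathbf O\cap H_1$ and $\mathbf O\cap H_2$ lie on opposite sides of $H_3$, forcing $H_3\in\mathcal H(\mathbf O)$.

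Second (a $(d-1)$-dimensional boundary set inside $\mathcal H(\mathbf O)$): let $\gamma_1,\ldots,\gamma_d$ be the $d$ coordinate rays of $\mathbf O$ emanating from its tip. In the cubical case these are combinatorial geodesic rays; in the CAT(0) case I would replace each $\gamma_i$ by an associated combinatorial geodesic ray having the same set of crossed hyperplanes, via the correspondence between CAT(0) rays and combinatorial rays from~\cite{HagenBoundary}. Each $\mathcal H(\gamma_i)$ is totally ordered by separation, unidirectional, and contains no facing triple, hence is a minimal boundary set. The orthant product structure forces every hyperplane in $\mathcal H(\gamma_i)$ to cross every hyperplane in $\mathcal H(\gamma_j)$ for $i\neq j$. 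Proposition~\ref{prop:decomposition} then certifies that $\bigsqcup_{i=1}^d\mathcal H(\gamma_i)$ is a boundary set of dimension $d-1$ sitting inside $\mathcal H(\mathbf O)$.

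Third (maximality supplies the matching upper bound): suppose, toward contradiction, that $\mathcal H(\mathbf O)$ were a boundary set of dimension at least $d$. Full visibility of $\mathbf X$ together with Proposition~\ref{prop:simplicesflats} would then produce a $(d+1)$-dimensional orthant $\mathbf O'\subseteq\mathbf X$ with $\mathcal H(\mathbf O')\sim\mathcal H(\mathbf O)$. Since convex subspaces whose crossed-hyperplane sets agree up to finite symmetric difference lie at finite Hausdorff distance, $\mathbf O'$ coarsely contains $\mathbf O$, contradicting maximality of $\mathbf O$. Hence $\mathcal H(\mathbf O)$ has dimension exactly $d-1$ and is equivalent to $\bigsqcup_i\mathcal H(\gamma_i)$. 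Unidirectionality and absence of facing triples for $\mathcal H(\mathbf O)$ follow immediately from this decomposition: within each minimal component $\mathcal H(\gamma_i)$ the hyperplanes form a chain under separation, and hyperplanes from distinct components pairwise cross, so no three of them can form a facing triple and no half-space of any one of them can contain infinitely many of the others.

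The main obstacle is stage three. Maximality only yields the dimension bound after we convert a hypothetical higher-dimensional boundary set into an actual larger orthant coarsely containing $\mathbf O$; the cleanest route is via the combinatorial-ray substitution from~\cite{HagenBoundary}, which lets us treat the CAT(0) case uniformly with the cubical one. The coarse-containment step requires a careful appeal to convexity, since a CAT(0) orthant in $\mathbf X$ need not be a subcomplex and hyperplanes of $\mathbf X$ may cross it non-coordinately.
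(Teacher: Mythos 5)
Your overall architecture matches the paper's: decompose $\mathcal H(\mathbf O)$ into $e$ pairwise-crossing minimal boundary sets, use the $d$ coordinate rays of $\mathbf O$ to show $e\ge d$, and use maximality of $\mathbf O$ to show $e\le d$. The paper's proof is essentially this three-line argument, with the upper bound obtained by observing that the \emph{construction} inside the proof of Proposition~\ref{prop:simplexflat} can be run so as to produce an $e$-dimensional cubical orthant that literally contains $\mathbf O$ (the rays can be taken to emanate from the tip of $\mathbf O$), which immediately contradicts maximality if $e>d$.

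The genuine gap is in your stage three. You invoke Proposition~\ref{prop:simplicesflats} as a black box to obtain \emph{some} $(d+1)$-dimensional orthant $\mathbf O'$ with $\mathcal H(\mathbf O')\sim\mathcal H(\mathbf O)$, and then assert that $\mathbf O'$ coarsely contains $\mathbf O$ ``since convex subspaces whose crossed-hyperplane sets agree up to finite symmetric difference lie at finite Hausdorff distance.'' That principle is correct for \emph{convex subcomplexes}, but neither $\mathbf O$ nor $\mathbf O'$ need be one: a CAT(0) orthant is generally not a subcomplex at all, and a cubical orthant is required only to be isometrically embedded, which is strictly weaker than convex (a combinatorial ``staircase'' ray in $\Euclidean^2$ is an isometrically embedded copy of $[0,\infty)$ but not convex). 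Passing to cubical convex hulls $\widehat{\mathbf O},\widehat{\mathbf O'}$ preserves the hyperplane sets and does give $d_{\mathrm{Haus}}(\widehat{\mathbf O},\widehat{\mathbf O'})<\infty$, but this only places $\mathbf O$ in a finite neighborhood of $\widehat{\mathbf O'}$, and $\widehat{\mathbf O'}$ can be strictly coarsely larger than $\mathbf O'$ (e.g.\ for a ``diagonal'' orthant inside a flat). So the coarse-containment claim as stated does not close the argument; you correctly flag this as the delicate point, but leaving it open means the maximality contradiction is not actually reached. The fix the paper uses is to bypass the equivalence-class argument altogether: don't produce an abstract $\mathbf O'$ and try to relate it to $\mathbf O$ after the fact, but instead run the Proposition~\ref{prop:simplexflat} construction with $\mathbf O$ as input so that the resulting $e$-dimensional cubical orthant contains $\mathbf O$ by construction. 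A secondary, minor issue: you apply Propositions~\ref{prop:decomposition} and \ref{prop:simplicesflats} to $\mathcal H(\mathbf O)$ before verifying unidirectionality and absence of facing triples, deferring those to the end; the verification is easy for a cubical orthant, but as written the order of logical dependence is slightly off, since those axioms are hypotheses of the propositions rather than consequences of the decomposition.
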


\begin{proof}
Let $\mathcal V=\mathcal H(\mathbf O)$, and let $\mathcal
V=\bigcup_{i=1}^e\mathcal V_i$ be a decomposition into minimal boundary sets
such that, for all $i\neq j$, if $H\in\mathcal V_i$ and $V\in\mathcal V_j$, then
$H$ crosses $V$.  Now, $e\geq d$ since $\mathbf O$ is a $d$-flat.  On the other
hand, the proof of Proposition~\ref{prop:simplexflat} shows that $\mathbf O$ is
contained in an $e$-dimensional cubical orthant, whence $d=e$.  Thus $\mathcal V$ represents a
$(d-1)$-simplex.
\end{proof}

\begin{rem}
The conclusion of Proposition~\ref{prop:orthantsyieldsimplices} fails in the
absence of maximality.  This is roughly because, while an isometric embedding
$\mathbf Y\rightarrow\mathbf X$ induces an embedding of simplicial boundaries,
the image of $\simp\mathbf Y$ may not be a subcomplex if $\mathbf Y$ is not
convex.  For example, consider the geodesic ray $L$ in $\Euclidean^2$ beginning
at $(0,0)$ and containing $(1,1)$.  Let $\mathbf X$ be the standard tiling of
$\Euclidean^2$ by 2-cubes, and let $\mathbf Y$ be a combinatorial geodesic ray
whose 0-cubes are the points $(n,n),(n+1,n)\,n\geq 0$.  No two hyperplanes of $\mathbf
Y$ cross in $\mathbf Y$, so that $\simp\mathbf Y$ is a 0-simplex.  But $\mathcal
H(\mathbf Y)$ determines a 1-simplex of $\simp\mathbf X$.

Proposition~\ref{prop:orthantsyieldsimplices} also requires full visibility.
For example, if $\mathbf X$ is an \emph{eighth-flat}
(see \cite[Example~3.17]{HagenBoundary}), a maximal cubical orthant is
1-dimensional but the set of dual hyperplanes corresponds to a 1-simplex of
$\simp\mathbf X$.
\end{rem}

The following proposition characterizes hyperbolic proper, cocompact CAT(0) cube complexes using $\simp\mathbf X$.  In the fully visible case, the proof is simplified slightly by Proposition~\ref{prop:simplexflat} and Proposition~\ref{prop:orthantsyieldsimplices}.

\begin{prop}\label{prop:disconnectedhyperbolic}
Let the CAT(0) cube complex $\mathbf X$ admit a proper, cocompact group action.  $\simp\mathbf X$ is discrete if and only if $\mathbf X$ (and therefore $\mathbf X^{(1)}$) is hyperbolic.
\end{prop}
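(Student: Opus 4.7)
The plan is to prove the two directions separately, in each case relating positive-dimensional simplices of $\simp\mathbf X$ to the presence of isometrically embedded two-dimensional flat pieces of $\mathbf X$.

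For the forward direction (hyperbolic implies $\simp\mathbf X$ discrete), I would argue by contrapositive. Suppose $\simp\mathbf X$ contains a simplex $v$ of dimension at least $1$, and enlarge $v$ to a maximal simplex $v'$. Since maximal simplices are visible by Theorem~3.19 of~\cite{HagenBoundary}, the construction underlying Proposition~\ref{prop:simplexflat}---which needs only visibility of the specific simplex, not full visibility of $\mathbf X$---produces a cubical orthant $\mathbf O\subseteq\mathbf X$ of dimension at least $2$, isometrically embedded with respect to $\done$ (and hence to $\dtwo$, since convex cubical subcomplexes inherit the CAT(0) metric). Within $\mathbf O$ one can produce a combinatorial triangle with vertices $(0,0)$, $(n,0)$, $(0,n)$ and a $\done$-geodesic side from $(n,0)$ to $(0,n)$ passing through $(n,n)$; the point $(n,n)$ is at $\done$-distance $n$ from the other two sides, so the triangle is not $\delta$-thin for any fixed $\delta$. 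Because an isometric embedding sends geodesics to geodesics, this contradicts hyperbolicity of $\mathbf X^{(1)}$.

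For the reverse direction (discrete $\simp\mathbf X$ implies hyperbolic), I would also argue by contrapositive. If $(\mathbf X,\dtwo)$ is not Gromov hyperbolic, then the Flat Plane Theorem applied to the proper cocompact CAT(0) space $\mathbf X$ yields an isometrically embedded 2-flat $F$. Invoking the Flat Torus Theorem on the stabilizer of $F$ together with Proposition~3.5 of~\cite{CapraceSageev}, I would upgrade $F$ to a convex cubical 2-flat $\mathbf F\subseteq\mathbf X$ isomorphic to the standard tiling of $\Euclidean^2$. Its simplicial boundary $\simp\mathbf F\cong\mathbb O_2$ is a 4-cycle and hence contains 1-simplices; since $\mathbf F$ is convex in $\mathbf X$, the inclusion induces an embedding $\simp\mathbf F\hookrightarrow\simp\mathbf X$ as a subcomplex, providing a 1-simplex in $\simp\mathbf X$ and contradicting discreteness.

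The main obstacle is the reverse direction, specifically the passage from an abstract $\dtwo$-isometric 2-flat to a \emph{cubical} 2-flat. Everything else reduces to the definitions of $\simp\mathbf X$ and of cubical orthants/flats, whereas this step relies on nontrivial cubical machinery from outside the boundary-set framework developed in Section~\ref{sec:simplicialboundary}.
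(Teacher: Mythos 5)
Your ``discrete implies hyperbolic'' direction has a genuine gap. The Flat Plane Theorem produces an isometrically embedded $2$-flat $F$, but it gives no control over $\stabilizer(F)$, which may well be trivial. The Flat Torus Theorem cannot be ``invoked on the stabilizer of $F$'' unless you first know that stabilizer contains $\integers^2$---and deducing that from the mere existence of a flat is the periodic-flat (flat-closing) problem, a considerably heavier result than anything needed here; even granting it, one would still need a further argument to pass from a $\integers^2$-invariant flat to a \emph{cubical} flat. The paper avoids all of this: it simply takes the cubical convex hull of the (possibly non-periodic) flat $F$ and observes that the hyperplanes crossing it already contain a boundary set of positive dimension, giving a positive-dimensional simplex of $\simp\mathbf X$ directly, with no appeal to periodicity.

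Your ``hyperbolic implies discrete'' direction takes a route genuinely different from the paper's, and it rests on an unverified claim. You assert that the orthant construction underlying Proposition~\ref{prop:simplexflat} requires only visibility of the specific (maximal, hence visible) simplex rather than full visibility of $\mathbf X$. But the paper's remark directly before the proposition---that in the fully visible case ``the proof is simplified slightly'' by Propositions~\ref{prop:simplexflat} and~\ref{prop:orthantsyieldsimplices}---signals that the authors do not regard those propositions as usable here without the full-visibility hypothesis; and indeed it is not automatic that the $0$-faces of a visible maximal simplex are themselves visible, which is what the product-of-rays construction of the orthant actually needs. The paper's argument is shorter and avoids orthants entirely: a positive-dimensional simplex is represented by a boundary set that decomposes into at least two minimal boundary sets whose members almost all pairwise cross, which immediately gives arbitrarily large complete bipartite subgraphs $K_{n,n}$ in the intersection graph of hyperplanes, precluding hyperbolicity by~\cite{HagenQuasiArb}. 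Your fat-triangle observation is correct once you have a $\done$-isometrically embedded cubical orthant of dimension at least $2$, but producing one under the stated hypotheses is precisely the point at issue.
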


\begin{proof}
If $\simp\mathbf X$ consists entirely of isolated 0-simplices, then
$\mathbf X$ cannot contain an isometrically embedded flat of dimension $d\geq
2$: if $\Euclidean^d\cong\mathbf F\rightarrow(\mathbf X,\dtwo)$ is such
an isometric embedding, then the cubical convex hull of $\mathbf F$ contains a boundary set of positive dimension, resulting in a positive-dimensional simplex of $\simp\mathbf X$.  Hence, by the Flat Plane Theorem~\cite{BridsonHaefliger}, $\mathbf X$ is hyperbolic.  Conversely, if $v$ is a $d$-simplex with $d\geq 2$, then the intersection graph of the set of hyperplanes contains arbitrarily large complete bipartite graphs $K_{n,n}$, by the definition of a boundary set, whence $\mathbf X$ is not hyperbolic~\cite{HagenQuasiArb}.
\end{proof}

\subsection{Essential actions and the simplicial boundary}
We will require the following lemma in Section~\ref{sec:thickoforderone}.

\begin{lem}\label{lem:boundaryofcore}
Let the group $G$ act properly and cocompactly on the CAT(0) cube complex
$\mathbf X$. Let $\mathbf X_1\subseteq\mathbf X$ be a convex, $G$-cocompact
subcomplex on which $G$ acts essentially.  Then $\simp\mathbf X\cong\simp\mathbf
X_1$.
\end{lem}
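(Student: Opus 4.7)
The plan is to construct a simplicial isomorphism $\phi\colon\simp\mathbf{X}_1\to\simp\mathbf{X}$ induced by the natural inclusion of hyperplanes. First observe that since both $\mathbf{X}$ and $\mathbf{X}_1$ are $G$-cocompact with $\mathbf{X}_1\subseteq\mathbf{X}$, there is a finite $D$ such that every $0$-cube of $\mathbf{X}$ is within combinatorial distance $D$ of $\mathbf{X}_1$. Convexity of $\mathbf{X}_1$ gives $\mathcal H(\mathbf{X}_1)\subseteq\mathcal H(\mathbf{X})$, consisting precisely of those hyperplanes of $\mathbf{X}$ that cross $\mathbf{X}_1$, so I can define $\phi$ by sending the $\sim$-class of a boundary set $\mathcal U\subseteq\mathcal H(\mathbf{X}_1)$ to its class as a subset of $\mathcal H(\mathbf{X})$.

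For well-definedness of $\phi$, the properties of infiniteness, unidirectionality, and absence of facing triples transfer verbatim between $\mathbf{X}_1$ and $\mathbf{X}$, since they refer only to the abstract pattern of halfspaces. The nontrivial point is closure under separation in $\mathbf{X}$: if $H_1,H_2\in\mathcal U$ and $H_3\in\mathcal H(\mathbf{X})$ separates them, then $\mathbf{X}_1$ meets both $H_1$ and $H_2$ and these lie in opposite halfspaces of $H_3$, so $\mathbf{X}_1$ meets both sides of $H_3$; hence $H_3\in\mathcal H(\mathbf{X}_1)$, and then $H_3\in\mathcal U$ by closure of $\mathcal U$ in $\mathbf{X}_1$. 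Both the $\sim$-relation and the face relation depend only on finite set-differences, so $\phi$ is immediately injective and simplicial.

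The hard part will be surjectivity, which reduces to the key claim that $|\mathcal V\setminus\mathcal H(\mathbf{X}_1)|<\infty$ for every boundary set $\mathcal V$ of $\mathbf{X}$: then $\mathcal V\cap\mathcal H(\mathbf{X}_1)$ is a boundary set in $\mathbf{X}_1$ (by the same closure argument) that is $\sim$-equivalent to $\mathcal V$. Suppose for contradiction $|\mathcal V\setminus\mathcal H(\mathbf{X}_1)|=\infty$. Since $\mathbf{X}$ has no infinite collection of pairwise-crossing hyperplanes, Ramsey's theorem yields an infinite pairwise-nested subset of $\mathcal V\setminus\mathcal H(\mathbf{X}_1)$; relabel its members as $H^{(0)},H^{(1)},\ldots$ with $\lefth{H^{(j+1)}}\subsetneq\lefth{H^{(j)}}$. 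Each $H^{(j)}$ fails to cross $\mathbf{X}_1$, so by connectedness $\mathbf{X}_1$ lies in one halfspace of $H^{(j)}$; monotonicity of the nested halfspaces then forces one of two eventual alternatives: either (a) $\mathbf{X}_1\subseteq\righth{H^{(j)}}$ for all $j\geq j_0$, or (b) $\mathbf{X}_1\subseteq\lefth{H^{(j)}}$ for every $j$.

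Both cases will fail. In case (a), for any $j'>j_0$ and any $0$-cube $p\in\lefth{H^{(j')}}$ (nonempty since halfspaces contain $0$-cubes), the hyperplanes $H^{(j_0)},H^{(j_0+1)},\ldots,H^{(j')}$ all separate $p$ from $\mathbf{X}_1$, forcing $\done(p,\mathbf{X}_1)\geq j'-j_0+1$; combined with the Hausdorff bound $\done(p,\mathbf{X}_1)\leq D$, this gives $j'\leq j_0+D-1$, contradicting infinitude of the chain. In case (b), any $0$-cube $z\in\righth{H^{(0)}}$ (nonempty since hyperplanes have nonempty halfspaces) satisfies $z\in\righth{H^{(j)}}$ for every $j$ by monotonicity, while any $y\in\mathbf{X}_1$ lies in $\lefth{H^{(j)}}$ for every $j$; so every $H^{(j)}$ separates $y$ from $z$, which would make $\done(y,z)=\infty$, impossible. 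Hence the key claim holds and surjectivity follows. The main obstacle throughout is exactly this surjectivity argument, and the crucial feature enabling the resolution of both cases is the finite combinatorial Hausdorff distance between $\mathbf{X}_1$ and $\mathbf{X}$ supplied by the two cocompactness hypotheses.
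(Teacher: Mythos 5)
Your argument is correct and follows essentially the same approach as the paper: convexity gives an injective simplicial map $\simp\mathbf X_1\hookrightarrow\simp\mathbf X$ (the paper quotes~\cite[Theorem~3.15]{HagenBoundary} for this, whereas you verify it by hand), and surjectivity is deduced from the finite combinatorial Hausdorff distance between $\mathbf X_1$ and $\mathbf X$ guaranteed by $G$-cocompactness of both. Your presentation is somewhat more careful than the paper's on the surjectivity step (explicitly extracting a nested chain via Ramsey and handling both directions of the chain relative to $\mathbf X_1$, rather than asserting a representative boundary set wholly disjoint from $\mathcal H(\mathbf X_1)$ exists), but the underlying idea is the same.
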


\begin{proof}
By~\cite[Theorem~3.15]{HagenBoundary}, the inclusion $\mathbf
X_1\hookrightarrow\mathbf X$ induces a simplicial embedding $\simp\mathbf
X_1\rightarrow\simp\mathbf X$.  It suffices to show that this map is surjective.
 If not, there exists a 0-simplex $v$ of $\simp\mathbf X$ that does not belong
to the image of $\simp\mathbf X_1$.  This means that $v$ is represented by a
minimal boundary set $\mathcal V$ such that, for all $V\in\mathcal V$, the
intersection $V\cap\mathbf X_1=\emptyset$.  We thus have a sequence of
hyperplanes $\{V_i\in\mathcal V\}_{i\geq 0}$ such that for all $i\geq 1$, we
have $V_i\subset\righth V_{i-1}$ and $\mathbf X_1\subset\lefth V_{i-1}$.  Now,
by
cocompactness, there exists $R<\infty$ such that every point of $\mathbf X$ is
of the form $gx$, where $g\in G$ and $x$ lies in the $R$-neighborhood of some
fundamental domain $K\subset\mathbf X_1$ for the action of $G$ on $\mathbf X_1$.
 For any $j\geq 0$, we can choose $gx\in\righth V_1$ to be separated from $V_1$,
and hence from $\mathbf X_1$,
by at least $j$ of the hyperplanes $V_i$.  This contradicts the fact that $G$
stabilizes any regular neighborhood of $\mathbf X_1$.  Thus the embedding
$\simp\mathbf X_1\rightarrow\simp\mathbf X$ is surjective.
\end{proof}

Lemma~\ref{lem:boundaryofcore} will be used in conjunction
with~\cite[Proposition~3.5]{CapraceSageev} in the following way: if we wish to
make a statement about $\simp\mathbf X$, where $\mathbf X$ admits a proper,
cocompact action, then there is no harm in passing to a convex, cocompact,
essential subcomplex.

\subsection{Limit simplices, limit sets, and the visual boundary}\label{sec:limitvisual}
In this section, $\mathbf X$ is a CAT(0) cube complex admitting a proper, cocompact action by a group $G$.  Let $\visual\mathbf X$ denote the visual boundary of $(\mathbf X,\dtwo)$, endowed with the cone topology.  For a geodesic ray $\gamma\subset\mathbf X$, we denote by $[\gamma]$ the point of $\visual\mathbf X$ represented by $\gamma$.  It is shown in~\cite[Section~3]{HagenBoundary} that, when $\mathbf X$ is fully visible, there is a surjection $f\co\visual\mathbf X\rightarrow\simp\mathbf X$ such that, if $\gamma$ is a CAT(0) geodesic and $u$ is the simplex of $\simp\mathbf X$ represented by $\mathcal H(\gamma)$, then $f([\gamma])\in u$.

In the interest of an explicit, self-contained account, we now describe the map $f\co\visual\mathbf X\rightarrow\simp\mathbf X$ when $\mathbf X$ is a fully visible CAT(0) cube complex admitting a proper, cocompact action by some group $G$.  Fix a base 0-cube $x_o$, and choose for each $[\gamma]\in\visual\mathbf X$ a CAT(0) geodesic ray $\gamma$ representing $[\gamma]$, with $\gamma(0)=x_o$.  Let $u_{[\gamma]}$ be the simplex of $\simp\mathbf X$ represented by $\mathcal H(\gamma)$, which is easily seen to be a boundary set.  Note that if $\gamma'$ fellow-travels with $\gamma$, then $|\mathcal H(\gamma)\triangle\mathcal H(\gamma)|<\infty$, whence $u_{[\gamma]}=u_{[\gamma']}$.  Hence $u_{[\gamma]}$ is well-defined.  Moreover, every simplex $u$ of $\simp\mathbf X$ satisfies $u=u_{[\gamma]}$ for some $\gamma\in\visual\mathbf X$, by full visibility of $\mathbf X$.

If $[\gamma]$ has the property that $\mathcal H(\gamma)$ is a minimal boundary set, then $u_{[\gamma]}$ is a 0-simplex, and we let $f([\gamma])=u_{[\gamma]}$.

Next, let $\gamma$ be a combinatorial geodesic ray with $\gamma(0)=x_o$ and $\mathcal H(\gamma)$ a representative set for a $d$-simplex $u$ of $\simp\mathbf X$, with $d\geq 2$.  By Proposition~\ref{prop:simplicesflats}, there exists an isometrically embedded maximal flat orthant $Y\subset\mathbf X$ with $|\mathcal H(\gamma)-\mathcal H(\gamma)\cap\mathcal H(Y)|<\infty$, so that the cubical convex hull $\widehat Y$ has the property that the inclusion $\widehat Y\rightarrow\mathbf X$ induces the inclusion $\simp Y\cong u\hookrightarrow\simp\mathbf X$.

Choose a geodesic ray $\sigma\subset Y$ such that $\mathcal H(\sigma)$ and $\mathcal H(\gamma)$ have finite symmetric difference, and such that $\sigma(0)$ is the image of the origin under $[0,\infty)^D\cong Y\hookrightarrow\mathbf X$.  Let $\gamma_0,\ldots,\gamma_D$, with $D\geq d$, be a collection of CAT(0) geodesic rays such that $\mathbf Y=\gamma_0\times\ldots\times\gamma_D$, so that $u$ is spanned by the 0-simplices $f([\gamma_0]),\ldots,f([\gamma_D])$.  Then $\sigma$ is determined by a unit vector $(\alpha_i)_{i=0}^D$, where $\alpha_i$ is the projection in $\mathbf Y$ of $\gamma(1)$ to $\gamma_i$.  Let $f([\gamma])=f([\sigma])$ be the point $\sum_{i=0}^D\alpha_if([\gamma_i])$.  Note that this is well-defined: if $\gamma'$ fellow-travels with $\gamma$, then $|\mathcal H(\gamma')\triangle\mathcal H(\sigma)|<\infty$.

The map $f$ is surjective, by construction, and has the additional property that if $\mathcal H(\gamma)$ represents a simplex $u\subset\simp\mathbf X$, then $f([\gamma])\in u$, and if $f([\gamma])\in u$ for some simplex $u$, then $\mathcal H(\gamma)$ represents $u$ or one of its faces.  (A priori, for $f$ to be injective requires that any two geodesic rays representing the same 0-simplex of $\simp\mathbf X$ fellow-travel, and so there are in general many orthants that are coarsely inequivalent but represent the same simplex; each is coarsely equivalent to some orthant in the convex hull of any of them, however.  This explains the failure of $f$ to be injective; see~\cite[Proposition~3.37]{HagenBoundary}.)

\begin{defn}[Limit simplex, limit set]\label{defn:limitsequence}
Let $H\leq G$.  The simplex $a\subseteq\simp\mathbf X$ is a \emph{limit simplex} for the action of $H$ on $\mathbf X$ (and on $\simp\mathbf X$) if for some (and hence any) 0-cube $x\in\mathbf X$, there exists a sequence $(h_i\in H)$ such that the set of hyperplanes $V$ such that $V$ separates $h_ix$ from $x$ for all but finitely many $i$ is a boundary set representing $a$.  The \emph{limit complex} for $H$ is the smallest subcomplex that contains every limit simplex.

A point $p\in\mathbf X\cup\visual\mathbf X$ is in the \emph{limit set} of $H$ if for some (and hence any) $x\in\mathbf X$, there exists $(h_i\in H)_{i\geq 0}$ such that $h_ix$ converges to $p$ in the cone topology.
\end{defn}

The following lemma relates limit sets (which live in the visual boundary) to limit complexes (which live in the simplicial boundary).

\begin{lem}\label{lem:visualboundary}
Let $H\leq G$ and let $\mathbf X$ be finite-dimensional, locally finite, and fully visible, and let $u\subset\simp\mathbf X$ be a simplex.  If $f^{-1}(u)\subset\visual\mathbf X$ is contained in the limit set of $H$, then $u$ is contained in a limit simplex for $H$.
\end{lem}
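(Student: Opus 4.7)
The plan is to realize $u$ as the simplex encoded by a judiciously chosen $H$-orbit sequence asymptoting to a generic point of $f^{-1}(u)$.  First, by full visibility together with Proposition~\ref{prop:simplexflat} and Proposition~\ref{prop:simplicesflats}, I would fix a CAT(0) orthant $\mathbf O\subseteq\mathbf X$ whose apex is a chosen base 0-cube $x_o$ and with $\mathcal H(\mathbf O)$ representing $u$.  Letting $\alpha$ be the interior diagonal CAT(0) ray of $\mathbf O$ yields $\mathcal H(\alpha)\sim\mathcal H(\mathbf O)$, and the point $p:=f([\alpha])$ lies in the interior of $u$.  By hypothesis, $p$ is in the limit set of $H$, so I may pick $(h_i)\subset H$ with $h_ix_o\to p$ in the cone topology.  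Define
\[\mathcal V:=\{V\in\mathcal H:V\text{ separates }x_o\text{ from }h_ix_o\text{ for all but finitely many }i\}\]
and aim to show that $\mathcal V$ is a boundary set representing $u$.

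The four boundary-set axioms for $\mathcal V$ are routine.  Unidirectionality follows because, for any fixed $V\in\mathcal V$, the hyperplanes $W\in\mathcal V$ lying entirely on the $x_o$-side of $V$ must themselves lie between $x_o$ and $V$, and only finitely many hyperplanes can do so.  Closure under separation uses that if $V_3$ lies between $V_1,V_2\in\mathcal V$, then for $i$ large any $\done$-geodesic from $x_o$ to $h_ix_o$ crosses all three.  The absence of facing triples exploits the fact that for such a triple the three ``outer'' halfspaces have empty common intersection, precluding a common $h_ix_o$.  Infiniteness will follow once the containment $\mathcal H(\alpha)\subseteq\mathcal V$ up to finite symmetric difference is established.

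The heart of the argument is the identification $\mathcal V\sim\mathcal H(\alpha)$, which forces $\mathcal V$ to represent $u$.  For $V\in\mathcal H(\alpha)$, crossing $\alpha$ at some time $t_V$, cone convergence forces the CAT(0) segment $[x_o,h_ix_o]$ to pass near $\alpha(t_V+1)$ for large $i$, so $h_ix_o$ lies in the halfspace of $V$ opposite to $x_o$, giving $V\in\mathcal V$.  Conversely, if $V\in\mathcal V\setminus\mathcal H(\alpha)$ then $\alpha$ lies entirely in the halfspace of $V$ containing $x_o$, while $h_ix_o$ eventually lies in the opposite halfspace; combined with $h_ix_o\to p$, this forces $p\in\visual V$.

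I expect the hard part to be showing that $\{V:p\in\visual V,\ V\notin\mathcal H(\alpha)\}$ is finite.  When $u$ has positive dimension and $p$ is in its interior, $p$ is generic with respect to the walls of $\mathbf O$, so I would argue that only finitely many hyperplanes can asymptote to $p$ without crossing $\mathbf O$.  In the degenerate case when $u$ is 0-dimensional there is no freedom in choosing $p$, and a more delicate argument is needed: one should use the accumulation of the $H$-orbit at $p$ to extract a sequence in $H$ whose orbit remains at bounded combinatorial distance from $\alpha$, thereby confining $\mathcal V$ to $\mathcal H(\alpha)$ up to finite difference.
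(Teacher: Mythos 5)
Your overall strategy matches the paper's: pick a geodesic ray $\alpha$ emanating from a base 0-cube $x_o$ with $\mathcal H(\alpha)$ representing $u$, use the hypothesis to extract $(h_i)\subset H$ with $h_ix_o\to f([\alpha])$ in the cone topology, set $\mathcal V$ equal to the hyperplanes that eventually separate $x_o$ from $h_ix_o$, and show $|\mathcal V\triangle\mathcal H(\alpha)|<\infty$. The containment $\mathcal H(\alpha)\lesssim\mathcal V$ works as you describe. The problem is the converse containment, which you explicitly leave unfinished, and the route you sketch for it does not work. You reduce to the finiteness of $\{V:p\in\visual V,\ V\notin\mathcal H(\alpha)\}$ and hope that genericity of $p=f([\alpha])$ in the interior of a positive-dimensional simplex forces this set to be finite. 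It does not: in $\Euclidean^3$ with its standard cubulation, let $\alpha$ be the diagonal ray in $\{z=0\}$, so $p$ lies in the interior of the 1-simplex spanned by the positive $x$- and $y$-direction 0-simplices. Then every horizontal hyperplane $\{z=n\}$, $n\geq 1$, is disjoint from $\alpha$, yet $p\in\visual\{z=n\}$ since that hyperplane is a copy of $\Euclidean^2$ parallel to the direction of $\alpha$. So the set you want to bound is infinite, and the positive-dimensional case is no less delicate than the $0$-dimensional one.

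The paper closes the same gap by controlling the orbit sequence rather than the ambient geometry: it asserts a constant $\kappa$ so that, along a subsequence, $\dtwo(h_{i_n}x_o,\gamma(n))<\kappa$. Then any $V\in\mathcal V\setminus\mathcal H(\gamma)$ separates $h_{i_n}x_o$ from $\gamma(n)$ for all large $n$, so at most $\kappa$ such $V$ can exist, and the symmetric count in the other direction is bounded identically. Precisely this quantitative control on how far $h_ix_o$ can stray from the ray is what your argument is missing. Without it, $\mathcal V$ may genuinely pick up infinitely many hyperplanes not crossing $\alpha$: in the $\Euclidean^3$ example above, a sequence $h_ix_o$ whose $z$-coordinate drifts to infinity sublinearly in the $xy$-distance still converges to $p$ in the cone topology but eventually crosses every $\{z=n\}$. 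Your second, $0$-dimensional sketch gestures at the right idea (extracting a sub-orbit at bounded combinatorial distance from $\alpha$), but you neither carry it out nor explain why such a sub-orbit exists, and you incorrectly suggest it is only needed in the degenerate case.
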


\begin{proof}
Choose a (combinatorial or CAT(0)) geodesic ray $\gamma$ such that $\mathcal H(\gamma)$ represents the simplex $u$; this is possible since $\mathbf X$ is fully visible.  Since $f^{-1}(u)$ is contained in the limit set of $H$, there is a sequence $(h_i\in H)_{i\geq 0}$ such that $h_ix_o$ converges to $[\gamma]\in\visual\mathbf X$, where $x_o=\gamma(0)$.  Hence there exists $K\geq 0$ such that for all sufficiently large $i$, there exists $n_i$ such that $\ddot d(\gamma(n_i),p_i)\leq K$, where $p_i$ is the projection of $h_ix_o$ onto the (CAT(0)-metric) sphere of radius $n_i$ about $x_o$ (and $\ddot d(h_ix_o,x_o)\geq n_i$).

Let $\mathcal U$ be the set of hyperplanes $W$ such that $W$ separates $x_o$ from $h_ix_o$ for all but finitely many values of $i$.  Write $\mathcal U=\mathcal U_1\sqcup\mathcal U_2$, where $\mathcal U_1$ is the set of hyperplanes in $\mathcal U$ that separate $p_i$ from $x_o$ for all but finitely many $i$.  Since $p_i$ lies on the geodesic from $h_ix_o$ to $x_o$, we note that each $V\in\mathcal U_1$ separating $p_i$ from $x_o$ also separates $h_ix_o$ from $x_o$.

Observe that $|\mathcal U_1-\mathcal H(\gamma)|\leq K$.  Indeed, a hyperplane in $\mathcal U_1-\mathcal H(\gamma)$ must separate $\gamma(n_i)$ from $p_i$ for all sufficiently large $i$.

Conversely, suppose that $\mathcal H(\gamma)-\mathcal U_1$ is infinite.  Each $V\in\mathcal H(\gamma)-\mathcal U_1$ fails to separate $p_i$ from $x_o$ for arbitrarily large values of $i$, while separating $x_o$ from $\gamma(n_j)$ for all but finitely many $j$.  Thus $V$ separates $p_i$ from $\gamma(n_i)$ for arbitrarily large values of $i$.

Suppose $V_1,V_2,\ldots$ are hyperplanes with this property, numbered according to the order in which one encounters them while traveling along $\gamma$.  Let $M$ be the Ramsey number $R(\dimension\mathbf X+1,K+1)$.  Then $\{V_1,\ldots,V_M\}$ contains either $\dimension\mathbf X+1$ pairwise-crossing hyperplanes, which is impossible, or $K+1$ pairwise-disjoint hyperplanes.  In the latter case, renumber so that $V_1,\ldots,V_{K+1}$ are pairwise-disjoint hyperplanes.  Since $\gamma$ is a geodesic, each $V_j$ either separates $p_i$ from $\gamma(n_i)$ for all sufficiently large $i$, or $V_j$ separates $p_i$ from $p_{i'}$ for infinitely many values of $i,i'$.  However, if $V_j,V_{j'}$ are both hyperplanes of the latter type then, since they cannot cross, they separate $p_i,\gamma(n_i)$ for the same values of $i$.  Hence there exists $i$ such that $K+1$ hyperplanes separate $p_i$ from $\gamma(n_i)$, which is impossible.  Hence $|\mathcal H(\gamma)-\mathcal U|<\infty$.

Thus $|\mathcal U_1\triangle\mathcal H(\gamma)|<\infty$, i.e. $\mathcal U_1$ and $\mathcal H(\gamma)$ represent the same simplex $u$ of $\simp\mathbf X$.  Suppose that $V\in\mathcal U_1$ and $W\in\mathcal U_2$.  Then there exists $I$ such that for all $i\geq I$, the points $x_o$ and $h_ix_o$ are separated by $W$, but there are infinitely many $i$ such that $W$ separates $h_ix_o$ from $p_i$.  Hence, since $V$ separates $x_o$ from $h_ix_o$, all but finitely many such $V$ cross $W$.

Hence, if $\mathcal U_2$ is finite, then $u$ is a limit simplex for $H$.  Otherwise, by local finiteness, $\mathcal U_2$ contains a boundary set $\mathcal U'_2$ representing a simplex $v$ of $\simp\mathbf X$ such that $u\star v$ is also a simplex of $\simp\mathbf X$.  By definition, $u\star v$ is a limit simplex for $H$.
\end{proof}

\section{Relatively hyperbolic cubulated groups}\label{sec:relhyp}
Before studying cocompactly cubulated groups that are thick, we consider a
natural class of such groups that are not thick, namely those that are
relatively hyperbolic.  We saw in Proposition~\ref{prop:disconnectedhyperbolic} that if the infinite, finitely generated group $G$ acts properly and cocompactly on the CAT(0) cube complex $\mathbf X$, then $G$ is hyperbolic if and only
if $\simp\mathbf X$ is an infinite set of 0-simplices.  It is natural to ask how this extends to
relatively hyperbolic groups; in this section we shall provide a complete characterization of relatively hyperbolic cocompactly cubulated groups, in terms of the simplicial boundary.

Note that a subset of $\mathbf X^{(0)}$ is quasiconvex in $(\mathbf X,\dtwo)$ if and only if it is quasiconvex in $(\mathbf X^{(1)},\done)$.  Hence in what follows, we sometimes say that $A\subset\mathbf X$ is ``quasiconvex in $\mathbf X$'' to mean that the set of 0-cubes of $A$ is quasiconvex in $\mathbf X^{(0)}$.

\subsection{The simplicial boundary of a relatively hyperbolic cube
complex}\label{sec:relhyptosimp}
Suppose that the group $G$ acts properly and cocompactly on the CAT(0) cube
complex $\mathbf X$, and is hyperbolic relative to a collection $\mathbb P$ of
peripheral subgroups.
Now, each $P\in\mathbb P$ is the stabilizer of a single
vertex in an appropriately chosen fine hyperbolic graph for $(G,\mathbb P)$
(see~\cite{Bowditch97,SageevWiseCore}) and therefore acts on that graph with a quasiconvex orbit.  (The latter condition is called \emph{relative quasiconvexity} in~\cite{SageevWiseCore}.)  By~\cite[Theorem~1.1]{SageevWiseCore},
there exists a convex (and hence CAT(0)) $P$-invariant subcomplex $\mathbf
Y_P\subseteq\mathbf X$.  By~\cite[Theorem~3.15]{HagenBoundary}, the inclusion
$\mathbf Y_P\rightarrow\mathbf X$ induces a simplicial embedding $\simp\mathbf
Y_P\rightarrow\simp\mathbf X$.  Now, if $\mathbf Y,\mathbf Y'$ are convex,
$P$-cocompact subcomplexes, then each lies in a finite neighborhood of the
other, and it follows that $\mathcal H(\mathbf Y)$ and $\mathcal H(\mathbf Y')$
have finite symmetric
difference, so that the images of $\simp\mathbf Y$ and $\simp\mathbf Y'$ in
$\simp\mathbf X$ coincide.  We denote by $\mathcal I$ the set of isolated
0-simplices of $\simp\mathbf X$.

\begin{thm}\label{thm:relhyp}
Let $G$ be hyperbolic relative to a collection $\mathbb P$ of peripheral
subgroups, each of which has infinite index in $G$, and suppose that $G$ acts
properly and cocompactly on the CAT(0) cube

complex $\mathbf X$.  Then $\mathcal I\neq\emptyset$ and $\simp\mathbf
X\cong\mathcal I\cup\left(\bigsqcup_{P\in\mathbb P}\simp\mathbf Y_P\right)$.
\end{thm}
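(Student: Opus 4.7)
The plan is to decompose $\simp\mathbf X$ in three stages: first produce isolated $0$-simplices to witness $\mathcal I \neq \emptyset$; then show that every simplex which is not an isolated $0$-simplex must lie in some $g\simp\mathbf Y_P$; and finally verify that this union is disjoint. Throughout I exploit the tree-graded structure of asymptotic cones of relatively hyperbolic groups together with the Caprace--Sageev criteria for rank-one isometries of CAT(0) cube complexes.

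The first step produces isolated $0$-simplices. Since each $P \in \mathbb P$ has infinite index and $G$ is relatively hyperbolic, $G$ contains elements that are loxodromic on the Bowditch boundary (\emph{hyperbolic relative to $\mathbb P$}). After passing to an essential convex subcomplex via Lemma~\ref{lem:boundaryofcore} and \cite[Proposition~3.5]{CapraceSageev}, the rank-one criteria of~\cite{CapraceSageev} apply to show that such an element acts as a combinatorially rank-one isometry on $\mathbf X$, so by~\cite[Corollary~B]{CapraceSageev} the endpoints of its combinatorial axis are isolated $0$-simplices of $\simp\mathbf X$.

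The heart of the proof is showing that every simplex $u \subseteq \simp\mathbf X$ which is not an isolated $0$-simplex lies in some $g\simp\mathbf Y_P$. Either $\dimension u \geq 1$, or $u$ is a $0$-simplex that is a face of some higher-dimensional simplex; in either case $u$ is a face of a $1$-simplex $v$. By Proposition~\ref{prop:decomposition}, $v$ is represented by a boundary set of the form $\mathcal U_1 \sqcup \mathcal U_2$, with each $\mathcal U_i$ minimal and every element of $\mathcal U_1$ crossing every element of $\mathcal U_2$. Choose strictly nested sequences $V_1,V_2,\ldots \in \mathcal U_1$ and $W_1,W_2,\ldots \in \mathcal U_2$; the carriers of these hyperplanes assemble into an isometrically embedded combinatorial grid in $\mathbf X^{(1)}$, yielding a $(1,C)$-quasi-isometric embedding of a quarter-plane $[0,\infty)^2 \to (\mathbf X,\dtwo)$. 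Fix a scaling sequence $s = (s_n)$ and ultrafilter $\omega$ and pass to the asymptotic cone $\conexs$. By the Dru\c{t}u--Sapir characterization, $\conexs$ is tree-graded with pieces the ultralimits of peripheral cosets; a flat quarter-plane cannot be separated by a cut-point, so its image lies inside a single piece. Unwinding the ultralimit picks out, for $\omega$-almost every $n$, a peripheral coset $g_n P_n g_n^{-1}$ whose $\mathbf Y$ contains (up to error bounded in $s_n$) the relevant prefix of the grid. Extracting a particular coset $gPg^{-1}$ from this data and unpacking the bound shows that all but finitely many of the $V_i$ and $W_j$ cross $g\mathbf Y_P$. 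Minimality of $\mathcal U_1$ and $\mathcal U_2$ then upgrades this to $\mathcal U_1, \mathcal U_2 \lesssim \mathcal H(g\mathbf Y_P)$, so that $u \subseteq g\simp\mathbf Y_P$.

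Finally I verify disjointness. If $u$ lies in both $g\simp\mathbf Y_P$ and $g'\simp\mathbf Y_{P'}$ with $g\mathbf Y_P \neq g'\mathbf Y_{P'}$, then a common boundary set representing $u$ has finite symmetric difference with subsets of each of $\mathcal H(g\mathbf Y_P)$ and $\mathcal H(g'\mathbf Y_{P'})$; hence the two subcomplexes share an infinite nested chain of hyperplanes and thus unbounded carriers. This contradicts the almost malnormality of $\mathbb P$ combined with convexity of the $\mathbf Y_P$, which forces $g\mathbf Y_P \cap g'\mathbf Y_{P'}$ to be bounded. A parallel application of Step~2 (to a positive-dimensional simplex containing the given vertex, available when $P$ is non-elementary) shows that no truly isolated $0$-simplex of $\simp\mathbf X$ lies in any $g\simp\mathbf Y_P$. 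The main obstacle is the cone-and-return portion of Step~2: converting the asymptotic statement that the flat quadrant lies in a single piece of $\conexs$ into the combinatorial statement that all but finitely many $V_i, W_j$ cross a specific peripheral subcomplex $g\mathbf Y_P$. This is the cubical analogue of Dru\c{t}u--Sapir's theorem that quasi-isometric copies of wide spaces in a relatively hyperbolic group are contained in bounded neighborhoods of peripheral cosets, and the adaptation requires bookkeeping with hyperplanes rather than just with points.
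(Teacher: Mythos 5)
Your proposal follows the same architecture as the paper's proof: rank-one isometries give isolated $0$-simplices; unconstricted (wide) subspaces of a relatively hyperbolic group must lie near peripheral cosets, which places positive-dimensional simplices inside some $g\simp\mathbf Y_P$; and almost-malnormality gives disjointness of the $\simp\mathbf Y_P$. That said, there are several places where your proposal diverges from or falls short of the paper's argument.

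First, to see that $\mathcal I\neq\emptyset$ the paper does not go via loxodromic elements on the Bowditch boundary; it uses the rank-rigidity dichotomy directly: either $G$ contains a rank-one isometry (so $\mathcal I\neq\emptyset$), or $\mathbf X$ is a nontrivial product, in which case $G$ is wide and hence cannot be hyperbolic relative to infinite-index peripherals. Your route is plausible but requires an extra argument that an element which is loxodromic for the peripheral structure actually satisfies the Caprace--Sageev rank-one criterion; this is a nontrivial (though true) point, and the rank-rigidity dichotomy sidesteps it entirely.

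Second, in the core step you reduce to $1$-simplices, but the paper works with maximal simplices, and this distinction matters. A maximal simplex is automatically visible, is represented by an orthant, and once it is shown to lie in some $\simp\mathbf Y_P$, every face is carried along for free. By contrast, your reduction only literally treats $0$- and $1$-simplices (the phrase ``in either case $u$ is a face of a $1$-simplex $v$'' fails when $\dimension u\geq 2$), and pushing a conclusion about a $1$-dimensional face up to a higher-dimensional $u$ requires a fullness argument that you do not supply. There is also a diagonalization issue in your grid construction: Proposition~\ref{prop:decomposition} only gives that cofinitely many elements of $\mathcal U_1$ cross any fixed $W\in\mathcal U_2$, so producing nested chains $V_i,W_j$ that cross \emph{pairwise} requires care. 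The paper avoids all of this by invoking orthants representing maximal simplices. Also, the ``cone-and-return'' bookkeeping you flag as the main obstacle is precisely what is supplied by the results you cite (Dru\c{t}u--Sapir and \cite[Theorem~4.1, Remark~4.3]{BDM}): an unconstricted quasi-isometrically embedded subspace must lie in a bounded neighborhood of a peripheral coset, so no new ``cubical'' adaptation is required.

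Third, and most seriously, your final paragraph asserts that no isolated $0$-simplex of $\simp\mathbf X$ lies in any $g\simp\mathbf Y_P$. This is not claimed by the theorem, it is not needed (the statement is $\simp\mathbf X\cong\mathcal I\cup(\bigsqcup_P\simp\mathbf Y_P)$, with no disjointness between $\mathcal I$ and the $\simp\mathbf Y_P$), and in fact the paper's remark immediately following the theorem says explicitly that $\simp\mathbf Y_P$ ``may contain simplices of $\mathcal I$.'' Your proposed argument for it is also circular as written: you invoke ``a positive-dimensional simplex containing the given vertex,'' but by definition an isolated $0$-simplex is not contained in any positive-dimensional simplex. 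Drop this step entirely.
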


\begin{rem}
Note that $\simp\mathbf Y_P$ may be disconnected, and may contain simplices of
$\mathcal I$.
\end{rem}

\begin{rem}[Metric relative hyperbolicity]
Theorem~\ref{thm:relhyp} holds under more general conditions.  Namely, if $G$
acts properly and cocompactly on a CAT(0) cube complex $\mathbf X$ and there is
a family $\{\mathbf Y_P\}$ of convex subcomplexes such that $\mathbf X=\mathcal
N_{\tau}(\cup_P\mathbf Y_P)$ for some $\tau\geq 0$, no distinct $\mathbf
Y_P,\mathbf Y_Q$ have infinite coarse intersection, and the intersection graph
of the $\tau$-neighborhoods of the $\mathbf Y_P$ is fine and
$\delta$-hyperbolic for some $\delta\geq0$, then $\simp\mathbf X$ decomposes as
in the conclusion of Theorem~\ref{thm:relhyp}.
\end{rem}

\begin{rem}[Limit simplices]
If $a$ is a limit simplex for the action of $P$ on $\mathbf X$, then, fixing $y\in\mathbf Y$, we have a sequence $(p_j\in P)$ such that the set $\mathcal A$ of hyperplanes $H$ that separates $y$ from $p_jy$ for all but finitely many values of $j$ represents $a$.  Each such hyperplane separates two 0-cubes of the $P$-invariant subcomplex $\mathbf Y$, and thus crosses $\mathbf Y$.  Hence $a\subseteq\simp\mathbf Y$.  Thus each $\simp\mathbf Y_P$ contains every limit simplex for the action of $P$ on $\mathbf X$.  This verifies that each hypothesis in Theorem~\ref{thm:relhypconverse} below is necessary.
\end{rem}

\begin{proof}[Proof of Theorem~\ref{thm:relhyp}]
That $\mathcal I\neq\emptyset$ follows from the rank-rigidity
theorem~\cite[Corollary~B]{CapraceSageev} and the fact that the simplex
represented by the boundary set consisting of hyperplanes that cross a sub-ray
of an axis for a rank-one isometry is an isolated 0-simplex.  Otherwise,
$\mathbf X$ decomposes as the product of two unbounded subcomplexes and
$\mathbb P$ consists of $G$ itself.

We first show that, if $P,P'\in\mathbb P$ are distinct, then $\simp\mathbf Y_P$ and $\simp\mathbf Y_{P'}$ have disjoint images in $\simp\mathbf X$.  From this it follows that there is a simplicial embedding $\mathcal I\cup\left(\bigsqcup_{P\in\mathbb P}\simp\mathbf Y_P\right)\hookrightarrow\simp\mathbf X$.

Since $\mathbf Y_P\cap\mathbf Y_{P'}$ is the intersection of convex subcomplexes, it is convex and $P\cap P'$-cocompact, since $\mathbf Y_P$ and $\mathbf Y_{P'}$ are respectively $P$ and $P'$-cocompact.  Since $\mathbb P$ is almost-malnormal, $P\cap P'$ is finite, and $\mathbf Y_P\cap\mathbf Y_{P'}$ is therefore compact and, in particular, crossed by finitely many hyperplanes.  The same is true of the intersection of any uniform neighborhoods of $\mathbf Y_P$ and $\mathbf Y_{P'}$.  In particular, $\mathcal H(\mathbf Y_P)\cap\mathcal H(\mathbf Y_{P'})$ is finite, whence $\simp\mathbf Y_P\cap\simp\mathbf Y_{P'}=\emptyset$, as desired.

Consider a maximal simplex $v$ of $\simp\mathbf X$.  If $v$ is a 0-simplex, then
it belongs to $\mathcal I$, so suppose that the dimension of $v$ is positive.
Let $\mathbf O$ be an orthant in $\mathbf X$ such that $\mathcal H(\mathbf O)$
represents $v$.  It suffices to verify that $\mathbf O$ is coarsely contained in
some $\mathbf Y_P$, for it then follows that $v\subset\simp\mathbf Y_P$ and the
above embedding is surjective.

$\mathbf O$ is a maximal flat orthant, by maximality of $v$, and cannot have infinite
coarse intersection with more than one $\mathbf Y_P$.  Hence either $\mathbf F$
is coarsely contained in some $\mathbf Y_P$, or has finite intersection with
each $\mathbf Y_P$.  The latter case is impossible, since orthants are unconstricted, as shown in Section~\ref{sec:unconstricted}, and hence must lie near a peripheral subset by~\cite{DrutuSapir} and~\cite[Theorem~4.1,~Remark 4.3]{BDM}.  Thus $v$ belongs to a translate of some $\simp\mathbf Y_P$, and the proof is complete.
\end{proof}

When the peripheral subgroups are virtually abelian, we obtain a cubical
analogue of a result of Hruska-Kleiner~\cite[Theorem~1.2.1]{HruskaKleiner} which
states that if $X$ is a CAT(0) space admitting a proper,
cocompact action by a group that is hyperbolic relative to maximal abelian
subgroups, then the Tits boundary of $X$ is isometric to the disjoint union of
isolated points and spheres of various dimensions. This result of Hruska--Kleiner relates to the following:

\begin{cor}\label{cor:relhypabelian}
Let $G$ be hyperbolic relative to a collection $\mathbb P$ of virtually abelian
subgroups of rank at least 2.  Then for any CAT(0) cube complex $\mathbf X$ on
which $G$ acts properly and cocompactly, $\simp\mathbf X$ is the disjoint union
of a discrete set and a set of pairwise-disjoint spherical hyperoctahedra.  If
$G$ is not virtually abelian, each of these sets is infinite.
\end{cor}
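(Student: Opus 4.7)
The plan is to combine Theorem~\ref{thm:relhyp} with the structure of proper cocompact actions of virtually abelian groups on CAT(0) cube complexes. Applying Theorem~\ref{thm:relhyp} (noting that each $P\in\mathbb{P}$ has infinite index in $G$ as soon as $G$ is not itself virtually abelian, and handling the degenerate case $\mathbb{P}=\{G\}$ separately), we obtain
\[
\simp\mathbf{X}\;\cong\;\mathcal{I}\;\sqcup\;\bigsqcup_{P\in\mathbb{P},\,g\in G/P}g\simp\mathbf{Y}_P,
\]
so the corollary reduces to two assertions: (i) each $\simp\mathbf{Y}_P$ is isomorphic to the hyperoctahedron $\mathbb{O}_{n_P}$, where $n_P\geq 2$ is the rank of $P$; and (ii) in the non-virtually-abelian case both $\mathcal{I}$ and the collection $\{g\simp\mathbf{Y}_P\}$ are infinite.

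For (i), fix $P$ and pass to a finite-index subgroup $P'\cong\mathbb{Z}^{n_P}$ still acting properly and cocompactly on $\mathbf{Y}_P$. By \cite[Proposition~3.5]{CapraceSageev} together with Lemma~\ref{lem:boundaryofcore} we may replace $\mathbf{Y}_P$ by an essential convex $P'$-cocompact subcomplex without changing the simplicial boundary. Invoking the product decomposition and rank-rigidity of \cite[Corollary~B]{CapraceSageev}, either some element of $P'$ acts as a rank-one isometry on $\mathbf{Y}_P$, or $\mathbf{Y}_P$ splits equivariantly (after passage to a further finite-index subgroup of $P'$) as a nontrivial product of essential subcomplexes. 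The first alternative is incompatible with abelianness: the centralizer of a rank-one isometry acting cocompactly is virtually cyclic, while $P'$ has rank $n_P\geq 2$. Iterating the splitting on each factor, and descending to finite-index free-abelian subgroups as needed, presents $\mathbf{Y}_P$ as a product of essential irreducible CAT(0) cube complexes each carrying a cocompact action of a nontrivial free-abelian group with no rank-one element. Rank rigidity forces each such factor to be $1$-dimensional, hence a tree, and cocompactness of a $\mathbb{Z}$-action on a tree forces that tree to be a line. Therefore $\mathbf{Y}_P$ is a cubical flat; cocompactness of the $\mathbb{Z}^{n_P}$-action pins its dimension at $n_P$, so $\simp\mathbf{Y}_P\cong\mathbb{O}_{n_P}$, homeomorphic to $\mathbb{S}^{n_P-1}$.

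For (ii), assume $G$ is not virtually abelian. Each $P\in\mathbb{P}$ has infinite index in $G$, so $\{g\simp\mathbf{Y}_P\}_{g\in G/P}$ is an infinite pairwise-disjoint family of hyperoctahedra in $\simp\mathbf{X}$. Infinity of $\mathcal{I}$ follows as in the proof of Theorem~\ref{thm:relhyp}: since $G$ is not virtually abelian, $\mathbf{X}$ cannot split as a product with $\mathbb{P}=\{G\}$, so \cite[Corollary~B]{CapraceSageev} produces a rank-one isometry in $G$ whose infinitely many $G$-conjugate axes contribute infinitely many isolated 0-simplices to $\mathcal{I}$. The main obstacle in executing the plan lies in (i), namely the iterated product decomposition: one must carefully track finite-index subgroups of $P'$ while verifying that each irreducible factor in the resulting splitting is genuinely one-dimensional. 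A cleaner alternative, if available, would be to apply a cube-complex analogue of the flat torus theorem asserting directly that a proper cocompact $\mathbb{Z}^n$-action on a CAT(0) cube complex preserves a cocompact cubical $n$-flat, whose simplicial boundary is $\mathbb{O}_n$ by inspection.
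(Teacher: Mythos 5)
Your overall strategy matches the paper's: apply Theorem~\ref{thm:relhyp} to reduce to identifying each $\simp\mathbf Y_P$, then observe that it must be a hyperoctahedron, then count. The paper, however, disposes of the middle step by citing Theorem~A of~\cite{HagenCrystallographicCubes}, whereas you try to re-derive it from rank-rigidity, and it is there that your argument has a genuine gap.

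The problematic step is ``Rank rigidity forces each such factor to be $1$-dimensional, hence a tree, and cocompactness of a $\mathbb Z$-action on a tree forces that tree to be a line.'' This is not true, and rank-rigidity does not give it to you. An essential, irreducible, cocompact CAT(0) cube complex carrying a proper cocompact $\mathbb Z$-action is quasi-isometric to a line but need not \emph{be} a line: consider the two-dimensional ``thick diagonal'' subcomplex of the standard squaring of $\mathbb E^2$ spanned by the $2$-cubes $[m,m+1]\times[n,n+1]$ with $|m-n|\leq 1$. It is essential, irreducible, $2$-dimensional, and admits a cocompact $\mathbb Z$-action via $(m,n)\mapsto(m+1,n+1)$. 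So the conclusion ``Therefore $\mathbf Y_P$ is a cubical flat'' overreaches, and your proposed ``cleaner alternative'' --- a cubical flat torus theorem producing a $\mathbb Z^n$-invariant cubical $n$-flat --- is not available for the same reason: no $\mathbb Z$-invariant cubical line lives convexly inside the thick diagonal.

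What is true, and what you actually need, is weaker: each irreducible factor of the essential core of $\mathbf Y_P$ is a quasi-line, so its simplicial boundary is exactly two isolated $0$-simplices (maximal simplices are visible by~\cite[Theorem~3.19]{HagenBoundary}, so a positive-dimensional simplex would yield a $2$-flat by Proposition~\ref{prop:simplexflat}, contradicting the quasi-line). Since $\simp$ of a product is the join of the $\simp$'s of the factors (\cite[Theorem~3.30]{HagenBoundary}), iterating gives $\simp\mathbf Y_P\cong\mathbb O_0^{\star n_P}\cong\mathbb O_{n_P-1}$ without ever claiming that the factors are trees. This is, in effect, what the result the paper cites packages. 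Two smaller remarks: your rank-one exclusion via the virtually cyclic centralizer is fine, but you should say explicitly why the \emph{irreducible factor} inherits a proper cocompact action of a free-abelian subgroup (project the $\mathbb Z^n$-action, pass to a finite-index kernel, and so on --- this is where ``tracking finite-index subgroups'' bites). And the paper's proof separately verifies pairwise disjointness of distinct translates $g\simp\mathbf Y_P$, $h\simp\mathbf Y_{P'}$ using almost-malnormality of $\mathbb P$; your proposal takes the $\bigsqcup$ for granted, which is a minor omission since the proof of Theorem~\ref{thm:relhyp} already supplies it.
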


\begin{proof}
By Theorem~\ref{thm:relhyp}, $\simp\mathbf X\cong\mathcal
I\sqcup\left(\bigsqcup_P\simp\mathbf Y_P\right)$.  The set of isolated 0-cubes,
and the set of $\simp\mathbf Y_P$, are obviously infinite if $G$ is not
virtually abelian.  For each maximal virtually abelian subgroup $P$, we have
$\simp\mathbf Y_P\cong \mathbb O_d$, where $d\geq 2$ is the rank of $P$,
by~\cite[Theorem~A]{HagenCrystallographicCubes}.  If $\simp\mathbf Y_{P'}$ and
$g\simp\mathbf Y_P$ have nonempty intersection, containing a common simplex $v$,
then $g\mathbf Y_P\cap\mathbf Y_{P'}$ is coarsely unbounded, since it is crossed by every
hyperplane in a boundary set representing $v$.  But then $gPg^{-1}\cap P'$ is
infinite, contradicting almost-malnormality unless $gPg^{-1}=P'$.  In the latter
case, $g\simp\mathbf Y_P=\simp\mathbf Y_{P'}$.  (If $G$ is virtually abelian,
then the above argument shows that $\simp\mathbf X$ is a single
hyperoctahedron.)
\end{proof}

Since each hyperoctahedron can be given a CAT(1) metric, in which simplices are spherical simplices with side length $\frac{\pi}{2}$, making it isometric to a sphere of the appropriate dimension (see Section~3 of~\cite{HagenBoundary}), Corollary~\ref{cor:relhypabelian} provides a new proof of the Hruska-Kleiner result in the CAT(0) cubical case.

\subsection{Peripheral structures from collections of subcomplexes of
$\simp\mathbf X$}\label{sec:simptorelhyp}
Conversely, one can recover a relatively hyperbolic structure on $G$ from a
decomposition of $\simp\mathbf X$ like that in Theorem~\ref{thm:relhyp}.
Suppose $G$ acts properly and cocompactly on the CAT(0) cube complex $\mathbf X$
and, as before, denote by $\mathcal I$ the set of isolated 0-simplices of
$\simp\mathbf X$.

\begin{defn}[Fine graph]\label{defn:fine}
The graph $\Lambda$ is \emph{fine} if for all $n\in\naturals$ and all edges $e$
of $\Lambda$, there are finitely many $n$-cycles in $\Lambda$ that contain $e$.
\end{defn}

\begin{thm}\label{thm:relhypconverse}
For some $k<\infty$, let $\mathbf S_1,\ldots,\mathbf S_k$ be subcomplexes of
$\simp\mathbf X$, with $P_i=\stabilizer(\mathbf S_i)$, and satisfying all of the
following:
\begin{enumerate}
\item $\simp\mathbf X=\mathcal I'\sqcup G\left(\bigsqcup_{i=1}^k\mathbf
S_i\right)$, where $\mathcal I'\subseteq\mathcal I$.
\item For each $i$, the subcomplex $\mathbf S_i$ contains all limit simplices for the action of $P_i$ on $\simp\mathbf X$.  Equivalently, when $\mathbf X$ is fully visible, each $f^{-1}(\mathbf S_i)$ contains the limit set of $P_i$.
\item For all $1\leq i\leq j\leq k$ and $g,h\in G$, we have $g\mathbf S_i\cap
h\mathbf S_j=\emptyset$ unless $i=j$ and $gh^{-1}\in P_i$.
\item Either $k=1$ and $P_1$ is a finite index subgroup of $G$, or each $P_i$ has infinite index in
$G$.
\item Each $P_i$ is quasiconvex.
\end{enumerate}

Then $G$ is hyperbolic relative to a collection $\{Q_i\}_{i=1}^k$ for which
$Q_i$ is commensurable with $P_i$ for each $i\leq k$.
\end{thm}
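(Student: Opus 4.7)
The plan is to verify Bowditch's graph-theoretic characterization of relative hyperbolicity by constructing a $G$-cofinite, fine, hyperbolic graph whose non-finite vertex stabilizers are commensurable with the $P_i$. The degenerate case $k=1$ with $P_1$ of finite index in $G$ is trivial (any group is hyperbolic relative to a finite-index subgroup of itself), so I assume throughout that each $P_i$ has infinite index.

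First, using hypothesis~(5) together with the cubical superconvex core theorem of Sageev--Wise, I would produce, for each $i$, a convex, $P_i$-invariant, $P_i$-cocompact subcomplex $\mathbf{Y}_i\subseteq\mathbf{X}$. Because $P_i$ acts cocompactly on $\mathbf{Y}_i$, every simplex of the induced simplicial embedding $\simp\mathbf{Y}_i\hookrightarrow\simp\mathbf{X}$ arises as a limit simplex for the $P_i$-action on $\mathbf{X}$, and hence by hypothesis~(2) lies in $\mathbf{S}_i$. Second, I would establish coarse malnormality of the family $\{g\mathbf{Y}_i\}$: for any $R\geq 0$ there exists $D=D(R)<\infty$ such that $\diam\bigl(\mathcal{N}_R(g\mathbf{Y}_i)\cap\mathcal{N}_R(h\mathbf{Y}_j)\bigr)\leq D$ whenever $(g,i)$ and $(h,j)$ are inequivalent in the sense of~(3). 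Otherwise, a pigeonhole on hyperplanes separating 0-cubes in such an unbounded intersection produces a boundary set whose $\sim$-class is a simplex lying simultaneously in $g\mathbf{S}_i$ and $h\mathbf{S}_j$, contradicting~(3).

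Third, I would construct the coned-off graph $\widehat{\mathbf{X}}$ obtained from $\mathbf{X}^{(1)}$ by adjoining a single cone vertex $v_{g,i}$ per translate $g\mathbf{Y}_i$, joined by edges to every 0-cube of $g\mathbf{Y}_i$. The $G$-action on $\widehat{\mathbf{X}}$ is cofinite on edges with finite edge stabilizers, by properness of the action on $\mathbf{X}$; the stabilizer of $v_{g,i}$ is the setwise stabilizer of $g\mathbf{Y}_i$, which contains $gP_ig^{-1}$ with finite index, giving the required commensurable peripherals $Q_i$. Fineness of $\widehat{\mathbf{X}}$ reduces to Step~2 via a bounded-packing argument: short cycles through a fixed edge that enter some $g\mathbf{Y}_i$ at its cone vertex must exit through a $0$-cube lying in a uniformly bounded neighborhood, and only finitely many translates come close to a given cube. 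Hyperbolicity follows in the spirit of Proposition~\ref{prop:disconnectedhyperbolic}: after coning off the $g\mathbf{Y}_i$, every positive-dimensional simplex of $\simp\mathbf{X}$ is subsumed into a cone vertex, so only the isolated $0$-simplices of $\mathcal{I}'$ survive as genuine asymptotic directions, forcing $\widehat{\mathbf{X}}$ to have no asymptotic flats. Bowditch's theorem then concludes.

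The main obstacle is the hyperbolicity of $\widehat{\mathbf{X}}$: the absence of boundary flats must be upgraded to a uniform thin-triangle property in $\widehat{\mathbf{X}}$. This likely requires a direct analysis of CAT(0) geodesic triangles in $\mathbf{X}$ and their behavior under electrification by the $g\mathbf{Y}_i$, using the coarse malnormality of Step~2 to ensure that $\widehat{\mathbf{X}}$-geodesics track $\mathbf{X}$-geodesics outside uniformly bounded neighborhoods of finitely many cosets; any failure of thinness would yield, in the limit, a positive-dimensional simplex of $\simp\mathbf{X}$ that is not coned off, contradicting the decomposition in~(1).
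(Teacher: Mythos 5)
Your overall strategy --- verifying Bowditch's graph-theoretic criterion by building a $G$-cofinite, fine, hyperbolic graph whose infinite vertex stabilizers are commensurable with the $P_i$, using convex $P_i$-cocompact cores $\mathbf Y_i$ and almost-malnormality of the family $\{g\mathbf Y_i\}$ --- tracks the paper's plan closely for the first two steps. The paper's $\mathbf C_i$ is your $\mathbf Y_i$ (convex hull of $P_ix_o$, $P_i$-cocompact by quasiconvexity), and the paper proves almost-malnormality by the same mechanism: an unbounded intersection of translates would yield a boundary set crossing both, hence a shared simplex in distinct $g\mathbf S_i$ and $h\mathbf S_j$, violating hypothesis~(3). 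Where you diverge is in the choice of Bowditch graph: the paper does \emph{not} cone off $\mathbf X^{(1)}$. Instead it fixes a large radius $R$ so that the thickened cores $\mathfrak K_R(g\mathbf C_i)$ cover $\mathbf X$, and takes $\Gamma$ to be the \emph{intersection graph} of this family, with one vertex per translate and an edge whenever two translates intersect. This is a genuinely different graph from your electrification $\widehat{\mathbf X}$, and the difference matters for both of the hard verifications.

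The decisive gap is the hyperbolicity of $\widehat{\mathbf X}$, which you correctly flag as the main obstacle but resolve only heuristically. The claim that ``any failure of thinness would yield, in the limit, a positive-dimensional simplex of $\simp\mathbf X$ that is not coned off'' is not a proof: an electrified graph can fail to be $\delta$-hyperbolic for quantitative reasons, without producing any new asymptotic flat in $\mathbf X$; passing from ``no uncone-d flats'' to ``uniform thin triangles in the cone-off'' is precisely the content that needs an argument, not an appeal. The paper supplies this by (i) enlarging $R$ until each $\mathfrak K_R(\mathbf C_i)$ is \emph{superconvex} (any bi-infinite geodesic is either contained in it or has bounded intersection with every fattened neighborhood of it), so that every sufficiently large isometrically embedded flat square lies entirely inside a single translate; and (ii) an explicit disc-diagram verification of Papasoglu's thin-bigon criterion for $\Gamma$, where superconvexity is what caps the bigon width. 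Nothing in your sketch plays the role of superconvexity, and no concrete mechanism turns ``boundary fully accounted for'' into a uniform thinness constant. Your fineness claim is similarly underdeveloped: in a coned-off graph a short cycle can enter a cone vertex $v_{g,i}$ at one $0$-cube and leave at a distant $0$-cube of $g\mathbf Y_i$, so ``exit through a $0$-cube in a uniformly bounded neighborhood'' does not hold as stated; the paper's fineness proof (Lemma~\ref{lem:fine}) is a separate, nontrivial case analysis by disc diagrams on 3-cycles, 4-cycles, and long cycles in $\Gamma$, rather than a bounded-packing argument. To close the gap you would need to either reconstruct the superconvexity/disc-diagram machinery in the electrified setting, or invoke an external black-box criterion for relative hyperbolicity of cube complexes --- neither of which appears in your sketch.
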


\begin{proof}
First, we assume that each $\mathbf S_i$ contains at least one
positive-dimensional simplex, for otherwise the hypotheses are satisfied by a
proper subset of $\{\mathbf S_i\}_{i=1}^k$.  If the set of $\mathbf S_i$ is
empty, then $\simp\mathbf X$ consists entirely of isolated 0-simplices whence
$G$ is hyperbolic relative to $\{1\}$ by Proposition~\ref{prop:disconnectedhyperbolic}.

In this proof, we use the metric $\done$ unless stated
otherwise.  Observe also that the hypotheses imply that each positive-dimensional component of $\simp\mathbf X$ is contained in a single $g\mathbf S_i$.

\textbf{Representing $P_i$ in $\mathbf X$:}  Fix a 0-cube $x\in\mathbf X$.  For
$1\leq i\leq k$, let $\mathbf C_i$ be the convex hull of the orbit $P_ix$.  The
subcomplex $\mathbf C_i$ is $P_i$-invariant because $\mathbf C_i$ is the largest subcomplex contained in the
intersection of all halfspaces that contain $P_ix$, the set of which is
obviously $P_i$-invariant.  Thus $P_i\leq\stabilizer_G(\mathbf C_i)$.  Each $P_i$ is quasiconvex in $G$ with respect to the action of $G$ on $\mathbf X^{(1)}$.  Hence the subcomplex $\mathbf C_i$ is contained in a uniform neighborhood of the orbit $P_ix$ and is therefore $P_i$-cocompact.  Let $Q_i=\stabilizer_G(\mathbf C_i)$.  Since $\mathbf C_i$ is contained in a finite neighbourhood of $P_ix$, the groups $P_i$ and $Q_i$ are commensurable.

\textbf{Comparing $\simp\mathbf C_i$, $\mathbf S_i$, and verifying almost-malnormality:}  The inclusion $\mathbf C_i\rightarrow\mathbf X$ induces an inclusion $\simp\mathbf C_i\hookrightarrow\simp\mathbf X$ whose image is a subcomplex.  Now, suppose that $a\subseteq\simp\mathbf C_i$ is a maximal, and therefore visible, simplex, and let $\gamma\rightarrow\mathbf C$ be a combinatorial geodesic ray such that $\mathcal H(\gamma)$ represents $a$.  Since $P_i$ acts cocompactly on $\mathbf C_i$, there exists a sequence $\{p_j\in P_i\}$ such that $\gamma$ lies at finite Hausdorff distance from $\{p_jx\}$, and therefore that the set of hyperplanes $H$ such that $H$ separates $x$ from $p_jx$ for all but finitely many values of $j$ has finite symmetric difference with $\mathcal H(\gamma)$.  Hence $a$ is a limit simplex for the action of $P_i$ on $\mathbf X$.

Under the hypothesis that each $\mathbf S_i$ contains every limit simplex for the action of its stabilizer $P_i$, this shows that $\simp\mathbf C_{i}\subseteq\mathbf S_i$.  Similarly, under the hypothesis that $f^{-1}(\mathbf S_i)$ contains the limit set for the action of $P_i$, this implies that $\simp\mathbf C_i\subseteq\mathbf S_i$.  Hence, if $g,h\in G$, then $g\simp\mathbf C_i\cap h\simp\mathbf C_j=\emptyset$ unless $i=j$ and $gh^{-1}\in P_i$.  This implies that the set of hyperplanes crossing $g\mathbf C_i$ and $h\mathbf C_j$ is finite, whence, for any $R\geq 0$, the intersection of the $R$-neighborhood of $g\mathbf C_i$ with that of $h\mathbf C_j$ is compact.

Let $i,j\leq k$ and $h\in G$, and consider $P_i^h\cap P_j$.  If this intersection is infinite, then $\mathbf C_j\cap
(h\mathbf C_i)$ contain unbounded subsets at finite Hausdorff distance, a contradiction.  Thus $\{P_i\}_{i=1}^k$ is an almost-malnormal collection, and the same is true of $\{Q_i\}$.

\textbf{A Bowditch graph:}  For any $R\in\naturals$, and any convex
subcomplex $Y\subset\mathbf X$, let $\mathfrak K_R(Y)$ be the following convex
subcomplex containing $Y$ with the property that every $x\in\mathfrak
K_R(Y)$ satisfies $\done(x,Y)\leq R$.  Let
$t_R=\frac{R}{\dimension\mathbf X}$ and let $\mathfrak K_R(Y)$ be the
convex hull of the $\done$-neighborhood of $Y$ of radius $t_R$.  Then
$Y\subseteq\mathfrak K_R(Y)$, the latter subcomplex is convex and
contained in the uniform $R$-neighborhood of $Y$ as we now quickly
show.  Any geodesic joining $y\in\mathfrak K_R(Y)$ to a closest point
of $Y$ crosses a set of hyperplanes that cross the $t_R$-neighborhood
of $Y$ but do not cross $Y$.  Further, this set of hyperplanes
contains no facing triple, and each clique has cardinality at most
$\dimension\mathbf X$.  Thus, there are at most $\dimension\mathbf X
t_r=R$ hyperplanes in the set, since otherwise we would have a
contradiction as we would obtain a nested set of more than $t_R$
hyperplanes separating $y$ from $Y$ and crossing $\mathcal
N_{t_R}(Y)$.

Since $G$ acts cocompactly, there exists $R<\infty$ such that
$\bigcup_iG\mathfrak K_R(\mathbf C_i)=\mathbf X$.  Fixing such an $R$, let
$\Gamma$ be the intersection graph of the collection of subspaces $\mathfrak
K_R(\mathbf C_i)$ and all of their translates.  More precisely, $\Gamma$ has a
vertex for each $\mathfrak K_R(g\mathbf C_i)$ and exactly one edge joining
$\mathfrak K_R(g\mathbf C_i)$ to $\mathfrak K_R(h\mathbf C_j)$ if and only if
$g\mathbf C_i\neq h\mathbf C_j$ and $\mathfrak K_R(g\mathbf C_i)\cap \mathfrak
K_R(h\mathbf C_j)\neq\emptyset$.

Since $\mathbf S_i\cap\mathbf S_j=\emptyset$ for $i\neq j$, and $\mathbf X$ is
locally finite, $\mathbf C_i\cap\mathbf C_j$ is compact, and in particular is
crossed by finitely many hyperplanes.  More strongly, the set of hyperplanes
that crosses both $\mathbf C_i$ and $\mathbf C_j$ is finite, since otherwise
$\mathcal H(\mathbf C_i)\cap\mathcal H(\mathbf C_j)$ would contain a boundary
set.  Hence finitely many hyperplanes cross $\mathfrak K_R(\mathbf C_i)\cap
\mathfrak K_R(\mathbf C_j)$, and therefore there exists a compact
convex subcomplex $B$ such that for all $g,h\in G,\,1\leq i,j\leq k$ there
exists $a\in G$ such that $\mathfrak K_R(g\mathbf C_i)\cap \mathfrak
K_R(h\mathbf C_j)\subset aB$.

By construction, $G$ acts by isometries on $\Gamma$, in such a way that the set
of vertex stabilizers is exactly the set of subgroups $Q_i$ and their
conjugates.

\textbf{Edge-stabilizers:}  Almost-malnormality of $\{Q_i\}_i$ implies that the
stabilizers of edges in $\Gamma$ are finite.

\textbf{Cofiniteness:}  There are finitely many $G$-orbits of edges in $\Gamma$.
 To see this, first observe that each $P_i$ acts cocompactly on $\mathfrak
K_R(\mathbf C_i)$.  Also, there are
clearly finitely many $G$-orbits of vertices in $\Gamma$: one for each $\mathbf
C_i$ with $1\leq i\leq k$.

For each vertex $\mathfrak v$ of $\Gamma$ (corresponding to some translate of
some $\mathfrak K_R(\mathbf C_i)$), let $E(\mathfrak v)=\{e_1,\ldots,e_q\}$ be a
set of edges of $\Gamma$ incident to $\mathfrak v$, containing exactly one edge
from each $\stabilizer_G(\mathfrak v)$-orbit.  This set is finite since $\stabilizer(\mathfrak v)$ acts cocompactly on $\mathbf C_i$.  Let $\{\mathfrak
v_1,\ldots,\mathfrak v_k\}$ contain exactly one vertex of $\Gamma$ from each
$G$-orbit.  If $\mathfrak v$ is a vertex and $e$ an incident edge, then
$(\mathfrak v, e)=(g\mathfrak v_i,gpg^{-1}e_j)$, where $g^{-1}e_j\in E(\mathfrak
v_i)$, and $g\in G$, and $p\in\stabilizer_G(\mathfrak v)$.  Thus $(\mathfrak v,
e)=g(\mathfrak v_i,pg^{-1}e_j)$ is a translate of one of the finitely many pairs
$(\mathfrak v_i,e_j)$.  Hence there are finitely many $G$-orbits of edges in
$\Gamma$.

\textbf{Conclusion:} Below we prove $\Gamma$ is fine in Lemma~\ref{lem:fine} and hyperbolic in
Lemma~\ref{lem:qi}.  Accordingly, the action of $G$ on $\Gamma$ satisfies all of the
conditions of~\cite[Definition~2]{Bowditch97} and $G$ is therefore hyperbolic
relative to $\{Q_i\}_{i=1}^k$.
\end{proof}

\begin{lem}\label{lem:fine}
$\Gamma$ is fine.
\end{lem}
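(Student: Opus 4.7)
The plan is to fix an edge $e_0=\{\mathfrak{v}_0,\mathfrak{v}_1\}$ of $\Gamma$ and an integer $n\geq 2$, and show that only finitely many $n$-circuits pass through $e_0$. Two ingredients drive the argument: a local-finiteness statement for vertices of $\Gamma$ whose thickening meets a bounded subset of $\mathbf{X}$, and the bounded-intersection property of adjacent vertices already built into the definition of $\Gamma$.

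For the first ingredient, I claim that for any bounded $K\subset\mathbf X$, only finitely many vertices $\mathfrak u=g\mathbf C_i$ satisfy $\mathfrak K_R(g\mathbf C_i)\cap K\neq\emptyset$. Since each $P_i$ acts cocompactly on $\mathbf C_i$ and is quasiconvex, there is a uniform constant $C$ with $\mathbf C_i\subset\mathcal N_C(P_ix)$; combined with $\mathfrak K_R(g\mathbf C_i)\subset\mathcal N_R(g\mathbf C_i)$, a nonempty intersection with $K$ forces $gP_ix\cap\mathcal N_{R+C}(K)\neq\emptyset$. Since $\mathcal N_{R+C}(K)$ is bounded and $G$ acts properly on $\mathbf X^{(0)}$, finitely many elements of $G$ can carry $x$ into $\mathcal N_{R+C}(K)$, so finitely many cosets $gP_i$ qualify. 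Summing over the finite range of $i$ finishes this step. Recall also, for use below, that each edge intersection $\mathfrak K_R(g\mathbf C_i)\cap\mathfrak K_R(h\mathbf C_j)$ is contained in a translate $aB$ of the fixed compact $B$; in particular all such intersections have diameter at most $\operatorname{diam}(B)$.

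For each $n$-circuit $\mathfrak v_0,\mathfrak v_1,\ldots,\mathfrak v_n=\mathfrak v_0$ through $e_0$, pick a point $p_i\in\mathfrak K_R(\mathfrak v_{i-1})\cap\mathfrak K_R(\mathfrak v_i)$ for each $i$. The pair $(p_1,e_0)$ lies in a fixed compact region $K_0\subset aB$. The crucial task is to prove that all $p_i$ (and hence the subcomplexes $\mathfrak v_i$) lie in a bounded region $K_n$ of $\mathbf X$ depending only on $n$, $e_0$, and the constants $R$, $\operatorname{diam}(B)$. I expect to argue this inductively using gate projections in the cube complex: for each $i$, the gate projection $\pi_{\mathfrak v_0}(\mathfrak v_i)$ is a bounded subcomplex of $\mathfrak v_0$ (contained in the bounded intersection $\mathfrak K_R(\mathfrak v_0)\cap\mathfrak K_R(\mathfrak v_i)$, when nonempty), and the closure of the circuit together with the bounded-intersection diameter constrains the total excursion of the projected trace on $\mathfrak v_0$. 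Iterating this projection argument around the circuit, and using the median structure of $\mathbf X$ to control how far a single leg in $\mathfrak{K}_R(\mathfrak v_i)$ can move a chosen point between two bounded intersection regions, confines all $p_i$ to the desired $K_n$.

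Once $K_n$ is obtained, the first step applied to $K=K_n$ gives finitely many possible values for each $\mathfrak v_i$, hence finitely many $n$-tuples $(\mathfrak v_2,\ldots,\mathfrak v_{n-1})$, and thus finitely many $n$-circuits through $e_0$. The hard part will clearly be the confinement to $K_n$: the bound must not accumulate uncontrollably around the circuit, and the almost-malnormality of $\{P_i\}$ (already exploited above to guarantee bounded edge intersections) is what prevents such accumulation, since it forbids long coarse coincidences between pairs of distinct vertex subcomplexes. I expect any attempt to make this inductive confinement precise will hinge on a quantitative coarse bounded-intersection estimate between arbitrary pairs of vertex subcomplexes, rather than just adjacent ones.
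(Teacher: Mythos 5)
Your plan has the right scaffolding --- the local finiteness claim from properness is correct and cleanly argued, and you correctly observe that everything reduces to confining an $n$-circuit through a fixed edge to a bounded region $K_n$ --- but the confinement step is exactly where you stop proving things and start announcing expectations, and that is the entire content of the lemma. The bounded-diameter of each \emph{adjacent} intersection $\mathfrak K_R(\mathfrak v_{i-1})\cap\mathfrak K_R(\mathfrak v_i)$, which is all you have in hand, places no bound whatsoever on the distance inside $\mathfrak K_R(\mathfrak v_i)$ from the entry region $\mathfrak K_R(\mathfrak v_{i-1})\cap\mathfrak K_R(\mathfrak v_i)$ to the exit region $\mathfrak K_R(\mathfrak v_i)\cap\mathfrak K_R(\mathfrak v_{i+1})$; a single leg of the circuit could a priori wander arbitrarily far, and the gate-projection language does not change this. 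You do eventually name the missing ingredient --- ``a quantitative coarse bounded-intersection estimate between arbitrary pairs of vertex subcomplexes, rather than just adjacent ones'' --- but you neither establish that such a bound exists nor show how to use it, and without both, $K_n$ is never produced.

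For comparison, the paper supplies exactly these two pieces. It proves a uniform constant $\xi$ bounding $|\mathcal H(A)\cap\mathcal H(A')|$ for \emph{any} pair of distinct vertex subcomplexes $A,A'$, using cocompactness of the vertex stabilizers together with the hypothesis that distinct translates of the $\mathbf C_i$ have disjoint simplicial boundaries (almost-malnormality by itself is not enough; disjointness of the boundary subcomplexes is what feeds in here). It then bounds each leg by a disc-diagram argument: for a minimal diagram bounded by geodesics $\rho_0,\dots,\rho_p$, every dual curve emanating from $\rho_\ell$ must terminate on some non-adjacent $\rho_{\ell'}$, and with only $p-2$ possible destinations pigeonholing against $\xi$ gives $|\rho_\ell|\le (p-2)\xi$. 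If you want to avoid disc diagrams, you could instead observe that any hyperplane $H$ separating $p_i$ from $p_{i+1}$ must be re-crossed by the rest of the circuit before returning to $p_i$, and hence $H$ crosses some other convex $\mathfrak K_R(\mathfrak v_j)$; pigeonholing with $\xi$ then gives $d(p_i,p_{i+1})\le (n-1)\xi$. But in either version the proof of $\xi$ and the per-leg pigeonhole are the load-bearing steps, and your write-up contains neither. As it stands, this is not a proof; it is a correct diagnosis of what a proof must contain, with the central estimates deferred.
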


\begin{proof}
Since $\Gamma$ contains no loops or bigons, every cycle has length at least 3.

\textbf{3-cycles:}  Let $A_0=\mathfrak K_R(g\mathbf C_i)$ and $A_1=\mathfrak
K_R(h\mathbf C_j)$ with $A_0\cap A_1\neq\emptyset$.  Let $e$ be the edge of
$\Gamma$ joining the vertices corresponding to $A_0$ and $A_1$.  If $A_2$ is a
subcomplex corresponding to some other vertex of $\Gamma$, and $A_0\cap
A_2\neq\emptyset$ and $A_1\cap A_2\neq\emptyset$, then $A_0\cap A_1\cap
A_2\neq\emptyset$, since each $A_i$ is convex and CAT(0) cube complexes have the
Helly property.  Now, $A_0\cap A_1$ is compact, and thus contained in some translate $aB$ of $B$.  Hence, for each $A_2$ that intersects $A_0$ and $A_1$, the mutual
intersection $A_0\cap A_1\cap A_2$ lies in $aB$.  In particular, $A_2$ intersects
$aB$.  Hence, by cocompactness, there are only finitely many $A_2$ such that the vertices in $\Gamma$ corresponding to
$A_0,A_1,A_2$ form a 3-cycle.

\textbf{4-cycles:}  As before, let $\{A_0,A_1\}$ be an edge of $\Gamma$.  Let
$A'_0,A'_1$ be vertices of $\Gamma$ (we use the same notation for the
corresponding subcomplexes of $\mathbf X$) such that $\{A_i,A_i'\}$ is an edge
of $\Gamma$ for $i\in\{0,1\}$ and $\{A'_0,A'_1\}$ is an edge of $\Gamma$.

Choose combinatorial geodesic paths $\rho_0,\rho'_0,\rho_1,\rho'_1$ such that
$\rho_i\rightarrow A_i$ and $\rho'_i\rightarrow A'_i$ for $i\in\{0,1\}$ and
$\rho_0\rho_1\rho'_1\rho'_0$ is a closed path in $\mathbf X$.  Let
$D\rightarrow\mathbf X$ be a disc diagram in $\mathbf X$ bounded by
$\rho_0\rho_1\rho'_1\rho'_0$, as in Figure~\ref{fig:finedisc1}.  Assume that $D$
has minimal area among all diagrams with that boundary path, and, moreover,
suppose that the $\rho_i$ and $\rho'_i$ are chosen among geodesic paths in the
required $A_i,A'_i$ in such a way that the resulting disc diagram $D$ is as
small as possible, in the following sense: $(\area(D),|\partial_pD|)$ is as
small as possible, where such pairs are taken in lexicographic order.

Suppose, for the moment, that $|\rho_i|,|\rho'_i|>0$ for each $i$, so that $D$
contains a dual curve emanating from each of the four named subpaths of its
boundary path.  If the dual curve $K$ emanates from $\rho_1$, then $K$ cannot
end on $\rho_1$, since that path is a geodesic.  Also, if $K_1,K_2$ are two dual
curves emanating from $\rho_1$, then they cannot cross, for otherwise, by
convexity of $A_1$, we could modify $\rho_1$ by finding a corner of a square of $A_1$ in the subdiagram bounded by $A_1$ and the arrowed path indicated in Figure~\ref{fig:finedisc1}, leading to a lower-area
diagram.  If $K$ is a leftmost (or rightmost) dual curve emanating from $\rho_1$
and ending on $\rho'_1$ (or $\rho'_0$, if $K$ is rightmost), then any dual curve
emanating from the part of $\rho_1$ subtended by $\rho'_1$ and $K$
(respectively, $\rho'_0$ and $K$) must cross $K$, and this is impossible.  Hence
$K$ is dual to the terminal (respectively, initial) 1-cube $c$ of $\rho_1$ and,
by performing a series of \emph{hexagon moves}
(see~\cite[Section~2]{WiseIsraelHierarchy}),
 we find that $\rho_1$ and $\rho'_1$ (respectively, $\rho_1$ and $\rho'_0$) have
a common 1-cube, namely $c$.  We can thus remove $c$ from $\rho_1,\rho'_1$,
resulting in a new diagram with the required properties, the same area as $D$,
and strictly shorter boundary path.  Since this is a contradiction, we conclude
that every dual curve travels from $\rho_1$ to $\rho'_0$ or from $\rho'_1$ to
$\rho_0$.  Let $\mathcal V$ be the set of hyperplanes corresponding to dual
curves of the former type, and $\mathcal W$ the set of hyperplanes corresponding
to dual curves of the latter type.  (Using this fact, the fact that geodesic
segments cross each hyperplane at most once, and the fact that hyperplanes do
not self-cross, it is easy to see that distinct dual curves in $D$ map to
distinct hyperplanes.)

\begin{figure}[h]
  \includegraphics[width=0.4\textwidth]{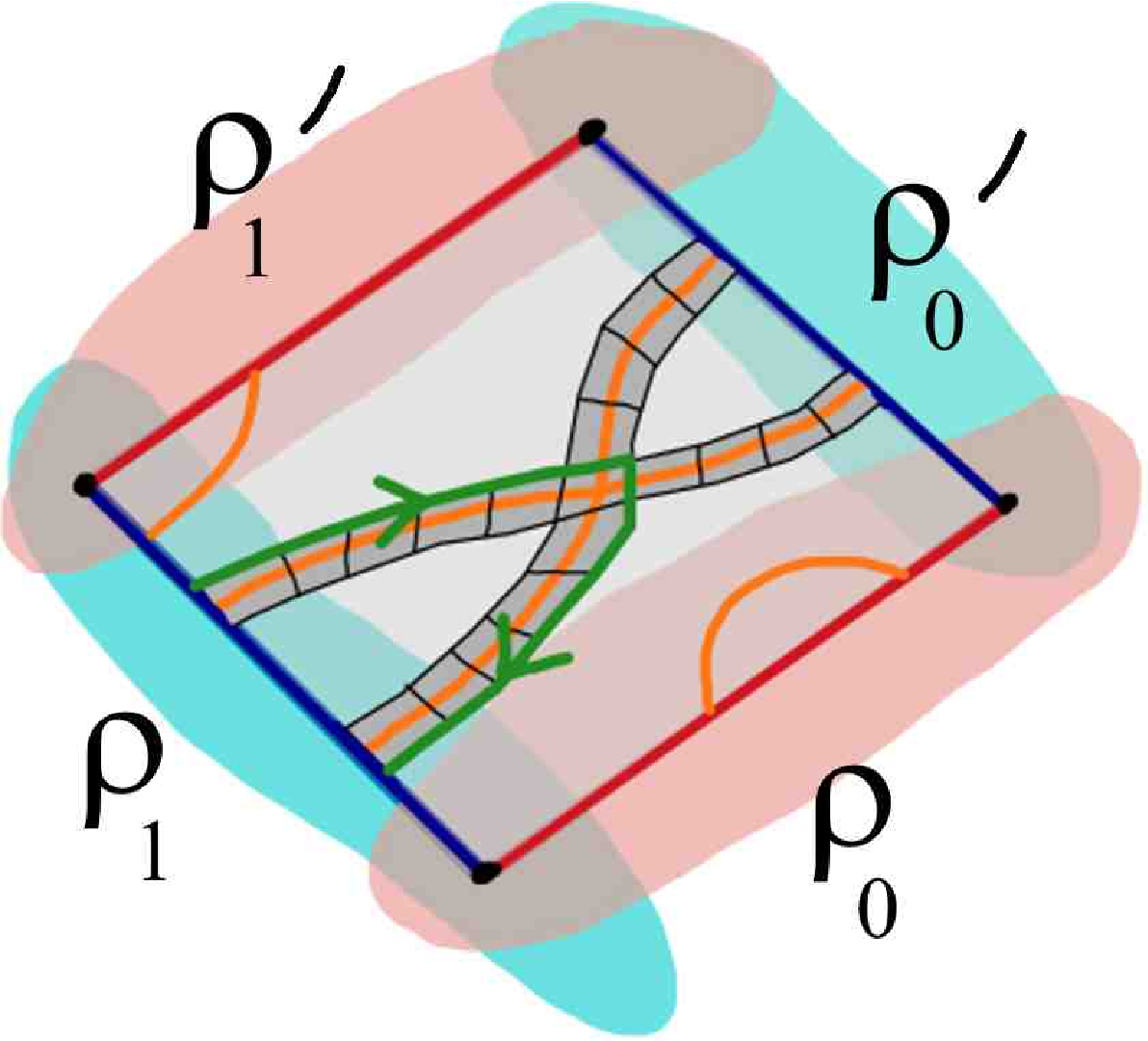}\\
  \caption{Some illegal dual curves, and an illegal crossing, in $D$.}\label{fig:finedisc1}
\end{figure}

This argument shows that $|\rho_1|=|\rho'_0|$ and $|\rho_0|=|\rho'_1|$.  If
$|\rho_1|=0$, then $A_0,A_1,A'_1$ pairwise-intersect, and hence $A'_1$
is one of finitely many vertices of $\Gamma$ that can be the third vertex in a
3-cycle containing the edge $\{A_0,A_1\}$.  But $A'_1,A'_0,A_0$ form a 3-cycle
in $\Gamma$, and thus there are only finitely many possible $A'_0$.  In other
words, if $\Gamma$ contains infinitely many 4-cycles containing the edge
$\{A_0,A_1\}$, then all but finitely many of these 4-cycles lead to disc
diagrams with $|\rho_1|=0$.  An identical argument works for $\rho_0$, and
hence $\mathcal V$ and $\mathcal W$ are nonempty for all but finitely many
4-cycles containing $\{A_0,A_1\}$.

Hence suppose that for all $m\geq 0$, there exist vertices $A'_0=A_0'(m),A'_1=A_1'(m)$ of
$\Gamma$ such that $A_0,A_1,A_1',A_0',A_0$ is a 4-cycle in $\Gamma$, and suppose
that for all $m$, the sets $\mathcal V(m),\mathcal W(m)$ defined above are
nonempty.  Note that $\mathcal V(m)\subseteq\mathcal H(A_1)\cap\mathcal
H(A_0'(m))$ and $\mathcal W(m)\subseteq\mathcal H(A_0)\cap\mathcal H(A_1'(m))$.
Moreover, if $V\in\mathcal V(m)$ and $W\in\mathcal W(m)$, then $V$ and $W$
cross, since their corresponding dual curves in the associated disc diagram
cross.

Next, we show that there exists $\xi<\infty$, depending only on $R$, such that $\max\{|\mathcal V(m)|,|\mathcal W(m)|\}\leq\xi$ for all $m$.  $\mathcal W(m)$ is a set of hyperplanes $H$ that cross both $A_1$ and $A'_0(m)$.  If it were possible to choose $A'_0(m)$ in such a way as to make $H(A_1)\cap\mathcal H(A_0'(m))$ have arbitrarily large cardinality, then since $\stabilizer_G(A_1)$ acts cocompactly on $A_1$, there would exist some $A'_0(m)$ with $H(A_1)\cap\mathcal H(A_0'(m))$ infinite, contradicting the fact that distinct translates of the various $\mathbf C_i$ have disjoint simplicial boundaries.

By cocompactness of the action of $\stabilizer_G(A_0)$, we can assume that $\rho_0(m)\cap\rho_1(m)$ lies
in a fixed compact set in $A_0$, of diameter $d<\infty$, and hence each
$A'_0(m)$ and $A'_1(m)$ come within $d+\xi$ of $\rho_0(1)\cap\rho_1(1)$.
There can only be finitely many such $A_0'(m)$ or $A_1'(m)$, and we conclude
that each edge of $\Gamma$ is contained in at most finitely many distinct
3-cycles or 4-cycles.

(Alternatively, we see that $|\mathcal V(m)|$ and $|\mathcal W(m)|$ must both be
unbounded as $m\rightarrow\infty$, and deduce that there exist infinite sets
$\mathcal V_{\infty}\subset\mathcal H(A_1)$ and $\mathcal
W_{\infty}\subset\mathcal H(A_0)$, with each $V\in\mathcal V$ crossing each
$W\in\mathcal W$.  Thus $\simp\mathbf X$ contains a 1-simplex joining a
0-simplex of $\simp A_1=g\mathbf S_i$ to a 0-simplex of $\simp A_0=h\mathbf
S_j$, and this is impossible.)

\textbf{Large cycles:}  Let $p\geq 4$. Let $A_0,A_1$ be a pair of vertices of $\Gamma$ connected by an edge. Let
$A_2,A_p$ be distinct vertices which are disjoint from $A_0,A_1$, and such that
$\{A_1,A_2\}$ and $\{A_p,A_0\}$ are edges of $\Gamma$.  Let $\sigma$ be an
embedded path of length at least 1 in $\Gamma$ joining $A_2$ to $A_p$ and not
containing $A_0$ or $A_1$; for $2\leq i\leq p$, let $A_i$ denote the subcomplex corresponding to the $(i-1)^{th}$ vertex of $\sigma$.  For each $0\leq i\leq p$, let $\rho_i\rightarrow A_i$ be a combinatorial
geodesic path such that $\rho_0\ldots\rho_p$ is a closed path in $\mathbf X$,
bounding a disc diagram $D$ that is minimal in the same sense as above (the details are identical to the $4$-cycle case).  Then

every dual curve in $D$ travels from some $\rho_i$ to some $\rho_j$ with $i\neq
j$.  For $0\leq\ell\leq p$, let $\mathcal V_{\ell}$ be the set of distinct
hyperplanes corresponding to dual curves emanating from $\rho_{\ell}$.  For each
$\ell$, there exists $\ell'$ such that $|\mathcal V_{\ell}\cap\mathcal
V_{\ell'}|\geq\frac{|\mathcal V_{\ell}|}{p-2}$, since there are $p$ possible
destinations for each of the dual curves emanating from $\rho_{\ell}$
(minimality of $D$ implies that such a
dual curve cannot end on $\rho_{\ell\pm1}$).  Now since $\mathcal
V_{\ell}\subset\mathcal H(A_{\ell})$ and $\mathcal V_{\ell'}\subset\mathcal
H(A_{\ell'})$, we have $|\rho_{\ell}|\leq(p-2)\xi$ for all $\ell$.  As
above, this implies that there are only finitely many paths $\rho$ in $\Gamma$
that combine with $\{A_0,A_1\}$ to make a $(p+1)$-cycle.  Thus $\Gamma$ is fine.
\end{proof}

\begin{lem}\label{lem:qi}
There exists $\delta\in[0,\infty)$ such that $\Gamma$ is $\delta$-hyperbolic.
\end{lem}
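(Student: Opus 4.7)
The plan is to deduce hyperbolicity of $\Gamma$ from a boundary-theoretic property of $\mathbf{X}$ by first comparing $\Gamma$ to a natural cone-off of $\mathbf{X}$. Let $\hat{\mathbf{X}}$ be the graph with vertex set $\mathbf{X}^{(0)}\sqcup\{v_{gA}:g\in G,\,A=\mathfrak K_R(\mathbf C_i)\text{ for some }i\}$ whose edges are the 1-cubes of $\mathbf{X}^{(1)}$ together with an edge from each $v_{gA}$ to each 0-cube of $gA$. Because $\bigcup_{g,i}g\mathfrak K_R(\mathbf C_i)=\mathbf{X}$ and because each piece in the cover is acted on cocompactly by its stabilizer, the map $v_{gA}\mapsto \mathfrak v_{gA}\in\Gamma$ extends to a quasi-isometry $\hat{\mathbf{X}}\to\Gamma$. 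It therefore suffices to show that $\hat{\mathbf{X}}$ is $\delta$-hyperbolic for some $\delta\geq 0$.

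I would prove this by contradiction, in the spirit of Proposition \ref{prop:disconnectedhyperbolic}. If $\hat{\mathbf{X}}$ were not hyperbolic, then by the insize characterization from \cite{BridsonHaefliger} there would be a sequence of geodesic triangles $T_n$ in $\hat{\mathbf{X}}$ with insize tending to $\infty$. Each geodesic side of $T_n$ lifts to a concatenation of combinatorial $\done$-geodesics in $\mathbf{X}^{(1)}$ glued along pairs of endpoints lying in a common piece $g\mathfrak K_R(\mathbf C_i)$ (replacing each subpath $u\!-\!v_{gA}\!-\!w$ of length two by a $\done$-geodesic from $u$ to $w$ through $gA$). Bound this lifted closed path by a disc diagram $D_n\to\mathbf{X}$ chosen to be lexicographically minimal in $(\area(D_n),|\partial_pD_n|)$, as in Lemma \ref{lem:fine}. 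Hexagon-move minimality forces every dual curve in $D_n$ either to travel between distinct sides of $T_n$ or to terminate at a ``jump'' subsegment through a piece. The insize blowing up, combined with uniform bounds (from quasiconvexity of the $P_i$ and the established fact that $\mathcal H(\mathbf C_i)\cap\mathcal H(g\mathbf C_j)$ is finite whenever $g\mathbf C_j\neq\mathbf C_i$) on the number of dual curves that can cross any fixed piece, yields at least $\sim n$ hyperplanes crossing two distinct sides of the lifted $T_n$ but crossing no $g\mathbf C_i$.

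Using local finiteness of $\mathbf{X}$ and $G$-cocompactness to translate each $T_n$ into a common compact fundamental domain, I would extract (after passing to a subsequence) an infinite, unidirectional, separation-closed, facing-triple-free family $\mathcal U\subseteq\mathcal H$, none of whose elements crosses any $g\mathbf C_i$. Thus $\mathcal U$ is a boundary set representing a simplex $u$ of $\simp\mathbf{X}$ disjoint from every $g\simp\mathbf C_i$, and hence from every $g\mathbf S_i$ by the inclusion $\simp\mathbf C_i\subseteq\mathbf S_i$ already proven. Moreover, $u$ cannot lie in $\mathcal I'$, since hyperplanes in $\mathcal U$ separate points of $\mathbf{X}^{(0)}$ belonging to distinct pieces and thus $u$ sits in a positive-dimensional simplex built from cross-pairs coming from the disc-diagram construction. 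This contradicts hypothesis $(1)$ of Theorem \ref{thm:relhypconverse}, completing the proof.

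The main obstacle will be the disc-diagram bookkeeping: guaranteeing that the nested hyperplane chain extracted from the fat triangles is genuinely unidirectional and facing-triple-free, rather than merely infinite, demands a careful adaptation of the hexagon-move arguments in Lemma \ref{lem:fine}, together with a choice of lifts and minimal diagrams that ``forget'' hyperplanes crossing any particular $g\mathbf C_i$ while preserving the large-scale structure of the triangle. A shorter but less self-contained route is to verify the hypotheses of Dru\c{t}u--Sapir's asymptotically-tree-graded characterization using the boundary decomposition $\simp\mathbf{X}=\mathcal I'\sqcup G(\bigsqcup_i\mathbf S_i)$ and the almost-malnormality of $\{Q_i\}$ already established, and invoke their electrification theorem to conclude hyperbolicity of $\hat{\mathbf{X}}$.
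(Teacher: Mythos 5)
Your proposal takes a genuinely different route from the paper's. The paper verifies Papasoglu's thin-bigon criterion directly in $\Gamma$: it first enlarges $R$ to make the pieces $\mathfrak K_R(\mathbf C_i)$ superconvex, deduces a uniform bound $N$ on the size of any flat rectangle not contained in a single piece, and then runs a careful dual-curve analysis of minimal disc diagrams filling the lift of a bigon, concluding that corresponding vertices of the two geodesics in $\Gamma$ stay within distance $\xi N^2+4N+4$. Your approach instead passes to an electrification $\hat{\mathbf X}$, argues by contradiction via fat triangles, and tries to extract a limiting boundary set incompatible with the decomposition of $\simp\mathbf X$. This is a reasonable strategy in outline, but there are three substantive gaps.

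First, the quasi-isometry $\hat{\mathbf X}\to\Gamma$ is asserted without verification. Both graphs fail to be locally finite (cone points and $\Gamma$-vertices generically have infinite degree), so the usual properness-based shortcuts do not apply; one has to check directly that the piece-distance in $\mathbf X$ agrees up to additive error with both metrics. This is plausible but not automatic.

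Second, the extraction of a boundary set $\mathcal U$ from the triangles $T_n$ is the real crux, and the sketch does not resolve it. The $T_n$ are geodesic in $\hat{\mathbf X}$, where each piece has diameter at most $2$; after lifting, the filling diagram $D_n$ lives in $\mathbf X$ and may have enormous diameter concentrated inside a handful of pieces. The claim that ``the insize blowing up \ldots yields at least $\sim n$ hyperplanes crossing two distinct sides of $T_n$ but crossing no $g\mathbf C_i$'' is exactly what needs a proof, and it is not a routine adaptation of Lemma~\ref{lem:fine}: the hexagon-move machinery there controls dual curves relative to the combinatorial boundary of $D_n$, not relative to the $\hat{\mathbf X}$-metric that made $T_n$ fat. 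Similarly, the translation-and-subsequence step needs a basepoint in $\mathbf X$ (not $\hat{\mathbf X}$) that stays a bounded $\done$-distance from the relevant hyperplanes; this is not supplied. You flag this as ``the main obstacle,'' which is accurate, but the obstacle is the content of the lemma, not a detail.

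Third, even granting a positive-dimensional simplex $u$ represented by hyperplanes that cross no $g\mathbf C_i$, this does not directly contradict hypothesis $(1)$ of Theorem~\ref{thm:relhypconverse}. The hypotheses only give $\simp\mathbf C_i\subseteq\mathbf S_i$, not equality, so $u$ could a priori lie in $\mathbf S_i\setminus\simp\mathbf C_i$. To close this you would need the superconvexity argument from the paper's proof (that every simplex of $\mathbf S_i$ consists of hyperplanes crossing $\mathbf C_i$), which is proved for a different purpose there and which you have not invoked. Finally, the suggested alternative via the Dru\c{t}u--Sapir asymptotically tree-graded characterization would require establishing the $\beta$-separation and quasiconvexity axioms relative to the pieces, which is essentially the same amount of work as the present lemma and so does not shorten the proof.
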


\begin{proof}
We will verify that the $G$-cocompact graph $\Gamma$ has thin triangles.

\textbf{Superconvexity:}  The arguments supporting fineness work for any sufficiently large
finite $R$.  In particular, we first show that we can choose $R$ large enough
that $\mathfrak
K_R(\mathbf C_i)$ is \emph{superconvex} for $1\leq i\leq k$, i.e., for any
bi-infinite (combinatorial or CAT(0)) geodesic $\gamma$ in $\mathbf X$, either
$\gamma\subset \mathfrak K_R(\mathbf C_i)$, or $\gamma\cap \mathfrak
K_r(\mathfrak K_R(\mathbf C_i))$ is bounded for all $r\geq 0$.  By
cocompactness, for all $r\geq 0$, there exists $m_r<\infty$ such that
$\diam(\gamma\cap\mathfrak K_{R+r}(\mathbf C_i))\leq m_r$ for any bi-infinite
geodesic $\gamma$ not contained in $\mathfrak K_R(\mathbf C_i)$.

To make this choice, suppose that for all $R\geq 0$, there exists a (CAT(0) or
combinatorial)
geodesic ray $\sigma_R$ lying in $\mathfrak K_R(\mathbf C_i)$, with every point
of $\sigma_R$ at distance at least $R-1$ from $\mathbf C_i$.  Applying
cocompactness and a standard disc diagram argument shows that, in this
situation, there is a boundary set $\mathcal U\subset\mathcal H(\mathbf C_i)$,
representing a simplex $u$ of $\mathbf S_i$, and a boundary set $\mathcal
V\subset\mathcal H-\mathcal H(\mathbf C_i)$ representing a simplex $v$ that is
adjacent in $\simp\mathbf X$ to $u$.  But $v\not\subset\mathbf S_i$, since every
simplex of $\mathbf S_i$ is represented by a boundary set consisting of
hyperplanes crossing $\mathbf C_i$.  Hence $v$ lies in some $\mathbf S_j$ that
differs from and intersects $\mathbf S_i$, a contradiction.

\textbf{Non-peripheral rectangular discs:}  Convexity and superconvexity of
$\mathfrak K_R(\mathbf C_i)$ together imply that any isometric flat $\mathbf
F\subset\mathbf X$ lies entirely inside some $\mathfrak K_R(g\mathbf C_i)$.
Cocompactness then implies that there exists $N$ such that if
$D\rightarrow\mathbf X$ is a combinatorial isometric embedding of the CAT(0)
cube complex $[0,m]^2$, then either $m < N$ or the image of $D$ is contained
in exactly one $\mathfrak K_R(g\mathbf C_i)$.

\textbf{Non-peripheral strips:}  $N$ and $R$ can be chosen so that if there exists a subspace $\mathfrak K_R(\mathbf C)$ corresponding to a vertex of $\Gamma$ and an isometrically embedded rectangle $S\cong[0,a]\times[0,b]\subset\mathbf X$ with $[0,a]\times\{0\}\subset\mathfrak K_R(\mathbf C)$ and $a\geq N$, then $S\subset\mathfrak K_R(\mathbf C)$.  This follows from superconvexity of $\mathfrak K_R(\mathbf C)$ and cocompactness of the action of its stabilizer.

\textbf{Representing geodesics in $\Gamma$:}  Let
$\gamma:[0,T]\rightarrow\Gamma$ be a geodesic segment.  For $0\leq i\leq T$, let $A_i=\gamma(i)$ be the $i^{th}$ vertex.  We also denote by $A_i$ the corresponding subcomplex $\mathfrak K_R(\mathbf C)$ of $\mathbf X$.  A combinatorial piecewise-geodesic $\rho$ is said to \emph{represent} the geodesic $\gamma$ in $\Gamma$ if $\rho=\rho_0\rho_1\ldots\rho_{T-1}$, where $\rho_i$ is a combinatorial geodesic of $A_i$ for $0\leq i\leq T-1$.

\textbf{Properties of projection to $\Gamma$:}  The remainder of the proof requires establishing three claims.  We note that there is a map $\mathbf X\rightarrow\Gamma$ sending each point to the vertex corresponding to the vertex corresponding to some $\mathfrak K_R(\mathbf C)$ containing it.  (There are many choices of such a map and we choose one arbitrarily.  Although we don't use this fact, these maps are coarsely the same, since any point of $\mathbf X$ lies in a uniformly bounded number of subcomplexes $\mathfrak K_R(\mathbf C)$.)  Below, we discuss images of paths under this map.  We note that these images need not be paths, but nevertheless are geometrically well-behaved in the following ways.

\renewcommand{\qedsymbol}{\ensuremath{\blacksquare}}

\begin{claim}\label{claim:bigon_thin}
Let $\sigma'\sigma^{-1}$ be a geodesic bigon in $\mathbf X$.  Then there exists $\delta'$ such that the image of $\sigma$ is contained in the $\delta'$-neighborhood of the image of $\sigma'$ and vice versa.
\end{claim}

\begin{proof}
Let $D\rightarrow\mathbf X$ be a minimal-area disc diagram with boundary path $\sigma'\sigma^{-1}$.  Since $\sigma,\sigma'$ are geodesics, every dual curve in $D$ starts on $\sigma$ and ends on $\sigma'$.  Choose $x\in\sigma$ and $x'\in\sigma'$.  Let $\mathcal L$ be the set of dual curves starting on $\sigma$ to the left of $x$ and ending on $\sigma'$ to the right of $x'$, and let $\mathcal R$ be the set of dual curves starting on $\sigma$ to the right of $x$ and ending on $\sigma'$ to the left of $x'$.  Then every dual curve in $D$ separating $x,x'$ belongs to one of these sets, whence $$\done(x,x')\leq|\mathcal L|+|\mathcal R|.$$
If either of $\mathcal L$ or $\mathcal R$ has cardinality at most $N$, then $x$ lies at distance at most $N$ from $\sigma$ and $x'$ lies at distance at most $\epsilon N$ from $\sigma'$.  On the other hand, since each dual curve in $\mathcal L$ crosses each dual curve in $\mathcal R$, if $|\mathcal L|,|\mathcal R|\geq N$, then $\mathbf X$ contains an isometric flat rectangle $F$, each of whose sides has length at least $N$, containing $x,x'$.  The rectangle $F$ is contained in some subcomplex $\mathbf C$ corresponding to a vertex of $\Gamma$, whence the images of $x,x'$ can be joined by a path of length $2$ in $\Gamma$ whose middle vertex is $\mathbf C$.  Hence the image of $\sigma$ is contained in the $\delta'$-neighborhood of the image of $\sigma'$, and vice versa, for $\delta'$ depending only on $N$.
\end{proof}

\begin{claim}\label{claim:thin_triangle}
Let $\gamma\gamma'\gamma''$ be a geodesic triangle in $\mathbf X$.  There exists $\delta$ such that the image of any of $\gamma,\gamma',\gamma''$ in $\Gamma$ lies in the $\delta$-neighborhood of the union of the images of the other two paths.
\end{claim}

\begin{proof}
This follows from the fact that $\mathbf X^{(0)}$, endowed with the metric $\done$, is a median space, together with Claim~\ref{claim:bigon_thin}.  Indeed, let $\gamma\gamma'\gamma''$ be a geodesic triangle in $\mathbf X^{(1)}$.  Then there is a combinatorial geodesic triangle $\alpha\alpha'\alpha''$ such that $\alpha\gamma^{-1},\alpha'(\gamma')^{-1},\alpha''(\gamma'')^{-1}$ are geodesic bigons and each of $\alpha,\alpha',\alpha''$ is contained in the union of the other two (each passes through the median of the three endpoints of $\gamma\cup\gamma'\cup\gamma''$).  Hence, by Claim~\ref{claim:bigon_thin}, the image of each of $\gamma,\gamma',\gamma''$ in $\Gamma$ lies in the $\delta=2\delta'$-neighborhood of the union of the other two.
\end{proof}

\begin{claim}\label{claim:hierarchy_path!}
There exists $\mathfrak L$, independent of $\gamma$, such that a representative $\rho$ can be chosen so that its image in $\Gamma$ is contained in the $\mathfrak L$-neighborhood of the image of a geodesic $\sigma$ of $\mathbf X$.
\end{claim}

\begin{proof}
There are several steps:\\

\emph{Strategy:} Suppose that $\gamma$ has a representative $\rho$, so
that $\rho=\rho_0\rho_1\cdots\rho_{T-1}$ is a piecewise-geodesic with
$\rho_j\rightarrow A_j$ for $0\leq j\leq T-1$ that joins $x_0\in A_0$
to $x_{T}\in A_{T-1}\cap A_T$.  Let $\sigma_0$ be a geodesic joining
$x_0$ to $x_T$.  Let $D\rightarrow\mathbf X$ be a minimal-area disc
diagram bounded by $\rho\sigma_0^{-1}$.  Convexity of the $A_j$ implies
that no dual curve starts on $\rho_j$ and ends on $\rho_{j\pm1}$, for
otherwise we could remove backtracks from the boundary path of $D$.
Similarly, no two dual curves emanating from a common $\rho_j$ can
cross, for otherwise convexity of $A_j$ would enable us to modify
$\rho_j$, without changing its endpoints, to obtain a lower-area
diagram.

If no dual curve in $D$ has both ends on $\rho$, then $\rho$ is a
geodesic and the claim holds by setting $\sigma=\rho$.  Hence, we suppose
that $K$ is a dual curve in $D$ that is \emph{outermost} in the sense
that $K$ is dual to two distinct 1-cubes on $\rho$, and the subpath of
$\rho$ subtended by these 1-cubes is not properly contained in a
subpath subtended by two distinct 1-cubes dual to the same dual curve.
If the image of $K$ under the map $D\rightarrow\mathbf
X\rightarrow\Gamma$ is at uniformly bounded Hausdorff distance from
the image of $\rho$, then we can replace the part of $\rho$ between
and including the 1-cubes dual to $K$ by a path in the carrier of $K$,
yielding a new path $\rho'$, whose image is at uniformly bounded
Hausdorff distance from that of $\rho$, but which has strictly fewer
pairs of 1-cubes dual to a common hyperplane.  Finitely many
repetitions of this procedure then yields the desired $\sigma$.  Hence
it suffices to find $\mathfrak L$ such that the $\mathfrak
L$-neighborhood of the image of $K$ in $\Gamma$ contains the image of
$\rho$.\\

\emph{The subdiagram $D'$:}  To this end, suppose that $K$ starts on $\rho_j$ and ends on $\rho_{j'}$, with $|j-j'|>1$.  Let $P$ be a shortest path in $N(K)\subset D$ starting at $N(K)\cap\rho_j$ and ending at $N(K)\cap\rho_{j'}$, with $P$ separated from the subtended part of $\rho$ by $K$.  Let $\rho'$ be the subtended part of $\rho$, so that $\rho'=\rho'_j\rho_{j+1}\cdots\rho_{j'}'$, where $\rho'_j,\rho'_{j'}$ are respectively subpaths of $\rho_j,\rho_{j'}$.  Let $D'\rightarrow\mathbf X$ be the subdiagram of $D$ bounded by $P$ and $\rho'$.  As before, no dual curve travels from $\rho'_j$ to $\rho_{j+1}$, or $\rho_k$ to $\rho_{k\pm1}$ for $j-1\leq k\leq j'+1$, or from $\rho_{j'-1}$ to $\rho_{j'}'$, and no two dual curves emanating from the same named subpath of $P$ cross.  Every dual curve emanating from $P$ ends on $\rho'$, since $D$ has minimal area for its boundary path and therefore contains no bigon of dual curves (see e.g.~\cite{Sageev95,WiseIsraelHierarchy}).  Note that the images of $\rho'_j$ and $\rho'_{j'}$ in $\Gamma$ are at distance at most 1 from the images of $A_j,A_{j'}$ and hence at distance at most 2 from the image of $P$.\\

\emph{The diagrams $D'_k$:}  For $j+1\leq k\leq j'-1$, we inductively define combinatorial paths $a_k,b_k$ starting on $\rho_k$ and ending on $P$ as follows.  Let $a_{j+1}$ be a shortest path in $D'$ joining a point of $\rho_{j+1}$ to a point of $P$.  Let $b_{j+1}$ be of minimal length among all paths in $D'$ joining a point of $\rho_{j+2}$ to a point of $P$ and not crossing $a_{j+1}$ (these paths are allowed to coincide for some or all of their lengths).  Given $a_k$ joining $\rho_k$ to $P$, let $b_k$ be a minimal path joining $\rho_{k+1}$ to $P$ that does not cross $a_k$, and given $b_{k}$, let $a_{k+1}$ be a minimal path joining $\rho_{k+1}$ to $P$ that does not cross $b_k$.  See Figure~\ref{fig:a_and_b}.

\begin{figure}
\begin{overpic}[width=0.5\textwidth]{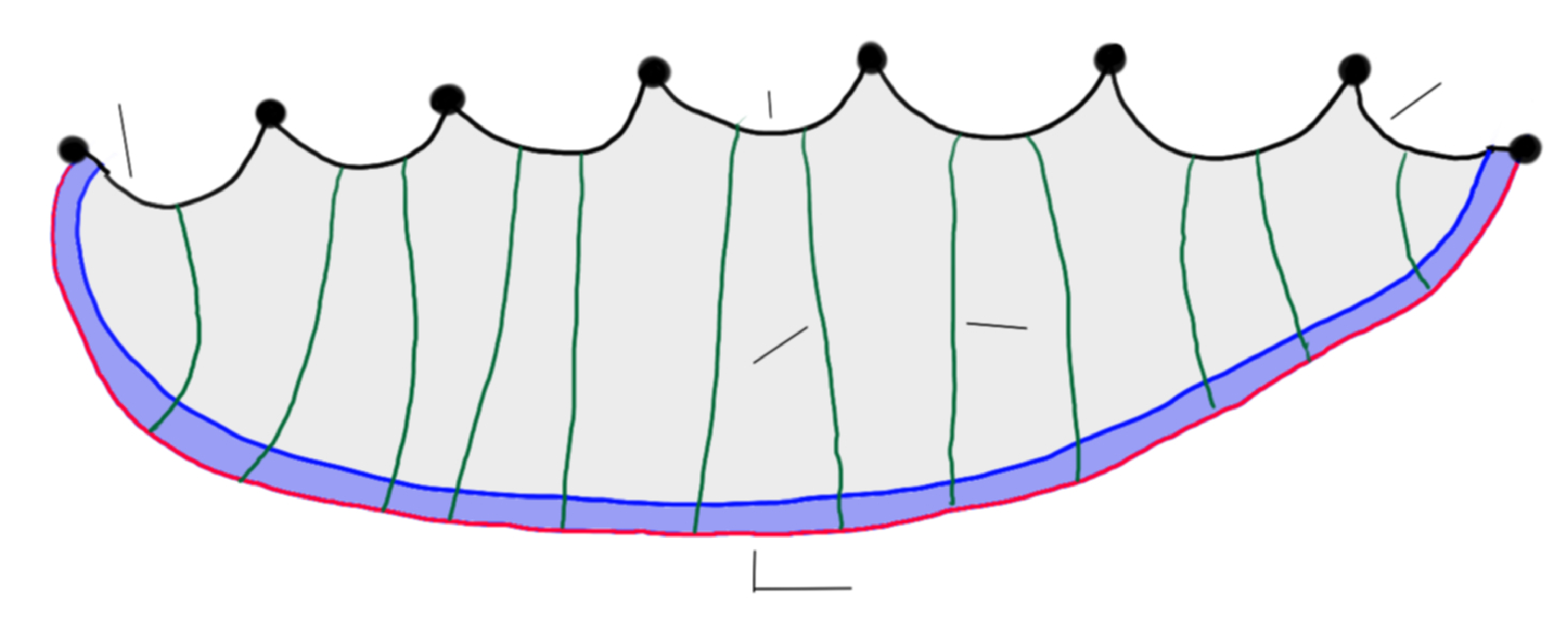}
\put(7,35){$\rho'_j$}
\put(47,35){$\rho_k$}
\put(93,35){$\rho'_{j'}$}
\put(55,0){$P$}
\put(47,14){$a_k$}
\put(65,17){$b_k$}
\end{overpic}
\caption{The diagram $D'$.}\label{fig:a_and_b}
\end{figure}

For each $k$, let $P_k$ be the subpath of $P$ between the endpoints of
$a_k$ and $b_k$.  Let $c_k$ be the subpath of $\rho_k$ between the
initial point of $a_k$ and the terminal point of $\rho_k$, and let
$d_k$ be the part of $\rho_{k+1}$ from the initial point of
$\rho_{k+1}$ to the initial point of $b_k$; these paths are shown in
Figure~\ref{fig:a_and_b}.  Consider the subdiagram $D'_k$ bounded by
$a_k,P_k,b_k,d_k,$ and $c_k$.  Every dual curve in $D'_k$ emanating
from $P_k$ ends on $c_k$ or $d_k$, and no two such dual curves cross.
Indeed, if such a dual curve $C$ ended on $a_k$ (or $b_k$), then we
could have chosen $a_k$ (or $b_k$) to be shorter, as shown in
Figure~\ref{fig:diagram_D_k} at left.  Similarly, no dual curve
travels from $a_k$ to $c_k$ or $b_k$ to $d_k$.  We conclude that
$D'_k$ is the union of two (possibly degenerate) flat rectangles,
$T_k,U_k$ and a subdiagram $V_k$ shown at right in
Figure~\ref{fig:diagram_D_k}.  The subdiagram $V_k$ is formed by the
crossing of the dual curves emanating from $P_k$ with the dual curves
traveling from $a_k$ to $b_k$.  The rectangle $T_k$ is formed from the
dual curves traveling from $a_k$ to $d_k$ crossing those that emanate
from $c_k$.  The rectangle $U_k$ is formed analogously.  Now, if
$|c_k|\geq N$, then the strip $T_k$ actually lies in $A_k$ and we
could have chosen $\rho_k$ to yield a lower-area diagram $D$.  Hence
$|c_k|<N$ and $|d_k|<N$.  Thus $|P_k|<N$, and there is a path of
length less than $2N$ joining $\rho_k\cap\rho_{k+1}$ to $V_k$.  It
follows that if, for any $\epsilon\geq0$, at most $\epsilon N$ dual
curves travel from $a_k$ to $b_k$, then
$\done(\rho_k\cap\rho_{k+1},P)\leq (2+\epsilon)N$.  The images of
$\rho_k$ and $\rho_{k+1}$ in $\Gamma$ thus lie in the
$[(2+\epsilon)N+1]$-neighborhood of the image of $P$.\\

\begin{figure}[h]
\includegraphics[width=0.5\textwidth]{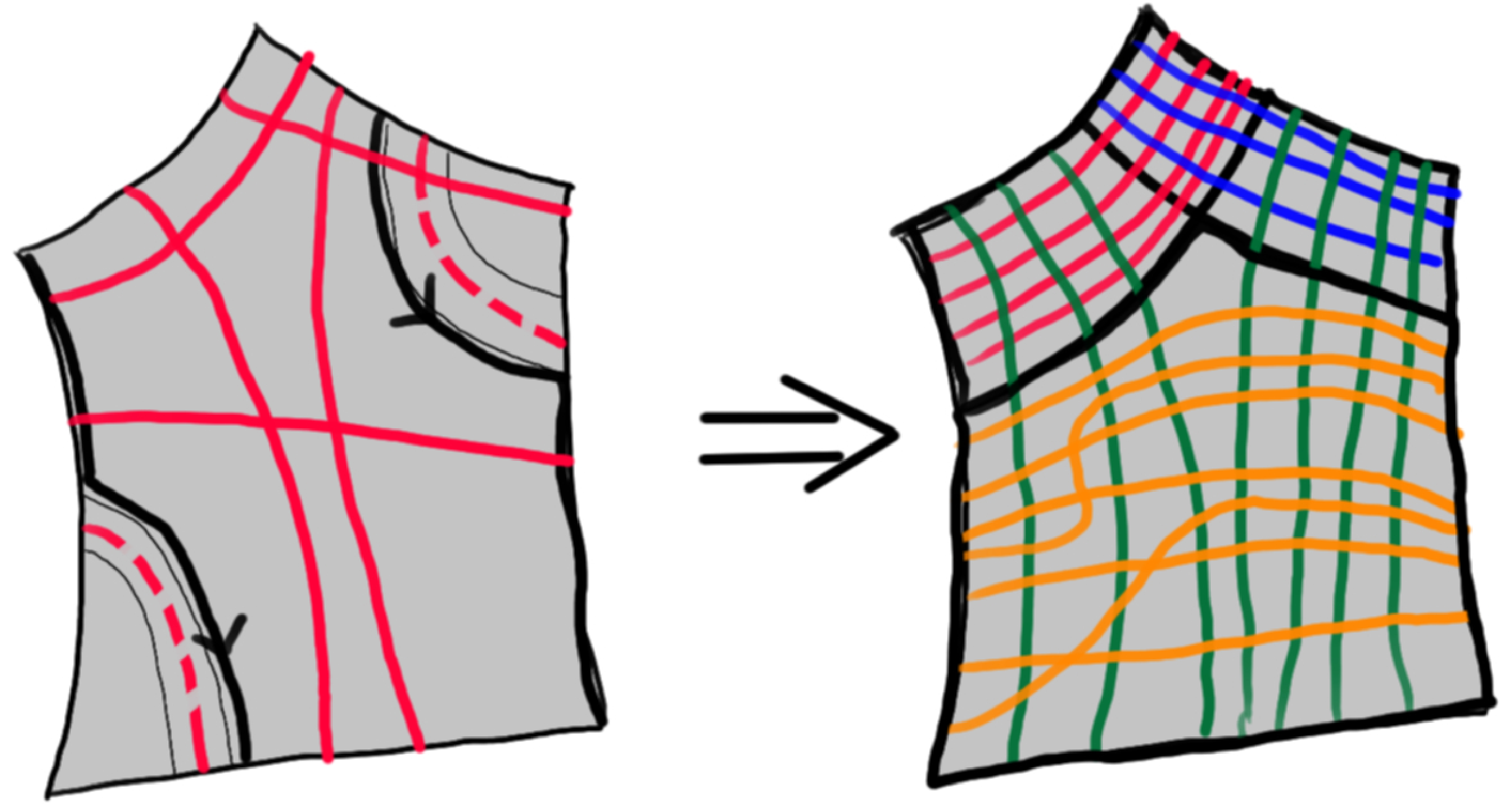}
\caption{Left: the solid dual curves shown in $D'_k$ are all possible.
If either dotted dual curve occurs, then as shown, either $a_k$ or
$b_k$ could be shortened (there are two other similar possibilities
not shown).  This leads to the conclusion at left: the rectangles
$T_k,U_k$ intersect in the smaller rectangle at the top of $D'_k$,
each of whose sides has length less than $N$, and the remainder of the
diagram is $V_k$.}\label{fig:diagram_D_k}
\end{figure}

\emph{The diagrams $E_k$:} For each $k$, let $Q_k$ be the subpath of
$P$ between the endpoints of $b_k$ and $a_{k+1}$ and let $e_k$ be the
subpath of $\rho_{k+1}$ between the initial point of $b_k$ and the
initial point of $a_{k+1}$.  The subdiagram $E_k$ bounded by
$e_k,a_{k+1},Q_k,$ and $b_k$ has the property that all dual curves
travel from $b_k$ to $a_{k+1}$ (by the minimality of those paths) or
from $Q_k$ to $e_k$.  If there are at least $N$ dual curves from $b_k$
to $a_{k+1}$, then the convex hull of the image of $E_k$ in $\mathbf
X$ contains an $N\times N$ flat grid. Since this image of $E_{k}$
contains an $N\times N$ flat grid, as we proved above in the
paragraph on ``Non-peripheral rectangular discs'' we then have
$E_{k}$ contained in some $A_{i}$ and thus the distance in $\Gamma$
between the images of $\rho_k$ and $P$ is at most 3.  Thus
$|\rho_k|\leq 3N$ for all $k$ for which the distance between some
point in the image of $\rho_k$ and the image of $P$ is at least 4.

Choose $k_1,k_2$ with $j\leq k_1\leq k_2\leq j'$ such that for all $k\in\{k_1,\ldots,k_2\}$, the diagram $D'_k$ has more than $\epsilon N$ dual curves traveling from $a_k$ to $b_k$, and for $k\in\{k_1,\ldots,k_2-1\}$, the diagram $E_k$ has more than $\epsilon N$ such dual curves, and the distance from some point of each $\rho_k$ to $P$ in $\Gamma$ is at least 4, and such that $k_2-k_1$ is as large as possible.

Define the subdiagram $E$ of $D'$ to consist of the union of the
$D'_k$, for $k_1\leq k\leq k_2$, together with $E_k$ for $j\leq k\leq j'$.  Let $\mathcal V$ be the set of \emph{vertical} dual curves, i.e., those that have an end on some $P_k$ or $Q_k$.  By the above discussion, at most $3N$ vertical dual curves end on each $\rho_k$.  Observe that there is a path of length $2\epsilon N+|\mathcal V|$ joining $\rho_{k_1}$ to $\rho_{k_2}$, and hence a path of length at most $2\epsilon N+|\mathcal V|+2$ in $\Gamma$ joining $A_{k_1}$ to $A_{k_2}$.  Hence $|\mathcal V|\geq k_2-k_1-2\epsilon N-1$.  If $k_2-k_1\leq 2(2\epsilon N+1)$, then we have a uniform bound 
of $2(2\epsilon N+1)+3$ 
on the distance from any point of the image of any $\rho_k$ to the image of $P$, for $k_1\leq k\leq k_2$.  Hence we can assume $|\mathcal V|\geq\frac{k_2-k_1}{2}$.  Since there is a bound of $3N$ on the number of vertical dual curves intersecting each $\rho_k$, there exists an integer $p=p(N)\geq 1$, independent of $\epsilon$, such that any concatenation of $p$ consecutive paths of the form $\rho_k$, with $k_2\leq k\leq k_1$, crosses at least $N$ vertical dual curves.

To conclude, consider a path $\rho_k\rho_{k+1}\cdots\rho_{k+p}$ with
$k_1\leq k\leq k+p\leq k_2$.  This path crosses at least
$N$ vertical dual curves.  There are at least $\epsilon
N-pN$ dual curves in $E$ that cross $b_{k}$ and $a_{k+p}$, and thus
cross each intervening vertical dual curve emanating from $P$, since for
each $k'$ at most $N$ non-vertical dual curves leave the diagram through $c_{k'}$.
Hence take $\epsilon=p+1$.  Then there
are at least $N$ horizontal dual curves, each of which crosses each of
the at least $N$ vertical dual curves, in the subdiagram between
$b_k,a_{k+p}$, and the subtended parts of $\rho$ and $P$.  Hence there
is an $N\times N$ flat grid whose convex hull intersects $P$ and
$\rho_{k},\rho_{k+1},\ldots,\rho_{k+p}$.  Thus each such path projects
to a subspace of the $3$-neighborhood of the image of $P$ in $\Gamma$.
Either every $\rho_k$ is contained in such a path, or $k_2-k_1\leq p$ and we can bound the distance from any $\rho_k$ to $P$ in
$\Gamma$.
\end{proof}

\renewcommand{\qedsymbol}{\ensuremath{\square}}

\textbf{Conclusion:}  Let $\gamma,\gamma',\gamma''\rightarrow\Gamma$ be geodesics forming a triangle in $\Gamma$.  Let $\rho,\rho',\rho''$ be combinatorial paths respectively representing $\gamma,\gamma',\gamma''$ as above, chosen so that $\rho\rho'\rho''$ is a closed path in $\mathbf X$.  For each $p\in\rho$, there is some subspace $\mathbf C$ representing a vertex of $\gamma$ and containing $p$.  Hence the image of $p$ in $\Gamma$ lies at distance at most 1 from $\gamma$.  Similarly, the image of $\rho'$ [respectively $\rho''$] lies in the 1-neighborhood of the image of $\gamma'$ [respectively $\gamma''$].  By Claim~\ref{claim:hierarchy_path!}, there exist geodesics $\sigma,\sigma',\sigma''$, the $\mathfrak L$-neighborhoods of whose images in $\Gamma$ respectively contain the images of $\rho,\rho',\rho''$.  By Claim~\ref{claim:thin_triangle}, the image of the geodesic triangle $\sigma\sigma'\sigma''$ has the property that the image of any side is contained in the $\delta$-neighborhood of the image of the other two sides.  Hence $\gamma\gamma'\gamma''$ is $\delta+(2\mathfrak L+1)$-thin, whence $\Gamma$ is $\delta+2(\mathfrak L+1)$-hyperbolic.
\end{proof}

In particular, when the $\mathbf S_i$ are hyperoctahedra of dimension at least
1 satisfying the hypotheses of Theorem~\ref{thm:relhyp}, then we may conclude that $G$ is hyperbolic
relative to a finite collection of virtually abelian subgroups, as we now explain.  First,
consider the action of $Q_i$ on $\mathbf C_i$. This action is proper and
cocompact, and by Lemma~\ref{lem:boundaryofcore}
and~\cite[Proposition~3.5]{CapraceSageev}, we may assume that this action is
essential.  Now, $\mathbf C_i$ is fully visible because any invisible simplex is non-maximal and contained in a unique maximal simplex, by the proof of~\cite[Theorem~3.19]{HagenBoundary}, and no such simplices exist in a hyperoctahedron.
By~\cite[Theorem~3.30]{HagenBoundary}, the decomposition $\mathbf
S_i\cong\mathbb O_{d-1}\star\mathbb O_0$ corresponds to a decomposition $\mathbf
C_i\cong X_{d-1}\times X_0$, where $\simp X_0\cong\mathbb O_0$ and $\simp
X_{d-1}\cong\mathbb O_{d-1}$.
Since the boundary of $X_0$ is a single pair of points, and $X_0$ is cocompact, there exists a periodic geodesic $\gamma$ such that $X_0$ lies in a finite neighborhood of $\gamma$.  By induction on dimension, $X_{d-1}$ contains a periodic flat $F\cong\reals^{d-1}$ which coarsely contains all of $X_{d-1}$.  Hence $\mathbf C_i$ is coarsely contained in a flat $F\times\gamma$ of dimension $d$ that is stabilized by a finite-index subgroup of $Q_i$.  Thus $Q_i$ is virtually $\integers^d$, by Bieberbach's theorem.

\begin{exmp}\label{exmp:relhyp}
We conclude this section with some examples and non-examples of relatively
hyperbolic cocompactly cubulated groups:

\begin{enumerate}
\item (Right-angled Artin groups) The results
of~\cite{BehrstockCharney} and~\cite{BDM} combine to show that
one-ended right-angled Artin groups are never relatively hyperbolic
since they are all
either thick of order 0 (in the case the group is a direct product) or
thick of order 1 and thus not relatively hyperbolic by~\cite[Corollary~7.9]{BDM}.
Theorem~\ref{thm:relhyp} above provides another proof of non-relative
hyperbolicity for these groups, since the simplicial boundary of a
one-ended right-angled Artin group, $A$, has only one
positive-dimensional connected component.

\item (Hyperbolic relative to a right angled Artin group)
Figure~\ref{fig:hyprelraag} shows a
cubical subdivision of the Salvetti complex $\overline{\mathbf C}$ of
\[F_2\times\integers\cong\langle a,b,t\mid [a,t],[b,t]\rangle\] at left and a
nonpositively-curved cube complex $\overline{\mathbf Y}$ at right that is a
tiling by 2-cubes of a closed, orientable genus-3 surface.  The fundamental
group of $\overline{\mathbf Y}$ is presented by \[\pi_1\mathbf Y\cong\langle
p_1,q_1,p_2,q_2,p_3,q_3\mid[p_1,q_1][p_2,q_2][p_3,q_3]\rangle\] and we form a
compact nonpositively-curved cube complex $\overline{\mathbf X}$ by attaching a
cylinder to $\overline{\mathbf C}$ and $\overline{\mathbf Y}$ as shown, so that
$G=\pi_1\overline{\mathbf X}$ is isomorphic to the following \[(\pi_1\mathbf C\ast\pi_1\mathbf
Y)\slash\langle\langle
b=p_1q_1^{-1}p_1^{-1}p_2q_2p_2^{-1}\rangle\rangle\]

\begin{figure}[h]
\includegraphics[width=0.6\textwidth]{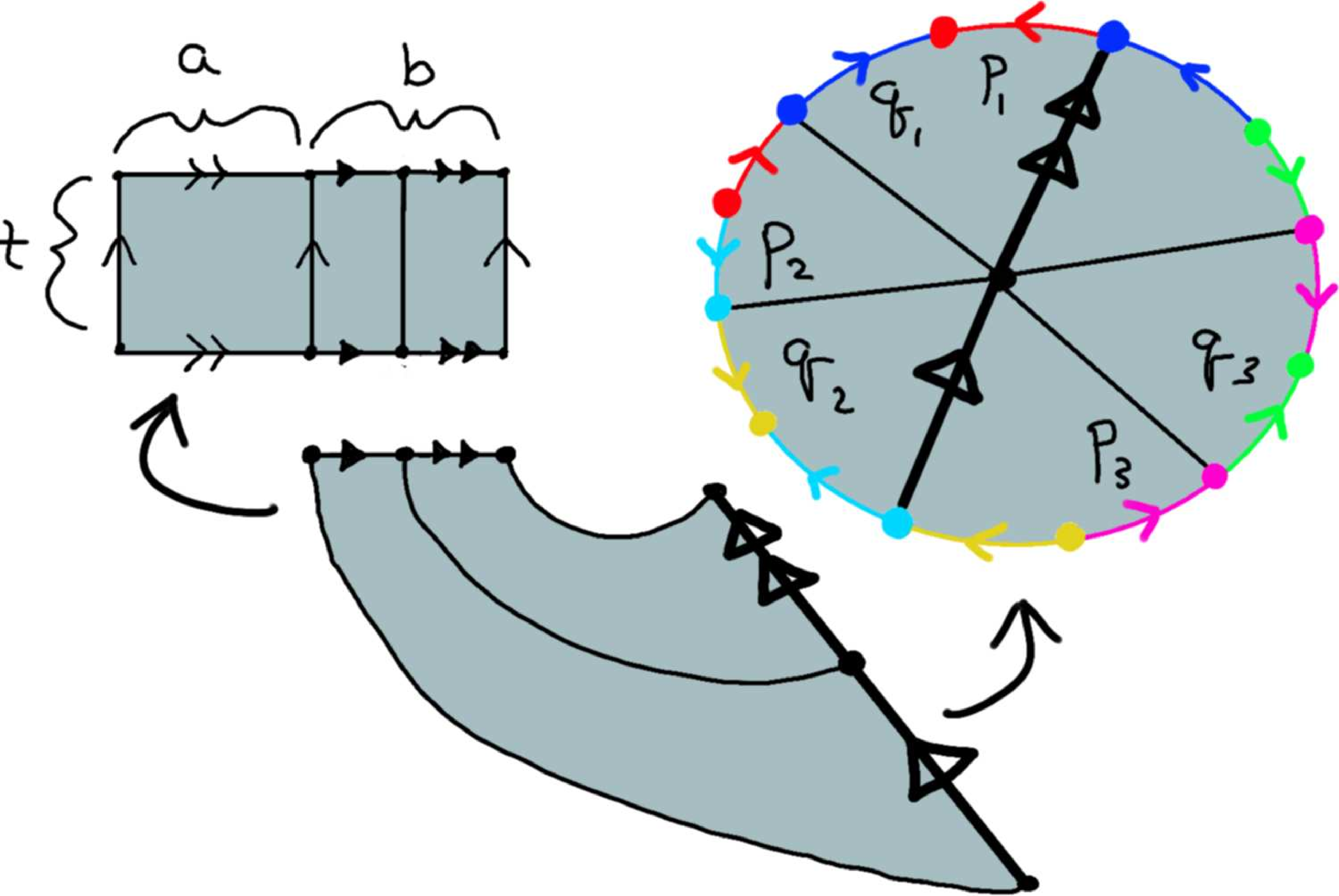}
\caption{}\label{fig:hyprelraag}
\end{figure}

Since the attaching maps of the cylinder are locally convex circles,
$\overline{\mathbf C}$ and $\overline{\mathbf Y}$ are locally convex in
$\overline{\mathbf X}$, and hence the universal cover $\mathbf C$
is a convex, $P\cong\pi_1\overline{\mathbf C}$-cocompact subcomplex of the
universal cover $\mathbf X$.  Now, $\mathbf S=\simp\mathbf C$ is
isomorphic to the join of an infinite discrete set with a pair of 0-simplices,
and $\mathbf S\subset\simp\mathbf X$.  Any two distinct translates
of $\mathbf C$ intersect in a translate of a convex periodic
geodesic lying in a translate of the universal cover $\mathbf Y$, which is a
convex copy of $\mathbb H^2$ in $\mathbf X$. Hence, since cyclic subgroups of
$\pi_1\overline{\mathbf Y}$ are malnormal, $\mathbf S\cap g\mathbf S=\emptyset$
for $g\not\in P$.  Now, every flat orthant in $\mathbf X$ lies in some
translate of $\mathbf C$.  Therefore, $\simp\mathbf X$ is the union of
translates of $\mathbf S$ together with a nonempty set of isolated points
arising from translates of $\simp\mathbf Y$, and
Theorem~\ref{thm:relhypconverse} confirms that $G$ is hyperbolic relative to
$P$.

\item (Cusped hyperbolic 3-manifolds)  There are many cusped, hyperbolic
3-manifolds $\widehat M$ for which $\pi_1\widehat M$ is the fundamental group of
a compact nonpositively-curved cube complex.  Such manifolds arise as finite
covers of finite-volume cusped hyperbolic 3-manifolds that contain a
geometrically finite incompressible
surface~\cite[Theorem~14.29]{WiseIsraelHierarchy}.  In this case, the cusp
subgroups correspond to isolated 4-cycles in the simplicial boundary of the
cocompact cubulation of $\pi_1M$, the remainder of which consists of an
infinite collection of isolated 0-simplices.
\end{enumerate}
\end{exmp}

\section{Unconstricted and wide cube complexes}\label{sec:unconstricted}
We assume throughout this section that $\mathbf X$ is a locally finite,
finite-dimensional CAT(0) cube complex.

$\mathbf X$ is \emph{geodesically complete} if each CAT(0) geodesic segment is
contained in a bi-infinite CAT(0) geodesic.  If $\mathbf X$ is geodesically
complete, then it is \emph{combinatorially geodesically complete} in the sense
that, for any maximal set $W_1,\ldots,W_n$ of pairwise-crossing hyperplanes,
each of the $2^n$ maximal intersections of halfspaces associated to those
hyperplanes contains 0-cubes arbitrarily far from the cube
$\cap_{i=1}^nN(W_i)$.  Equivalently, $\mathbf X$ is combinatorially
geodesically complete if every combinatorial geodesic segment extends to a
bi-infinite combinatorial geodesic, as is shown in~\cite{HagenBoundary}.  If $\mathbf X$ is (combinatorially or CAT(0)) geodesically complete, then $\mathbf X$ satisfies the first requirement of the definition of an unconstricted space, since each point of $\mathbf X$ lies at distance 0 from a bi-infinite (combinatorial or CAT(0)) geodesic and hence lies uniformly close to a CAT(0) quasigeodesic.

Let $\omega$ be an ultrafilter, $(s_n)_{n\geq 1}$ a sequence of scaling
constants, and $(x_n)_{n\geq 1}$ a sequence of observation points in $\mathbf
X$.  Denote by $[y_n]$ the point of $\ascone{\mathbf
X}{(x_n)}{(s_n)}{\omega}$
represented by the sequence $(y_n\in\mathbf X)_{n\geq 1}$.  Since $\mathbf X$ is
finite-dimensional the CAT(0) metric and the path metric on $\mathbf X^{(1)}$
are quasi-isometric, and thus
$\ascone{\mathbf X}{(x_n)}{(s_n)}{\omega}$ is bilipschitz homeomorphic to
$\ascone{\mathbf X^{(1)}}{(x'_n)}{(s_n)}{\omega}$, where $x'_n$ is a closest
0-cube to $x_n$. Where the ultrafilter, scaling constants, and observation
points are understood, we denote this asymptotic cone by $\Xomega$.

We say $\simp\mathbf X$ is \emph{bounded} if its 1-skeleton (with the usual
graph
metric) is finite diameter.

\begin{thm}\label{thm:unconstrictedCC}
Let $\mathbf X$ be a locally finite, finite-dimensional CAT(0) cube
complex such that $|\simp\mathbf X|>1$.  If $\simp\mathbf X$ is
bounded then no asymptotic cone of $\mathbf X$ is separated
by a finite closed ball, in the sense that in no asymptotic cone do
there exist points $\mathbf a,\mathbf b,\mathbf x$ such that $\mathbf
d_{\omega}(\mathbf x,\{\mathbf a,\mathbf b\})>3$ and every path from
$\mathbf a$ to $\mathbf b$
passes through the 1-ball about $\mathbf x$. Under 
the additional hypotheses that every combinatorial 
geodesic segment can be extended to a ray: if $\simp\mathbf X$ is bounded, then
$\mathbf X$ is wide.
\end{thm}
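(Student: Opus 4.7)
The plan is to establish the biconditional by relating bounded-diameter paths in $\simp\mathbf X^{(1)}$ to bounded-length detours in $\mathbf X$ that route around candidate separating balls in asymptotic cones, and then to deduce the ``in particular'' clause by combining this with combinatorial geodesic completeness.

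For the forward direction, I assume $\diam\simp\mathbf X^{(1)}\leq D<\infty$ and take a triple $\mathbf a=[a_n],\mathbf b=[b_n],\mathbf x=[x_n]$ in some $\Xomega$ with $\mathbf d_{\omega}(\mathbf x,\{\mathbf a,\mathbf b\})>3$, so that $\done(x_n,a_n),\done(x_n,b_n)>3s_n$ for $\omega$-almost every $n$. If some combinatorial geodesic from $a_n$ to $b_n$ stays $\done$-distance at least $2s_n$ from $x_n$, its ultralimit already avoids the closed $1$-ball at $\mathbf x$. Otherwise I cut the geodesic into an initial arc $\alpha_n$ ending at $y_n\in\partial B_{2s_n}(x_n)$ and a terminal arc $\beta_n$ starting at $z_n\in\partial B_{2s_n}(x_n)$. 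Extending the full geodesic beyond $y_n$ and $z_n$ yields infinite nested families of hyperplanes separating $x_n$ from the two branches; passing to an $\omega$-limit and extracting a minimal sub-boundary set via Lemma~\ref{lem:containsminimal}, these produce $0$-simplices $u,v$ of $\simp\mathbf X$. Since $\diam\simp\mathbf X^{(1)}\leq D$, I pick a chain $u=u_0,\ldots,u_D=v$ of $0$-simplices whose consecutive pairs span $1$-simplices. Each such $1$-simplex yields, via Proposition~\ref{prop:simplexflat} and Proposition~\ref{prop:simplicesflats} applied in a convex subcomplex containing the relevant hyperplanes, a $2$-dimensional orthant whose finite quarter-arcs at radius $\asymp s_n$ furnish detour segments of length $O(s_n)$ avoiding $B_{s_n}(x_n)$ that transition from the $u_i$-direction to the $u_{i+1}$-direction. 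Concatenating $\alpha_n$, the $D$ detours, and $\beta_n$ produces a path of length $O((D+1)s_n)$ from $a_n$ to $b_n$ whose ultralimit is a path in $\Xomega$ avoiding the closed $1$-ball at $\mathbf x$.

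For the converse I assume $\diam\simp\mathbf X^{(1)}=\infty$ and choose for each $n$ a pair of $0$-simplices $u_n,v_n$ of $\simp\mathbf X$ with $d_{\simp\mathbf X^{(1)}}(u_n,v_n)\geq n$. I represent them by minimal boundary sets and build combinatorial geodesic rays $\alpha_n,\beta_n$ from a common basepoint $x_n$ crossing the hyperplanes of these boundary sets. Choosing $s_n\to\infty$ and forming the cone with observation points $x_n$, I set $\mathbf x=[x_n]$, $\mathbf a=[\alpha_n(10s_n)]$, $\mathbf b=[\beta_n(10s_n)]$, so $\mathbf d_\omega(\mathbf x,\mathbf a)=\mathbf d_\omega(\mathbf x,\mathbf b)=10$. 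The key claim is that any path in $\mathbf X$ from $\alpha_n(10s_n)$ to $\beta_n(10s_n)$ staying $\done$-distance $\geq s_n$ from $x_n$ has length super-linear in $s_n$: otherwise, such a detour concatenated with subrays of $\alpha_n$ and $\beta_n$ would, by hyperplane-crossing analysis in the spirit of the divergence/boundary-diameter comparison of~\cite{HagenBoundary}, produce a short chain of $1$-simplices between $u_n$ and $v_n$, contradicting $d_{\simp\mathbf X^{(1)}}(u_n,v_n)\geq n$. Hence no bounded-length path in $\Xomega$ joins $\mathbf a$ to $\mathbf b$ outside the closed $1$-ball at $\mathbf x$, so $\mathbf x$ separates.

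The ``in particular'' clause is then immediate: combinatorial geodesic completeness supplies, at each $0$-cube, a bi-infinite combinatorial geodesic and hence a uniformly nearby CAT(0) quasigeodesic, verifying condition (1) of Definition~\ref{defn:unconstricted}; the biconditional just established shows that \emph{every} cone of $\mathbf X$ lacks finite-ball separators, hence in particular lacks cut-points (a cut-point being a $0$-ball separator), verifying (2) for every ultrafilter, observation sequence, and scaling sequence, so that $\mathbf X$ is wide. The main obstacle is the quantitative step in the forward direction: an edge of $\simp\mathbf X^{(1)}$ is infinitary data, and one must convert it into a finite-radius detour around $x_n$. In a fully visible cocompact setting Propositions~\ref{prop:simplexflat}--\ref{prop:simplicesflats} give a concrete cubical orthant, but here one only has local finiteness and finite dimension, so one must extract enough $2$-dimensional corner structure near $x_n$ by disc-diagram surgery between pairs of minimal boundary sets whose hyperplanes all cross, and uniformly control the radius at which the quarter-arc is realized. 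The bookkeeping resembles the hyperplane-crossing analysis in Lemmas~\ref{lem:fine} and~\ref{lem:qi}.
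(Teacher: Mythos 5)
Your converse direction is broadly sound in outline and tracks the paper's strategy: construct, at scale $s_n$, geodesics from a common basepoint whose simplicial-boundary distance grows, and observe that any uniformly short detour in the cone would force a short chain in $\simp\mathbf X^{(1)}$ — the quantitative fact you attribute to ``the spirit of the divergence/boundary-diameter comparison of~\cite{HagenBoundary}'' is exactly what the paper cites as~\cite[Theorems~6.8,~6.9]{HagenBoundary}. The paper organizes this into two subcases (disconnected boundary, then connected-but-unbounded boundary) for bookkeeping reasons, but the substance matches. The ``in particular'' clause is also handled essentially as the paper does, via combinatorial geodesic completeness for condition (1) of Definition~\ref{defn:unconstricted} and noting a cut-point is a degenerate cut-ball.

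The gap is in your forward direction. Your plan to convert each $1$-simplex in a chain from $u$ to $v$ into a quarter-arc detour of radius $\asymp s_n$ invokes Propositions~\ref{prop:simplexflat} and~\ref{prop:simplicesflats}, but both of those require $\mathbf X$ to be fully visible — a hypothesis that is \emph{not} available in Theorem~\ref{thm:unconstrictedCC}, which assumes only that $\mathbf X$ is locally finite and finite-dimensional. Without full visibility an arbitrary $1$-simplex of $\simp\mathbf X$ need not be realized by any geodesic ray, let alone by a cubical orthant based near $x_n$, so there is no orthant from which to extract your transition arc. You correctly flag this as ``the main obstacle'' and propose replacing it with ``disc-diagram surgery,'' but you never carry this out, and it is precisely the nontrivial content that the paper outsources to~\cite[Theorem~6.8]{HagenBoundary}: that theorem gives two-sided linear bounds $A_1 r + B_1 \le \dive{\alpha}{\beta}(r) \le A_2 r + B_2$ with $A_1, A_2$ controlled by $d_{\simp\mathbf X^{(1)}}(h_\alpha, h_\beta)$, valid under just local finiteness and finite dimensionality. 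The paper's proof of ``bounded implies no cut-ball'' then reduces to: pass to the median $\mu$ of $a,b,c$, extend to a bi-infinite geodesic through $a,\mu,b$ (this is where combinatorial geodesic completeness enters, as you also tacitly use when you ``extend the full geodesic beyond $y_n$ and $z_n$''), invoke the linear divergence of that geodesic, and concatenate; this yields linear divergence of $\mathbf X^{(1)}$, after which the no-cut-ball statement in cones follows by the standard argument of~\cite[Lemma~3.14]{DrutuMozesSapir}. Absent a worked-out substitute for the visibility step, your forward direction does not close.
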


\begin{proof}
Although $\mathbf X$ is not assumed to be fully visible, we always work with visible simplices, justified by the fact that maximal simplices are visible~\cite[Theorem~3.19]{HagenBoundary}.

Let $\alpha,\beta$ be combinatorial geodesics, representing simplices
$h_{\alpha},h_{\beta}$ of $\simp\mathbf X$ respectively.  Without loss of
generality, $\alpha$ and $\beta$ have a common initial point $x_o$.  The
\emph{cubical divergence}, $\dive{\alpha}{\beta}(r)$, is the length of a
shortest
combinatorial path $P_r\rightarrow\mathbf X$ which joins $\alpha(r)$ to
$\beta(r)$ and contains no 0-cube at distance less than $r$ from $x_o$.  Now,
$h_{\alpha}$ and $h_{\beta}$ lie in the same component of $\simp\mathbf X$ if
and only if $\dive{\alpha}{\beta}(r)$ is bounded above by a linear function of
$r$, by~\cite[Theorem~6.8]{HagenBoundary}.  In this case, for all $r\geq 0$,
\[A_1r+B_1\leq\dive{\alpha}{\beta}(r)\leq A_2r+B_2\]
where $A_1,A_2$ depend linearly on the distance between $h_{\alpha}$ and
$h_{\beta}$ in $\simp\mathbf X^{(1)}$ and $B_1,B_2$ are constants depending on
$\alpha$ and $\beta$.  We first exhibit a cut-ball in an asymptotic cone when
$\simp\mathbf X$ is disconnected, and then do the same when $\simp\mathbf X$ is
connected but unbounded.

\textbf{Disconnected $\simp\mathbf X$ implies cut-ball:} Suppose that
$h_{\alpha}$ and $h_{\beta}$ lie in distinct components of $\simp\mathbf X$.
Then, for each $M\geq 1$, there exists a smallest $r_M\geq M$ such that
$\dive{\alpha}{\beta}(r_M)\geq Mr_M$.  From the definition of $r_M$, it follows
immediately that $\dive{\alpha}{\beta}(Kr_M)\geq (2-2K+M)r_M$ for any fixed
$K\geq 1$.  

Consider an asymptotic cone, $\ascone{\mathbf X}{\mathbf x}{(r_n)}{\omega}$,
where the scaling
constants are given by the $(r_n)$ above, and the sequence of observation points
is $\mathbf x=(x_o)$.

For each $n\geq 0$, let $a_n=\alpha(Kr_n)$, where $K\geq 3$ is some fixed
integer, and likewise let $b_n=\beta(Kr_n)$.  Then
$\done(a_n,x_o)r_n^{-1}=K=\done(b_n,x_o)r_n^{-1}$, so that $\mathbf
a=[(a_n)],\mathbf b=[(b_n)]$ define points of $\ascone{\mathbf X}{\mathbf
x}{(r_n)}{\omega}$, and these points are each at distance $K$ from $\mathbf x$.

By construction, any path $P_n$ in $X$ from $a_n$ to $b_n$ either has
length at least $(2-2K+n)r_n$ or travels through the interior of the $r_n$--ball about $x_o$, i.e., through the closed $(r_n-1)$--ball.
We see this as follows. By prepending the part of $\alpha$ joining $\alpha(r_n)$ to $a_n$, and
appending the part of $\beta$ joining $b_n$ to $\beta(r_n)$, to $P_n$, we
obtain a path $P'_n$ of length $2(K-1)r_{n}+|P_n|$ joining $\alpha(r_n)$ to
$\beta(r_n)$.  Either $P'_n$ travels through the interior of the forbidden
$r_n$--ball or else, by our choice of $r_n$, 
$|P'_n|\geq nr_n$ and thus $|P_n|\geq (2-2K+n)r_n$ as claimed.

By construction, $\mathbf d_{\omega}(\mathbf a,\mathbf b)\leq2K$ and, as noted
above, $\mathbf d_{\omega}(\mathbf a,\mathbf x)=\mathbf d_{\omega}(\mathbf
b,\mathbf
x)=K$. We shall show that the closed ball of radius 1 about $\mathbf x$
separates $\mathbf a$ from $\mathbf b$.

Let $\mathfrak P$ be a finite length path in $\ascone{\mathbf X}{\mathbf
x}{(r_n)}{\omega}$ joining $\mathbf a$ to
$\mathbf b$ and let $P_n$ be a path in
$\mathbf X$ joining $a_n$ to $b_n$ for which the $\omega$-limit of these paths
is $\mathfrak P$.  Either $P_n$ passes through the
$(r_n-1)$-ball about $x_o$ for $\omega$-almost all $n$, or
$|P_n|\geq (2-2K+n)r_n$ for $\omega$-almost all $n$.
Now, the latter case can't occur, since if it did then we would have
$\lim_{\omega}|P_n|r^{-1}_n=\infty$, and thus $\mathfrak P$ has infinite length,
contradicting our hypothesis.
 In the former case, by taking the $\omega$--limit of these
balls, we have that $\mathfrak P$ passes through a ball of radius
$\lim_{\omega}\frac{r_n-1}{r_n}=1$.
Taking $K>3$, the claim is proved.

\textbf{Unbounded $\simp\mathbf X$ implies cut-ball:}  By
\cite[Theorem~6.9]{HagenBoundary}, for each $n\geq 0$, we
have $r_n\geq 0$ and combinatorial geodesic rays $\alpha_n,\beta_n$ emanating
from $x_o$ with $\dive{\alpha_n}{\beta_n}(r)\geq nr$ for all $r\geq r_n$. From
this point the argument then finishes exactly as above.

\textbf{Bounded $\simp\mathbf X$ implies no cut-ball:} First we show: if $\simp\mathbf X$ is bounded and $|\simp\mathbf X|>1$, then the combinatorial metric on $\mathbf X^{(1)}$ has linear divergence function.

Let $a,b,c\in\mathbf X^{(1)}$, with $\done(a,b)\leq n$ and
$\done(\{a,b\},c)=r>0$. Choose $\delta\in(0,\frac{1}{2})$ and $\kappa\geq 0$.
Let $\mu$ be the median of $a,b,c$ and let $\gamma$ be a bi-infinite 
path 
with $\gamma(0)=\mu$ and $\gamma(-t_a)=a,\gamma(t_b)=b$ for 
$t_a,t_b\in[0,n]$ and with the property that both 
$\gamma|_{(-\infty,0]}$ and $\gamma|_{[0,\infty)}$ are geodesic rays.  
Here we have used the combinatorial geodesic-ray completeness hypothesis.

Since $X$ is finite-dimensional and locally finite, the hypothesis of
\cite[Theorem~6.8]{HagenBoundary} is satisfied, and thus, since $\simp\mathbf X$
is bounded, the divergence of $\gamma$ is bounded above by a linear function
with uniform additive and multiplicative constants. Note that to use
\cite[Theorem~6.8]{HagenBoundary} implicitly requires $|\simp\mathbf X|\geq 2$,
since a pair of distinct infinite geodesic rays is required in order to apply that theorem.

If $\done(\mu,c)>\delta r-\kappa$, then the subpath of $\gamma$ joining $a$ to
$b$ has length $t_a+t_b\leq n$ and avoids the $(\delta r-\kappa)$--ball about $c$. In this
case we thus have that $\divergence{a}{b}{c}{\delta}{\kappa}=t_a+t_b\leq n$.

Hence, we restrict our attention to the alternate case where
$\done(\mu,c)\leq\delta r-\kappa$. Let $T=2\max\{t_a,t_b\}$. Note that since
$\delta<\frac{1}{2}$ we have $\min\{t_a,t_b\}\geq\frac{r}{2}$. Since, as noted
above, $\gamma$ has linear divergence, there exists a path $P$ connecting
$\gamma(-T)$ to $\gamma(T)$ whose length is linear in $T$ and which avoids the
ball of radius $T$ about $\gamma(0)$, i.e., for each $p\in P$ we have
$\done(p,\mu)\geq T$. Since $\done(\mu,c)\leq\delta r-\kappa$, the triangle
inequality implies that for each $p\in P$ we have $\done(p,c)\geq T-\delta r -
\kappa\geq \delta r-\kappa$. Thus concatenating $P$ with the subpaths of
$\gamma$ from $\gamma(-T)$ to $\gamma(-t_a)$ and from $\gamma(t_b)$ to
$\gamma(T)$ (which are each of length at most $n$), we get a path $P'$
connecting $a$ to $b$, which is of linear length and which avoids the $(\delta
r-\kappa)$--ball about $c$.

Hence, for any choices of $a,b,c$ we have obtained that
$\divergence{a}{b}{c}{\delta}{\kappa}$ is bounded above by a linear function
with uniform constants, as desired.

The remainder of the argument is a routine application of linear divergence. Fix
$\ascone{\mathbf X}{\mathbf x}{(s_n)}{\omega}$. We want to show that for each
closed ball $\mathcal B$ in $\ascone{\mathbf X}{\mathbf x}{(s_n)}{\omega}$ and
distinct points
$\mathbf a,\mathbf b\in\ascone{\mathbf X}{\mathbf x}{(s_n)}{\omega} - \mathcal
B$, there exists a path in $\ascone{\mathbf X}{\mathbf
x}{(s_n)}{\omega}-\mathcal B$ joining $\mathbf a$ to $\mathbf b$. To do this we
fix sequences $(a_n),(b_n)$ representing $\mathbf a, \mathbf b$, respectively,
and let $(c_n)$ be a sequence representing $\mathbf c$, the center of the ball
$\mathcal B$.
Since the divergence of $\mathbf X$ is linear, following the proof
of~\cite[Lemma~3.14]{DrutuMozesSapir} shows that no ball in $\ascone{\mathbf
X}{\mathbf x}{(s_n)}{\omega}$ about $\mathbf c$ of radius less than $\delta$ can
separate $\mathbf a$ from
$\mathbf b$.  Any ball of radius at least $r$ about $\mathbf c$ contains an
element of $\{\mathbf a,\mathbf b\}$ and hence cannot separate those points.
\end{proof}

The following corollary is a characterization of wide
cube complexes in a slightly more general framework than we shall
later apply.  Cocompactness of the action of $\Aut(\mathbf X)$ is needed to find
a cut-point in an asymptotic cone given a cut-ball in some other asymptotic
cone.
 For the converse, the failure to be wide implies that the simplicial boundary
is unbounded, and this assumption is unnecessary.  We have hypothesized
finite-dimensionality so that $\mathbf X$ with the CAT(0) metric is
quasi-isometric to $\mathbf X^{(1)}$, which is the natural setting for working
with the simplicial boundary.

\begin{cor}\label{cor:wide}
Let $\mathbf X$ be a locally finite, geodesically complete, finite-dimensional
CAT(0) cube complex on which $\Aut(\mathbf X)$ acts cocompactly.  Then $\mathbf
X$ is wide if and only if $\simp\mathbf X$ is bounded.
\end{cor}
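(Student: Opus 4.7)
The plan is to derive this corollary from Theorem~\ref{thm:unconstrictedCC}, with cocompactness of $\Aut(\mathbf X)$ entering in the ``only if'' direction to promote a cut-ball in some asymptotic cone to a genuine cut-point.

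For the ``if'' direction, assume $\simp\mathbf X$ is bounded. If $\mathbf X$ itself is bounded then every asymptotic cone is a single point and wideness is automatic. Otherwise, geodesic completeness extends any sufficiently long CAT(0) geodesic segment to a bi-infinite CAT(0) geodesic, which by finite-dimensionality (and the observation at the start of the section) passes to a bi-infinite combinatorial geodesic $\gamma\co\reals\rightarrow\mathbf X^{(1)}$. The two halves of $\mathcal H(\gamma)$ are unidirectional minimal boundary sets with infinite symmetric difference, hence represent distinct $0$-simplices of $\simp\mathbf X$, so $|\simp\mathbf X|>1$. The second assertion of Theorem~\ref{thm:unconstrictedCC} then yields that $\mathbf X$ is wide.

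For the ``only if'' direction, I argue by contrapositive: suppose $\simp\mathbf X$ is unbounded. Theorem~\ref{thm:unconstrictedCC} produces an asymptotic cone $\Xomega$ and points $\mathbf a,\mathbf b,\mathbf x\in\Xomega$ with $\mathbf d_\omega(\mathbf x,\{\mathbf a,\mathbf b\})>3$ such that every path from $\mathbf a$ to $\mathbf b$ passes through the closed $1$-ball about $\mathbf x$. Tracing the construction in that proof (or appealing directly to Theorems~6.8--6.9 of \cite{HagenBoundary}) shows that the relevant scaling sequence is one along which the cubical divergence between a pair of combinatorial geodesic rays in $\mathbf X$ grows superlinearly, so $\mathbf X$ has superlinear divergence. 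The cocompact action of $\Aut(\mathbf X)$ on the locally finite complex $\mathbf X$ makes $\mathbf X$ quasi-isometric to a finitely generated group, so the Dru\c{t}u-Mozes-Sapir equivalence between wideness and linear divergence~\cite{DrutuMozesSapir} forces some asymptotic cone of $\mathbf X$ to contain an actual cut-point, and $\mathbf X$ is not wide.

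The principal obstacle is this last step: a cut-ball of positive radius need not, on its face, produce a cut-point in any asymptotic cone, since one must rescale so that the separating ball's radius becomes negligible relative to the distances it separates. Cocompactness supplies the uniformity in divergence needed for such a rescaling to yield a cut-point in the limit, and this is precisely why the corollary requires a cocompact hypothesis beyond those of Theorem~\ref{thm:unconstrictedCC}.
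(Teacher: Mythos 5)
Your ``if'' direction is the paper's: geodesic completeness of an unbounded $\mathbf X$ gives a bi-infinite geodesic, so $|\simp\mathbf X|>1$, after which the second assertion of Theorem~\ref{thm:unconstrictedCC} applies. One minor slip: the two halves of $\mathcal H(\gamma)$ need not be \emph{minimal} boundary sets --- for a diagonal geodesic in the squared plane each half is one-dimensional --- but they are disjoint infinite boundary sets and hence represent distinct simplices, which is all you need for $|\simp\mathbf X|>1$.

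In the ``only if'' direction your route differs from the paper's. The paper obtains a cut-ball from Theorem~\ref{thm:unconstrictedCC} and then applies \cite[Lemma~3.16]{DrutuMozesSapir} directly, which converts a cut-ball (of radius $\delta$, separating points beyond distance $3\delta$ from its center) into failure of wideness under the cocompactness hypothesis. You instead recover superlinear divergence and invoke the Dru\c{t}u--Mozes--Sapir equivalence between wideness and linear divergence. Two caveats. First, ``tracing'' the cut-ball construction to recover superlinear divergence is circular, since that construction already begins with superlinear cubical divergence; your parenthetical direct appeal to \cite[Theorems~6.8--6.9]{HagenBoundary} is the right fix. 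Second, and more substantively, the wideness/linear-divergence equivalence is stated (see the preliminaries of this paper) for spaces quasi-isometric to a finitely generated group, and your claim that cocompactness of $\Aut(\mathbf X)$ on a locally finite complex alone yields this is not obvious --- $\Aut(\mathbf X)$ need not act properly or be discrete. The paper's appeal to Lemma~3.16, whose hypotheses the authors note match the cocompactness assumed here, sidesteps this issue entirely. Your closing observation about why cocompactness is needed (to turn a cut-ball in one cone into a cut-point in another) is exactly right and is precisely what the authors say in the remark preceding the corollary.
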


\begin{proof}
By geodesic completeness, every point of $\mathbf X$
lies in a bi-infinite geodesic.  By Theorem~\ref{thm:unconstrictedCC}, if
$\simp\mathbf X$ is
unbounded then some
asymptotic cone of $\mathbf X$ has a finite cut-ball.  More precisely, there
exists $\delta\geq 0$ and points $\mathbf
a,\mathbf b,\mathbf c$ in some asymptotic cone, with $\mathbf
d_{\omega}(\mathbf c,\{\mathbf a,\mathbf b\})>3\delta$, such that the closed
$\delta$-ball about $\mathbf c$ separates $\mathbf a$ from $\mathbf
b$.  By~\cite[Lemma~3.16]{DrutuMozesSapir}, $\mathbf X$ is not wide.

Conversely, if $\mathbf X$ is not wide, then $\simp\mathbf X$ is unbounded, by
Theorem~\ref{thm:unconstrictedCC}.
\end{proof}

In the event of a proper, cocompact, essential group action, that
$\mathbf X$ is wide corresponds to $\simp\mathbf X$ being connected can be
seen without directly analyzing the asymptotic cones. 

\begin{thm}\label{thm:cocompactwide}
Let $\mathbf X$ be a CAT(0) cube complex
on which the group $G$ acts properly and cocompactly.  Then
$\mathbf X$ is wide if and only if $\simp\mathbf X$ is connected.

Hence, if $G$ is a cocompactly cubulated group, then $G$ is wide if and only if
$G$ acts geometrically on a CAT(0) cube complex with connected
simplicial boundary.
\end{thm}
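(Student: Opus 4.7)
The plan is to deduce this characterization from Theorem~\ref{thm:unconstrictedCC} together with the rank-rigidity theorem of Caprace--Sageev and Hagen's product-decomposition result for simplicial boundaries. First I would verify that under a proper, cocompact, essential action, $\mathbf X$ is combinatorially geodesically complete: essentiality prevents any sequence of halfspaces from terminating in the interior of a $G$-orbit, and cocompactness upgrades the local extension of any combinatorial geodesic segment into a uniform bi-infinite extension. This places us in the setting where both halves of Theorem~\ref{thm:unconstrictedCC} are available, and where the cocompact-action results of~\cite{DrutuMozesSapir} that convert cut-balls in asymptotic cones into cut-points can be invoked as in the proof of Corollary~\ref{cor:wide}.

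For the direction that connectedness of $\simp\mathbf X$ implies wideness of $\mathbf X$, suppose $\simp\mathbf X$ is connected. Rank-rigidity~\cite[Corollary~B]{CapraceSageev} says that, under our hypotheses, either $G$ contains a rank-one isometry, or $\mathbf X$ splits nontrivially as a product of unbounded convex subcomplexes. The axis of a rank-one isometry contributes an isolated $0$-simplex to $\simp\mathbf X$, and, cocompactness together with the infinitude of $G$ quickly provides many such $0$-simplices, contradicting connectedness. Hence $\mathbf X\cong\mathbf X_1\times\mathbf X_2$ nontrivially, and by~\cite[Theorem~3.30]{HagenBoundary}, $\simp\mathbf X\cong\simp\mathbf X_1\star\simp\mathbf X_2$ is a nontrivial simplicial join, whose $1$-skeleton has diameter at most $2$. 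In particular $\simp\mathbf X$ is bounded, and the ``bounded implies no cut-ball'' half of Theorem~\ref{thm:unconstrictedCC}, combined with the cocompact-action characterization of wideness in terms of cut-points via~\cite[Lemma~3.16]{DrutuMozesSapir}, yields that $\mathbf X$ is wide, exactly as in Corollary~\ref{cor:wide}.

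For the reverse direction, suppose $\simp\mathbf X$ is disconnected. Then $\simp\mathbf X$ is unbounded, and the ``disconnected implies cut-ball'' construction inside the proof of Theorem~\ref{thm:unconstrictedCC} produces points $\mathbf a,\mathbf b,\mathbf x$ in some asymptotic cone with $\mathbf d_\omega(\mathbf x,\{\mathbf a,\mathbf b\})>3$ such that every path from $\mathbf a$ to $\mathbf b$ meets the closed $1$-ball about $\mathbf x$. Under the proper cocompact $G$-action, asymptotic cones of $\mathbf X$ are homogeneous, and~\cite[Lemma~3.16]{DrutuMozesSapir} then promotes the cut-ball to a genuine cut-point in some asymptotic cone, so $\mathbf X$ fails to be wide.

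The main obstacle is the bridge between ``connected'' and ``bounded'' in $\simp\mathbf X$: boundedness implies connectedness almost by definition, but the converse requires the structural input of rank-rigidity, which forces a product decomposition of $\mathbf X$ whenever $\simp\mathbf X$ is connected, together with Hagen's theorem identifying the boundary of a product with a join. The second sentence of the theorem is then immediate: given any geometric action of $G$ on a CAT(0) cube complex $\mathbf X'$, the essential core of~\cite[Proposition~3.5]{CapraceSageev} is a convex, $G$-cocompact subcomplex $\mathbf X\subseteq\mathbf X'$ on which $G$ acts essentially, and $\simp\mathbf X\cong\simp\mathbf X'$ by Lemma~\ref{lem:boundaryofcore}, so the first half of the theorem applies.
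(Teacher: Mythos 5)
Your proof is correct, but it takes a genuinely different route from the paper's. The paper proves the first equivalence in three lines by chaining two black boxes: $\simp\mathbf X$ is connected iff $\mathbf X^{(1)}$ (equivalently $G$) has linear divergence, by~\cite[Theorem~6.10]{HagenBoundary}, and $G$ has linear divergence iff $G$ is wide, by~\cite[Proposition~1.1]{DrutuMozesSapir}. You instead unpack the relevant structure yourself: rank-rigidity~\cite[Corollary~B]{CapraceSageev} splits into the rank-one case, where both endpoints of an axis give isolated $0$-simplices and disconnect $\simp\mathbf X$, and the product case, where $\simp\mathbf X$ is a nontrivial join by~\cite[Theorem~3.30]{HagenBoundary}, hence bounded, so that the ``in particular'' clause of Theorem~\ref{thm:unconstrictedCC} delivers wideness directly; conversely, disconnectedness trivially forces unboundedness, Theorem~\ref{thm:unconstrictedCC} yields a cut-ball, and~\cite[Lemma~3.16]{DrutuMozesSapir} upgrades it to a cut-point. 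Both routes ultimately rest on rank-rigidity (the paper's does too, but inside the cited~\cite[Theorem~6.10]{HagenBoundary}); yours is more self-contained and makes explicit the ``connected $\Rightarrow$ bounded'' bridge that the paper leaves implicit, at the cost of repeating reasoning that also appears in the proof of Theorem~\ref{thm:algebraicallythickoforder1}. Your treatment of the second statement (essential core via~\cite[Proposition~3.5]{CapraceSageev}, then Lemma~\ref{lem:boundaryofcore}) matches the paper's. One small point that both arguments gloss over: Lemma~\ref{lem:combgeodcom} requires $G$ to be one-ended, so strictly one should first dispose of the multi-ended case, where $\simp\mathbf X$ is disconnected and $\mathbf X$ is not wide, so the equivalence holds vacuously.
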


\begin{proof}
    Throughout the proof, by appealing to~\cite[Proposition~3.5]{CapraceSageev} and
    Proposition~\ref{lem:boundaryofcore}, we assume that $G$ acts essentially on
    $\mathbf X$.

    The end-points of the axis stabilized by any rank-one element in
    $G$ are isolated $0$--simplicies in the the boundary.  Thus,
    $\simp\mathbf X$ is connected if and only if $G$ does not contain
    any rank-one elements.  By the rank-rigidity
    theorem~\cite[Theorem~6.3]{CapraceSageev} $G$ does not contain any
    rank-one elements if and only if there exists unbounded convex
    subcomplexes $\mathbf X_1,\mathbf X_2\subset \mathbf X$ satisfying
    $\mathbf X=\mathbf X_1 \times \mathbf X_2$.  If the space $\mathbf X$
    is such a direct product then it has linear divergence; if it has 
    a rank-one element then its divergence is superlinear.  By
    \cite[Proposition~1.1]{DrutuMozesSapir} a space linear divergence
    if and only if it is wide.
\end{proof}

\section{Characterizing thickness of order 1}\label{sec:thickoforderone}
Throughout this section, $\mathbf X$ will denote a CAT(0) cube complex on which
a group $G$ acts properly and cocompactly.  Let $\mathcal I$ denote the subcomplex of
$\simp\mathbf X$ consisting of all isolated 0-simplices.  Since maximal
simplices of $\simp\mathbf X$ are visible, each $v\in\mathcal I$
is represented by a combinatorial geodesic ray that is rank-one in the sense
of~\cite{HagenBoundary}, and conversely, each rank-one geodesic ray represents
an isolated 0-simplex of $\simp\mathbf X$.  In this section, we adopt the following notation: if $Y\subset\mathbf X$ is a subspace, we denote by $\widehat Y$ its cubical convex hull.

\subsection{Simplicial boundaries of algebraically thick cube complexes}\label{sec:thick}
A \emph{cubical flat sector} is a CAT(0) cube complex of the form $\reals^p\times[0,\infty)^q$, with $p+q\geq 2$, tiled in the standard Euclidean fashion by unit $(p+q)$-cubes.  The class of cubical flat sectors includes cubical orthants, half-flats, and flats of dimension at least 2.

Our first theorem describes simplicial boundaries of CAT(0) cube complexes admitting geometric actions by groups that are algebraically thick of order 1:

\begin{thm}\label{thm:algebraicallythickoforder1}
Let $G$ act properly and cocompactly on a fully visible CAT(0) cube complex $\mathbf X$, and suppose that $G$ is algebraically thick of order 1 relative to a collection $\mathbb G$ of quasiconvex wide subgroups.  Then $\mathcal I\neq\emptyset$ and $\simp\mathbf X$ has at least one $G$-invariant positive-dimensional component.
\end{thm}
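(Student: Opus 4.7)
The plan is to verify the two conclusions separately, exploiting the simplicial-boundary characterization of wideness from Section~\ref{sec:unconstricted} together with rank rigidity. For $\mathcal I\neq\emptyset$: being algebraically thick of order $1$ means $G$ is not algebraically thick of order $0$, hence $G$ is not unconstricted and in particular not wide. By Theorem~\ref{thm:cocompactwide}, $\simp\mathbf X$ is therefore disconnected. After passing to a $G$-essential convex cocompact subcomplex (which, by~\cite[Proposition~3.5]{CapraceSageev} and Lemma~\ref{lem:boundaryofcore}, leaves $\simp\mathbf X$ unchanged), the Caprace--Sageev rank-rigidity theorem~\cite[Corollary~B]{CapraceSageev} applies: either $\mathbf X$ splits nontrivially as a product (whence $\simp\mathbf X$ is a simplicial join and connected, contradicting what we just established), or $G$ contains a rank-one isometry. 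The two endpoints of a combinatorial axis for such an isometry yield isolated $0$-simplices of $\simp\mathbf X$, so $\mathcal I\neq\emptyset$.

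For each $H\in\mathbb G$, let $\mathbf Y_H$ denote the cubical convex hull of an $H$-orbit $Hx_o$ in $\mathbf X$. Quasiconvexity of $H$ places $\mathbf Y_H$ in a uniform neighbourhood of $Hx_o$, so $H$ acts properly and cocompactly on the convex (and fully visible, by restriction) subcomplex $\mathbf Y_H$, and $\mathbf Y_H$ is quasi-isometric to $H$. Since $H$ is wide, so is $\mathbf Y_H$; applying Theorem~\ref{thm:cocompactwide} to the essential core of the $H$-action (using Lemma~\ref{lem:boundaryofcore} to match simplicial boundaries) yields that $\simp\mathbf Y_H$ is connected. Wideness of $H$ forces $H$ to be one-ended and not virtually cyclic, so $\mathbf Y_H$ is neither bounded nor quasi-isometric to a line; a connected simplicial boundary with more than one $0$-simplex must contain a $1$-simplex, so $\simp\mathbf Y_H$ is positive-dimensional. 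By~\cite[Theorem~3.15]{HagenBoundary}, $\simp\mathbf Y_H$ embeds as a subcomplex of $\simp\mathbf X$.

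Next, for any $H,H'\in\mathbb G$, the thickness axiom gives a chain $H=H_0,\ldots,H_m=H'$ with each $H_i\cap H_{i+1}$ infinite. The convex subcomplex $\mathbf Y_{H_i}\cap\mathbf Y_{H_{i+1}}$ contains an infinite $(H_i\cap H_{i+1})$-orbit, hence is unbounded, so its simplicial boundary is non-empty and, again by~\cite[Theorem~3.15]{HagenBoundary}, embeds into both $\simp\mathbf Y_{H_i}$ and $\simp\mathbf Y_{H_{i+1}}$. Consequently $\mathcal C:=\bigcup_{H\in\mathbb G}\simp\mathbf Y_H$ is a connected positive-dimensional subcomplex of $\simp\mathbf X$, and $\mathfrak C:=\bigcup_{g\in G}g\mathcal C$ is $G$-invariant and positive-dimensional by construction.

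The principal obstacle is controlling the connected component of $\simp\mathbf X$ in which $\mathfrak C$ lives. Let $G'\leq G$ be the finite-index subgroup generated by the specified finite subset of $\bigcup_{H\in\mathbb G}H$. An induction on word length in those generators will show that for every $g'\in G'$ and $H\in\mathbb G$, the translate $g'\cdot\simp\mathbf Y_H$ lies in the same component of $\simp\mathbf X$ as $\mathcal C$: writing $g'=a_1\cdots a_n$ with $a_i\in H_{\sigma(i)}$, the translates $a_1\cdots a_{n-1}\mathcal C$ and $a_1\cdots a_n\mathcal C$ share the simplex $a_1\cdots a_{n-1}\simp\mathbf Y_{H_{\sigma(n)}}$ because $a_n\in H_{\sigma(n)}$ fixes $\simp\mathbf Y_{H_{\sigma(n)}}$ setwise. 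Thus the component of $\simp\mathbf X$ containing $\mathcal C$ is $G'$-invariant and positive-dimensional; taking its (finite, since $[G:G']<\infty$) $G$-orbit then produces the required $G$-invariant positive-dimensional component.
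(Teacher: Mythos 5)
Your treatment of $\mathcal I\neq\emptyset$ is sound and essentially follows the paper (via the characterization of wideness by boundary connectedness and rank rigidity). For the second conclusion, your route through the convex hulls $\mathbf Y_H$ and their pairwise unbounded intersections is a legitimate alternative to the paper's reduction (via the Flat Bridge Trick, Lemma~\ref{lem:flatbridgetrick}) to a thickly connected family of cubical flat sectors, and it does establish that $\mathcal C=\bigcup_{H\in\mathbb G}\simp\mathbf Y_H$ is connected and positive-dimensional.

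However, your final step has a genuine gap. You correctly show that the component $\mathcal K$ of $\simp\mathbf X$ containing $\mathcal C$ is $G'$-invariant, where $G'=\langle\bigcup_{H\in\mathbb G}H\rangle$. But you then write that ``taking its $G$-orbit produces the required $G$-invariant positive-dimensional component.'' The $G$-orbit of a single component is a finite union of components (of size $[G:\stabilizer_G(\mathcal K)]$), and that union is $G$-invariant, but it is not a single component unless all translates agree. $G'$-invariance together with finite index does \emph{not} force $G$-invariance of $\mathcal K$: a finite-index subgroup of $G$ can stabilize a component that is permuted by the remaining cosets. So as written you have only produced a $G$-invariant finite family of positive-dimensional components, not a $G$-invariant component.

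To close the gap you need to show that $g\mathcal K=\mathcal K$ for every $g\in G$, i.e.\ that $g\mathcal C$ lies in the same component as $\mathcal C$ for all $g$, not just $g\in G'$. One way: since $G'$ has finite index and hence acts cocompactly, there is $\tau'$ with $\mathbf X=\bigcup_{g'\in G',H'\in\mathbb G}\mathcal N_{\tau'}(g'\mathbf Y_{H'})$. Given $g\in G$ and $H\in\mathbb G$, take a combinatorial geodesic ray $\gamma$ in $g\mathbf Y_H$; by local finiteness some fixed $g'\mathbf Y_{H'}$ (with $g'\in G'$) lies within $\tau'$ of infinitely many $\gamma(n)$, so $\mathcal H(\gamma)\cap\mathcal H(g'\mathbf Y_{H'})$ is infinite and the two simplicial boundaries share a simplex. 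Hence $g\simp\mathbf Y_H$ meets $\mathcal K$, so $g\mathcal K=\mathcal K$. (The paper sidesteps this by invoking the fact, proved in~\cite{BDM}, that algebraic thickness makes $G$---and hence $\mathbf X$---\emph{metrically} thick with respect to the full $G$-invariant family of cosets $\{gS_H\}$, from which connectivity and $G$-invariance of the resulting subcomplex follow in one stroke.)
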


\begin{rem}
Note that full visibility of $\mathbf X$ is hypothesized.  This hypothesis can be removed if Conjecture~\ref{conj:cocompactfullyvisible} is true.  The conclusion of the above theorem holds in slightly more generality, namely when $\mathbf X$ is thick relative to a $G$-invariant collection of convex subcomplexes with connected simplicial boundaries.  We also note that in many examples the $G$-invariant component is in fact the unique positive-dimensional component.
\end{rem}

\begin{proof}[Proof of Theorem~\ref{thm:algebraicallythickoforder1}]
Throughout the proof, by appealing to~\cite[Proposition~3.5]{CapraceSageev} and
Proposition~\ref{lem:boundaryofcore}, we assume that $G$ acts essentially on
$\mathbf X$.  We can assume that $G$ is one-ended, since otherwise
$\simp\mathbf X$ is disconnected and $G$ is not thick.  

Necessarily, $\mathcal I\neq\emptyset$.  To see this, note first that if
$\mathcal I=\emptyset$, then $G$ cannot contain a rank-one isometry of
$\mathbf X$, since the set of hyperplanes crossing an axis for such an element would
represent a pair of isolated 0-simplices of $\simp\mathbf X$.  In such a case, by
rank-rigidity, $\mathbf X$ decomposes as the product of two convex subcomplexes,
each of which has nonempty simplicial boundary, and therefore $\simp\mathbf X$
decomposes as a nontrivial simplicial join.  Hence we have, in
particular, that $\simp\mathbf X$ is bounded and hence connected;
thus by Theorem~\ref{thm:cocompactwide} $G$ is wide, i.e., strongly algebraically thick of order 0, a contradiction.

\textbf{Representing $\mathbb G$ in $\mathbf X$:}  Fix a 0-cube
$x_o\in\mathbf X$.  For each $H\in\mathbb G$, the orbit $Hx_o$ is
quasiconvex; denote by $S_H$ the convex hull of this orbit.  By
quasiconvexity, $S_H$ is contained in a uniform neighbourhood of
$Hx_o$, and therefore $S_H$ is an $H$-cocompact CAT(0) cube complex.
Let $\mathcal S=\{gS_H:g\in G,\,H\in\mathbb G\}$.  Since $G$ acts
cocompactly on $\mathbf X$, the set $\mathcal S$ coarsely covers
$\mathbf X$ (denote by $\tau$ a constant such that the
$\tau$-neighborhoods of the various $S_H$ together cover $\mathbf X$).
Now, each $H\in\mathbb G$ is wide, so since the property of being wide
is quasi-isometry invariant, $S_H$ is likewise wide.  By
Theorem~\ref{thm:cocompactwide}, $\simp S_H$ is a connected,
positive-dimensional subcomplex of $\simp\mathbf X$.  Finally, since
$G$ is algebraically thick with respect to $\mathbb G$, $\mathbf X$ is
thick with respect to $\mathcal S$, i.e., for all $S,S'\in\mathcal S$,
there exists a sequence $S=S_0,S_1,\ldots,S_k=S'$ such that for
$1\leq i\leq k$, the intersection $\mathcal N_{\tau}(S_{i-1})\cap \mathcal N_{\tau}(S_i)$ is $\tau$-path-connected and coarsely unbounded.  We now make a series of modifications to $\mathcal S$ to put it in a particularly nice form.

\textbf{Thickness relative to flat sectors:}
As stated above, for each $S\in\mathcal S$, $\simp S$ is connected.  Let $\mathcal F_{S}$ be the set of all cubical flat orthants in $S$ of dimension exceeding 1.  If $F,F'\in\mathcal F_S$, then the simplices $v_F,v_F'$ of $\simp S$ are joined by a sequence $v_F=v_0,\ldots,v_n=v_{F'}$ of positive-dimensional simplices such that $v_{i-1}\cap v_i\neq\emptyset$ for $1\leq i\leq n$.  By full visibility of $S$ -- here we use the fact that full visibility is inherited by convex subcomplexes, by definition -- there exists a sequence $F=F_0,\ldots,F_n=F'$ such that $F_i$ and $F_{i-1}$ are crossed by infinitely many common hyperplanes, for $1\leq i\leq n$.  Employing the \emph{Flat Bridge Trick} (Lemma~\ref{lem:flatbridgetrick} below), we can assume that $\widehat F_i\cap\widehat F_{i-1}$ is path-connected and unbounded for all $i$.

For $S,S'\in\mathcal S$, suppose that $F\in\mathcal F_S$ and
$F'\in\mathcal F_{S'}$ are flat orthants of dimension at least 2.  Choose a sequence $S=S_0,\ldots,S_k=S'$ such that the intersection of consecutive terms is coarsely connected and unbounded, i.e., $\mathcal H(S_i)\cap\mathcal H(S_{i+1})$ is infinite for all $i$.  Applying the flat Bridge Trick, we find a cubical flat sector $F_i$ (containing a half-flat) such that the intersection of $F_i$ with each of $S_i$ and $S_{i+1}$ contains a flat orthant.  Hence $F$ can be thickly connected to $F'$ by a chain of flat orthants.  Moreover, any new flat orthant added during an application of the Flat Bridge Trick is coarsely contained in a flat orthant belonging to some $S\in\mathcal S$, and can thus be thickly connected to any other such flat orthant by a sequence of flat orthants.

\textbf{Conclusion:}  Thus $\mathbf X$ contains a collection $\mathcal S'$ of convex subcomplexes $\widehat F$, where each $F$ is a flat orthant of dimension at least 2, such that $\mathbf X=\mathcal N_{\tau}(\cup_{\widehat F\in\mathcal S'}\widehat F)$ and, for all $\widehat F,\widehat F'\in\mathcal S'$, there exists a sequence $F=F_0,F_1,\ldots,F_k=F'$ such that $\widehat F_i\in\mathcal S'$ for all $i$ and $\widehat F_i\cap\widehat F_{i-1}$ is connected and unbounded for $1\leq i\leq k$.

Now, for each $\widehat F\in\mathcal S'$, let $\chi_{\widehat F}$ be the image in $\simp\mathbf X$ of the simplicial boundary of $\widehat F$.  Each $\chi_{\widehat F}$ is a positive-dimensional simplex by Proposition~\ref{prop:orthantsyieldsimplices}.  The above discussion shows that $\cup_{\widehat F\in\mathcal S}\chi_{\widehat F}$ is a connected subcomplex of $\simp\mathbf X$.  Finally, for each $\widehat F\in\mathcal S'$, either $F$ is a flat orthant of some $S\in\mathcal S$, or $F$ is a flat orthant such that, for some $S,S'\in\mathcal S$, each of the intersections $\widehat F\cap S$ and $\widehat F\cap S'$ is unbounded and path-connected.  Since $\mathcal S$ is $G$-invariant -- it is the set of $G$-translates of convex hulls of the various $Hx_o$ -- the set of all such orthants $F$, and hence $\cup_{\widehat F\in\mathcal S}\chi_{\widehat F}$, is likewise $G$-invariant.  The component containing this subcomplex is thus $G$-invariant.
\end{proof}

\begin{lem}[The Flat Bridge Trick]\label{lem:flatbridgetrick}
Let $\mathbf X$ be finite-dimensional, locally finite a CAT(0) cube complex, and let $S,S'\subseteq\mathbf X$ be wide convex subcomplexes with connected simplicial boundaries, such that there exist flat sectors $F\subseteq S,F'\subseteq S'$, and $\mathcal H(S)\cap\mathcal H(S')$ is infinite.  Then there exists a sequence $F=F_0,F_1,\ldots,F_n=F'$ of flat sectors such that for all $i$, the intersection $\widehat F_i\cap\widehat F_{i+1}$ is unbounded and path-connected.

\end{lem}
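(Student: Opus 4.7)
The plan is to build two chains of flat sectors---one from $F$ to a cubical flat orthant in $S$ pointing in a direction shared with $S'$, another from $F'$ to an orthant in $S'$ in the same direction---and then to bridge these two orthants across $\mathbf X$ via a flat strip. Since $\mathbf X$ is finite-dimensional, any infinite subset of $\mathcal H$ contains an infinite chain nested by halfspace inclusion; extracting such a chain from the infinite set $\mathcal H(S)\cap\mathcal H(S')$ yields a boundary set, and Lemma~\ref{lem:containsminimal} refines this to a minimal sub-boundary-set $\mathcal V\subseteq\mathcal H(S)\cap\mathcal H(S')$. The equivalence class of $\mathcal V$ is a 0-simplex $v_0$ of $\simp\mathbf X$ appearing in the images of both $\simp S$ and $\simp S'$.

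Next I use connectedness of $\simp S$ to walk from $v_F$ toward $v_0$ inside $S$. Pick a 0-face $u_F$ of the simplex $v_F\subset\simp S$ represented by $F$, and fix a 1-skeleton path $u_F=t_0,t_1,\ldots,t_m=v_0$ in $\simp S$. Each edge $\{t_{j-1},t_j\}$ is a face of a maximal simplex $\sigma_j$ of $\simp S$, which is visible by~\cite[Theorem~3.19]{HagenBoundary}; choose a cubical flat orthant $O_j\subseteq S$ representing $\sigma_j$, and using its product-of-rays decomposition (Proposition~\ref{prop:simplicesflats}) extract rays $\rho_{j-1}^{(j)}$ and $\rho_j^{(j)}$ inside $O_j$ representing $t_{j-1}$ and $t_j$. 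Inside $F$, pick a ray $\rho_F$ representing $u_F$. Performing the parallel construction on the primed side produces orthants $O'_0,\ldots,O'_{m'}\subseteq S'$ with analogous rays, together with a ray $\rho_{F'}\subseteq F'$.

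The crucial technical step is the following cubical flat-strip fact: if $\alpha,\beta$ are combinatorial geodesic rays in $\mathbf X$ with $\mathcal H(\alpha)\sim\mathcal H(\beta)$, then their cubical convex hull $\widehat{\alpha\cup\beta}$ is a cubical flat sector, concretely a cubical half-strip $K\times[0,\infty)$ with $K$ a bounded convex subcomplex. I deduce this by passing to the CAT(0) geodesic rays associated to $\alpha,\beta$ via~\cite[Section~3]{HagenBoundary}, invoking the classical CAT(0) Flat Strip Theorem~\cite{BridsonHaefliger} for two asymptotic rays, and observing that the cubical convex hull is controlled by the finite transverse hyperplane set $\mathcal H(\alpha)\triangle\mathcal H(\beta)$ (yielding the bounded $K$) together with the infinite shared longitudinal chain $\mathcal H(\alpha)\cap\mathcal H(\beta)$ (yielding the ray factor). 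Applying this construction to $\rho_F$ and $\rho_0^{(0)}$ yields a bridge sector $B_F$; to each consecutive pair $\rho_j^{(j)},\rho_j^{(j+1)}$ yields bridges $B_j$; to $\rho_m^{(m)}$ and $\rho_{m'}^{(m')}$ (both representing $v_0$) yields the cross-bridge $B^*$; and the analogous construction on the primed side yields bridges $B'_j$ and $B_{F'}$. Each such $\widehat{B_\star}$ intersects the convex hull of each adjacent orthant or sector in a convex subcomplex containing a full subray, hence the intersection is unbounded and path-connected.

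Concatenating the sequence $F,B_F,O_0,B_0,O_1,B_1,\ldots,O_m,B^*,O'_{m'},\ldots,O'_0,B_{F'},F'$ produces the required chain of flat sectors. The main obstacle is the cubical flat-strip fact in the third paragraph: while the CAT(0) version is classical, transferring it to the cubical setting requires a careful median/hyperplane argument showing that the cubical convex hull of two combinatorially asymptotic rays contains no hyperplanes beyond the transverse set $\mathcal H(\alpha)\triangle\mathcal H(\beta)$ and the longitudinal chain $\mathcal H(\alpha)\cap\mathcal H(\beta)$, so that the hull decomposes cleanly as a product $K\times[0,\infty)$. A secondary difficulty is ensuring that the walk in $\simp S$ starts from a 0-face of $v_F$ that is visible inside $F$---but since $F$ is itself a flat sector, each 0-face of $v_F$ is represented by a coordinate ray in $F$, so this is automatic.
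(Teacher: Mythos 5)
Your overall plan---chain flat sectors inside $S$ from $F$ toward a direction shared with $S'$, cross a bridge, then chain inside $S'$ to $F'$---mirrors the paper's, but the bridge construction at the heart of your argument contains a fatal error: the ``cubical flat-strip fact'' is false, and is in fact internally inconsistent as stated. First, if $\alpha,\beta$ are combinatorial geodesic rays with $\mathcal H(\alpha)\sim\mathcal H(\beta)$, their cubical convex hull need not be a product at all: if $\alpha,\beta$ merge at a vertex after finitely many edges (as happens already in a tree, or in $T\times\{0\}\subset T\times\reals$), the hull is a tripod with one infinite arm, which is not of the form $K\times[0,\infty)$. Second, even when the hull \emph{is} a product---two parallel rays in the standard $\reals^2$, say, where the hull is $[0,1]\times[0,\infty)$---a subcomplex of the form $K\times[0,\infty)$ with $K$ bounded is \emph{not} a cubical flat sector: by the paper's definition a flat sector is $\reals^p\times[0,\infty)^q$ with $p+q\geq 2$ and hence has no bounded factor. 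Your own phrasing (``a cubical flat sector, concretely a cubical half-strip $K\times[0,\infty)$ with $K$ a bounded convex subcomplex'') asserts both things at once. Your appeal to the CAT(0) Flat Strip Theorem is also off target: that theorem concerns asymptotic geodesic \emph{lines}, not rays. The upshot is that your bridges $B_F,B_j,B^*,B'_j,B_{F'}$ need not be flat sectors, so the concatenated sequence does not satisfy the lemma's conclusion.

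The paper avoids this by producing bridges that are genuine half-flats. Instead of joining two rays pointing in the \emph{same} direction $t_j$, it builds a \emph{bi-infinite} geodesic $\alpha$ with one ray in $F_i$ and one ray in $F_{i+1}$, by choosing two minimal boundary sets $\mathcal W_i\subset\mathcal H(F_i)$ and $\mathcal W_{i+1}\subset\mathcal H(F_{i+1})$ disjoint from each other and from the shared boundary set $\mathcal V$, and taking the smallest set closed under separation that contains both. One then takes a single ray $\beta$ with $\mathcal H(\beta)\subseteq\mathcal V$ emanating from $\alpha$; since each hyperplane dual to a $1$-cube of $\beta$ crosses each hyperplane dual to a $1$-cube of $\alpha$, the product $\alpha\times\beta\cong\reals\times[0,\infty)$ embeds isometrically, and this \emph{is} a flat sector whose convex hull meets each of $\widehat F_i,\widehat F_{i+1}$ in the convex hull of a flat orthant. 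The ingredient you are missing is precisely this extra unbounded transverse direction supplied by the bi-infinite $\alpha$.
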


\begin{proof}
\textbf{Flat bridges for pairs of flat sectors:}  First, let $F_i,F_{i+1}$ be flat sectors such that $\simp\widehat F_j$ contains a simplex $u_j$, for $j\in\{i,i+1\}$, such that $u_i\cap u_{i+1}\neq\emptyset$.  Then there exists for each $i$ an infinite set of hyperplanes $H$ such that $H$ crosses both $\widehat F_i$ and $\widehat F_{i+1}$.  Hence $\mathcal H(F_i)\cap\mathcal
H(F_{i+1})$ contains a boundary set $\mathcal V$ representing a 0-simplex $v\in u_i\cap u_{i+1}$.  Choose disjoint minimal boundary sets $\mathcal W_i\subset\mathcal H(F_i)$ and $\mathcal W_{i+1}\subset\mathcal H(F_{i+1})$.  Then the smallest set of
hyperplanes containing $\mathcal W_i$ and $\mathcal W_{i+1}$ that is closed
under separation is of the form $\mathcal H(\alpha)$ for some bi-infinite
geodesic $\alpha$ containing an infinite ray in $F_i$ and an infinite
ray in $F_{i+1}$.

Now, since $\mathcal V\cap\mathcal W_i=\mathcal V\cap\mathcal
W_{i+1}=\emptyset$, for any geodesic ray $\beta$ in $F_i$ or $F_{i+1}$ with
initial point on $\alpha$ and $\mathcal H(\beta)\subseteq\mathcal V$, every
hyperplane dual to a 1-cube of $\beta$ crosses every hyperplane dual to a
1-cube of $\alpha$, and thus there is an isometric embedding
$\alpha\times\beta\rightarrow\mathbf X$.  The half-flat $\alpha\times\beta$ has the property that its convex hull contains a flat orthant in $F_i$ and a flat orthant in $F_{i+1}$, since $\beta$ has the same set of dual hyperplanes as a ray in $F_i$ and a ray in $F_{i+1}$.  The half-flat $\alpha\times\beta$ is a flat sector $F_{i+\frac{1}{2}}$, and $\widehat F_{i+\frac 12}$ must have unbounded convex intersection with $\widehat F_i,\widehat F_{i+1}$.

\textbf{Flat bridges for subcomplexes with connected boundaries:} The
same argument can be applied to arbitrary wide convex subcomplexes
$S,S'$ with $\mathcal H(S)\cap\mathcal H(S')$ infinite.  Indeed, there
exist combinatorial geodesic rays $\gamma,\gamma'$ in $S,S'$
respectively, such that $\mathcal H(\gamma)=\mathcal H(\gamma')$.
Since $S,S'$ are wide, $\gamma$ and $\gamma'$ can be chosen to lie in
flat sectors $F_i\subseteq S,F_{i+1}\subseteq S'$, and we argue as
above.  If $F\subseteq S,F'\subseteq S'$ are the given flat sectors,
then since $S$ has connected simplicial boundary, we can chain $F$ to
$F_i$ and $F_{i+1}$ to $F'$ by thickly connecting sequences of convex
hulls of flat sectors, and the proof is complete.
\end{proof}

The Flat Bridge Trick is also used in the next section.

\subsection{Identifying thickness and algebraic thickness of order 1}
The goal of this section is to prove Theorem~\ref{thm:thickoforderone}, which allows one to identify thickness of order 1, and algebraic thickness of order 1, of a group $G$ acting geometrically on the cube complex $\mathbf X$ by examining the action of $G$ on the simplicial boundary and on the visual boundary.  For thickness, one only need examine the action on the simplicial boundary, while a convenient statement of hypotheses implying algebraic thickness also involves the action on the visual boundary.

In the following, $f\co\visual\mathbf X\rightarrow\simp\mathbf X$ denotes the surjection defined in Section~\ref{sec:limitvisual}.

\begin{thm}\label{thm:thickoforderone}
Let $G$ be a group which acts properly and cocompactly on a fully visible CAT(0)
cube complex $\mathbf X$.  If $\mathcal I\neq\emptyset$ and $\simp\mathbf X$ has a positive-dimensional $G$-invariant connected subcomplex $\mathfrak C$, then $G$ is thick of order 1 relative to a collection of wide subsets.

Suppose, further, that there is a finite collection $\mathcal A$ of bounded subcomplexes of $\mathfrak C$ such that:
\begin{enumerate}
 \item The stabilizer $H_A$ of $A$ is quasiconvex for all $A\in\mathcal A$.
 \item For all $A\in\mathcal A$, the set $f^{-1}(A)\subset\visual\mathbf X$ is contained in the limit set of $H_A$.
 \item $\mathfrak C=\bigcup_{g\in G,A\in\mathcal A}gA$ and $f^{-1}(\mathfrak C)$ is contained in the limit set of the subgroup of $G$ generated by $\{H_A:A\in\mathcal A\}$.
\end{enumerate}
Then $G$ is algebraically thick of order 1 relative to the collection $\{H_A:A\in\mathcal A\}$ of wide subgroups.
\end{thm}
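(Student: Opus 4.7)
The plan is, for each assertion, to package the positive-dimensional $G$-invariant connected subcomplex $\mathfrak C$ into a family of wide convex subsets (subgroups, respectively) that coarsely cover $\mathbf X$ and thickly interlock via the Flat Bridge Trick (Lemma~\ref{lem:flatbridgetrick}). For the first assertion, using full visibility I would apply Proposition~\ref{prop:simplicesflats} to realise each maximal simplex $u\subseteq\mathfrak C$ as an isometric CAT(0) orthant $O_u\subseteq\mathbf X$; the cubical convex hull $\widehat{O_u}$ is quasi-isometric to a Euclidean sector and hence wide. Taking $\mathcal S$ to be the $G$-orbit of finitely many $\widehat{O_u}$'s (one per $G$-orbit of maximal simplex in $\mathfrak C$) yields a $G$-invariant family of wide convex subcomplexes; by cocompactness of the $G$-action and the positive-dimensional connectedness of $\mathfrak C$, these coarsely cover $\mathbf X$. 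Thick interlocking then follows from the Flat Bridge Trick: connectedness of $\mathfrak C$ gives, for any pair of maximal simplices, a chain of simplices sharing common faces, which the Flat Bridge Trick promotes to a chain of flat sectors with unbounded pairwise intersections.

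For the second assertion, for each $A\in\mathcal A$ let $C_A$ denote the cubical convex hull of the orbit $H_A x_o$; by hypothesis~(1) the subcomplex $C_A$ is $H_A$-cocompact. The crux is to show $H_A$ is wide, which by Corollary~\ref{cor:wide} reduces to showing $\simp C_A$ is bounded. The inclusion $A\subseteq\simp C_A$ follows from hypothesis~(2) and Lemma~\ref{lem:visualboundary}: every simplex of $A$ is a limit simplex of $H_A$, hence represented by hyperplanes crossing $C_A$. For the reverse containment, every maximal simplex of $\simp C_A$ is visible, represented by a combinatorial ray which fellow-travels some $H_A$-orbit by cocompactness, and hence is a limit simplex of $H_A$ whose $f$-preimage lies in the limit set of $H_A$; the $H_A$-invariance of $A$ together with boundedness of $A$ then confines this limit set to $f^{-1}(A)$, so $\simp C_A$ is bounded and $H_A$ is wide. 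The chain condition in Definition~\ref{defn:algebraicallythick} is then verified as in the first part: given $A,A'\in\mathcal A$ and $g\in G$, connectedness of $\mathfrak C$ together with the decomposition $\mathfrak C=\bigcup_{g,A}gA$ provide a chain of $gA$-subcomplexes inside $\mathfrak C$ linking $A$ to $gA'$, and the Flat Bridge Trick lifts this chain to one of $C_A$'s whose consecutive members share a flat sector stabilised by an infinite subgroup of the corresponding intersection of conjugates of the $H_A$'s.

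The step I expect to be the main obstacle is verifying that $\langle H_A:A\in\mathcal A\rangle$ has finite index in $G$. The intended use of hypothesis~(3), that $f^{-1}(\mathfrak C)$ is contained in the limit set of $\langle H_A\rangle$, would force the convex hull of the $\langle H_A\rangle$-orbit to exhaust $\mathbf X$ in all non-isolated directions of $\simp\mathbf X$; the remaining isolated directions correspond to rank-one axes and must be absorbed by combining elements of the various $H_A$'s via a ping-pong argument, so that $\langle H_A\rangle$ acts cocompactly on $\mathbf X$. Finite index would then follow from properness and cocompactness of the $G$-action. Carrying out the ping-pong step carefully, so that no limit point of $G$ escapes $\langle H_A\rangle$, is the technical heart of the proof and the part I expect to require the most delicate case analysis.
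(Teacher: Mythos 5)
Your plan for the first assertion matches the paper: realize maximal simplices of $\mathfrak C$ as orthants via Proposition~\ref{prop:simplicesflats}, take convex hulls, show these are wide (by Theorem~\ref{thm:unconstrictedCC}, since their simplicial boundaries are single simplices, hence bounded), and chain them together via the Flat Bridge Trick. One small thing you omit is the need to verify that the family of convex hulls is \emph{uniformly} wide, which the paper handles with Lemma~\ref{lem:uniformlywide}; this is required by Definition~\ref{defn:thickspace}.

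For the second assertion there are two genuine gaps. First, your argument that $\simp C_A$ is bounded does not close. You correctly show $A\subseteq\simp C_A$ via Lemma~\ref{lem:visualboundary}, and that $\simp C_A$ equals the limit complex of $H_A$; but then you assert that ``the $H_A$-invariance of $A$ together with boundedness of $A$ confines this limit set to $f^{-1}(A)$.'' Hypothesis~(2) gives only $f^{-1}(A)\subseteq$ limit set of $H_A$, not the reverse inclusion, so this confinement does not follow. The paper instead shows $H_A$ contains no rank-one isometry of $\mathbf C_A$: if $g\in H_A$ were rank-one, Lemma~\ref{lem:unboundedinboundary} would make $\langle g\rangle a$ unbounded in $A^{(1)}$ for any visible $a\in A$ not fixed by $g$, contradicting boundedness of $A$, while $g$ fixing all of $A^{(0)}$ contradicts connectedness. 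Then rank-rigidity \cite[Corollary~B]{CapraceSageev} forces $\mathbf C_A$ to split as a nontrivial product, whose simplicial boundary is a join and hence bounded. Without this detour through rank-one elements and rank-rigidity, boundedness of $\simp C_A$ is unproved.

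Second, your proposed ping-pong argument for finite index of $\langle H_A:A\in\mathcal A\rangle$ is not carried out, and the sketch you give does not match how hypothesis~(3) is actually used. The paper's argument is much more direct: if $G'=\langle H_A\rangle$-translates of the $\mathbf C_A$ do not coarsely cover $\mathbf X$, then by cocompactness one finds points $x_r$, lying in some fixed $g\mathbf C_A$ with $g\in G\setminus G'$, that escape every $R$-neighborhood of $G'(\cup_A\mathbf C_A)$ and converge to $x_\infty\in f^{-1}(g\simp\mathbf C_A)\subseteq f^{-1}(\mathfrak C)$; hypothesis~(3) then says $x_\infty$ lies in the limit set of $G'$, but by construction it does not, a contradiction. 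No analysis of isolated (rank-one) directions is needed, because the contradiction is obtained inside $f^{-1}(\mathfrak C)$. Your suggestion that the isolated directions ``must be absorbed by combining elements of the various $H_A$'s via a ping-pong argument'' is not needed and, as stated, would not readily yield cocompactness of $G'$.
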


\begin{rem}
Note that $\simp\mathbf X$ has a connected, positive-dimensional, $G$-invariant subcomplex if and only if $\simp\mathbf X$ has a positive-dimensional, $G$-invariant component.  Theorem~\ref{thm:thickoforderone} is stated in terms of connected subcomplexes, rather than components, since $(3)$ is rarely satisfied if $\mathfrak C$ is required to be an entire component.

Theorem~\ref{thm:thickoforderone} could be stated in terms of the
$G$-action on $\simp\mathbf X$ alone, with each hypothesis about limit
sets in $\visual\mathbf X$ replaced by the appropriate statement about
limit subcomplexes in $\simp\mathbf X$: the appropriate modification
of condition $(2)$ would require each $A$ to lie in the limit
subcomplex of $H_A$ and that of $(3)$ would require $\mathfrak C$ to
lie in the limit subcomplex of $\langle\{H_A\}\rangle$.
\end{rem}

\begin{proof}[Proof of Theorem~\ref{thm:thickoforderone}]
Suppose that $\simp\mathbf X-\mathcal I$ is nonempty and has a $G$-invariant positive-dimensional connected subcomplex $\mathfrak C$.
Since $\dimension(\mathbf X)<\infty$, there is no infinite family of pairwise-crossing hyperplanes and hence every simplex of $\simp\mathbf X$ is contained in a finite-dimensional maximal simplex, by Theorem~3.14.(2) of~\cite{HagenBoundary}.  Let $v$ be a maximal positive dimensional simplex of $\mathfrak C$.  From Proposition~\ref{prop:simplicesflats}, it
follows that there exists an isometrically embedded maximal flat orthant $F_v\cong[0,\infty)^n\subset\mathbf X$, for some $n\geq 2$, whose boundary is $v$.  Hence the set $\mathcal F$ of \emph{flat sectors} whose convex hulls represent positive-dimensional connected subcomplexes of $\mathfrak C$ is nonempty.  Moreover, $\mathcal F$ is $G$-invariant, since $\mathfrak C$ is.  To see this, note that for all $F\in\mathcal F$, the inclusion $g\widehat F\hookrightarrow\mathbf X$ induces an inclusion of simplicial boundaries whose image lies in $g\mathfrak C=\mathfrak C$.  By definition, $gF\in\mathcal F$.  Hence, by cocompactness, there exists
$\tau\geq 0$ such that $\mathbf X=\bigcup_{F\in\mathcal F}N_{\tau}(F)$.

For each $F\in\mathcal F$, let $\widehat F$ be the convex hull of
$N_{\tau}(F)$.  Since $\widehat F$ is convex, it is a CAT(0) cube complex, and
moreover, $\simp\widehat F$ is bounded and positive-dimensional, being a connected subcomplex of the simplicial boundary of a cubical flat of dimension at least 2 and containing the boundary of a flat orthant. Thus $\widehat F$ is wide, by
Theorem~\ref{thm:unconstrictedCC}.
We conclude that $\{\widehat F:F\in\mathcal F\}$ is a set of convex (and hence
uniformly quasiconvex) wide subcomplexes that covers $\mathbf X$.  By
definition, each $\widehat F$ has the property that every $f\in\widehat F$ is
contained in a bi-infinite combinatorial geodesic, and therefore each point in
$\widehat F$ is uniformly close to a bi-infinite CAT(0) quasigeodesic.  To
conclude that $\{\widehat F\}$ is uniformly wide, it remains to check that no
ultralimit of a sequence in $\{\widehat F\}$ has a cut-point; this is the
content of Lemma~\ref{lem:uniformlywide} below.

Let $p,q\in\mathbf X$ be 0-cubes, and choose $F,F'\in\mathcal F$ so that
$p\in N_k(F),q\in N_k(F')$.  By assumption, there exists a sequence
$v_F=u_0,u_1,\ldots,u_n=v_{F'}$ of maximal simplices in $\mathfrak C$ such
that $u_i\cap u_{i+1}\neq\emptyset$ for all $i$.  For each $i$, let $\widehat
F_i\in\mathcal F$ be the convex hull of a maximal
flat sector containing an orthant representing $u_i$.  If for each $i$, there exist
geodesic rays $\gamma\subset\widehat F_i,\gamma'\subset\widehat F_{i+1}$ that
fellow-travel at distance $\tau$, then we have thickly connected $p$ to $q$
using convex hulls of flat orthants.  (Note that the intersection of
CAT(0) $\tau$-neighborhoods of convex subcomplexes is convex.)  Otherwise, for
any pair $F_{i},F_{i+1}$ not containing such a pair of geodesic rays, we construct a third flat orthant $F_{i+\frac{1}{2}}$ whose convex hull has unbounded intersection with $\widehat F_i$ and $\widehat F_{i+1}$, using the Flat Bridge Trick.  Adding the new $\widehat F_{i+\frac{1}{2}}$ for each $i$ yields the desired thickly connecting
sequence.

Thus far, we have shown that for any two points $x,y\in\mathbf X$, and any $\widehat F_0,\widehat F_n$ with $x\in\mathcal N_{\tau}(\widehat F_0)$ and $y\in\mathcal N_{\tau}(\widehat F_n)$, there exists
a sequence $\widehat F_0,\ldots,\widehat F_n$ of subcomplexes, where each
$\widehat F_i$ is the convex hull of a $d$-dimensional flat, where $d\geq 2$,
such that $\mathcal N_{\tau}(\widehat F_i)\cap\mathcal N_{\tau}(\widehat
F_{i+1})$ is unbounded and path-connected for each $i$.  In so doing, we have verified that
$\mathbf X$, and therefore $G$, is thick of order at most 1.  Since $\mathcal I\neq\emptyset$, $\simp\mathbf X$ is disconnected and hence $G$ contains a rank-one isometry of $\mathbf X$, whence $G$ is not unconstricted and is therefore thick of order exactly 1.

\textbf{Obtaining algebraic thickness:}  Fix a base 0-cube $x_o\in\mathbf X$, and let $\mathbf C_A$ denote the cubical convex hull of the quasiconvex orbit $H_Ax_o$, for each $A\in\mathcal A$.  Then $\mathbf C_A$ is an $H_A$-cocompact subcomplex, by quasiconvexity.  By passing to the $H_A$-essential core of $\mathbf C_A$, if necessary, we may assume that $\mathbf C_A$ is a CAT(0) cube complex on which $H_A$ acts properly, cocompactly, and essentially.

Let $u\subseteq\simp\mathbf C_A$ be a simplex represented by $\mathcal H(\gamma)$ for some geodesic ray $\gamma$ emanating from $x_o$.  Since $H_A$ acts cocompactly on $\mathbf C_A$, $\gamma$ is contained in the limit of a sequence of $H_A$-periodic geodesics, from which it is easily verified that $u$ is a limit simplex of $H_A$.  Conversely, if $u$ is a limit simplex of $H_A$ that is not contained in $\mathbf C_A$, then $u$ is represented by $\mathcal H(\gamma)$ for some geodesic ray $\gamma$ that contains points arbitrarily far from $\mathbf C_A$.  There exists a sequence $(h_i\in H_A)$ such that $\mathcal H(\gamma)$ is the set of hyperplanes $H$ such that $H$ separates all but finitely many $h_ix_o$ from $x_o$.  Since $\mathbf C_A$ is convex and $\gamma$ contains points arbitrarily far from $\mathbf C_A$, infinitely many $H\in\mathcal H(\gamma)$ separate points of $\gamma$ from $\mathbf C_A$, whence $(h_ix_o)$ contains points not in $\mathbf C_A$, contradicting the fact that the latter contains $H_Ax_o$ by definition.  Thus $\simp\mathbf C_A$ coincides with the limit complex for $H_A$.  Our hypothesis that $A$ is contained in the limit complex for $H_A$ implies that $A\subseteq\simp\mathbf C_A$.  ($A$ is contained in the limit complex for $H_A$ since $f^{-1}(A)$ is contained in the limit set of $H_A$, by Lemma~\ref{lem:visualboundary}.)

Suppose $H_A$ contains a rank-one isometry $g$ of $\mathbf C_A$.  We shall show that this contradicts the fact that $A$ is bounded.  Let $a\in A$ be a visible 0-simplex (this must exist because $A$ contains a maximal simplex, at least one of whose 0-simplices must be visible, by the proof of~\cite[Theorem~3.19]{HagenBoundary}).  Then either the orbit $\langle g\rangle a$ is unbounded and contained in $A^{(1)}$, by Lemma~\ref{lem:unboundedinboundary} below, or $g$ fixes $a$.  The former possibility contradicts boundedness of $A$.  Hence $g$ fixes each $a\in A^{(0)}$.  This contradicts the fact that $g$ is rank-one unless $A$ consists of a pair of 0-simplices represented by a combinatorial geodesic axis for $g$, which is impossible since $A$ is connected.  Hence $H_A$ contains no rank-one elements.

By Corollary~B of~\cite{CapraceSageev}, $\mathbf C_A$ decomposes as a non-trivial product equal to the limit complex for $H_A$, and thus $\mathbf C_A$ has bounded simplicial boundary that contains $A$ and is contained in $\mathfrak C$.  We may thus assume that $\simp\mathbf C_A=A$, by adding to $A$, if necessary, any simplices of $\mathfrak C$ that lie in $\simp\mathbf C_A$ but not in $A$.

Since $A=\simp\mathbf C_A$ is connected, $\mathbf C_A$, and therefore $H_A$, is wide by Theorem~\ref{thm:cocompactwide}.\\

\emph{Verifying thick connectivity:}  Let $A,A'\in\mathcal A$ and let $H=H_A,H'=H_{A'}$.  Suppose that $A\cap A'\neq\emptyset$.  Then $\mathcal H(\mathbf C_A)\cap\mathcal H(\mathbf C_{A'})$ is infinite, and by cocompactness of the actions of $H$ on $\mathbf C_A$ and $H'$ on $\mathbf C_{A'}$, it follows that $H_A\cap H'_A$ is infinite (the same holds for conjugates of $H,H'$: if the corresponding $G$-translates of $A,A'$ have nonempty intersection, then the corresponding conjugates of $H$ and $H'$ have infinite intersection).  Conversely, if $H\cap H'$ is infinite, then the intersection contains a hyperbolic isometry of $\mathbf X$, and thus each of $\mathbf C_A$ and $\mathbf C_{A'}$ contains a bi-infinite combinatorial geodesic such that these two geodesics are parallel, and hence represent the same simplices of $\mathfrak C$.  Thus $A\cap A'\neq\emptyset$.  Now, without loss of generality, $\cup_{A\in\mathcal A}A$ is a connected subcomplex of $\mathfrak C$, which can be achieved by choosing conjugacy class
representatives of the various $A\in\mathcal A$ so that the corresponding subcomplex is connected, and replacing $\mathcal A$ by this collection of subgroups.  Hence, for any $A,A'\in\mathcal A$, there exists a
sequence $A=A_0,\ldots,A_n=A'$ such that
$A_i\in\mathcal A$ for all $i$ and $A_i\cap A_{i+1}\neq\emptyset$ for $0\leq i\leq n-1$.  Hence $H_{A_i}\cap H_{A_{i+1}}$ is infinite for $0\leq i\leq n-1$.\\

\emph{Verifying that $\cup_{A\in\mathcal A}H_A$ generates:}  To complete the proof of algebraic thickness of $G$, it suffices to show that $G'=\langle\{H_A:A\in\mathcal A\}\rangle$ has finite index in $G$, and, to this end, we will verify that there exists $R\geq 0$ such that $\mathbf X=\mathcal N_R(G'(\cup_{A\in\mathcal A}\mathbf C_A))$.

If the preceding equality does not hold, then for all $r\geq 0$, there exists $x_r\in\mathbf X$ such that $\dtwo(x_r,h\mathbf C_A)>r$ for all $A\in\mathcal A$ and all $h\in G'$.  By cocompactness of the $G$-action on $\mathbf X$, we may choose $\{x_r\}_{r\geq 0}$ so that for some fixed $A\in\mathcal A$ and $g\in G-G'$, each $x_r\in g\mathbf C_A$ and $x_r$ converges to a point $x_{\infty}\in f^{-1}(g\simp\mathbf C_A)\subset\visual\mathbf X$.  Thus $f(x_\infty)\in g\simp\mathbf C_A$, but $x_{\infty}$ fails, by construction, to be a limit point of $G'$, a contradiction.  Hence $\mathbf X$ is contained in a finite neighborhood of the union of $G'$-translates of the various $\mathbf C_A$, and the stabilizer of each $\mathbf C_A$ is a subgroup of $G'$, whence $G'$ generates a finite-index subgroup of $G$, as required.
\end{proof}

\begin{lem}\label{lem:unboundedinboundary}
Let $H$ act properly and cocompactly on the CAT(0) cube complex $\mathbf C$, and let $g\in H$ be a rank-one element.  Then for any simplex $v$ of $\simp\mathbf X$ not stabilized by $g$, the orbit $\langle g\rangle v$ is unbounded in $(\simp\mathbf C)^{(1)}$.
\end{lem}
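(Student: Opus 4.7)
The plan is a proof by contradiction combining the North--South dynamics of the rank-one element $g$ on the visual boundary $\visual\mathbf C$ with the cubical divergence criterion of~\cite[Theorem~6.8]{HagenBoundary}, which quantitatively relates $1$-skeleton distance in $\simp\mathbf C$ to $\dive{\cdot}{\cdot}$. First I reduce to the case that $v$ is a $0$-simplex: the axis $\gamma$ of $g$ yields isolated $0$-simplices $v^+,v^-$ of $\simp\mathbf C$ (by the rank-rigidity theorem~\cite[Corollary~B]{CapraceSageev}), so the vertices of any higher-dimensional $v$ avoid $\{v^\pm\}$. Boundedness of $\langle g\rangle v$ in $(\simp\mathbf C)^{(1)}$ implies boundedness of the orbit of each $0$-simplex vertex of $v$, reducing the problem to the $0$-simplex case. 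Moreover, because maximal simplices are visible~\cite[Theorem~3.19]{HagenBoundary}, we may assume our $0$-simplex $v$ is visible (which is the only case required in the intended application).

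Let $v$ be a visible $0$-simplex not stabilized by $g$. Observe that the only $g$-stabilized visible $0$-simplices of $\simp\mathbf C$ are $v^\pm$: if $g\mathcal H(\alpha)\sim\mathcal H(\alpha)$ for a ray $\alpha$ representing $v$, then $g$ fixes $[\alpha]\in\visual\mathbf C$, and the rank-one North--South dynamics force $[\alpha]\in\{\gamma(\pm\infty)\}$. So $v\neq v^\pm$; choose a CAT($0$) geodesic ray $\alpha$ from $x_o=\gamma(0)$ with $\xi=[\alpha]$ representing $v$. North--South dynamics yield $g^n\xi\to\gamma(+\infty)$, and the CAT($0$) geodesic ray $\beta_n$ from $x_o$ to $g^n\xi$ (representing $g^nv$) fellow-travels $\gamma|_{[0,\infty)}$ along an initial segment of length $r_n\geq cn$ for some $c>0$, via the contracting property of the rank-one axis $\gamma$.

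I then apply~\cite[Theorem~6.8]{HagenBoundary} in two directions. Because $v^+$ is isolated in $\simp\mathbf C$, it occupies a singleton component of $(\simp\mathbf C)^{(1)}$, so $v$ and $v^+$ lie in distinct components; hence $\dive{\alpha}{\gamma_+}(r)$ is super-linear in $r$, where $\gamma_+=\gamma|_{[0,\infty)}$. On the other hand, assuming for contradiction that $\langle g\rangle v$ has diameter at most $D$ in $(\simp\mathbf C)^{(1)}$, Theorem~6.8 gives $\dive{\alpha}{\beta_n}(r)\leq A(D)r+B_n$ with slope $A(D)$ depending only on $D$. The fellow-travel of $\beta_n$ with $\gamma_+$ along $[0,r_n]$ then upgrades this to $\dive{\alpha}{\gamma_+}(r)\leq A(D)r+B_n+O(1)$ for all $r\leq r_n/2$.

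The main technical hurdle is to control $B_n$ in terms of $n$. The translate $g^n\alpha$ also represents $g^nv$, but emanates from $g^nx_o=\gamma(nL)$, at distance $nL$ from $x_o$ (where $L$ is the translation length of $g$); this forces the hyperplane sets $\mathcal H(\beta_n)$ and $g^n\mathcal H(\alpha)$ to have finite symmetric difference whose size is bounded linearly in $n$, yielding $B_n\leq Cn$ for a uniform $C$. Combining with $r_n\geq cn$, the upper bound $\dive{\alpha}{\gamma_+}(r_n/2)\leq A(D)(r_n/2)+Cn+O(1)$ grows only linearly in $n$, while super-linearity of $\dive{\alpha}{\gamma_+}$ evaluated on the sequence $r_n/2\to\infty$ must eventually exceed any linear bound in $n$. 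The resulting contradiction shows $\langle g\rangle v$ is unbounded.
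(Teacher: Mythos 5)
Your proposal takes a genuinely different route from the paper. The published argument is purely combinatorial: assuming $\langle g\rangle v$ has $(\simp\mathbf C)^{(1)}$-diameter at most $M$, it applies the Flat Bridge Trick (Lemma~\ref{lem:flatbridgetrick}) to chain $\gamma$ to $g^n\gamma$ through $\leq 2M$ flat sectors, then exploits the rank-one contracting property of the axis via two constants $p$ (bounding crossings along the axis) and $q$ (bounding how far apart along the axis two hyperplanes can be if they cross a common flat sector), and concludes by a pigeonhole argument on the hyperplanes crossing the subsegment of the axis subtended by $\gamma$ and $g^n\gamma$. This avoids the visual boundary and divergence functions entirely and keeps all constants in view. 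Your argument instead routes through North--South dynamics, fellow-traveling along the contracting axis, and the quantitative form of~\cite[Theorem~6.8]{HagenBoundary} relating $1$-skeleton distance to cubical divergence. That is a legitimately different strategy.

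However, there is a genuine gap at exactly the place you flag as the ``main technical hurdle.'' The bound $\dive{\alpha}{\beta_n}(r)\leq A(D)r+B_n$ from Theorem~6.8 has the multiplicative constant controlled by the simplicial distance (hence by $D$), but as the paper itself records, the additive constants ``depend on $\alpha$ and $\beta$'' with no further specification. You assert that the symmetric difference $\mathcal H(\beta_n)\triangle g^n\mathcal H(\alpha)$ being $O(n)$ forces $B_n\leq Cn$, but nothing in the statement of Theorem~6.8 (or in the quantitative gloss given in Section~\ref{sec:unconstricted}) ties the additive constant to the size of a hyperplane symmetric difference. Since the contradiction requires $B_n$ to grow at most linearly in $n$ so that the resulting upper bound on $\dive{\alpha}{\gamma_+}(r_n/2)$ is linear in $r_n$, and since that control is precisely what is not established, the proof does not close. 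There are also smaller technical issues one would need to settle --- that the short path from $\beta_n(r)$ to $\gamma_+(r)$ can be kept outside the $r$-ball rather than merely outside the $(r-\epsilon)$-ball, that the superlinearity on the subsequence $r_n/2$ actually contradicts linear boundedness (this requires an interpolation argument relying on bounded increments $r_{n+1}-r_n$), and that the reduction to $0$-simplices should pick out a vertex of $v$ \emph{not} fixed by $g$ --- but the $B_n$ control is the step that would genuinely fail without additional information about the proof of Theorem~6.8.
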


\begin{proof}
If $v\in\simp\mathbf C$ is an isolated 0-simplex not fixed by $g$, then $\langle g\rangle v$ is disconnected and therefore unbounded.  Hence it suffices to consider a visible 0-simplex $v$ that is not fixed by $g$.  Let $\alpha$ be a combinatorial geodesic axis for $g$, and let $\gamma$ be a ray representing $v$ and emanating from a 0-cube of $\alpha$.  Suppose that there exists $M<\infty$ such that $g^nv$ is joined to $v$ by a path of length at most $M$ in $\simp\mathbf C^{(1)}$, for all $n\in\integers$.

Then, applying the Flat Bridge Trick, we find for each $n\in\integers$ some $m\leq 2M$ and a sequence $F_0,\ldots,F_m$ of flat sectors such that $\widehat F_i\cap\widehat F_{i+1}$ is unbounded and path-connected for all $i$ and such that $v\in\simp\widehat F_0$ and $g^nv\in\simp\widehat F_m$; moreover it is no loss of generality to assume that $F_0$ is always the same flat sector, as can be seen by applying the Flat Bridge Trick between any pair of $F_0$s obtained as above.  Moreover, these flat sectors can be chosen, again using the Flat Bridge Trick, so that $g^nF_0=F_m$ and so that $\gamma$ contains a sub-ray lying in $F_0$.  Hence for all $n$, the distance from $\alpha$ to $F_m$ is uniformly bounded.  It follows that there exists $\eta\geq0$ such that there are hyperplanes $H,H'$ crossing the subpath $\alpha_n$ of $\alpha$ subtended by $\gamma$ and $g^n(\gamma)$, satisfying $\done(N(H)\cap\alpha,N(H')\cap\alpha)\geq |\alpha_n|M^{-1}-\eta$ and $H,H'$ crossing a common $\widehat F_i$.

Since $\alpha$ is a rank-one periodic geodesic, there exists $p<\infty$ such that if $H,H'$ are hyperplanes that cross $\alpha$, either $H\cap H'=\emptyset$ or the subpath of $\alpha$ between the 1-cubes dual to $H,H'$ has length at most $p$ (see~\cite[Section~2]{HagenBoundary}).

Note that if $H,H'$ are hyperplanes crossing $\alpha_n$, and $H,H'$ both cross $\widehat F_i$, and $H,H'$ do not cross, then $\done(N(H),N(H'))\leq q$ for some $q$ depending on $g$ (but independent of $H,H'$ and $F_i$).  Indeed, analyzing a minimal-area disc diagram bounded by geodesics in $N(H),N(H')$, the subtended part of $\alpha$, and a geodesic in a hyperplane of $F_i$ crossing $H,H'$ (as in~\cite[Section~2]{HagenBoundary}) shows that if $H,H'$ can be chosen arbitrarily far apart, then either there are hyperplanes $V,V'$, that cross $\alpha$ arbitrarily far apart and cross each other, or there are arbitrarily large isometric flat discs of the form $[0,N]^2$ with one side on $\alpha$.  This contradicts that $g$ is a rank-one isometry.

Now, there must exist hyperplanes $H,H'$ with $\done(N(H),N(H'))\geq|\alpha_n|M^{-1}$ that both cross $\widehat F_i$ for some $i$.  If $|\alpha_n|>M\max\{p,q\}$, then $H,H'$ cannot cross, and cannot cross a common flat sector, a contradiction.
\end{proof}

\begin{rem}\label{rem:extrahypotheses}
Let $G$ act properly, cocompactly and essentially on $\mathbf X$, and
suppose that $G$ is algebraically thick of order 1 with respect to a
finite collection $\mathbb G=\{H_A:A\in\mathcal A\}$ of quasiconvex,
wide subgroups, as in Theorem~\ref{thm:algebraicallythickoforder1}.
For each $A\in\mathcal A$, let $S_A$ be the $H_A$-cocompact convex
subcomplex constructed in the proof of
Theorem~\ref{thm:algebraicallythickoforder1}.  That proof shows that
$\cup_{A\in\mathcal A,g\in G}\simp S_A=\mathfrak C$ is
positive-dimensional, connected, and $G$-invariant.  Hence
$\simp\mathbf X$ has a positive-dimensional $G$-invariant component,
namely that containing $\mathfrak C$.  Moreover,
Theorem~\ref{thm:cocompactwide} implies that $\simp\mathbf X$ is
disconnected, since $G$ is not wide.

Now, since $H_A$ acts cocompactly on $S_A$ for all $A\in\mathcal A$,
and each $H_A$ is wide, each $\simp S_A$ is connected by
Theorem~\ref{thm:cocompactwide}.  Each $f^{-1}(\simp S_A)$ is contained in
the limit set of $H_A$, by the general fact that bi-infinite geodesics
in proper, cocompact spaces are limits of sequences of periodic
geodesics.  Likewise, since $\{H_A:A\in\mathcal A\}$ generates a
finite-index subgroup $G'\leq G$, by algebraic thickness, $G'$ acts
cocompactly on $\mathbf X$, which is the coarse union of
$G'$-translates of the various $S_A$, and thus
$f^{-1}(\cup_{A\in\mathcal A,g\in G'})=f^{-1}(\mathfrak C)$ is
contained in the limit set of $G'$.

This discussion shows that, if $G$ is algebraically thick of order 1 relative to a finite collection $\{H_A\}$ of quasiconvex, wide subgroups, then $\simp\mathbf X$ has a $G$-invariant component $\mathfrak C$, and a finite collection $\mathcal A$ of connected subcomplexes, satisfying hypotheses $(1)-(3)$ of Theorem~\ref{thm:thickoforderone}.  This conclusion is used in the proof of Theorem~\ref{thm:introthick}.
\end{rem}

The following characterization of convex hulls of flat sectors is immediate from the
definitions.

\begin{lem}\label{lem:convexhulls}
Let $\mathbf X$ be as in Theorem~\ref{thm:thickoforderone}.  For $n\geq 2$, let
$\mathcal A_n$ be the class of CAT(0) cube complexes $A\subseteq\mathbf X$ such that:

\begin{enumerate}
 \item $A$ contains an isometrically embedded cubical flat sector $F$ with
$2\leq\dimension F\leq n$.
 \item Every hyperplane of $A$ crosses $F$.
\end{enumerate}

Then the convex hull of each cubical flat sector $F$ in $\mathbf X$ belongs to
$\mathcal A_n$ for $n=\dimension\mathbf X$.
\end{lem}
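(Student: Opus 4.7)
The plan is to verify the two conditions defining membership in $\mathcal A_n$ directly from the definition of the cubical convex hull $\widehat F$, working with the characterization of $\widehat F$ as an intersection of halfspaces.

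First I would fix a cubical flat sector $F\subseteq\mathbf X$ of dimension $d$ with $2\leq d\leq n=\dimension\mathbf X$, and let $A=\widehat F$ denote its convex hull. For condition~(1), I simply observe that by definition $F\subseteq A$, and $F$ is isometrically embedded in $\mathbf X$ and therefore in $A$; since $d\leq n$, the inclusion $F\subseteq A$ exhibits $A$ as containing an isometrically embedded cubical flat sector of dimension between $2$ and $n$.

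For condition~(2), I would use the description of $A$ as the intersection of all halfspaces of $\mathbf X$ that contain $F$. Suppose $H\in\mathcal H$ is a hyperplane of $\mathbf X$ with $H\cap A\neq\emptyset$, giving a hyperplane of $A$. If $H$ did not cross $F$, then $F$ would be contained in one of the halfspaces $\lefth H$ or $\righth H$, say $\lefth H$. Since $\lefth H$ is a halfspace containing $F$, it appears in the intersection defining $\widehat F$, so $A\subseteq\lefth H$, contradicting $H\cap A\neq\emptyset$. Hence $H$ must cross $F$, proving~(2).

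The only point requiring a sentence of justification is that every hyperplane of $A$ arises as $H\cap A$ for some hyperplane $H$ of $\mathbf X$ meeting $A$, which follows because $A$ is convex in $\mathbf X$ (so the trace of each ambient hyperplane on $A$ is either empty or a hyperplane of $A$, and all hyperplanes of $A$ arise this way). There is no real obstacle here; the content of the lemma is entirely tautological, consisting of unpacking the definitions of ``cubical convex hull'' and ``hyperplane of $A$,'' which is why the authors describe the statement as immediate.
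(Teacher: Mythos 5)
Your proof is correct and matches the paper's intent: the paper gives no argument at all, stating only that the lemma is ``immediate from the definitions,'' and your write-up is exactly the routine unpacking that the authors are alluding to (cubical convex hull as intersection of halfspaces containing $F$, plus the standard fact that hyperplanes of a convex subcomplex are traces of ambient hyperplanes). No gaps.
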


\begin{lem}\label{lem:uniformlywide}
$\mathcal A_n$ is uniformly wide.  Equivalently, $\{A^{(1)}:A\in\mathcal
A_n\}$ is uniformly wide.
\end{lem}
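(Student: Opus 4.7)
My plan is to establish uniform linear divergence of the combinatorial metrics on $\{A^{(1)}:A\in\mathcal A_n\}$ and then deduce uniform wideness by following the argument in the proof of Theorem~\ref{thm:unconstrictedCC}.

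First, I would argue that for each $A\in\mathcal A_n$, the simplicial boundary $\simp A$ has $1$-skeleton diameter bounded by a constant $D(n)$ depending only on $n$. The hypothesis that every hyperplane of $A$ crosses the flat sector $F\subseteq A$ makes the restriction map $\mathcal H(A)\to\mathcal H(F)$, sending $H\mapsto H\cap F$, injective: any hyperplane of $A$ separates two $0$-cubes of $F$, and is determined by how it does so. Under this injection, visible simplices of $\simp A$ (represented by combinatorial geodesic rays, which by Theorem~3.19 of~\cite{HagenBoundary} include every maximal simplex) correspond to simplices whose defining hyperplane sets pull back to $\mathcal H(F)$. Since $F\cong\reals^p\times[0,\infty)^q$ with $2\leq p+q\leq n$, its simplicial boundary is $\mathbb O_p\star\Delta^{q-1}$, whose $1$-skeleton has diameter at most $2$. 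Additional crossings among hyperplanes parallel in $F$ can only add edges to $\simp A^{(1)}$ and enlarge simplices, not decrease connectivity between the finitely many vertices.

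Second, I would apply the ``bounded $\simp$ implies no cut-ball'' argument from the proof of Theorem~\ref{thm:unconstrictedCC} to each $A\in\mathcal A_n$, using the uniform diameter bound from the first step. That argument invokes Theorem~6.8 of~\cite{HagenBoundary} to produce, for any bi-infinite combinatorial geodesic $\gamma$ in $A$, a linear divergence bound with multiplicative and additive constants controlled by the $\simp A^{(1)}$-distance between the two simplices of $\simp A$ represented by the two ends of $\gamma$. Since these are at distance at most $D(n)$, we obtain a bound on the combinatorial divergence of $A^{(1)}$ of the form $r\mapsto\lambda r+\mu$ with $\lambda,\mu$ depending only on $n$ and $\dimension\mathbf X$. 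Combinatorial geodesic completeness of $A$, needed to extend the segment through the median $\mu$ of the configuration $(a,b,c)$ to a bi-infinite geodesic, follows because every $0$-cube of $A$ lies near $F$: given a hyperplane $H$ of $A$ dual to a $1$-cube at an endpoint of a segment, $H$ crosses $F$, so the segment can be extended using $1$-cubes dual to further hyperplanes crossing $F$ on the appropriate side.

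Third, I would conclude uniform wideness directly: given any ultrafilter $\omega$, sequence $(A_i)\subset\mathcal A_n$, observation points $(a_i\in A_i)$, and scaling sequence $(s_i)$, the ultralimit $\lim_\omega(A_i^{(1)},a_i,\done/s_i)$ inherits the uniform linear divergence. Following the final paragraph of the proof of Theorem~\ref{thm:unconstrictedCC} (modeled on Lemma~3.14 of~\cite{DrutuMozesSapir}), uniform linear divergence implies that for each closed ball $\mathcal B$ in the ultralimit and each pair of points $\mathbf a,\mathbf b\notin\mathcal B$, there is a path joining them which avoids $\mathcal B$. In particular, no point is a cut-point.

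The main obstacle is the first step: the boundary set properties (unidirectionality and closure under separation) in $A$ are defined with respect to $A$'s halfspace structure, which can legitimately differ from that of $F$, so the map $\simp A\to\simp F$ induced by restriction is not obviously a simplicial isomorphism. I expect this can be handled by invoking Proposition~\ref{prop:simplexflat} to represent each simplex of $\simp A$ by a cubical orthant coarsely equivalent to a flat orthant in $F$, thereby reducing the diameter estimate in $\simp A^{(1)}$ to one in $\simp F^{(1)}$.
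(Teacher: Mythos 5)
Your route is genuinely different from the paper's.  You propose to first show that $\simp A$ has uniformly bounded $1$-skeleton diameter and then re-run the ``bounded $\simp$ implies no cut-ball'' argument of Theorem~\ref{thm:unconstrictedCC}, citing Theorem~6.8 of~\cite{HagenBoundary}.  The paper never mentions $\simp A$: it takes $a,b,c$ with median $\mu$, runs a bi-infinite geodesic through them, and quotes the explicit construction in Lemma~6.5 of~\cite{HagenBoundary} to produce an avoiding path of length $\leq 5T+B$ with $B\leq 2T$, giving a clean divergence bound $\leq 22m$ with no additive term.  That explicit bound is what delivers uniformity across the family $\mathcal A_n$ directly.

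The gap in your Step~1 is real, and you partially flag it yourself at the end.  The bijection $\mathcal H(A)\to\mathcal H(F)$ does not carry boundary sets to boundary sets: if $H_1,H_3\in\mathcal U$ and $H_2$ lies between them in a parallel family of $F$, but $H_2$ crosses $H_1$ and $H_3$ inside $A$, then $H_2$ need not lie in $\mathcal U$, so the restriction of $\mathcal U$ fails to be closed under separation in $F$.  (Concretely, take $F\cong\reals\times[0,\infty)$ embedded as a ``staircase'' in $\reals^3$; the convex hull $A$ is $\reals\times[0,\infty)^2$, a $y$-direction minimal boundary set of $A$ restricts to the alternating subfamily of $t$-hyperplanes of $F$, which is not closed under separation in $F$.)  In the same example $\simp A$ has four $0$-simplices while $\simp F$ has three, so the phrase ``the finitely many vertices'' — implicitly, that $\simp A^{(0)}$ matches $\simp F^{(0)}$ — is wrong, and ``additional crossings can only add edges'' is not a substitute for an argument.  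The conclusion (diameter $\leq 2$) is in fact true and can be proved by a different route: partition $\mathcal H(A)=\mathcal H(F)$ into the $\leq n$ parallel families of $F$, show each minimal boundary set of $A$ has infinite intersection with some family, and connect any two $0$-simplices through a third family whose hyperplanes cross everything in the first two.  But this is not the argument you gave, and your suggested repair via Proposition~\ref{prop:simplexflat} is not developed.  A secondary issue in Step~2: you assert that both the multiplicative and additive constants in Theorem~6.8 are controlled by the $\simp$-distance, whereas the paper's own gloss on that theorem (in the proof of Theorem~\ref{thm:unconstrictedCC}) says the additive constants $B_1,B_2$ depend on the particular rays $\alpha,\beta$; this is exactly why the paper reaches for Lemma~6.5 instead of Theorem~6.8 here.
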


\begin{proof}
The two assertions are equivalent since the collection of elements in $\mathcal
A_n$ have
uniformly bounded dimension and are thus each quasi-isometric to their
1-skeleta,
with uniform quasi-isometry constants (see,
e.g.,~\cite[Lemma~2.2]{CapraceSageev}).

Let
$(A_i)_{i\geq 0}$ be a sequence of cube complexes in $\mathcal A_n$, and
denote by $d_i$ the standard path-metric on $A_i^{(1)}$. Recall that $\mathcal
A_n$ is uniformly wide if and only if for any sequence
$(a_i\in A_i)_{i\geq 0}$, any positive sequence $(s_i)_{i\geq 0}$ with
$\lim_is_i=\infty$, and any ultrafilter $\omega$, the ultralimit
$\lim_{\omega}(A_i,a_i,\frac{d_i}{s_i})$ has no cut-point. We will prove that
exhibit a uniform linear bound on the divergences of the
$A_i^{(1)}$, from which the result then follows from
\cite[Proposition~1.1]{DrutuMozesSapir} which relates divergence and
wideness.

Let $a,b,c\in A_i^{(0)}$, with $d_i(a,b)=m$ and
$d_i(\{a,b\},c)=r$.  Choose $\delta\in(0,\frac{1}{2})$ and $\kappa\geq 0$.
Let $\mu$ be the median of $a,b,c$ and let $\gamma$ be a bi-infinite geodesic
with $\gamma(0)=\mu$ and $\gamma(-t_a)=a,\gamma(t_b)=b$ for $t_a,t_b\in(0,m)$.

If $d_i(\mu,c)>\delta r-\kappa$, then the subpath of $\gamma$ joining $a$ to
$b$ has length $m$ and avoids the $(\delta r-\kappa)$-ball about $c$.

Otherwise, $d_i(\mu,c)\leq\delta r-\kappa$, so that for any
$t\in\mathbb{R}$ we have
$d_i(\gamma(t),c)\geq t-\delta r+\kappa$.

Let $T=2\max\{t_a,t_b\}$. Since $\delta<\frac{1}{2}$ we have $T\geq \delta r -
\kappa$.

In the proof of~\cite[Lemma~6.5]{HagenBoundary}, it is shown that there exists
a combinatorial path $P$ connecting $\gamma(-T)$ to $\gamma(T)$, with the
property that each point
of $P$ lies at distance at least $T$ from $\mu$, having length
at most $5T+B$, where $B$ counts a certain set of hyperplanes separating $a$ or
$b$ from $\mu$, whence $B\leq 2T$. Since $d(\mu,c)\leq\delta r-\kappa$, the
triangle inequality implies that for each $p\in P$ we have $d(p,c)\geq T-\delta
r - \kappa\geq \delta r-\kappa$. Thus concatenating $P$ with the subpaths of
$\gamma$ from $\gamma(-T)$ to $\gamma(-t_a)$ and from $\gamma(t_b)$ to
$\gamma(T)$ (which are each of length at most $m$), we get a path $P'$
connecting $a$ to $b$, which is of linear length (at most $11T\leq 22m$) and
which avoids the $(\delta r-\kappa)$--ball about $c$.

Hence,  the
divergence of $a,b,c$ is at most $22m$.  Since the constants for divergence do
not depend on $i$, it follows immediately that the ultralimit of the sequence
$A_i^{(0)}$ does not have any cut-points.
\end{proof}

\subsection{Strong algebraic thickness}\label{sec:strongalgebraicthickness}
Obtaining quadratic divergence bounds using the result in~\cite{BD} requires strong thickness of order 1, which we obtain here as a consequence of strong algebraic thickness.  First, we note that little stands between the conclusion of Theorem~\ref{thm:thickoforderone} and the conclusion of strong algebraic thickness of a cocompactly cubulated group $G$:

\begin{prop}\label{prop:stalgthick}
Let $G$ act properly and cocompactly on the CAT(0) cube complex $\mathbf X$, and suppose that $G$ is algebraically thick of order $n\geq 1$ with respect to a finite collection $\{H_A:A\in\mathcal A\}$ of quasiconvex subgroups, each of which is strongly algebraically thick of order at most $n-1$.  Then $G$ is strongly algebraically thick of order $n$ relative to $\{H_A\}$.

Hence $\mathbf X$ and $G$ are strongly thick of order $n$ and have polynomial divergence of order at most $n+1$.
\end{prop}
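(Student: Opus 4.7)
The plan is to verify in turn each of the four conditions of Definition~\ref{defn:stronglyalgebraicallythick} for the collection $\mathbb G = \{H_A : A \in \mathcal A\}$, and then derive the ``hence'' statements. Conditions (1) and (3) come essentially for free: condition (1), that each $H_A$ is strongly algebraically thick of order at most $n-1$, is given outright, while condition (3), that $\bigcup_A H_A$ generates a finite-index subgroup of $G$, is part of the hypothesis that $G$ is algebraically thick of order $n$ with respect to $\{H_A\}$. Condition (2)---uniform quasiconvexity with a single constant $M$---is likewise straightforward after replacing the individual quasiconvexity constants of the $H_A$ with their maximum, which is finite because $\mathcal A$ is finite.

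The substantive verification is condition (4): given $H_A, H_{A'} \in \mathbb G$, one must produce a chain $H_A = H_{A_0}, H_{A_1}, \ldots, H_{A_k} = H_{A'}$ in $\mathbb G$ whose consecutive intersections are infinite \emph{and} uniformly $M$-path-connected. Algebraic thickness delivers the chain with infinite consecutive intersections. To upgrade to $M$-path-connectedness I would fix a basepoint $x_o \in \mathbf X$ and, for each $A$, let $\mathbf C_A$ denote the cubical convex hull of $H_A x_o$. As in the proofs of Theorems~\ref{thm:relhypconverse} and~\ref{thm:thickoforderone}, quasiconvexity of $H_A$ makes $\mathbf C_A$ an $H_A$-cocompact convex subcomplex, and for consecutive subgroups in the chain the intersection $\mathbf C_{A_i} \cap \mathbf C_{A_{i+1}}$ is a nonempty convex subcomplex that is unbounded, since $H_{A_i} \cap H_{A_{i+1}}$ is infinite and the $G$-action is proper. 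Once one establishes that $H_{A_i} \cap H_{A_{i+1}}$ acts cocompactly on this convex intersection, joining any two of its $0$-cubes by a combinatorial geodesic yields, via the Milnor--\v{S}varc quasi-isometry, a path in the Cayley graph of $G$ that stays within a controlled neighborhood of $H_{A_i} \cap H_{A_{i+1}}$. Finiteness of $\mathcal A$ then upgrades the various constants to a single uniform $M$.

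The remaining assertions are now quick. Strong algebraic thickness of $G$ relative to $\{H_A\}$ yields strong thickness of $G$ as a metric space---the witnessing family of subsets is $\{gH_A : g \in G,\ A \in \mathcal A\}$, each of which is strongly thick of order at most $n-1$ because $H_A$ is, with uniform $M$-path-connected infinite intersections transported by left multiplication---and since $G$ acts geometrically on $\mathbf X$, this property transfers to $\mathbf X$ by quasi-isometry invariance. Proposition~\ref{prop:BDdivthick} then gives the divergence bound $\Divergence{\mathbf X}{\lambda}{\mu}(r) \preceq r^{n+1}$.

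The principal obstacle is the cocompactness of the action of $H_{A_i} \cap H_{A_{i+1}}$ on $\mathbf C_{A_i} \cap \mathbf C_{A_{i+1}}$: in a general CAT(0) cube complex, intersections of convex-cocompactly-acting quasiconvex subgroups need not act cocompactly on the intersection of their convex hulls. I expect to handle this via a combination of properness of the $G$-action, cocompactness of each $H_{A_i}$ on $\mathbf C_{A_i}$, and the fact---available because the intersection is infinite---that $\mathbf C_{A_i} \cap \mathbf C_{A_{i+1}}$ contains a bi-infinite combinatorial geodesic along which we can translate. Finiteness of $\mathcal A$ means that up to $G$-translation there are only finitely many intersection patterns to analyze, so extracting a cocompactness constant for each and taking their maximum supplies the uniform bound needed for $M$.
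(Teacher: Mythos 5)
Your approach coincides with the paper's in all the main moves: conditions (1), (2), (3) of Definition~\ref{defn:stronglyalgebraicallythick} are dispatched exactly as you describe, and for condition (4) the paper likewise fixes a base 0-cube $x_o$, takes the cubical convex hulls $\mathbf C_A$ of $H_Ax_o$, uses the algebraic-thickness chain to get consecutive pairs with infinite subgroup intersection, observes that $\mathbf C_{A_i}\cap\mathbf C_{A_{i+1}}$ is convex and unbounded, and derives $M$-path-connectedness from the geometry. The ``hence'' assertions are obtained the same way, by showing $\mathbf X$ is strongly thick relative to $\{g\mathbf C_A\}$ and invoking Proposition~\ref{prop:BDdivthick}.

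You are right to isolate as the substantive point the claim that $H_{A_i}\cap H_{A_{i+1}}$ acts cocompactly on $\mathbf C_{A_i}\cap\mathbf C_{A_{i+1}}$ --- equivalently, that a geodesic between two 0-cubes of $H_{A_i}x_o\cap H_{A_{i+1}}x_o$ stays within a uniform neighborhood of that set, which the paper asserts in a single sentence. However, your proposed route to this claim does not close it. Translating along one bi-infinite combinatorial geodesic inside the intersection sweeps out at most a one-dimensional slice, while $\mathbf C_{A_i}\cap\mathbf C_{A_{i+1}}$ may be a half-flat or a flat sector of dimension at least two, so properness of the $G$-action places no bound on how far points of the intersection lie from that geodesic. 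More basically, the isometry translating that geodesic need not lie in $H_{A_i}\cap H_{A_{i+1}}$ at all. The ``finitely many intersection patterns'' observation addresses uniformity of the constant $M$ once cocompactness is granted, but supplies no cocompactness. To make this rigorous you would need an actual argument or a citation giving cocompactness of $H_{A_i}\cap H_{A_{i+1}}$ on the intersection of the two convex cores (a convex-cocompactness-of-intersections statement for quasiconvex subgroups of cubulated groups); as written, your sketch correctly flags the obstacle but does not overcome it.
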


\begin{proof}
By hypothesis, each $H_A$ is strongly algebraically thick of order $n-1$.  Moreover, since each $H_A$ acts with an $M_A$-quasiconvex orbit on $\mathbf X$, each $H_A$ is $M=\max_{A\in\mathcal A}$-quasiconvex in $G$.  In particular, each $H_A$ acts properly and cocompactly on a convex subcomplex $\mathbf C_A$ of $\mathbf X$ that is contained in the tubular $M$-neighborhood of the orbit $H_Ax_o$, where $x_o$ is a fixed 0-cube.  Now, if $H_A,H_{A'}$ are among the given finite collection, then by algebraic thickness, there exists a sequence $A=A_0,\ldots,A_n=A'$ such that for $0\leq i<n$, the intersection $H_{A_i}\cap H_{A_{i+1}}$ is infinite.  This implies that $H_{A_i}x_o\cap H_{A_{i+1}}x_o$ is infinite, whence $\mathbf C_{A_i}\cap\mathbf C_{A_{i+1}}$ is unbounded, and hence path-connected, since the intersection of convex subcomplexes of $\mathbf X$ is again convex.  Thus any geodesic segment starting and ending in $H_{A_i}x_o\cap H_{A_{i+1}}x_o$ lies inside of the $M$-neighborhood of $H_{A_i}x_o\cap H_{A_{i+1}}x_o$, whence $H_{A_i}\cap H_{A_{i+1}}$ is $M$-path-connected.  Finally, $\langle\{H_A\}\rangle$ has finite index in $G$ since $G$ is algebraically thick relative to $\{H_A\}$.  Thus $G$ is strongly algebraically thick of order at most $n$.  Obviously, if $G$ were strongly algebraically thick of order $k<n$, then $G$ would be thick of order $k$, a contradiction.  Hence $G$ is strongly algebraically thick of order exactly $n$.

It is now readily verified that $\mathbf X$ is strongly thick of order $n$ relative to the collection $\{g\mathbf C_A:g\in G, A\in\mathcal A\}$.  Thus $\mathbf X$, and $G$, have polynomial divergence of order at most $n+1$ by Corollary~4.17 of~\cite{BD}.
\end{proof}

\begin{cor}\label{cor:quaddiv}
Let $G$ act properly and cocompactly on the fully visible CAT(0) cube complex $\mathbf X$, and suppose that $\simp\mathbf X$ contains isolated 0-simplices and a $G$-invariant connected subcomplex $\mathfrak C=\cup_{g\in G,A\in\mathcal A}gA$, with $\mathcal A$ and the collection $\{H_A=\stabilizer_G(A):A\in\mathcal A\}$ as in Theorem~\ref{thm:thickoforderone}.  Then $G$ has quadratic divergence function.
\end{cor}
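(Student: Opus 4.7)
The plan is to derive Corollary~\ref{cor:quaddiv} as essentially a two-step chaining of results already assembled in the section, with no new ingredients required. First I would observe that the standing hypotheses of the corollary are precisely the hypotheses of the quantitative half of Theorem~\ref{thm:thickoforderone}: we are handed a fully visible $\mathbf X$ with isolated $0$-simplices in $\simp\mathbf X$, together with a positive-dimensional $G$-invariant connected subcomplex $\mathfrak C = \bigcup_{g\in G, A\in\mathcal A} gA$ and a finite collection $\mathcal A$ such that each $H_A = \stabilizer_G(A)$ is quasiconvex, $f^{-1}(A)$ lies in the limit set of $H_A$, and $f^{-1}(\mathfrak C)$ lies in the limit set of $\langle\{H_A\}\rangle$. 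Applying Theorem~\ref{thm:thickoforderone} then yields that $G$ is algebraically thick of order exactly $1$ relative to $\{H_A : A \in \mathcal A\}$. Moreover, the proof of that theorem simultaneously shows that each $H_A$ is wide, via the reduction to $\simp\mathbf C_A$ connected together with Theorem~\ref{thm:cocompactwide}.

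Second, I would feed this output directly into Proposition~\ref{prop:stalgthick} with $n=1$. Wideness of each $H_A$ is, by definition, the same as $H_A$ being strongly algebraically thick of order $0$; combined with the quasiconvexity already recorded above, the hypotheses of Proposition~\ref{prop:stalgthick} are verified verbatim. The proposition then upgrades algebraic thickness of $G$ to strong algebraic thickness of order $1$ relative to the same family $\{H_A\}$, and concurrently exhibits $\mathbf X$ as strongly thick of order $1$ relative to the $G$-translates of the convex hulls $\mathbf C_A$ of the orbits $H_A x_o$.

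Finally, the divergence conclusion is immediate from Proposition~\ref{prop:BDdivthick} (the invocation of Corollary~4.17 of~\cite{BD} that is already built into the statement of Proposition~\ref{prop:stalgthick}): applied with $n=1$ to the strongly thick geodesic metric space $\mathbf X$, it gives $\Divergence{\mathbf X}{\lambda}{\mu}(r) \preceq r^{2}$ for suitable $\lambda,\mu$. Since $G$ acts properly and cocompactly on $\mathbf X$, its Cayley graph is quasi-isometric to $\mathbf X^{(1)}$, and the $\asymp$-class of divergence is a quasi-isometry invariant in this setting by~\cite[Corollary~3.2]{DrutuMozesSapir} (as recalled in Section~\ref{sec:divergence}). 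Hence $G$ itself has (at most) quadratic divergence function, as required.

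Since the whole argument is a transcription exercise across the interfaces of two results already proved, there is no genuinely hard step. The only point meriting care is the verification that the $H_A$ produced in the hypothesis are really wide and quasiconvex in the sense demanded by Proposition~\ref{prop:stalgthick}; quasiconvexity is part of hypothesis $(1)$ of Theorem~\ref{thm:thickoforderone}, and wideness is extracted inside the proof of Theorem~\ref{thm:thickoforderone} by identifying each $A$ with $\simp\mathbf C_A$ (after absorbing any stray simplices of $\mathfrak C \cap \simp\mathbf C_A$ into $A$) and invoking Theorem~\ref{thm:cocompactwide}. Once this bookkeeping is noted, the corollary reduces to a one-line deduction from Proposition~\ref{prop:stalgthick}.
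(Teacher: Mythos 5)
Your argument establishes only the upper bound: chaining Theorem~\ref{thm:thickoforderone}, Proposition~\ref{prop:stalgthick}, and Proposition~\ref{prop:BDdivthick} does give $\Divergence{\mathbf X}{\lambda}{\mu}(r)\preceq r^2$, and this part is correctly executed and matches the paper's first step verbatim. But the corollary asserts that $G$ has \emph{quadratic} divergence, which by the paper's conventions (see Section~\ref{sec:divergence}, ``divergence of order $d$'' means at most order $d$ but not at most order $d-1$) requires a lower bound as well. Your own parenthetical ``(at most)'' signals the problem, yet you then conclude ``as required'' — so the argument as written is incomplete.

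The missing step is the lower bound, and the paper supplies it as follows: if the divergence were subquadratic, then by a theorem of Kapovich--Leeb~\cite[Proposition~3.3]{KapovichLeeb} it would actually be linear; by Theorem~\ref{thm:cocompactwide} (equivalently, the results of~\cite{DrutuMozesSapir} used therein) linear divergence forces $\simp\mathbf X$ to be connected, which contradicts the standing hypothesis that $\simp\mathbf X$ contains isolated $0$-simplices. Without this observation your deduction only places $G$ somewhere in the range ``linear or quadratic,'' and it happens that one knows a priori that the answer cannot be linear precisely because $\mathcal I\neq\emptyset$. You should add this gap argument to obtain the stated conclusion; the rest of your write-up is accurate.
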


\begin{proof}
By Theorem~\ref{thm:thickoforderone}, $G$ is algebraically thick of order 1 relative to $\{H_A\}$, and thus strongly algebraically thick by Proposition~\ref{prop:stalgthick}, from which it also follows that the divergence of $G$ is at most quadratic.  On the other hand, if the divergence is subquadratic, then it is linear~\cite[Proposition~3.3]{KapovichLeeb}, which implies that $\simp\mathbf X$ is connected, contradicting the fact that the set of isolated 0-simplices is nonempty.
\end{proof}

\begin{exmp}[The Croke-Kleiner example]\label{exmp:crokekleinerthick}
The following example confirms that $\mathbf X$ satisfies the conclusions of Theorem~\ref{thm:algebraicallythickoforder1}, Theorem~\ref{thm:thickoforderone}, and Corollary~\ref{cor:quaddiv} when $\mathbf X$ is the universal cover of the Salvetti complex of a right-angled Artin group; here we have chosen the Croke-Kleiner group~\cite{CrokeKleiner}.  The same reasoning applies to any one-ended right-angled Artin group that is not a product, and these are known to be thick of order 1 and have quadratic divergence; see~\cite{BDM} and~\cite{BehrstockCharney}.

Let $\mathbf X$ be the universal cover of the Salvetti complex of the
right-angled Artin group
\[G\cong\langle a,b,c,d\mid[a,b],[b,c],[c,d]\rangle.\]
(This group is studied by Croke-Kleiner in~\cite{CrokeKleiner}.)  $\mathbf
X$ decomposes as a tree $T$ of spaces: the vertex-spaces are the obvious periodic 2-dimensional
cubical flats whose edges are labeled by generators, and the edge-spaces are
bi-infinite combinatorial geodesics representing cosets of $\langle
a\rangle,\langle b\rangle,\langle c\rangle,$ or $\langle d\rangle$.

Each flat $F$ corresponding to a vertex of $T$ is convex in $\mathbf X$, so
$\simp F$ embeds as a subcomplex in $\simp\mathbf X$.  Each $F$ is labeled by a
pair $(x,y)\in\{a,b,c,d\}^2$ of distinct generators corresponding to the labels
of the 1-cubes of the constituent squares of $F$.  The $x$-labeled combinatorial
geodesics in $F$ represent a pair of 0-simplices in $\simp F$, and the same is
true of the $y$-labeled geodesics, and $\simp F$ is a 4-cycle, being the join
of the $x$-labeled 0-simplices and the $y$-labeled 0-simplices.

Now, fix a root of $T$ and let $F_0$ be the corresponding flat; for
concreteness, take $F_0$ to be a flat labeled $(a,b)$. For each $n\geq 0$, let
$\mathcal S_n$ be the set of flats that correspond to vertices of $T$ at
distance $n$ from the vertex corresponding to $F_0$.  Each $F$ corresponding to
a vertex in $\mathcal S_1$ is labeled $(b,c)$, and for each such $F$,
$\simp\mathbf X$ contains a copy of $\simp F$ attached to $\simp F_0$ along the
pair of $b$-labeled 0-simplices.  If $F,F'\in\mathcal S_1$ are distinct, then
the images of their $c$-labeled 0-simplices are distinct.  By induction on $n$,
one checks that the union of the images of all $\simp F$ is connected; this union is clearly $G$-invariant.

Now, for each geodesic ray $\bar\gamma$ in $T$, there exists a rank-one
geodesic ray $\gamma$ in $\mathbf X$ such that $\gamma$ has nonempty
intersection with exactly those $F$ that correspond to vertices of
$\bar\gamma$.  Conversely, each rank-one ray in $\mathbf X$ projects to a
geodesic ray in $T$, and two rays projecting to the same ray in $T$ represent
the same 0-simplex of $\simp\mathbf X$.  Hence $\simp\mathbf X$ contains
exactly one isolated 0-simplex for each point of $\simp T$, as shown in
Figure~\ref{fig:eye}.

\begin{figure}[h]
\includegraphics[width=0.35\textwidth]{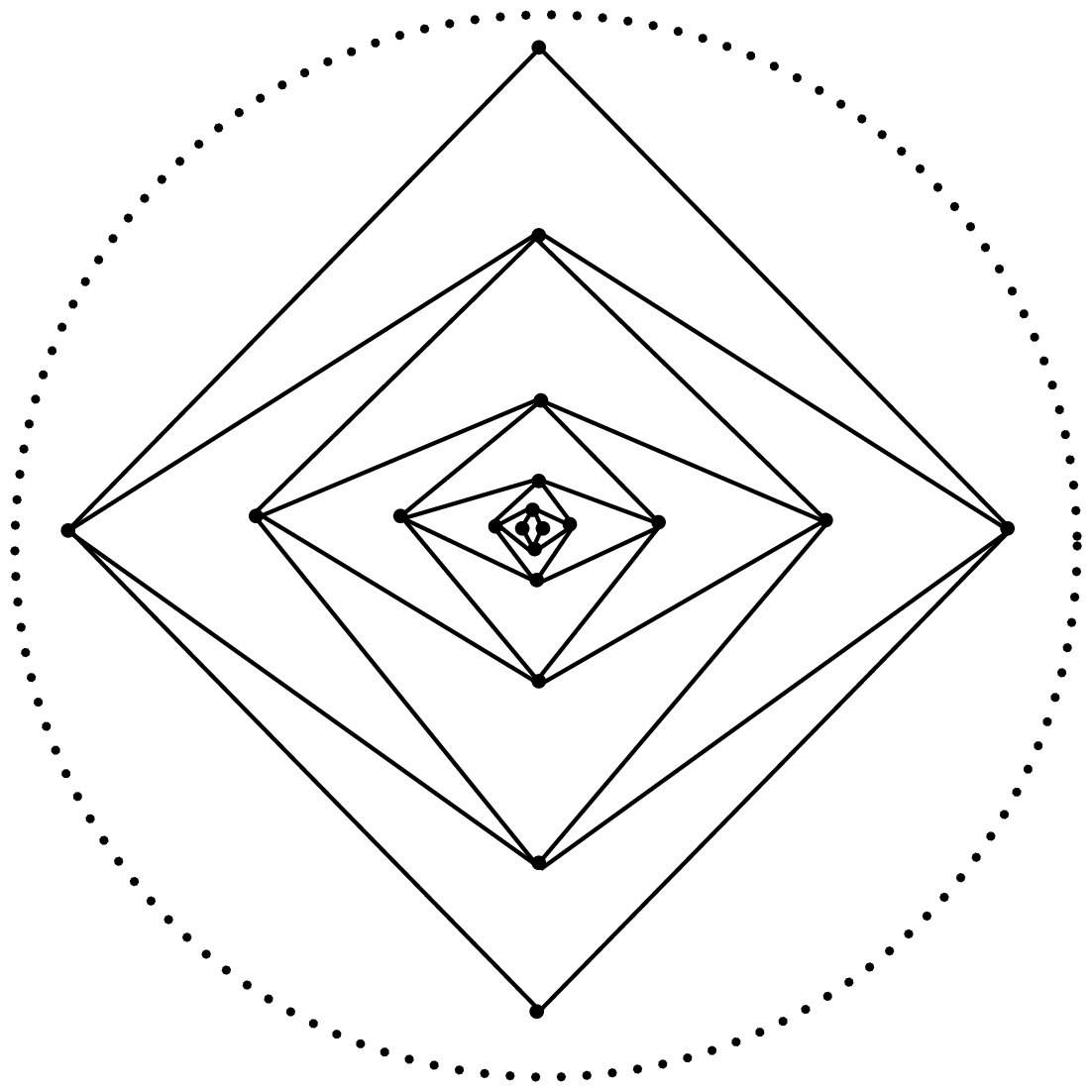}\\
\caption{Part of the simplicial boundary of the universal cover of the Salvetti
complex of the Croke-Kleiner group.}\label{fig:eye}
\end{figure}
\end{exmp}

\subsection{Necessary and sufficient conditions for thickness of order 1}

The following is a culmination of the results of this section.

\begin{thm}\label{thm:introthick}
Let $G$ act properly and cocompactly by isometries on the fully visible CAT(0) cube complex
$\mathbf X$.  If $G$ is algebraically thick of order 1 relative to a collection of quasiconvex wide subgroups, then $\simp\mathbf X$ is disconnected and contains a positive-dimensional, $G$-invariant connected component.  Conversely, if $\simp\mathbf X$ is disconnected, and has a positive-dimensional $G$-invariant component, then $\mathbf X$ is thick of order 1 relative to a collection of wide, convex subcomplexes, whence $G$ is thick of order 1.

Moreover, $G$ is strongly algebraically thick of order 1 if and only if $\simp\mathbf X$ is disconnected and has a positive-dimensional, $G$-invariant connected subcomplex $\mathfrak C=\cup_{A\in\mathcal A,g\in G}gA$, where $\mathcal A$ is a finite collection of bounded subcomplexes such that:
\begin{enumerate}
 \item Each $\stabilizer(A)$ acts on $\mathbf X$ with a quasiconvex orbit.
 \item For each $A\in\mathcal A$, $f^{-1}(A)$ belongs to the limit set of $\stabilizer(A)$.
 \item $f^{-1}(\mathfrak C)$ is contained in the limit set of $\langle\{\stabilizer(A):A\in\mathcal A\}\rangle$.
\end{enumerate}
\end{thm}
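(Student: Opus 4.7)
The plan is to assemble this theorem directly from the pieces already proved in this section, since Theorem~\ref{thm:introthick} is essentially a combined restatement of Theorem~\ref{thm:algebraicallythickoforder1}, Theorem~\ref{thm:thickoforderone}, Proposition~\ref{prop:stalgthick}, and Remark~\ref{rem:extrahypotheses}. Accordingly, the proof should be quite short, proceeding by invoking each of those results and verifying that their hypotheses hold under the conditions of the present theorem.

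For the first two statements (on thickness of order 1), I would first observe that Theorem~\ref{thm:algebraicallythickoforder1} immediately gives the forward implication: it says precisely that under the hypothesis that $G$ is algebraically thick of order 1 relative to a collection of quasiconvex wide subgroups, $\mathcal I \neq \emptyset$ (so $\simp\mathbf X$ is disconnected) and $\simp\mathbf X$ has a $G$-invariant positive-dimensional connected component. For the converse, I would apply the first (unnumbered) conclusion of Theorem~\ref{thm:thickoforderone}, which states that if $\mathcal I \neq \emptyset$ and $\simp\mathbf X$ contains a positive-dimensional $G$-invariant connected subcomplex, then $\mathbf X$ (and therefore $G$) is thick of order 1 relative to a collection of wide convex subcomplexes, namely the convex hulls of neighborhoods of the cubical flat sectors representing simplices of that component.

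For the equivalence involving strong algebraic thickness of order 1, the reverse direction (boundary structure implies strong algebraic thickness) follows by combining the second (algebraic) half of Theorem~\ref{thm:thickoforderone} with Proposition~\ref{prop:stalgthick}: the former shows that under hypotheses (1)--(3) the group $G$ is algebraically thick of order 1 relative to the family $\{H_A:A\in\mathcal A\}$ of quasiconvex wide subgroups, and the latter promotes algebraic thickness to strong algebraic thickness once the constituent subgroups are wide (strongly algebraically thick of order 0) and quasiconvex. For the forward direction, I would use Remark~\ref{rem:extrahypotheses}: if $G$ is strongly algebraically thick of order 1 relative to $\{H_A\}$, then in particular $G$ is algebraically thick relative to this collection of quasiconvex wide subgroups, and the remark verifies that the resulting subcomplex $\mathfrak C = \bigcup_{A,g} g \simp \mathbf C_A$ is connected, $G$-invariant, positive-dimensional, and satisfies conditions (1)--(3), with $\mathcal A$ chosen to be the finite collection $\{\simp\mathbf C_A : A\}$ of limit subcomplexes (each of which is bounded since each $H_A$ is wide, hence has bounded simplicial boundary by Corollary~\ref{cor:wide}).

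Since every ingredient is already established, the only technical point requiring care is bookkeeping between the two formulations—specifically, confirming that the finite collection $\mathcal A$ produced in Remark~\ref{rem:extrahypotheses} consists of bounded subcomplexes (requiring wideness and thus boundedness of each $\simp\mathbf C_A$), and that the hypothesis $f^{-1}(\mathfrak C) \subseteq \text{limit set of } \langle\{H_A\}\rangle$ matches the one used in Theorem~\ref{thm:thickoforderone}. The main, mild obstacle is keeping the two notions of ``contained in the limit set'' (in $\visual\mathbf X$) and ``contained in the limit complex'' (in $\simp\mathbf X$) straight, but Lemma~\ref{lem:visualboundary} provides precisely the translation needed, so no new argument is required.
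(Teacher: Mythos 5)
Your proposal is correct and follows essentially the same route as the paper's own proof: the forward thickness implication via Theorem~\ref{thm:algebraicallythickoforder1}, the converse via the first part of Theorem~\ref{thm:thickoforderone}, the strong algebraic thickness forward direction via Remark~\ref{rem:extrahypotheses}, and the reverse via the algebraic half of Theorem~\ref{thm:thickoforderone} combined with Proposition~\ref{prop:stalgthick}. Your observation that Lemma~\ref{lem:visualboundary} handles the translation between limit sets and limit complexes is a helpful explication of a point the paper's proof leaves implicit.
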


\begin{proof}
The first assertion is the content of Theorem~\ref{thm:algebraicallythickoforder1}.  Remark~\ref{rem:extrahypotheses} shows that $\mathfrak C$ satisfies $(1)-(3)$.  The converse is Theorem~\ref{thm:thickoforderone}, with the equivalence of strong algebraic thickness of order 1 is equivalent to algebraic thickness of order 1 relative to quasiconvex wide subgroups being established by Proposition~\ref{prop:stalgthick}.
\end{proof}

\section{Characterizations of thickness and relative hyperbolicity via the Tits boundary}\label{sec:titschar}
When regarding $\mathbf X$ as a combinatorial object, it is natural
to use the simplicial boundary; as a CAT(0) space, $\mathbf X$ also has a Tits
boundary $\partial_T\mathbf X$.  By viewing each simplex of $\simp\mathbf X$ as
a right-angled spherical simplex whose 1-simplices have length $\frac{\pi}{2}$,
one realizes $\simp\mathbf X$ as a piecewise-spherical CAT(1) space.
Proposition~3.37 of~\cite{HagenBoundary} asserts that, when $\mathbf X$ is fully visible, there is an isometric embedding $I\co\simp\mathbf X\rightarrow\partial_T\mathbf X$ such that $\partial_T\mathbf X\subseteq\mathcal N_{\frac{\pi}{2}}(\image I)$.  (The map $I$ is an isometric embedding with respect to the piecewise-spherical CAT(1) metric on $\simp\mathbf X$.)  This map sends each 0-simplex $v$ --- which, by full visibility, is represented by some CAT(0) geodesic ray $\gamma$  --- to the point of $\simp\mathbf X$ represented by $\gamma$.
It follows that $I$ is $G$-equivariant, and induces a bijection from the set of components (respectively, the set of isolated 0-simplices) of $\simp\mathbf X$ to the set of components (respectively, the set of isolated points) of the Tits boundary.

Moreover, $I$ is a section of a surjective map $R\co\partial_T\mathbf X\rightarrow\simp\mathbf X$ such that the $R$-preimage of any point is connected, has diameter at most $\frac{\pi}{2}$, and consists of points represented by rays that represent the same simplex in $\simp\mathbf X$. Furthermore, in the cocompact case, if the simplicial boundary contains infinitely many isolated points, then so does the Tits boundary.

\begin{cor}\label{cor:relhyptits1}
Let the group $G$ act geometrically on the fully visible CAT(0) cube complex
$\mathbf X$.

Suppose that $G$ is hyperbolic relative to a collection
$\mathbb P$ of peripheral subgroups.  Then $\partial_T\mathbf X$ consists of a
nonempty set of disjoint closed balls of radius less than $\frac{\pi}{2}$, together with a collection $\{g\mathbf T_P:P\in\mathbb P,g\in G\}$ of subspaces such that $\stabilizer(\mathbf T_P)=P$ for all $P\in\mathbb P$ and $g\mathbf T_P\cap h\mathbf T_{P'}=\emptyset$ unless $P=P'$ and $gh^{-1}\in P$.

Conversely, suppose that the set of isolated points of $\partial_T\mathbf X$ is
nonempty, and that there is a pairwise-disjoint, $G$-finite collection
$G(\{\mathbf S_i\}_{i=1}^k)$ of subspaces of $\partial_T\mathbf X$ such that
each $P_i=\stabilizer_G(\mathbf S_i)$ is quasiconvex and of infinite index in
$G$, each $\mathbf S_i$ contains the limit set for the action of $P_i$ on $\visual\mathbf X$, and every point of $\partial_T\mathbf X$ lies in some
$g\mathbf S_i$ or in some isolated ball of radius less than $\frac{\pi}{2}$.  Then $G$ is hyperbolic relative to $\{P_i\}_{i=1}^k$.
\end{cor}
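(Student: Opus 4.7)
The plan is to exploit the $G$-equivariant pair of maps $I\co\simp\mathbf X\hookrightarrow\partial_T\mathbf X$ and $R\co\partial_T\mathbf X\to\simp\mathbf X$ recalled at the start of Section~\ref{sec:titschar} to transfer the boundary decomposition back and forth between the simplicial and Tits boundaries, and then quote Theorem~\ref{thm:relhyp} (for the forward direction) and Theorem~\ref{thm:relhypconverse} (for the converse). The key properties used throughout are that $R\circ I=\mathrm{id}$, both maps are $G$-equivariant, $I$ is an isometric embedding in the CAT(1) metric, and $R^{-1}$ of any simplex is connected of diameter at most $\pi/2$ and contains all rays representing that simplex.

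For the forward direction, I would start by applying Theorem~\ref{thm:relhyp} to obtain the decomposition $\simp\mathbf X\cong\mathcal I\sqcup\bigsqcup_{P\in\mathbb P,\,g\in G}g\,\simp\mathbf Y_P$ with $\mathcal I$ nonempty, and then set $\mathbf T_P:=R^{-1}(\simp\mathbf Y_P)$. Surjectivity and $G$-equivariance of $R$ yield $\partial_T\mathbf X=R^{-1}(\mathcal I)\sqcup\bigsqcup_{P,g}g\mathbf T_P$, and the stabilizer and disjointness properties of the $g\mathbf T_P$ are inherited from the corresponding statements for $g\,\simp\mathbf Y_P$ via $G$-equivariance. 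For each isolated $v\in\mathcal I$, the preimage $R^{-1}(v)$ is connected, of diameter $\leq\pi/2$, and, since $v$ is isolated in $\simp\mathbf X$ and $I$ is isometric, is disjoint from every other such preimage and from every $g\mathbf T_P$; it is therefore contained in a closed Tits ball of radius strictly less than $\pi/2$ centered at $I(v)$, giving the claimed form for the isolated piece of $\partial_T\mathbf X$.

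For the converse, I would set $\mathbf S_i':=R(\mathbf S_i)\subseteq\simp\mathbf X$ for each $i$ and verify the hypotheses of Theorem~\ref{thm:relhypconverse} for $\{\mathbf S_i'\}$. First, each isolated closed ball of radius $<\pi/2$ in $\partial_T\mathbf X$ must map under $R$ to an isolated $0$-simplex, since if $R$ of such a ball met a positive-dimensional simplex $u$, then $I(u)$ would sit inside the ball and have diameter $\geq\pi/2$, a contradiction; thus every positive-dimensional simplex of $\simp\mathbf X$ lies in some $g\mathbf S_i'$, and $\mathbf S_i'$ is a union of simplices, i.e., a subcomplex. Pairwise-disjointness of $\{g\mathbf S_i'\}$ follows from that of $\{g\mathbf S_i\}$ using $R\circ I=\mathrm{id}$; quasiconvexity of $P_i=\stabilizer_G(\mathbf S_i)=\stabilizer_G(\mathbf S_i')$ and infinite index in $G$ transfer directly. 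To see that $\mathbf S_i'$ contains every limit simplex of $P_i$ on $\simp\mathbf X$, I would apply Lemma~\ref{lem:visualboundary}: the hypothesis that $\mathbf S_i$ contains the visual limit set of $P_i$ together with compatibility of $R$ with the map $f\co\visual\mathbf X\to\simp\mathbf X$ (both send a ray to the simplex represented by its crossing hyperplanes) implies $f^{-1}(\mathbf S_i')$ contains the limit set, and the lemma then yields the limit-simplex condition.

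The main obstacle is verifying this last point cleanly: reconciling the three different boundaries (simplicial, Tits, visual) and their associated notions of limit set, and in particular confirming that pushing the Tits limit-set hypothesis through $R$ produces exactly the limit-complex hypothesis required by Theorem~\ref{thm:relhypconverse}. A secondary technical point is the careful justification that $R^{-1}$ of an isolated $0$-simplex actually fills a closed ball of radius $<\pi/2$ (rather than merely being contained in one), which is where the fully-visible hypothesis and the precise structure of the CAT(1) metric on $\simp\mathbf X$ enter.
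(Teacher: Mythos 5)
Your proposal mirrors the paper's proof: in both directions the argument is exactly the transfer via the maps $R$ and $I$, with $\mathbf T_P := R^{-1}(\simp\mathbf Y_P)$ for the forward direction (quoting Theorem~\ref{thm:relhyp}) and $\mathbf S_i' := R(\mathbf S_i)$ for the converse (quoting Theorem~\ref{thm:relhypconverse}), together with verification that the required hypotheses pass through $R$ and $I$ using $G$-equivariance, connectedness of $R$-point-preimages, and Lemma~\ref{lem:visualboundary}.

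One step, however, does not hold as written. You argue that $R$ of an isolated ball cannot meet a positive-dimensional simplex $u$ because $I(u)$ would sit inside the ball and have diameter $\geq\pi/2$, ``a contradiction.'' But a closed Tits ball of radius $\rho<\pi/2$ has diameter up to $2\rho<\pi$, and for $\rho\in(\pi/4,\pi/2)$ this comfortably accommodates a spherical simplex of diameter $\pi/2$, so there is no contradiction with the stated hypothesis of ``radius less than $\pi/2$.'' What the forward direction actually establishes (``any two points in the preimage of the same isolated $0$-simplex correspond to almost-equivalent rays and hence lie at Tits distance strictly less than $\pi/2$'') is that each $R^{-1}(v)$ has \emph{diameter} strictly less than $\pi/2$; it is this stronger bound, not the radius bound, that is incompatible with containing an isometrically embedded positive-dimensional simplex. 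To make your argument airtight you should read the converse hypothesis compatibly --- as asking for isolated subsets of $\partial_T\mathbf X$ of diameter less than $\pi/2$ --- or otherwise supply a different reason (e.g., via the rank-one/rank-rigidity dichotomy and Proposition~\ref{prop:orthantsyieldsimplices}) that a component containing $I(u)$ for positive-dimensional $u$ cannot be one of the isolated balls. The paper asserts this step without proof, so this is a genuine subtlety your write-up would need to resolve.
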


\begin{proof}
If $G$ is relatively hyperbolic, then each $\mathbf T_P=R^{-1}(\mathbf S_P)$, where $\mathbf S_P$ is one of the subcomplexes arising from Theorem~\ref{thm:relhyp}.  It is easily verified that the resulting family of subspaces has the desired properties.  Every other point in $\partial_T\mathbf X$ lies in $R^{-1}(p)$ for some isolated 0-simplex $p$.  Any two points in the preimage of the same isolated point correspond to rays that are almost-equivalent and thus represent points at Tits distance strictly less than $\frac{\pi}{2}$.

Conversely, suppose that $\partial_T\mathbf X=\mathbf B\cup(\sqcup_{g\in G,P\in\mathbb P}g\mathbf T_P)$, where $\mathbf B$ is the disjoint union of the isolated balls.  Then for each $g,P$, let $g\mathbf S_P=R(g\mathbf T_P)=gR(\mathbf T_P)$.  This is a $P^g$-invariant subcomplex, and any two of these subcomplexes are disjoint.  For each $b\in\mathbf B$, $R(b)$ must be an isolated 0-simplex, and it follows from Theorem~\ref{thm:relhypconverse} that $G$ is hyperbolic relative to $\mathbb P$.
\end{proof}

\begin{cor}\label{cor:thicktits}
Let $G$ act properly and cocompactly on the fully visible CAT(0) cube complex $\mathbf X$.
If $G$ is algebraically thick of order 1, then $\partial_T\mathbf X$ has a proper $G$-invariant connected
component.

Conversely, if $\partial_T\mathbf X$ has this feature, then $G$ is thick of order 1 relative to a collection of wide subsets.  Suppose, in addition, that $\partial_T\mathbf X$ has a connected $G$-invariant subspace $\mathfrak C=\cup_{g\in G,A\in\mathcal A}$, where $\mathcal A$ is a finite set of connected subspaces satisfying:
\begin{enumerate}
 \item For all $A\in\mathcal A$, the stabilizer $H_A$ of $A$ is quasiconvex.
 \item For all $A\in\mathcal A$, the limit set of $H_A$ (in the cone topology on $\visual\mathbf X$) contains $A$.
 \item The limit set of $\langle\{H_A:A\in\mathcal A\}\rangle$ contains $\mathfrak C$.
\end{enumerate}
Then $G$ is strongly algebraically thick of order 1 relative to a collection of quasiconvex, wide subgroups, and $G$ has polynomial divergence function of order exactly 2.
\end{cor}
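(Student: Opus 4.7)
The plan is to reduce each assertion to Theorem~\ref{thm:introthick} by transporting all data between $\simp\mathbf X$ and $\partial_T\mathbf X$ via the $G$-equivariant retraction $R\co\partial_T\mathbf X\to\simp\mathbf X$ and its section $I$. Since $R$ is continuous with connected fibers of diameter at most $\pi/2$, and $I$ is an equivariant section, $R$ induces a $G$-equivariant bijection between the connected components of the two boundaries.

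For the first implication, Theorem~\ref{thm:introthick} gives a positive-dimensional $G$-invariant connected component $\mathfrak C_s$ of $\simp\mathbf X$ with $\mathcal I\ne\emptyset$, and $R^{-1}(\mathfrak C_s)$ is then a proper $G$-invariant connected component of $\partial_T\mathbf X$. Conversely, a proper $G$-invariant connected component $\mathfrak C_T$ of $\partial_T\mathbf X$ maps to a proper $G$-invariant connected component $\mathfrak C_s=R(\mathfrak C_T)$ of $\simp\mathbf X$; rank rigidity combined with disconnectedness supplies the isolated $0$-simplices needed to ensure $\mathfrak C_s$ is positive-dimensional, and the second half of Theorem~\ref{thm:introthick} then yields thickness of order 1 of $\mathbf X$ (and hence $G$) relative to a family of wide convex subcomplexes.

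For the strong algebraic thickness claim, for each $A\in\mathcal A$ let $A'\subseteq\simp\mathbf X$ be the smallest subcomplex containing $R(A)$, and set $\mathcal A'=\{A'\}$ and $\mathfrak C_s=\bigcup_{g\in G,A'\in\mathcal A'}gA'$. Boundedness and connectedness of $A'$ follow from those of $A$ together with the uniform $\pi/2$ bound on fibers of $R$; $G$-equivariance of $R$ gives $H_A\subseteq\stabilizer_G(A')$, whose orbit on $\mathbf X$ is quasiconvex because $H_A$ is. Conditions~(2) and~(3) of the corollary, which concern cone-topology limit sets of $H_A$ and of $\langle\{H_A\}\rangle$, translate to the corresponding simplicial-boundary hypotheses of Theorem~\ref{thm:introthick} via the identification of the underlying point-sets of $\visual\mathbf X$ and $\partial_T\mathbf X$. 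Theorem~\ref{thm:introthick} then furnishes algebraic thickness of order 1 of $G$ relative to $\{H_A\co A\in\mathcal A\}$; Proposition~\ref{prop:stalgthick} upgrades this to strong algebraic thickness of order 1 with quadratic upper bound on divergence, and \cite[Proposition~3.3]{KapovichLeeb} rules out subquadratic divergence, exactly as in Corollary~\ref{cor:quaddiv}.

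The main technical obstacle is the translation of condition~(2). Lemma~\ref{lem:visualboundary} does not apply directly, because it demands that the \emph{whole} fiber $f^{-1}(u)$ lie in the limit set, whereas the Tits-boundary hypothesis provides this only for the single point $a\in A$ on a given ray. The remedy is to observe that cone-topology convergence $h_ix_o\to a$, for $a$ represented by a ray $\gamma$, forces the boundary set of hyperplanes separating $h_ix_o$ from $x_o$ (for almost all $i$) to contain $\mathcal H(\gamma)$; hence the associated limit simplex of $H_A$ always has $R(a)$ as a face, so $R(A)$ lies in the limit complex of $H_A$. One then checks that it is limit-complex membership, rather than the stronger limit-set statement for all of $f^{-1}(A')$, that is actually used in the proof of Theorem~\ref{thm:introthick}, so the translation goes through.
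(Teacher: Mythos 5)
Your proof is correct and takes essentially the same approach as the paper: transport data between $\partial_T\mathbf X$ and $\simp\mathbf X$ via the retraction $R$ and its section $I$, then invoke the simplicial-boundary characterizations (Theorem~\ref{thm:introthick}/Theorem~\ref{thm:thickoforderone} and Proposition~\ref{prop:stalgthick}, with the Kapovich--Leeb bound ruling out subquadratic divergence). Where the paper dismisses the verification of the hypotheses of Theorem~\ref{thm:thickoforderone} for $\{R(A):A\in\mathcal A\}$ as ``easily verified,'' you correctly note that the Tits-side condition~(2) gives only $A\subseteq\Lambda(H_A)$ rather than $f^{-1}(R(A))\subseteq\Lambda(H_A)$, and that this suffices because (as the remark after Theorem~\ref{thm:thickoforderone} confirms) the proof actually uses limit-complex membership of $R(A)$, which follows from $A\subseteq\Lambda(H_A)$ by the argument of Lemma~\ref{lem:visualboundary} applied pointwise along $A$; this is a genuine clarification of a point the paper leaves implicit.
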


\begin{proof}
If $G$ is algebraically thick of order 1, then $\simp\mathbf X$ has a $G$-invariant connected subspace $\mathfrak C'$ that is properly contained in $\simp\mathbf X$, by Theorem~\ref{thm:algebraicallythickoforder1}.  Let $\mathfrak C=R^{-1}(\mathfrak C')$.  The definition of $R$ implies that $\mathfrak C$ is connected: each simplex has connected $R$-preimage.  Also, $\mathfrak C$ does not contain all of $\partial_T\mathbf X$ since $R$ is surjective and distance-nonincreasing, and $\simp\mathbf X$ has more than one component.

Conversely, if $\mathfrak C$ is a $G$-invariant connected subspace of $\partial_T\mathbf X$, then $R(\mathfrak C)$ is a $G$-invariant connected subspace of $\simp\mathbf X$, whence $\mathbf X$ is thick by Theorem~\ref{thm:thickoforderone}.  It is easily verified that $\{R(A):A\in\mathcal A\}$ satisfies the hypotheses of Theorem~\ref{thm:thickoforderone}, from which strong algebraic thickness of order 1 follows.
\end{proof}

\section{Cubulated groups with arbitrary order of
thickness}\label{sec:arbitrary}
The goal of this section is to produce cocompactly cubulated groups of any
order of thickness; in fact, the groups we produce will be strongly algebraically
thick of the desired order.

\begin{notation}
For $n\geq 1$, we will let $\mathbb G_n$ denote the class of groups such that each $G\in\mathbb G_n$
acts properly and cocompactly on a CAT(0) cube complex, is strongly algebraically
thick of order at most $n$, and has polynomial divergence of order $n+1$.
\end{notation}

Note that $\mathbb G_n$ does not contain any groups of dimension $1$, since a $1$-dimensional CAT(0) cube complex is a tree, and hence such a group could not have polynomial divergence.

\begin{lem}\label{lem:thickoforder1}
For each dimension $k>1$, the class $\mathbb G_1$ has an infinite subclass
of pairwise non-quasi-isometric groups of geometric dimension $k$.
\end{lem}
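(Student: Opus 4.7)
The plan is to realize the desired infinite families as right-angled Artin groups (RAAGs) acting geometrically on the universal covers of their Salvetti complexes.

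\textbf{Construction.} Fix $k\geq 2$. For each integer $n\geq 4$, form a finite simplicial graph $\Gamma_{k,n}$ as follows. Start with the path $P_n$ on vertices $v_1,\ldots,v_n$ with edges $v_iv_{i+1}$. For $k=2$, set $\Gamma_{2,n}=P_n$. For $k\geq 3$, attach to each vertex $v_i$ a fresh $(k-1)$-clique on new vertices $w_{i,1},\ldots,w_{i,k-1}$, making every $w_{i,j}$ adjacent to $v_i$ and to every other $w_{i,j'}$, so that $\{v_i,w_{i,1},\ldots,w_{i,k-1}\}$ spans a $K_k$. The maximal cliques of $\Gamma_{k,n}$ are precisely these $K_k$'s (and, for $k=2$, the edges of $P_n$), so the clique number of $\Gamma_{k,n}$ equals $k$. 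The graph $\Gamma_{k,n}$ is connected, and for $n\geq 4$ its complement is connected, so $\Gamma_{k,n}$ is not a nontrivial join.

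\textbf{Membership in $\mathbb G_1$.} Let $G_{k,n}=A_{\Gamma_{k,n}}$, acting properly and cocompactly on the universal cover $\mathbf X_{k,n}$ of its Salvetti complex, a CAT(0) cube complex of dimension exactly $k$. Since $\Gamma_{k,n}$ is connected and not a nontrivial join, $G_{k,n}$ is one-ended and thick of order $1$ by~\cite{BehrstockCharney}. Taking as thickly-connecting subgroups the standard clique subgroups $\integers^k$ generated by the vertices of each maximal $K_k$, together with the edge subgroups $\langle v_i,v_{i+1}\rangle\cong\integers^2$ coming from the spine path, one verifies that $G_{k,n}$ is strongly algebraically thick of order $1$ relative to this finite collection of quasiconvex, wide subgroups: consecutive cliques share an edge subgroup in their (infinite, path-connected) intersection, each subgroup is an undistorted free abelian subgroup, and together they generate $G_{k,n}$. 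Proposition~\ref{prop:BDdivthick} then bounds the divergence above by a quadratic; since $G_{k,n}$ is not wide, its divergence is superlinear by~\cite{DrutuMozesSapir}, so it is polynomial of order exactly $2$. Thus $G_{k,n}\in\mathbb G_1$ with geometric dimension $k$.

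\textbf{Quasi-isometric distinctness.} The main obstacle is to show that infinitely many of the $G_{k,n}$ are pairwise non-quasi-isometric. My approach is to exhibit a quasi-isometry invariant of $G_{k,n}$ that depends strictly on $n$. The path $P_n$ at the core of $\Gamma_{k,n}$ manifests as a graph-of-groups decomposition of $G_{k,n}$ with $n$ vertex groups $\integers^k$ and $n-1$ edge groups $\integers^{k-1}$, and by a Papasoglu-style quasi-isometric invariance of the JSJ decomposition over two-ended or polycyclic subgroups, the combinatorial type of this decomposition is detected by the QI type of $G_{k,n}$. Alternatively, one can proceed by a direct argument on $\simp\mathbf X_{k,n}$: the $G_{k,n}$-invariant positive-dimensional subcomplex produced by Theorem~\ref{thm:introthick} carries a distinguished ``spine'' of $(k-1)$-simplices corresponding to the edge subgroups, whose combinatorial structure encodes the path and is invariant under cubical automorphisms induced by quasi-isometries. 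Either way, the invariant strictly increases with $n$, so an infinite strictly increasing subsequence yields the required pairwise non-QI subfamily.
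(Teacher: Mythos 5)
The construction does not establish the lemma: the key step, pairwise non-quasi-isometry, is left without a valid argument, and for $k=2$ it fails outright. Right-angled Artin groups whose defining graphs are trees of diameter at least $3$ are all quasi-isometric to one another (Behrstock--Neumann); the hypotheses in the definition of an ``atomic'' graph in~\cite{BKS} --- no leaves, girth at least $5$, no separating vertex star --- are chosen precisely to exclude this tree-like flexibility, and a path $P_n$ fails all three. So for $k=2$ your family $\{A_{P_n}\}_{n\geq 4}$ consists of mutually quasi-isometric groups and does not satisfy the conclusion of the lemma.

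Independent of the $k=2$ issue, the two proposed mechanisms for distinguishing the $G_{k,n}$ do not hold up. Papasoglu's quasi-isometric invariance of JSJ splittings over two-ended subgroups shows that a quasi-isometry coarsely respects the patterns of vertex and edge subgroups; it does not assert that the combinatorial type (e.g.\ the number of vertices) of the underlying graph of groups is a quasi-isometry invariant, and the tree RAAG phenomenon above is exactly a counterexample to that. The alternative ``direct argument on $\simp\mathbf X_{k,n}$'' rests on the claim that quasi-isometries induce cubical automorphisms; they do not, and the simplicial boundary is an invariant of the particular median graph $\mathbf X^{(1)}$ rather than of the quasi-isometry type of $G$, so the ``spine of $(k-1)$-simplices'' is not a priori a QI invariant. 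The paper avoids these problems by importing two specific rigidity theorems: for $k=2$ it uses atomic graphs and the Bestvina--Kleiner--Sageev theorem that atomic RAAGs are quasi-isometric if and only if their defining graphs are isomorphic; for $k>2$ it uses the irreducible $k$-tree groups of Behrstock--Januszkiewicz--Neumann, where infiniteness of the set of QI types was established directly. A proof along your lines would need a comparably strong rigidity input tailored to your $\Gamma_{k,n}$; as noted, for $k=2$ no such rigidity exists for your family.
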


\begin{proof}
Let $\Gamma$ be a connected graph with at least two vertices that does not
decompose as a nontrivial join.  The universal cover $\mathbf X_{\Gamma}$ of
the Salvetti complex of the associated right-angled Artin group $G(\Gamma)$ is
a combinatorially geodesically complete CAT(0) cube complex on which
$G(\Gamma)$ acts properly, cocompactly, and essentially.

According to~\cite{BehrstockCharney}, the right-angled Artin group $G(\Gamma)$
is algebraically thick of order 1 and has quadratic divergence, since
$\Gamma$ is not a nontrivial join.

A connected graph $\Gamma$ is~\emph{atomic} if it has no leaves, if its girth
is at least 5, and no vertex-star is separating.  It is shown in~\cite{BKS}
that, if $\Gamma_1,\Gamma_2$ are atomic graphs, then $G(\Gamma_1)$ and
$G(\Gamma_2)$ are quasi-isometric if and only if $\Gamma_1\cong\Gamma_2$.
Since there are obviously infinitely many isomorphism types of finite atomic
graphs, it follows that $\mathbb G_1$ contains infinitely many pairwise non-quasi-isometric groups each of dimension 2.

For each $k>2$, the irreducible $k$--tree groups constructed in \cite{BehrstockJanuszkiewiczNeumann:highdimartin} provide an infinite family of $k$--dimensional right-angled Artin groups which are all algebraically thick of order 1. Further, it was shown in \cite{BehrstockJanuszkiewiczNeumann:highdimartin} that this family contains infinitely many pairwise non-quasi-isometric groups.
\end{proof}

\begin{thm}\label{thm:thickofanyorder}
For each dimension $k>1$ and each $n\geq 1$, the class $\mathbb G_n$ contains an infinite class of
pairwise non-quasi-isometric groups of geometric dimension $k$.
\end{thm}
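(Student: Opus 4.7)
The plan is to induct on $n$, using Lemma~\ref{lem:thickoforder1} as the base case $n=1$ in every geometric dimension $k>1$. For the inductive step, fix $k>1$ and assume given an infinite family $\{G_i\}_{i\in\naturals}\subset\mathbb G_{n-1}$ of pairwise non-quasi-isometric groups of geometric dimension $k$. Each $G_i$ acts properly and cocompactly on a CAT(0) cube complex $X_i$ of dimension $k$; since $G_i$ is not wide (being thick of positive order), Theorem~\ref{thm:cocompactwide} ensures that $\simp X_i$ is disconnected, and the rank-rigidity theorem of~\cite{CapraceSageev} produces a rank-one element $g_i\in G_i$ with a combinatorial axis $\alpha_i\subset X_i$.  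Let $C_i\subset X_i$ be the cubical convex hull of a $\langle g_i\rangle$-orbit on $\alpha_i$; it is a convex, $\langle g_i\rangle$-cocompact subcomplex, and after replacing $g_i$ by a suitable power we may arrange that $\langle g_i\rangle$ is a malnormal, quasiconvex cubulated subgroup of $G_i$.

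Following the Behrstock-Dru\c{t}u construction, let $\bar X_i=G_i\backslash X_i$ and form the nonpositively-curved cube complex $\bar Y_i$ by gluing two copies of $\bar X_i$ along the compact locally convex subcomplex $\langle g_i\rangle\backslash C_i$; by the Hsu-Wise cubulation theorem for amalgams~\cite{HsuWiseCubulatingMalnormal}, the resulting amalgam $\Gamma_i=\pi_1\bar Y_i\cong G_i*_{\langle g_i\rangle}G_i$ acts properly and cocompactly on a $k$-dimensional CAT(0) cube complex $Y_i$. The two vertex subgroups stabilize convex lifts of $\bar X_i$, are therefore quasiconvex, are each strongly algebraically thick of order $n-1$ by the inductive hypothesis, together generate $\Gamma_i$, and intersect in the infinite cyclic subgroup $\langle g_i\rangle$. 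Hence $\Gamma_i$ is strongly algebraically thick of order at most $n$, and Proposition~\ref{prop:BDdivthick} yields the upper bound $\Divergence{\Gamma_i}{\lambda}{\mu}(r)\preceq r^{n+1}$.

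For the matching lower bound, the plan is to exploit the fact that each $\Gamma_i$-translate of $\alpha_i$ is rank-one, and hence Morse in $Y_i$, and separates $Y_i$ along the corresponding edge of the Bass-Serre tree of $\Gamma_i$: two points at mutual distance $r$ on opposite sides of such an axis must detour through a single vertex space, whose divergence is at least $r^n$ by inductive hypothesis, yielding $\Divergence{\Gamma_i}{\lambda}{\mu}(r)\succeq r^{n+1}$. Combined with Proposition~\ref{prop:BDdivthick}, this also forces the thickness order of $\Gamma_i$ to be exactly $n$, so that $\Gamma_i\in\mathbb G_n$. To extract infinitely many quasi-isometry types from $\{\Gamma_i\}$, the plan is to invoke quasi-isometric rigidity of the vertex subgroups: the $\Gamma_i$-translates of $G_i$ are characterized geometrically as the maximal subsets whose asymptotic cones lack the splitting cut-structure induced by the Bass-Serre tree of $\Gamma_i$, so any quasi-isometry $\Gamma_i\to\Gamma_j$ must permute these families up to finite Hausdorff distance and hence induces a quasi-isometry $G_i\to G_j$, contradicting the inductive hypothesis. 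The principal obstacles are verifying the Morse-type lower bound on divergence along the gluing axes and the peripheral quasi-isometric rigidity used in the last step; both require adapting the cut-point and divergence analysis of Behrstock-Dru\c{t}u to the cubical framework developed in Section~\ref{sec:thickoforderone}, with the relevant Morse geodesics arising as the rank-one axes guaranteed by rank rigidity and Theorem~\ref{thm:cocompactwide}.
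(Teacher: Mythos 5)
Your construction is the same one the paper uses (iterated amalgams $G\ast_{C}G$ over an infinite cyclic---or commensurable-with-cyclic---subgroup carried on a rank-one axis, starting from atomic RAAGs), so at the level of strategy you are on track. But several steps are either left as acknowledged ``obstacles'' or are stated in a way that doesn't quite work, and in each case the paper handles the point differently.

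First, the cubulation of the amalgam. You invoke the Hsu--Wise theorem for cubulating amalgams, but that result is designed for hyperbolic (or relatively hyperbolic) vertex groups and malnormal quasiconvex edge groups, whereas the vertex groups here are thick of positive order and hence very far from hyperbolic. The paper avoids the issue entirely: it builds $\mathbf X_{n+1}$ directly as a tree of spaces with vertex spaces copies of $\mathbf X_n$ and edge spaces $K_n\times[-1,1]$, and observes (Lemma~\ref{lem:convex}) that gluing CAT(0) cube complexes along convex subcomplexes produces a nonpositively-curved cube complex, whose universal cover is automatically CAT(0). This is more elementary and requires no outside machinery.

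Second, the lower bound on divergence is both under-argued and, as stated, quantitatively off. You claim the detour passes ``through a single vertex space, whose divergence is at least $r^n$,'' and conclude a bound of $r^{n+1}$; a detour through one vertex space of divergence $r^n$ gives only $r^n$, and the extra factor of $r$ does not appear. The genuine mechanism, which the paper imports almost verbatim from \cite[Proposition~5.2]{BD}, needs two ingredients you have not supplied: (i) malnormality of the edge group in the amalgam and acylindricity of the action on the Bass-Serre tree (Lemmas~\ref{lem:malnormal} and~\ref{lem:acylindrical}), which make translates of $K_n$ into genuine separating obstacles; and (ii) the inductive choice of the axis $\gamma\subset\mathbf X_n$ so that $\gamma$ itself has divergence of order at least $n+1$, not merely that some rank-one axis exists. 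Without (ii) the separating subsets need not force an $r^{n+1}$ detour. You omit this choice of $\gamma$ entirely, and you should not expect an arbitrary rank-one axis to work.

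Third, the quasi-isometric rigidity step. Your proposal to recognize the vertex subgroups as ``maximal subsets whose asymptotic cones lack the splitting cut-structure'' is plausible in spirit but is not carried out, and it is not obvious how to make it precise. The paper instead appeals to quasi-isometric invariance of splittings over $\mathbb Z$ from \cite[Theorem~7.1]{PapasogluZsplittings}, which directly yields that a quasi-isometry $G_{n+1}\to G'_{n+1}$ coarsely matches the vertex groups up to swapping the two factors, and hence gives a quasi-isometry $G_n\to G'_n$. If you want to avoid Papasoglu's theorem you would need to prove your cut-structure characterization, and I do not see how to do that without essentially reproving that invariance. In short: the outline is right, but the lower bound on divergence and the QI-distinctness step---the two places where the theorem actually earns its conclusion---are gaps in your argument, and the correct filling of those gaps is the content of the paper's proof.
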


\begin{proof}
The claim holds when $n=1$ by Lemma~\ref{lem:thickoforder1}.
For $n\geq 1$, by
induction there exists a group $G_n\in\mathbb G_n$ acting freely,
cocompactly, and essentially on a $k$--dimensional CAT(0) cube complex $\mathbf X_n$ that is
algebraically thick of order $n$ and has divergence of order $n+1$.

\textbf{Construction of $G_{n+1}$ and $\mathbf X_{n+1}$:}
By~\cite[Corollary~B]{CapraceSageev}, there exists $g\in G_n$ acting on
$\mathbf X_n$ as a rank-one isometry.
Let $\gamma\subset\mathbf X_n$ be a
CAT(0) geodesic axis for $g$.
By induction, we can choose $g$ so that $\gamma$ has
divergence of order at least $n+1$.  Since $g$ is rank-one, the cubical convex hull
$K_n$ of $\gamma$ lies in a finite neighborhood of $\gamma$.  Hence the
stabilizer $C_n\leq G$ of $K_n$ contains $\langle\gamma\rangle$ as a
finite-index subgroup.

Let $G_{n+1}=G_n\ast_{C_n}G_n$, and denote by $T_n$ the associated Bass-Serre
tree.  The space $\mathbf X_{n+1}$ is defined to be the total space of the tree
of spaces whose underlying tree is $T_n$, whose vertex-spaces are copies of
$\mathbf X_n$ and whose edge-spaces are copies of $K_n$ corresponding to cosets
of $C_n$.  The attaching maps are inclusions.  Since $\mathbf X_{n+1}$ is
obtained by gluing CAT(0) cube complexes along convex subcomplexes, it is
nonpositively curved and therefore a CAT(0) cube complex, by virtue of being
simply connected.  There is an obvious free, cocompact, essential action of
$G_{n+1}$ on $\mathbf X_{n+1}$, where the vertex-stabilizers are conjugate to
$G_n$ and the edge-stabilizers are conjugate to $C_n$.

We remark that collapsing each edge-space $K_n\times[-1,1]$ to $K_n$ within
$\mathbf X_{n+1}$ yields a new $G_{n+1}$-cocompact CAT(0) cube complex $\mathbf
X_{n+1}'$ with $\dimension\mathbf X_{n+1}'=\dimension\mathbf X_n$.  Although we
work in $\mathbf X_{n+1}$ for convenience, this observation shows, by induction
on $n$, that $G_{n+1}$ can always be chosen to act properly and cocompactly on
a CAT(0) cube complex of dimension $\dimension\mathbf X_1$, where $\mathbf X_1$
corresponds to some $G_1\in\mathbb G_1$.  To prove that $\mathbb G_n$ contains
infinitely many quasi-isometry types of $k$-dimensionally cocompactly cubulated
groups, one needs only to add to the induction hypothesis that
$\dimension\mathbf X_n=k$ and note that Lemma~\ref{lem:thickoforder1} has already accounted for the base case.

\textbf{An upper bound on order of thickness:}  By Lemma~\ref{lem:convex}
below, $h\mathbf X_n$ is a convex subcomplex of $\mathbf X_{n+1}$ for each
$h\in G$, and $h\mathbf X_n$ is thick of order $n$.

By construction, $\mathbf X_{n+1}$ is contained in the 1-neighborhood of
$G_{n+1}\mathbf X_n$.  Therefore, for any $x,y\in\mathbf X_{n+1}$, there exist
$h_0,h_m\in G$ such that $\done(x,h_0\mathbf X_n)\leq 1$ and $\done(y,h_m\mathbf
X_n)\leq 1$.  Let $h_0\mathbf X_n,h_1\mathbf X_n,\ldots,h_m\mathbf X_n$ be the
sequence of vertex-spaces corresponding to the sequence of vertices in the
projection to $T_n$ of a geodesic in $\mathbf X_{n+1}$ joining $x$ to $y$.  By
construction $h_i\mathbf X_n\cap h_{i+1}\mathbf X_n$ is a translate of $K_n$
for $0\leq i\leq m-1$.  Since $K_n$ is unbounded, the set $\{h\mathbf X_n:h\in
G_{n+1}\}$ is thickly connecting, whence $\mathbf X_{n+1}$, and therefore
$G_{n+1}$, is thick of order at most $n+1$.  Since each translate of $\mathbf
X_n$ is stabilized by a conjugate of one of the two vertex groups in the
splitting $G_{n+1}\cong G_n\ast_{C_n}G_n$, and $K_n$ has infinite stabilizer,
we see that $G_{n+1}$ is algebraically thick of order at most $n+1$.

\textbf{A lower bound on divergence:}  By Lemma~\ref{lem:malnormal}, $C_n$ is a
malnormal subgroup of $G_{n+1}$ and the action of $G_{n+1}$ on $T_n$ is
acylindrical by Lemma~\ref{lem:acylindrical}. The proof
of~\cite[Proposition~5.2]{BD} can now be repeated almost verbatim to show that
for any $g'\in G_{n+1}$ acting axially on $T_n$, any geodesic axis for $g'$ in
$\mathbf X_{n+1}$ has divergence of order at least $n+2$.  The only difference
is that the ``separating geodesics'' discussed in~\cite{BD} are replaced here
by tubular neighborhoods of $\gamma$ that contain $K_n$ and therefore separate
$\mathbf X_{n+1}$.

\textbf{Infinitely many quasi-isometry types:}  Denote by $A$ and $B$ the
copies of $G_n$ that are vertex groups of the splitting $G_{n+1}\cong
G_n\ast_{C_n}G_n$, so that $\{A,B\}$ is a set of subgroups showing that
$G_{n+1}$ has order of algebraic thickness at most $n+1$.  Let
$G'_{n+1}\in\mathbb G_{n+1}$ and define $A',B'\leq\mathbb G_{n+1}$ analogously
(so that $A'$ and $B'$ are both isomorphic to some $G'_n\in\mathbb G_n$).  If
$q:G_{n+1}\rightarrow G'_{n+1}$ is a quasi-isometry, then $q(A)$ and $q(B)$
are respectively coarsely equal to $A$ and $B$ (or $B$ and $A$), as in the
construction in~\cite[Section~5]{BD}, because of quasi-isometry
invariance of the splitting over $\integers$, which follows
from~\cite[Theorem~7.1]{PapasogluZsplittings}. Hence $G_n$ and $G'_n$ are
quasi-isometric, and therefore the set of quasi-isometry types represented in
$\mathbb G_{n+1}$ has cardinality at least that of the set of quasi-isometry
types represented in $\mathbb G_1$, and the latter is infinite by
Lemma~\ref{lem:thickoforder1}.
\end{proof}

\begin{lem}\label{lem:convex}
$\mathbf X_n$ and $K_n$ are convex subcomplexes of $\mathbf X_{n+1}$.
\end{lem}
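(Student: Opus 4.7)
I would prove the lemma in two steps: (a) each translate of $\mathbf X_n$ is convex in $\mathbf X_{n+1}$, and then (b) each translate of $K_n$ is convex in $\mathbf X_{n+1}$ by transitivity of convexity applied to the nested inclusions $K_n\subset\mathbf X_n\subset\mathbf X_{n+1}$, where $K_n$ is convex in $\mathbf X_n$ by construction as the cubical convex hull of the rank-one geodesic axis $\gamma$.

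For step (a), the essential input is the hyperplane structure of the tree of spaces $\mathbf X_{n+1}$. Each edge $e$ of the Bass-Serre tree $T_n$ contributes a slab $K_n\times I$ to $\mathbf X_{n+1}$ (consistent with the remark following the construction that ``collapsing each edge-space $K_n\times[-1,1]$ to $K_n$'' reduces the dimension by one), and this slab contains a hyperplane $H_e\cong K_n$ whose two associated halfspaces comprise the vertex- and edge-spaces corresponding to the two components of $T_n\setminus\{e\}$. Fix a vertex $v_0$ of $T_n$, and let $\mathbf X_n$ denote the vertex-space there. For any two $0$-cubes $x,y\in\mathbf X_n$ and any edge $e$ of $T_n$, the points $x$ and $y$ lie in the same halfspace of $H_e$, namely the one containing $v_0$. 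Since a $\done$-geodesic in $\mathbf X_{n+1}$ crosses each hyperplane at most once, any such geodesic from $x$ to $y$ crosses no $H_e$, and hence uses no ``edge-direction'' $1$-cube. Because transit between distinct vertex-spaces requires at least one such $1$-cube, the geodesic stays in $\mathbf X_n$, giving $1$-skeleton convexity. The ``cube condition'' required for convexity of the full subcomplex is automatic: any cube of $\mathbf X_{n+1}$ not contained in a vertex-space has at least one edge-direction $1$-cube and so cannot have its $1$-skeleton inside $\mathbf X_n$. Combined, these give combinatorial convexity of $\mathbf X_n$ in $\mathbf X_{n+1}$, which is equivalent to CAT(0)-convexity by \cite{HaglundSemisimple}.

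For step (b), transitivity is immediate. Given any two points $x,y$ of a translate of $K_n$ and any CAT(0) geodesic between them in $\mathbf X_{n+1}$, step (a) forces the geodesic into the ambient translate of $\mathbf X_n$, and convexity of $K_n$ within $\mathbf X_n$ then forces it into $K_n$.

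The main obstacle will be justifying the hyperplane structure asserted in step (a), in particular the existence and separation property of the edge-dual hyperplanes $H_e$. The paper's description of the edge-spaces of $\mathbf X_{n+1}$ is a little ambiguous (``edge-spaces are copies of $K_n$'' versus the subsequent mention of ``edge-space $K_n\times[-1,1]$''), and one must commit to the interpretation in which each slab genuinely thickens $K_n$ into a product. Once that interpretation is adopted, the hyperplane analysis is a standard tree-of-spaces argument and the remainder of the proof follows immediately.
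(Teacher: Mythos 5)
Your proof is correct, but it takes a genuinely different route from the paper's. The paper argues \emph{locally}: since $\mathbf X_{n+1}$ is CAT(0), it suffices to verify local convexity, which reduces to a $2$-cube case analysis — any square with two consecutive boundary $1$-cubes in $K_n$ (respectively, in $\mathbf X_n$) is shown to lie entirely in $K_n$ (respectively, $\mathbf X_n$). The paper proves $K_n$ first and then leverages that result to handle $\mathbf X_n$. You instead argue \emph{globally}, using the separating hyperplanes supplied by the tree-of-spaces structure: each edge of $T_n$ contributes edge-dual hyperplanes with $\mathbf X_n$ lying entirely to one side, so a combinatorial geodesic between $0$-cubes of $\mathbf X_n$ never crosses such a hyperplane, never uses a transverse $1$-cube of a slab, and therefore never leaves $\mathbf X_n$; you then get $K_n$ for free by transitivity of convexity, reversing the paper's order. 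Both arguments are sound. The paper's is slightly more elementary (it never invokes the hyperplane structure of $\mathbf X_{n+1}$), but the case analysis for the $\mathbf X_n$ step is written somewhat loosely, whereas your transitivity step is clean. Conversely, your approach relies on verifying that the slab-midcubes really are hyperplanes of $\mathbf X_{n+1}$ with the claimed separation property — a standard fact for trees of spaces glued along convex subcomplexes, but one you correctly flag as needing the $K_n\times I$ (rather than zero-thickness $K_n$) interpretation of the edge-spaces, which is indeed the interpretation the paper's own proof adopts. One small caveat: if the slabs have combinatorial length greater than one in the $I$-direction, each edge of $T_n$ contributes several parallel hyperplanes rather than a single $H_e$, but every one of them still has $\mathbf X_n$ entirely to one side, so your argument goes through unchanged.
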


\begin{proof}
$\mathbf X_{n+1}$ is the union of copies of $\mathbf X_n$ and copies of
$K_n\times[-1,1]$.  We denote by $K_n$ the subspace $K_n\times\{-1\}$ of
$\mathbf X_n$.

Since $\mathbf X_{n+1}$ is CAT(0), it is sufficient to verify that $\mathbf
X_n$ and $K_n$ are locally convex.  Suppose to the contrary that $s$ is a
2-cube whose boundary path is a 4-cycle $abcd$ with $ab\subset K_n$.  If
$s\subset\mathbf X_n$, then $cd$ is a combinatorial geodesic segment in $\mathbf
X_n$ starting and ending on $K_n$, whence $s\subset K_n$ since $K_n$ is convex
in $\mathbf X_n$.  Otherwise, $s$ lies in the copy of $K_n\times[-1,1]$
projecting to the edge of $T_n$ corresponding to $K_n$.  The unique possibility
in this case is that $s\subset K_n$.  Hence $K_n$ is convex.

A 2-cube with two
consecutive boundary 1-cubes in $\mathbf X_n$ has two consecutive boundary
1-cubes in some $\stabilizer(\mathbf X_n)$-translate of $K_n$, and must
therefore lie in $K_n\subset\mathbf X_n$.  Thus $\mathbf X_n$ is convex.
\end{proof}

\begin{lem}\label{lem:malnormal}
$C_n$ is a malnormal subgroup of $G$.
\end{lem}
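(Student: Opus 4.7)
The plan is to deduce malnormality of $C_n$ in the amalgam $G_{n+1} = G_n \ast_{C_n} G_n$ from the rank-one behavior of $g$ by way of two reductions. First, $G_n$ is torsion-free because it acts freely on $\mathbf{X}_n$, so the amalgam $G_{n+1}$ is also torsion-free; since every finite subgroup of a torsion-free group is trivial, any intersection that is finite must in fact be trivial, so almost-malnormality of $C_n$ will automatically upgrade to malnormality. Second, by the Bass-Serre normal form for amalgamated products, once $C_n$ is almost-malnormal in each vertex copy of $G_n$ the conjugate $hC_nh^{-1}$ meets $C_n$ trivially whenever $h$ has reduced length at least two in $G_n \ast_{C_n} G_n$, since conjugating a nontrivial $c\in C_n$ by such an $h$ produces a reduced word whose innermost factor lies in a vertex group but not in $C_n$ (by vertex-group malnormality), so the word cannot reduce back into the edge group. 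Thus everything reduces to showing $C_n$ is almost-malnormal in a single vertex group $G_n$.

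For the vertex-group statement I would suppose $h \in G_n \setminus C_n$ with $hC_nh^{-1}\cap C_n$ infinite, and derive a contradiction. Because $\langle g\rangle$ has finite index in the virtually cyclic group $C_n$, the intersection contains some $g^M$ with $M\neq 0$; raising to a further power to clear the finite quotient produces nonzero integers $M,k$ with $h^{-1}g^Mh = g^k$. Since $g$ is a rank-one isometry of $\mathbf{X}_n$, the standard rigidity of rank-one isometries (as in \cite{CapraceSageev}) identifies the fixed-point set of $\langle g\rangle$ in $\visual\mathbf{X}_n$ with exactly $\{\gamma(+\infty),\gamma(-\infty)\}$; the conjugation relation forces $h$ to permute this pair, so $h\gamma$ and $\gamma$ share their ideal endpoints. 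Rank-one of $g$, combined with the cocompact $G_n$-action, bounds the cross-section of the parallel set of $\gamma$ and so pins $h\gamma$ to a uniformly bounded Hausdorff neighborhood of $\gamma$; consequently $hK_n$ and $K_n$ are convex subcomplexes at bounded Hausdorff distance. Choosing $K_n$ at the outset to be the maximal convex subcomplex of $\mathbf{X}_n$ lying within bounded distance of $\gamma$ (such a maximum exists in finite dimension, and still has $\langle g\rangle$ as a finite-index subgroup of its stabilizer, by rank-one), uniqueness of the maximum forces $hK_n = K_n$, whence $h \in \stabilizer(K_n) = C_n$, contradicting $h\notin C_n$.

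I expect the main obstacle to be this final identification $hK_n = K_n$. If one takes $K_n$ naively to be the cubical convex hull of $\gamma$, then a non-equal convex hull of some parallel geodesic could appear as $hK_n$, and no immediate contradiction follows; the remedy is the slight enlargement described above, which is compatible with every earlier step in the construction of $\mathbf{X}_{n+1}$ because the enlarged $K_n$ remains a convex, $C_n$-cocompact subcomplex of $\mathbf{X}_n$, and so the tree of spaces, the free and essential $G_{n+1}$-action, and the subsequent divergence and thickness estimates all go through unchanged. After this adjustment, $\stabilizer(K_n)$ coincides with the maximal virtually cyclic subgroup of $G_n$ containing $\langle g\rangle$, which is exactly the subgroup of elements permuting $\{\gamma(\pm\infty)\}$, and the argument closes.
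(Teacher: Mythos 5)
Your route is genuinely different from the paper's. The paper works directly in $G_{n+1}$: from $g^r = hg^sh^{-1}$ and the quasi-isometric embedding of $C_n$ it deduces $|r|=|s|$, so that $h$ (after possibly squaring) commutes with $g^r$, and the Flat Torus Theorem applied to $\langle g^r,h\rangle$ produces a flat coarsely containing $\gamma$, contradicting rank-one. You instead first reduce via Bass--Serre normal form to malnormality of $C_n$ in a single vertex copy of $G_n$ (this reduction is correct, as is the torsion-free upgrade from almost-malnormality), and then use north--south dynamics of $g$ on $\visual\mathbf X_n$ to force $h\gamma$ parallel, hence at bounded Hausdorff distance, to $\gamma$. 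Up to that point the argument is sound and is a reasonable alternative to the paper's flat-torus approach.

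The gap is exactly where you flag the ``main obstacle.'' You propose to redefine $K_n$ as ``the maximal convex subcomplex of $\mathbf X_n$ lying within bounded distance of $\gamma$'' and assert such a maximum exists in finite dimension. It does not. The family of convex subcomplexes lying within bounded Hausdorff distance of a rank-one geodesic is not directed: the cubical convex hull of $N_R(\gamma)$ is convex and at Hausdorff distance $O(R)$ from $\gamma$, but that distance grows without bound as $R\to\infty$. (Already when $\gamma$ is a geodesic in a tree factor, $N_R(\gamma)$ is itself convex with Hausdorff distance exactly $R$; the same growth persists in cocompact, essential examples of dimension $\geq 2$.) So the object you want does not exist, and the conclusion $hK_n=K_n$ is not secured. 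The paper avoids the issue in the generic case via the Flat Torus Theorem -- if $\langle g^r,h\rangle\cong\integers^2$ one gets a $2$-flat and no identification $hK_n=K_n$ is needed -- though note that the degenerate case $\langle g^r,h\rangle\cong\integers$ still requires an argument that the common root lies in $C_n$, which runs into the same subtlety. A correct repair along your lines would take $K_n$ to be the cubical convex hull of the parallel set (equivalently the min set of a suitable power of $g$), a canonically defined convex subcomplex preserved by any isometry conjugating a power of $g$ to a power of $g$; but as written, the ``maximal $K_n$'' step fails.
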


\begin{proof}
If $C_n$ fails to be malnormal, then there exists $h\in
G_{n+1}-C_n$ and nonzero integers $r,s$ such that $g^r=hg^sh^{-1}$.  Since $C$
is quasi-isometrically embedded in $G_{n+1}$, we must have $|r|=|s|$, so that
without loss of generality, $r=s$ and $h$ is a hyperbolic isometry of $\mathbf
X_{n+1}$.  There is thus a $\langle g^r,h\rangle$-invariant flat in $\mathbf
X_{n+1}$ coarsely containing $\gamma$, and this contradicts the fact that $g$
is a rank-one isometry of $\mathbf X_{n}$ and that $\mathbf X_{n+1}$ is a tree of
spaces where the vertex spaces are copies of $\mathbf X_{n}$ and the edge
spaces are each contained in finite neighborhoods of copies of the
axis of $g$.
\end{proof}

\begin{defn}[Acylindrical]\label{defn:acylindrical}
The isometric action of the group $G$ on the graph $Y$ is \emph{acylindrical}
if for some $\ell>0$, there exists $M<\infty$ such that
$|\stabilizer(x)\cap\stabilizer(y)|\leq M$ whenever $x$ and $y$ are at distance
at least $\ell$ in $Y$.
\end{defn}

\begin{lem}\label{lem:acylindrical}
The action of $G_{n+1}$ on $T_n$ is acylindrical.
\end{lem}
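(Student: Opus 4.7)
My plan is to deduce acylindricity directly from the malnormality of $C_n$ established in Lemma~\ref{lem:malnormal}, via a completely standard Bass--Serre argument. I will show that the constants $\ell=2$ and $M=1$ suffice: any two distinct vertices $x,y$ of $T_n$ at graph distance at least $2$ satisfy $\stabilizer_{G_{n+1}}(x)\cap\stabilizer_{G_{n+1}}(y)=\{1\}$.

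First I would fix vertices $x,y$ of $T_n$ with $\mathbf d_{T_n}(x,y)\geq 2$. Since $T_n$ is a tree, any $h\in\stabilizer(x)\cap\stabilizer(y)$ fixes the geodesic $[x,y]$ pointwise, and in particular fixes two consecutive edges $e_1,e_2$ of $[x,y]$ meeting at an interior vertex $v$. The stabilizer of $v$ is conjugate in $G_{n+1}$ to one of the vertex groups $G_n$ of the splitting $G_{n+1}\cong G_n\ast_{C_n} G_n$, and the edges of $T_n$ incident to $v$ correspond bijectively to the cosets of $C_n$ inside $\stabilizer(v)$. Because $e_1\neq e_2$, they represent distinct cosets, so their $G_{n+1}$-stabilizers are two conjugates $u_1 C_n u_1^{-1}$ and $u_2 C_n u_2^{-1}$ with $u_1^{-1}u_2\notin C_n$.

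Therefore $h\in u_1 C_n u_1^{-1}\cap u_2 C_n u_2^{-1}$. Conjugating by $u_1^{-1}$, this equals $u_1\bigl(C_n\cap (u_1^{-1}u_2)\,C_n\,(u_1^{-1}u_2)^{-1}\bigr)u_1^{-1}$, which is trivial by the malnormality of $C_n$ in $G_{n+1}$ provided by Lemma~\ref{lem:malnormal}. Hence $h=1$, establishing acylindricity with $\ell=2$ and $M=1$. (If one prefers to allow $x,y$ to be arbitrary points of the graph rather than vertices, one simply takes $\ell=3$ so that the geodesic $[x,y]$ necessarily contains two distinct edges meeting at an interior vertex, and the argument is unchanged.)

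The main ``obstacle'' is in fact not an obstacle at all: everything is driven by Lemma~\ref{lem:malnormal}, and the Bass--Serre bookkeeping is purely formal. The only point that requires any care is making sure one extracts \emph{two distinct} edge-cosets from the geodesic, which is why the distance threshold $\ell=2$ (or $\ell=3$ in the more general formulation) is imposed.
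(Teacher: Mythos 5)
Your proof is correct and follows essentially the same approach as the paper: both arguments reduce to the observation that an element fixing two distinct edges of $T_n$ meeting at a common vertex lies in the intersection of two distinct conjugates of $C_n$, which is trivial by the malnormality established in Lemma~\ref{lem:malnormal}. Your version is somewhat more explicit in the Bass--Serre bookkeeping and records the constants $\ell=2$, $M=1$, whereas the paper phrases the same reduction in terms of distinct translates of $K_n$ inside a common vertex-space.
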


\begin{proof}
Let $x,y$ be vertices corresponding to $h_xG_n$ and $h_yG_n$, with
$d_{T_n}(x,y)=2$.  Let $z$ be the midpoint of the unique geodesic joining $x$
to $y$, and denote by $h_zG_n$ the corresponding coset.  If $k\in G_n^{h_x}\cap
G_n^{h_y}$, then $k$ stabilizes two distinct edges in $T_n$, corresponding to
distinct translates of $K_n$ in $h_z\mathbf X_n$.  Lemma~\ref{lem:malnormal}
implies that $k=1$.

If $d_{T_n}(x,y)>2$, then since geodesics in trees are unique, there exist
$x',y'$ between $x,y$ such that $d_{T_n}(x',y')=2$ and every element of
$G_{n+1}$ stabilizing $x$ and $y$ must also stabilize $x'$ and $y'$.
\end{proof}

\bibliographystyle{alpha}
\bibliography{ThickCubes1130}

\end{document}